\documentclass[review,onefignum,onetabnum]{siamart190516}
\usepackage{amsmath}
\usepackage{geometry}
\usepackage{amssymb,latexsym}
\usepackage{verbatim}
\usepackage{extarrows}
\usepackage{enumerate}
\usepackage{txfonts}
\usepackage{mathtools}
\usepackage{booktabs}
\usepackage{bbm}

\usepackage[font=small,labelfont=bf, labelsep = quad]{caption}
\usepackage{subfigure}
\usepackage{graphicx}
\usepackage{caption}
\usepackage{mwe}    
\usepackage{hyperref}
\hypersetup{
    colorlinks= True,
    linkcolor=blue,
    filecolor=magenta,      
    urlcolor=cyan,
}
\urlstyle{same}
\usepackage{xurl}
\usepackage{xcolor}
\renewcommand{\c}[1]{\mathcal{#1}}




\usepackage{mathtools}
\usepackage[tableposition=top]{caption}
\usepackage{booktabs,dcolumn}

\theoremstyle{plain}

\theoremstyle{definition}

\newtheorem{assumption}[theorem]{Assumption}

\usepackage{bbm}

\newcommand\R{\mathbb{R}}

\renewcommand{\c}[1]{\mathcal{#1}}

\usepackage{float}
\usepackage{placeins}

\newcommand*\sfref[1]{\ref{#1}}
 
\newcommand*\stref[1]{SI Table \ref{#1}}

\newcommand*\ssref[1]{SI Sec. \ref{#1}}

\usepackage{url}

\usepackage{xparse}
\usepackage{tikz}
\usetikzlibrary{matrix,backgrounds}
\pgfdeclarelayer{myback}
\pgfsetlayers{myback,background,main}

\tikzset{mycolor/.style = {line width=1bp,color=#1}}%
\tikzset{myfillcolor/.style = {draw,fill=#1}}%

\NewDocumentCommand{\highlight}{O{blue!40} m m}{%
\draw[mycolor=#1] (#2.north west)rectangle (#3.south east);
}

\def\ao #1{{#1}}
\def\aob #1{{#1}}
\def\aon #1{{#1}}
\def\aoa #1{{#1}}


\begin{document}
\nolinenumbers
\title{Optimal Lockdown for Pandemic Control} 
\author{Qianqian Ma\thanks{Department of Electrical and Computer Engineering, Boston
  University, Boston, MA, USA } \and Yang-Yu Liu\thanks{Channing Division of Network Medicine,
  Brigham and Women's Hospital, Harvard Medical School, Boston, MA
  02115, USA} \and Alex Olshevsky\thanks{Department of Electrical and Computer Engineering and Division
  of Systems Engineering, Boston University, Boston, MA, USA}}
\maketitle

\begin{abstract} 
As a common strategy of contagious disease containment, lockdowns will inevitably weaken the economy. The ongoing COVID-19 pandemic underscores the trade-off arising from public health and economic cost. An optimal lockdown policy to resolve this trade-off is highly desired. Here we propose a mathematical framework of pandemic control through an optimal stabilizing non-uniform lockdown, where our goal is to reduce the economic activity as little as possible while decreasing the number of infected individuals at a prescribed rate. This framework allows us to efficiently compute the optimal stabilizing lockdown policy for general epidemic spread models, including both the classical SIS/SIR/SEIR models and a new model of COVID-19 transmissions. We demonstrate the power of this framework by analyzing publicly available data of inter-county travel frequencies to analyze a model of COVID-19 spread in the 62 counties of New York State. We find that an optimal stabilizing lockdown based on epidemic status in April 2020 would have reduced economic activity more stringently outside of New York City compared to within it, even though the epidemic was much more prevalent in New York City at that point. Such a counterintuitive result highlights the intricacies of pandemic control and sheds light on future lockdown policy design.
\end{abstract} 

\tableofcontents
\section{Introduction} The COVID-19 pandemic has resulted in more than 92.3M confirmed cases and 2.0M deaths (up to Jan 13th, 2021) \cite{JHU} and has impacted the lives of more than 90\% global population \cite{pew,hospitalization}. 
Curbing the spread of the pandemic like COVID-19 depends critically on the successful implementation of non-pharmaceutical interventions such as lockdowns, social distancing, shelter in place orders, contact tracing, isolation, and quarantine \cite{chinazzi2020effect, pan2020association, flaxman2020estimating, germann2006mitigation}. However, these interventions can also  lead to substantial economic damage,
motivating us to investigate the problem of curbing  pandemic spread while minimizing the induced economic losses. 

\sloppy

We consider the problem of designing an optimal \aob{stabilizing} lockdown that minimizes the economic damage while reducing  the number of new infections to zero at a prescribed rate.  Such a lockdown should be {\em non-uniform}, because shutting down different locations has different implications both for the economic cost and for pandemic spread. The difficulty is whereas a uniform lockdown can be found through a search over a single parameter, a non-uniform lockdown is parametrized by many parameters associated with different locations. Despite of its significance and implications, a computationally efficient framework to design \aob{stabilizing} lockdown strategies is still lacking.    


Here we propose such a framework by mapping the design of optimal lockdown policy to a classical problem in control theory --- design  an intervention that affects the eigenvalues of a matrix governing the dynamics of a dynamical system. It turns out that, even though general epidemic spreading dynamics are highly nonlinear, an eigenvalue bound for a linear approximation of the spreading dynamics nevertheless forces the number of infections at each location to go to zero asymptotically at a prescribed rate \ao{for all time}. We provide two polynomial-time algorithms that design the optimal lockdown to achieve such an eigenvalue bound. 

We apply these algorithms to design an optimal \aob{stabilizing} lockdown on both synthetic and real data (using  data from SafeGraph \cite{travel} to fit a county-level model of New York State) for epidemic spread models of COVID-19 using disease parameters from the literature \cite{giordano2020modelling,bertozzi2020challenges,birge2020controlling}. 
Unsurprisingly, we find that the heterogeneous lockdown is far more economical than a homogeneous lockdown. However, we find additional features of the optimal \aob{stabilizing} lockdown that are counter-intuitive. For example, we find that in models of random graphs, degree centrality and population do not affect the strength of the lockdown of a location unless its population (or degree centrality) takes extremely smaller (or larger) values than others. Most surprisingly, we show that an optimal \aob{stabilizing} lockdown based on the epidemic status in April 2020 would have reduced activity more strongly outside of New York City (NYC) compared to within it, even though the epidemic was much more prevalent in NYC at that point.


\section{Results}

All the epidemic spread models considered in this work are compartmental or network models \cite{birge2020controlling,7277005} with ``locations'' corresponding to  neighborhoods,  counties, or other geographic subdivisions. 
We consider $n$ locations, with the variable $x_i$ denoting the proportion of infected population at location $i$. Our framework can be applied to general epidemic spread models. For demonstration purpose, here we consider a simple model of COVID-19 which contains the classical Susceptible-Infectious-Recovered (SIR) model and the Susceptible-Exposed-Infectious-Recovered (SEIR) model as special cases. Besides the COVID-19 model, we also consider the classical susceptible-infectious-susceptible (SIS) model; details about the SIS model can be found in \ssref{sec: SIS}. The optimal lockdown issue we consider is summarized in Fig.\ref{Fig: framework}.

\smallskip

\textbf{A network model of  COVID-19.} We consider a simple model (similar to models in literature \cite{khanafer2014optimal, giordano2020modelling,birge2020controlling, pagliara2020adaptive, carli2020model, zino2020assessing}) of COVID-19 spreading that breaks infected individuals into two types: asymptomatic and symptomatic. This model allows individuals transmit the infection at different rates: 
\begin{equation}\label{eq:COVID}
	\begin{aligned}
		\dot{s}_i & =  - s_i \sum_{j=1}^n a_{ij} (\beta^{\text a} x^{\text a}_j + \beta^{\text s} x^{\text s}_j) \\ 
		\dot{x}^{\text a}_i & =  s_i \sum_{j=1}^n a_{ij} (\beta^{\text a} x^{\text a}_j + \beta^{\text s} x^{\text s}_j) - (\epsilon + r^{\text a}) x^{\text a}_i \\ 
		\dot{x}^{\text s}_i & =  \epsilon x^{\text a}_i - r^{\text s} x^{\text s}_i 
	\end{aligned}.
\end{equation}
Here $s_i$ ($x_i^{\text a}$ or $x_i^{\text s}$) stands for the proportion of susceptible (asymptomatic or symptomatic infected, respectively) population at location $i$, $a_{ij}$ captures the rate at which infection flows from location $j$ to location $i$, $\beta^{\text a}$ (or $\beta^{\text s}$) is the transmission rate of asymptomatic (or symptomatic) infected individuals, $r^\text{a}$ (or $r^\text{s}$) is the recovery rate of asymptomatic (or symptomatic) infected individuals. We assume infected individuals are asymptomatic at first and $\epsilon$ is the rate at which they develop symptoms. We use different parameters for symptomatic and asymptomatic individuals because a recent study \cite{Kissler2020.10.21.20217042} reported that asymptomatic individuals have viral load that drops more quickly, so they not only recover faster, but also are probably less contagious.

Note that our model of COVID-19 spreading can be considered as a generalization of the classical SIR model and the SEIR model of epidemic spread. Indeed, by setting $\beta^{\text s}=\epsilon= r^{\text s} = 0$, we recover the SIR model; and by setting $\beta^{\text a}=r^{\text a} = 0$, we recover the SEIR model. However, neither the SIR nor the SEIR model captures the existence of two classes of individuals who transmit infections at different rates as above. Our model can also be considered as a simplification of existing models in studying COVID-19 spreading \cite{birge2020controlling, giordano2020modelling}. For example, in \cite{birge2020controlling}, asymptotic stability was considered in a slightly more general model including both births and deaths. Here for simplicity in our model we consider a fixed population size. 
In \cite{giordano2020modelling} eight classes of patients (instead of two) were introduced, depending on whether the infection is diagnosed, whether the patient is hospitalized, as well as other factors. 

In matrix form, we can write our model as 
\begin{equation} \label{COVID_19}
	\renewcommand{\arraystretch}{0.75}
	\left( 
	\begin{array}{c} 
		\dot{s} \\
		\dot{x}^{\text a} \\ 
		\dot{x}^{\text s}
	\end{array} 
	\right) = 
	\begin{tikzpicture}[baseline=-\the\dimexpr\fontdimen22\textfont2\relax ]
		\matrix (m)[matrix of math nodes, left delimiter=(,right delimiter=), nodes={minimum width=5em, minimum height=1.4em}]
		{
			0 & -\beta^{\text a} {\rm diag}(s) A &  - \beta^{\text s} {\rm diag}(s) A \\
			0 & \beta^{\text a} {\rm diag}(s) A - (\epsilon + r^{\text a}) &  \beta^{\text s} {\rm diag}(s) A \\
			0 & \epsilon & - r^{\text s}\\
		};
		
		\begin{pgfonlayer}{myback}
			\highlight[gray]{m-2-2}{m-3-3}
		\end{pgfonlayer}
	\end{tikzpicture}
	\left( 
	\begin{array}{c} 
		s \\
		x^{\text a} \\ 
		x^{\text s}
	\end{array} 
	\right),
\end{equation}
where scalars in the matrix should be understood as multiplying the identity matrix. Let us write $M(t)$ for the bottom right $2n \times 2n$ submatrix (outlined by a  box) in Eq. \eqref{COVID_19}.

It turns out that, if we want the number of infections at each location (or a linear combination of those numbers) to go to zero at a prescribed rate $\alpha$, we just need to ensure that the linear eigenvalue condition $\lambda(M(t_0)) \leq -\alpha$ holds (see \ssref{sec: stability} for a formal proof). \ao{Note that this is quite different from what usually happens in nonlinear systems when we pass to an eigenvalue bound of at a point:  here as long as the eigenvalue condition $\lambda(M(t_0)) \leq -\alpha$ is satisfied, we obtain that infections go to zero at rate $\alpha$ over {\em all} times $t \geq t_0$}. 

\ao{In the remainder of this paper, we will attempt to design strategies that enforce decay of infections with a prescribed rate by modifying the matrix $A$ through lockdowns to satisfy such an eigenvalue bound.  This is different from the more traditional approach of optimal control of network epidemic processes \cite{bock2018optimal, NBERw27102, fajgelbaum2020optimal, alvarez2020simple} in a number of ways.  First, this gives rise to a fixed lockdown, whereas a traditional optimal control approach would result in a lockdown that is different at every time $t$, which is obviously unrealistic. \aon{If the time-varying lockdown is approximated through a series of infrequently changing fixed lockdowns, the optimality guarantee is lost.} 
}\aon{Second, the optimal control approach results in lockdowns that relax in strength as the number of infections decreases. If policymakers are tasked with repeated lockdown relaxations, a potential danger is that political considerations will result in lockdowns that are too loose, leading cases to increase again. For example, a recent CDC report found that pre-mature relaxations of restrictions drove an increase of cases throughout the United States in 2020\cite{guy2021association}.  }Finally, as we will see later, one of  the main benefits of our approach is the guaranteed scalability: the main result  of this paper is a nearly linear time algorithm. \aon{By contrast,}  optimal control of epidemic processes is based on methods which are either known to be non-scalable or which sometimes fail to converge \aon{at all}. We discuss  this at more length in Section I of the Supplementary Information.

\medskip

\textbf{Lockdown model.}  Methods of constructing $A$ capturing spatial heterogeneity have been well studied \cite{gross2020epidemic, longini1988mathematical, sattenspiel1995structured,arino2003multi, bayham2015measured, pare2018virus, bussell2019applying, della2020network}. Here we follow a recent work \cite{birge2020controlling}that is particularly well-suited to model the lockdowns in curbing COVID-19. 
Denote the fixed population size at location $i$ as $N_i$, and assume people travel from location $i$ to location $j$ at rate $\tau_{ij}$. It is well accepted that such travel rates determine the evolution of an epidemic. For example, it has been reported that regional progress of influenza is much more correlated with the movement of people to and from their workplaces rather than geographic distances \cite{viboud2006synchrony}; \ao{in the context of COVID-19, mobility based on cell-phone data has been predictive as a measure of epidemic spread \cite{chang2021mobility, glaeser2020jue}}. The quantities $a_{ij}$ can then be determined as (see \ssref{sec: A_construct} for details):  
\begin{align}\label{eq: a_ij}
	a_{ij} = \sum_{l=1}^n  \tau_{il} \tau_{jl} \frac{N_j}{\sum_{k=1}^n N_k \tau_{kl}} .  
\end{align}
It is intuitive that $a_{ij}$ is the sum of the terms involving $\tau_{il} \tau_{jl}$ since this product captures the interactions between people from locations $i$ and $j$ through visits to location $l$. Eq. \eqref{eq: a_ij} can also be written in matrix form as $A = C B^{\top}$ with 
\begin{equation} \label{eq:bc} C =  \tau, B^{\top} = D_1  \tau^{\top} D_2, 
\end{equation} where $\tau = (\tau_{ij})$, $D_1 = {\rm diag}(\sum_{k} N_k \tau_{kl})^{-1}$ while $D_2 =  {\rm diag}(N_1, \ldots, N_n)$.  


When a lockdown is ordered heterogeneously across  different locations, this has two consequences. First, the transmission rates will be altered.  For instance, ensuring that all buildings have a maximum enforced density limits the rate at which people can interact, as do mandatory face-covering, and other measures, resulting in a number of transmissions that is a fraction of what they otherwise would have been. We may account for this as follows. From Eq. \eqref{eq: a_ij}, we have that 
\[ \beta^{\text a} a_{ij}  = \sum_{l=1}^n  \beta^{\text a} \tau_{il} \tau_{jl} \frac{N_j}{\sum_{k=1}^n N_k \tau_{kl}}. \] The effect of the lockdown is to replace $\beta^{\text a}$ in each term of the sum by $\beta^{\text a} f_l$, for some location-dependent $f_l \in [0,1]$. The effect on $\beta^{\text s} a_{ij}$ is similar. 
Secondly,  travel rates to location $l$ are also a fraction of what they were before since there is now reduced inducement to travel, i.e., $\tau_{il}$ should be replaced by $\tau_{il} g_l$ with $g_l \in [0,1]$ for each location $l$.

To avoid overloading the notation, we will not change the definitions of $\beta^{\text a}, \beta^s$ or the travel rates $\tau_{il}$ but instead achieve the same effect by changing the definition of $a_{ij}$ as:
\[ a_{ij} =  \sum_{l=1}^n z_l \frac{N_j}{\sum_{k=1}^n N_k \tau_{kl}} \tau_{il} \tau_{jl}, \]
where $z_l = f_l g_l\in [0, 1]$. 
In matrix notation, the post-lockdown $A$ matrix is  \begin{equation} \label{sislockdown} A = C {\rm diag}(z) B^{\top}.
\end{equation} The quantities $z_1, \ldots, z_n$ can be thought of as measuring the intensity of the lockdown at each location.



\medskip

\textbf{Lockdown cost.} Clearly, setting $z_l=1$ corresponds to doing nothing and should have a zero economic cost. On the other hand, choosing $z_l=0$ corresponds to a complete lockdown and should be avoided. We will later apply our framework to real data collected from counties in New York State; shutting down a county entirely would result in people being unable to obtain basic necessities, and thus the economic cost should approach $+\infty$ as $z_l \rightarrow 0$. 
With these considerations in mind, a natural choice of lockdown cost is \begin{equation} \label{eq:lockdown_cost} c(z_1, \ldots, z_n) = \sum_{i=1}^n c_l \left( \frac{1}{z_l} - 1 \right) . \end{equation} Here $c_l$ captures the relative economic cost of closing down location $l$. Throughout this paper, we will choose $c_l$ to be the employment at local $l$, but other choices are also possible (e.g., $c_l$ could be the GDP generated at location $l$).

Besides the cost function in Eq. (\ref{eq:lockdown_cost}), we will also consider cost functions that blow up with  different exponents as $\sum_{i=1}^n c_l \left( z_l^{-k} - 1 \right)$, as well as costs which threshold as $\sum_{i=1}^n \min  \left( c_l (z_l^{-1} - 1), C_l \right)$ which alter our cost function by saturating at some location-dependent cost $C_l$ rather than blowing up as $z_l \rightarrow 0$. 

\ao{One of the advantage of these cost functions is that they inherently discourage extreme disparities among nodes. Indeed,  a lockdown that places all the burden on a small collection of nodes by setting their $z_i$ close to zero will have cost that blows up. By contrast, some previous works such as \cite{birge2020controlling, carli2020model} used cost functions $\sum_{i} c_i (1-z_i)$ associated with lockdown strengths, which do not have this feature.}



\subsection{Analytical Results.}

\ao{Th optimal stabilizing lockdown problem put together all the features we have outlined above: we are looking for a lockdown enforcing a guaranteed decay rate through eigenvalue bounds on the matrix $A$ of minimum cost. Note that our problem formulation puts a cost on the lockdown strength and puts a decay condition on the number of infections as a {\em constraint}. It is therefore slightly different from approaches which put both of these into the cost, though it should be noted that via Lagrange multiplier arguments such problem variations are typically equivalent.}

We will consider two variations of the lockdown problem, whose difference is whether an optimal \aob{stabilizing} lockdown is allowed to increase activity in certain locations. \textit{Constrained lockdown}: we seek to find a vector $z$ with entries in $[0,1]$ minimizing lockdown cost determined by Eq. (\ref{eq:lockdown_cost})  subject to the bound $\lambda_{\rm max}(A-\gamma I) \leq \alpha$ in the SIS model, and $\lambda_{\rm max} (M(t)) \leq \alpha$ in the case of the COVID-19 model of Eq. \eqref{COVID_19}. \textit{Unconstrained lockdown}: same as above, but we do not constrain the entries of $z$ to lie in $[0,1]$. (The two factors $f_l$ and $g_l$ described in the previous subsection will not be constrained to lie in [0,1] either.)
Indeed, if certain locations contribute little to disease spread but have very high relative economic cost of lockdown, one could even increase activity in these locations to allow for a harsher lockdown elsewhere with the same overall cost. While our methods work for both variations, all of our simulations and empirical results will consider the constrained version. Our approach of stabilizing the system by forcing the eigenvalues to have negative real part is a standard heuristic in control theory \cite{levine1996control, ogata2002modern,  gopal2002control}. This approach comes with a caveat---if pushed to the extreme by moving the eigenvalues further and further towards negative infinity, the asymptotically better performance will  start coming at the expense of the non-asymptotic behavior of the system.



\bigskip
We next turn to describe our main results.  Our first step is to discuss an assumption required by one of our algorithms, i.e., the recovery rate $\gamma$ has to be small relative to the entries in the matrices $C$ and $B$. We call this ``high-spread assumption" (see Assumption \ref{as:spread} in \ssref{sec: lock down} for the formal description). We will later show that, under this assumption, the constrained and unconstrained lockdown problems are equivalent. This is quite intuitive: if the epidemic spreads sufficiently fast everywhere, the unconstrained shutdown will never choose to increase the activity of any location.

\smallskip

\textbf{Main theoretical contribution.} With the above preliminaries in place, we can now state our main theoretical contribution. Our main theorem provides efficient algorithms for both the unconstrained and constrained lockdown problems (see Theorem \ref{thm:mainthm} in \ssref{sec: lock down} for the formal description of this theorem). In particular, we first prove that the unconstrained lockdown problem for both SIS and COVID-19 models can be exactly mapped to the classical matrix balancing problem (see \ssref{sec: matrix balancing} for details) and solved with nearly linear time complexity. Moreover, we prove that if the “high-spread” assumption holds, then the constrained lockdown problem is equivalent to the unconstrained lockdown problem and consequently is also reducible to matrix balancing. Even if the “high-spread” assumption does not hold, we prove that under certain conditions the constrained lockdown problem for the SIS and COVID-19 models can still be solved by applying the covering semi-definite program with polynomial time complexity of ~$\tilde{O}(n^3)$, where the tilde hides factors logarithmic in model parameters. To summarize, we give three separate cases that cover all possible scenarios. In two of these cases, the optimal \aob{stabilizing} lockdown problem is solvable in nearly linear time, and in the remaining case, it is solvable in $\tilde{O}(n^3)$. 

In practice, we find the optimal lockdown problem is solvable in linear time in the vast majority of the cases. Specifically, when we fit the models to New York State data, in 23 experiments out of 27, the linear time algorithm gave the correct answer.

\subsection{Empirical Application.}\label{applications} \quad We now apply the algorithms we've developed to design an optimal \aob{stabilizing} lockdown policy for the 62 counties in the State of New York (NY). Our goal is to reduce activity in each county in a non-uniform way to curb the spread of COVID-19 while simultaneously minimizing economic cost. The data sources we employed are presented in \ssref{data_sources}. 

We consider only the constrained lockdown problem here. When the ``high-spread" assumption is satisfied, we will apply the matrix-balancing algorithm, otherwise we will apply the covering semi-definite program. To provide valid estimation results, we employed three different sets of disease parameters provided in literature \cite{bertozzi2020challenges}, \cite{birge2020controlling},  \cite{giordano2020modelling} (See Supplementary Table \ref{tab: model_parameters}).

\medskip 

\textbf{Comparison with other lockdown policies.} We used the data of the 62 counties in NY on April 1st, 2020 as initialization and estimated the number of active cases over $300 \sim 800$ days and the number of  cumulative cases over $500 \sim 1,500$ days  with different lockdown policies. Fig.\ref{Fig: active_acc_Covid_19}a-c
show the estimated active cases over times, and Fig.\ref{Fig: active_acc_Covid_19}d-e
show the estimated cumulative cases over time.  Here results from different columns of Fig.2  were calculated by using different sets of disease parameters adopted from literature \cite{birge2020controlling,giordano2020modelling, bertozzi2020challenges}.


We compared the optimal \aob{stabilizing} lockdown policy calculated by our methods with several other benchmark policies: (1) no lockdown, i.e., $z_l=1$ for all locations; (2) random lockdown, $z_l$ is randomly chosen from a uniform distribution $\mathcal{U}[a, b]$ with the lower- and upper-bounds $a$ and $b$ chosen such that the overall cost of this policy is the same as that of our policy; (3) uniform lockdown, where $z_l=z$ is the same for all the locations and $z$ is chosen such that the overall economic cost is the same as that of our policy; (4) uniformly-bounded-decline locdown, where  the ``decline'' is uniformly bounded across locations, i.e., the decay rate of the infections in each location is bounded by a constant $\alpha$, where $\alpha$ is chosen such that the economic cost of this policy is the same as that of our policy. Note that among the four benchmark policies both the uniformly-bounded-decline lockdown and the random lockdown are heterogeneous locationwise.


From  Fig.\ref{Fig: active_acc_Covid_19}, we can see that our optimal \aob{stabilizing} lockdown policy outperforms all other lockdown polices  in terms of the total final number of cumulative cases. Similar findings for the SIS and SIR models are reported in Fig.\sfref{Fig: active_acc_SIS} and Fig.\sfref{Fig: active_acc_SIR}. 

\medskip 

\textbf{Optimal \aob{stabilizing} lockdown rate $z_l^*$ for each county.} Fig.\ref{Fig: covid_19_each_county_ours_uni} shows lockdown-rate profiles $z_l$ calculated by various policies.
First, we found that the optimal \aob{stabilizing} lockdown profile is quite sensitive to the disease parameters. 
Second, surprisingly, the values of $z_l^*$ for counties in NYC are relatively higher (corresponding to a less stringent lockdown) than that of counties outside NYC, regardless of the disease parameters. 
This is a  counter-intuitive result: even though the epidemic was largely localized around NYC on the date we used to initialize the infection rates, the calculated optimal \aob{stabilizing} lockdown profile indicates that it is cheaper to reduce the spread of COVID-19 by being harsher on neighboring regions with smaller populations.  It can be observed from Fig. \ref{Fig: covid_19_each_county_ours_uni} that this pattern only appears in our heterogenous optimal \aob{stabilizing} lockdown policy. 

\ao{To see why this is counter-intuitive, consider the case of a single-node (i.e., non-network) model. It is easy to see that the optimal \aob{stabilizing} lockdown is insensitive to population. Intuitively, doubling population doubles the cost of the lockdown and also doubles the benefits in terms of lives saved. In terms of our model, the stabilization constraint is on the {\em proportion} of infected, so doubling the population may change the optimal cost but does not change the optimal solution. Furthermore, the strength of the optimal \aob{stabilizing} lockdown is increasing in $s(t_0)$: harsher restrictions are needed to achieve the same decay rate if more people are infected. Thus it is surprising that when we consider a network model of New York State, the region with the highest population and highest proportion of infected is treated the lightest under the optimal \aob{stabilizing} shutdown.
}

In \ssref{city_suburb}, we further replicate the same finding in a much simpler city-suburb model: we consider a city with large population and a neighboring suburb with small population and observe that the optimal \aob{stabilizing} lockdown will choose to shut down the suburb more stringently. In \ssref{sec: extended economic cost functions}, we further confirmed the same counterintuitive phenomenon using other cost functions.  In \ssref{Sec: Robustness}, we checked the robustness of this counterintuitive phenomenon with respect to the uncertainty of the travel rate matrix $\tau$ by adding noises or removing part of the travelling data. It turns out that this phenomenon is quite robust against the uncertainty of the matrix $\tau$ . \ao{In particular, perturbing each $\tau_{ij}$ by noise with variance up to $10 \tau_{ij}^2$, preserves the result, as does randomly setting half of the $\tau_{ij}$ to zero (See Fig. \sfref{robustness_check}). } 

We investigate this finding further in \ssref{two-parameters}, where we let the metric to be optimized be the  total number of infections. As a counterpoint, we do a greedy search over all two-parameter lockdowns that shut down NYC harder than the rest of New York State. 
Our results show that our lockdown has a smaller number of infections than the best two-parameter lockdown of the same cost.

This phenomenon likely occurs because shutting down a suburb with small population yields benefits proportional to the much larger population of the city, since a shutdown in the suburb affects the rate at which infection spreads in the city as city residents can infect each other through the suburb. Similarly, a possible explanation for the phenomenon we observe on New York State data is that shutdowns outside of NYC may be a cheaper way to curb the spread of infection within NYC.
\ao{We stress that this effect is due to the network interactions. In particular, this counterintuitive phenomenon does not occur in a hypothetical model of New York State where residents always stay within their own county: in that case, the lockdown problem reduces to a collection of single-node models which do not interact, and optimal \aob{stabilizing} shutdown will be increasing in the proportion of infected (and insensitive to population), thus hitting NYC harder than the rest of New York State.}



\medskip

\textbf{Additional observations.} To fully understand the effect of the disease-related parameters to the optimal \aob{stabilizing} lockdown-rate profile $\{ z_l^* \}$ and the economic cost, we implemented additional numerical experiments to analyze the sensitivity. The results are shown in SI Fig.\sfref{Fig: sensitivity}. It can be seen that the value of $z_l^*$ and the corresponding economic cost are sensitive to recovery rate and the initial growth rate but not to other parameters. 

We also studied the relationship of the value of $z_l^*$ and the structure of the underlying graph. We plotted the obtained $z_l^*$ with respect to degree, home-stay rate, population, and employment in Fig.\sfref{Fig: 2020_parameters}. We also implemented random permutation experiments (where we randomly permute one parameter while fixing everything else) in terms of degree, home-stay rate, population, employment and initial susceptible rate, and the results are shown in Fig.\sfref{Fig: rand_perm1} and Fig.\sfref{Fig: rand_perm2}. 
From these experiments, we found the distribution of $z_l^*$ can be strongly affected by permutations of centrality, population, and the home stay rate. However, the distribution of $z_l^*$ is not altered much by permuting employment and the initial susceptible rate. 
More details about these experiments are presented in \ssref{exp: extended}.

\subsection{Numerical Simulations}\label{syn_experiments}
From the empirical analysis of data in New York State, we hypothesize that  the home-stay rate, degree centrality, and population are three major parameters that impact the optimal  lockdown rate $z_l^*$ of county-$l$. However, no inferences can be made about the effect of these parameters from empirical data because all of them vary together. To study how the value of $z_l^*$ is related to these parameters, we implement  experiments on synthetically generated data. We describe the experiments and results next, with full details provided in the supplementary \ssref{exp: extended}.


\medskip

\textbf{Impact of degree centrality.} To study the impact of degree centrality, we considered geometric random graphs (see \ssref{sec: centrality} for other graphs).  The population, the home-stay rate, and the initial susceptible rate of different nodes are set as the same values across all the nodes. The  simulation results are presented in Fig.\ref{Fig: synthetic}a-b. We found that degree centrality only matters for the value of $z_l^*$ when there exist hotspots (i.e., hubs node with very high degrees). 
Beyond such hotspots, the effect of degree centrality is essentially ignorable.

\medskip

\textbf{Impact of  population.} To study the impact of  population, we fix all other model parameters and vary the population of the nodes. We again considered geometric random graphs, where node degrees are similar. The simulation results are presented in Fig.\ref{Fig: synthetic}c. We found that nodes with  small populations are assigned smaller values of $z_l^*$, but once the population is large enough, $z_l^*$ is almost independent of the population size.

\medskip

\textbf{Impact of  home-stay rate.} To study the impact of  home-stay rate, we fixed all other model parameters and tuned the home-stay rate of the nodes. Our simulation results based on the geometric random graphs were presented in Fig.\ref{Fig: synthetic}d. We found that $z_l^*$ increases with increasing home-stay rate, which agrees well with our intuition. 

\section{Discussion}
\ao{The main contribution of this paper is two-fold. Our first contribution is methodological: we give a modeling framework that  gives rise to efficient methods for pandemic control through fixed lockdowns, with our main algorithm taking nearly linear time. The linear time nature of the methods allows us to scale up in a way that is not known for any other method. For example, at present data is not available to design lockdowns at the city level, but the method presented here can be scaled up to design a lockdown even at the neighborhood level for the entire United States. Although this is highly unlikely to happen for the COVID-19 pandemic, it may be of use in future outbreaks. } 

\ao{Our second contribution is to use our algorithms to observe counter-intuitive properties of lockdowns. In particular, we observe that a model of epidemic spread in New York State will tend to shut down  outside of NYC more stringently that NYC itself, even if the epidemic is largely localized to NYC. 
	We compared the lockdown found our model against exhaustive search of all two-parameter lockdowns which shut down NYC harder than the rest of New York State to verify that indeed it outperforms. We further found that this result is robust against significant perturbations to the travel matrix, the epidemic parameters, as well as the epidemic model.} 


While we have focused on simple models in this work, our methods can be applied to more complex models, such as the SIDHARTE model \cite{giordano2020modelling},  which contains more than two classes (asymptomatic/symptomatic) of infected people. If additional data sets become available on variations of activity by age and location in the future, it is possible to incorporate this as in \cite{Brittoneabc6810}; one could, for example, split each city into multiple nodes, with each node corresponding to a different age group residing in that city. 

\ao{We next briefly discuss future directions. The main limitations of research on lockdown at the present time is the lack of available data. This limitation drove a number of the modeling choices made in the present work, as we describe next.} 

\ao{For example, it is tempting to divide trips into several different types (e.g., work, family, entertainment, etc), and argue that different types have different likelihood of leading to infections. Unfortunately, we are not aware of any data source which either counts such trips for different locations through the United States or estimates the infectivity differentials across types. One might further attempt to fit different transmissibility parameters to different counties, but again such data does not appear to be available \aon{at this level of granularity}. In general, there are many ways to build sophisticated models but the primary limitation is lack of data.} 

\ao{Even the SafeGraph data we have relied on here \aon{is not without limitations}. Indeed, SafeGraph estimates are based on cell phone data, and by definition do not sample people without cell phones. Further, we do not even know how many of the minutes recorded as travel outside might have been spent alone in a vehicle, with no possibility for transmission. \aon{However,}  as a counterpoint, mobility from cell phone data has been highly predictive in modeling COVID-19 spread as reported in \cite{chang2021mobility, glaeser2020jue}.  Finally, appropriately anonymized public data sets would allow us to better understand how people respond to lockdown and estimate a model with a more heterogeneous response compared to our model here. Future work is likely to be driven by fitting finer models as more data becomes available.}

\ao{We conclude by mentioning that, because of all of these considerations, we have focused on qualitative patterns of the lockdown which hold across different models and parameter values.   For example, as mentioned earlier, our finding that it is better to have a tighter lockdown in NYC holds when each entry of the travel matrix $\tau_{ij}$ is perturbed with noise of variance  up to $10 \tau_{ij}^2$ (see Supplementary Figure 18, and a detailed explanation of this experiment in Section 10 of the Supplementary Information).  The same finding is also unaffected by setting half of $\tau_{ij}$ randomly to zero. Thus the precise values in the travel matrix do not appear to be important for this finding as long as the zero-nonzero structure remains very broadly similar. Finally, the same finding remains true in the SIS/SIR models. Thus our main empirical result is a robust property of the optimal stabilizing shutdown that  holds even in the presence of severe data and model mis-specifications. }

\begin{figure}[!htb]
	\centering
	
	\begin{minipage}[b]{0.5\linewidth}
		\centering
		\includegraphics[width=1.0\linewidth]{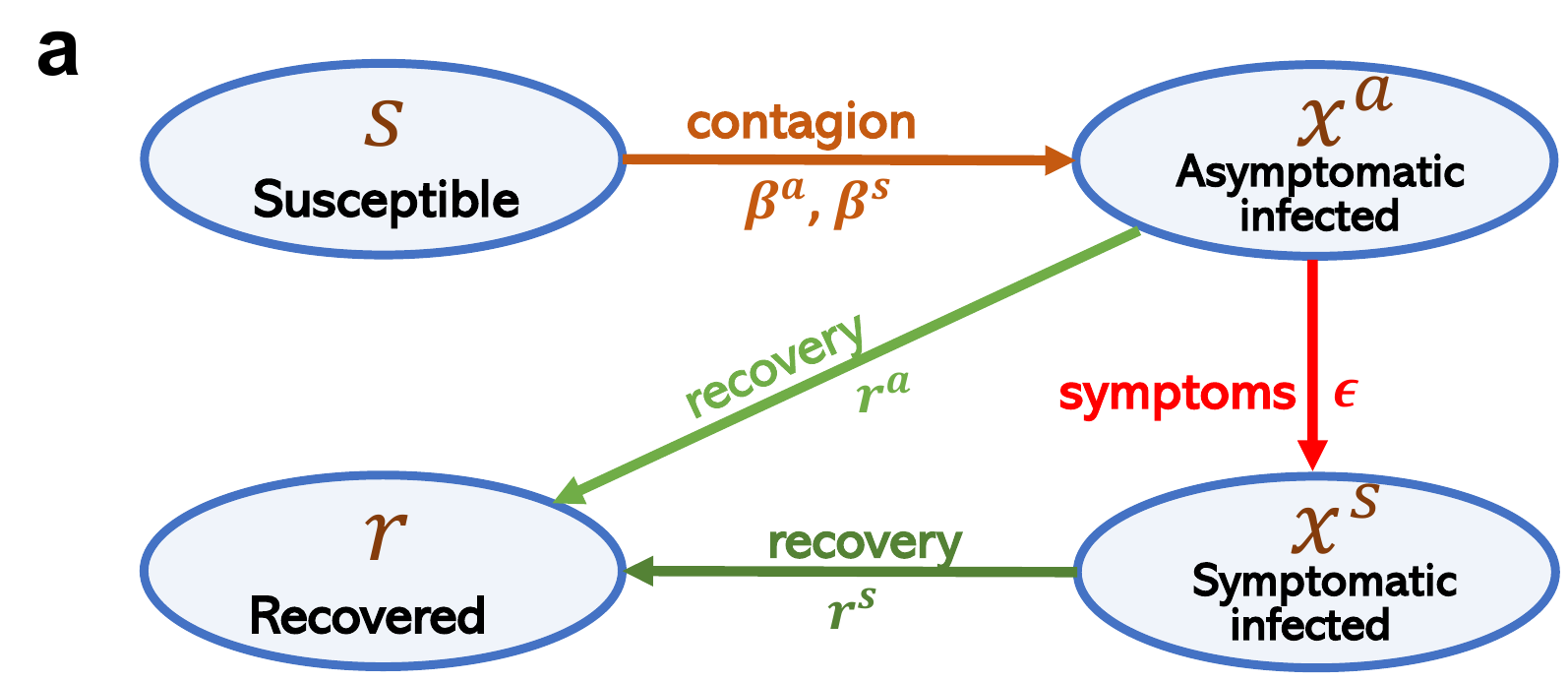}
	\end{minipage}%
	\vspace{3mm}
	
	\begin{minipage}[b]{1.0\linewidth}
		\centering
		\includegraphics[width=1.0\linewidth]{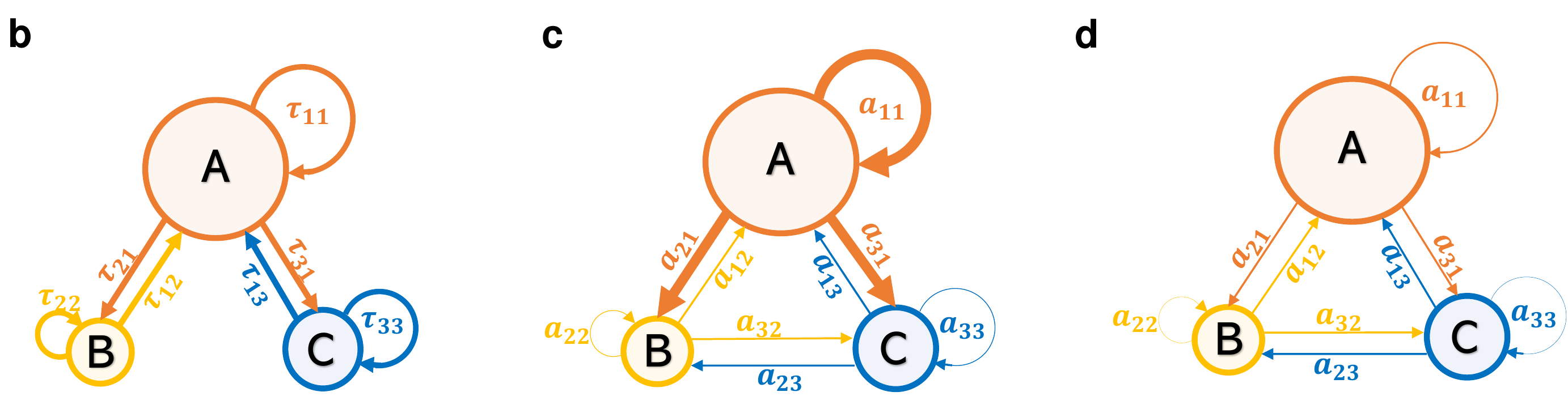}
	\end{minipage}%
	\vspace{3mm}
	
	\begin{minipage}[b]{0.75\linewidth}
		\centering
		\includegraphics[width =1.0\linewidth]{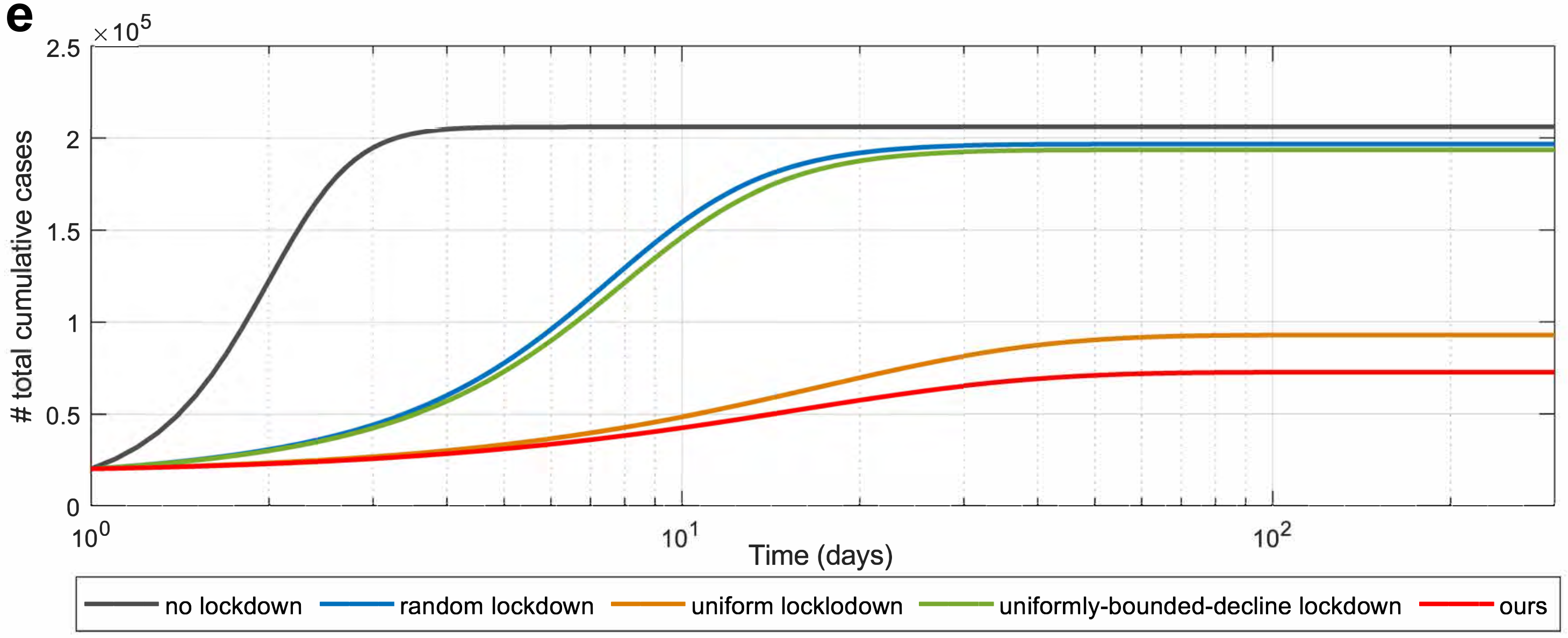}
	\end{minipage}%
	\caption{Framework of the optimal \aob{stabilizing} lockdown design. {\bf a},  the COVID-19 model we consider, which corresponds to Eq. \eqref{eq:COVID}. {\bf b},  the travel pattern of a three-nodes network, where A represents a city with large population, B and C represent two suburbs with small population, $\tau_{ij}$ represents the travel rate from location $i$ to location $j$ in a day. {\bf c}, the infection flow pattern of the three-nodes network before the lockdown, $a_{ij}$ represents the infection flow as
		described in Eq.\eqref{eq:COVID}. Note that although no travel occurs between nodes $B$ and $C$, the corresponding entries $a_{23}$ and $a_{32}$ are nonzero since people from these locations can meet each other in location $A$. {\bf d}, the infection flow pattern of the same network after the lockdown. The widths of the edges in {\bf c}, {\bf d} are propotional to the value of $a_{ij}$.
		When lockdown policy are implemented, the value of $a_{ij}$ will decrease, which lead to the control of the epidemics. {\bf e}, the estimated number of cumulative cases of  different lockdown polices. In this figure, ``ours" represents the heterogeneous optimal \aob{stabilizing} lockdown calculated by our method, ``uniform lockdown" represents a uniform policy with the same economic cost as our lockdown. Random lockdown rate $z_l$ is randomly chosen from a uniform distribution $\mathcal{U}[a, b]$ with the lower- and upper-bounds $a$ and $b$ chosen such that the overall cost of this policy is the same as that of our policy.  Uniformly-bounded-decline lockdow implies a policy where the decay rate of the infections in each county is bounded by a constant $\alpha$, where $\alpha$ is chosen such that the economic cost of this policy is the same as that of our policy.
		For this small network, the lockdown rates of our heterogeneous policy are $z_1 = 0.21,~z_2 = 0.06,~ z_3 = 0.06$. By contrast, the lockdown rates of the uniform policy with the same cost are $z_1 = 0.16,~ z_2 = 0.16,~ z_3 = 0.16$. It can be seen that our policy outperforms all the other lockdown policies. }\label{Fig: framework}
	\centering
	\vspace{-1mm}
\end{figure}

\vspace{\fill}
\clearpage


\begin{figure}[!htb]
	\centering
	\begin{minipage}[b]{0.95\linewidth}\label{fig: active_covid_19}
		\centering
		\includegraphics[width=1.0\linewidth]{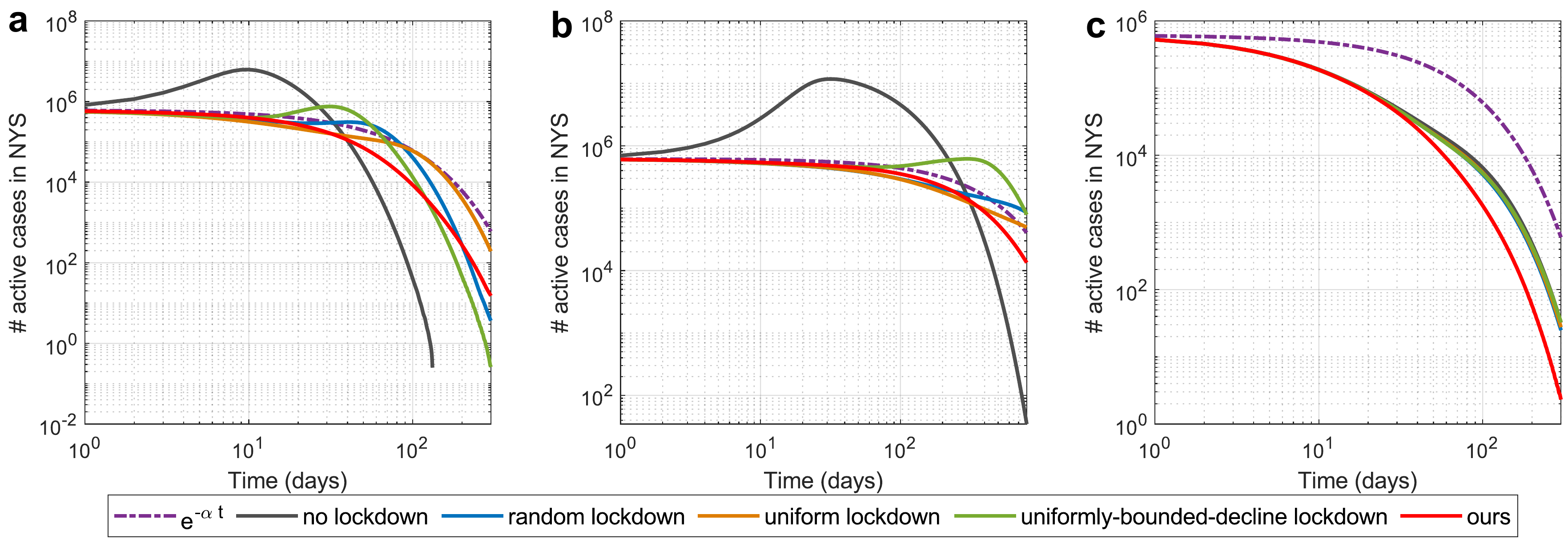}			
	\end{minipage}\hfill
	
	\vspace{3mm}
	
	\begin{minipage}[b]{0.95\linewidth}\label{fig: acc_covid_19}
		\centering
		\includegraphics[width=1.0\linewidth]{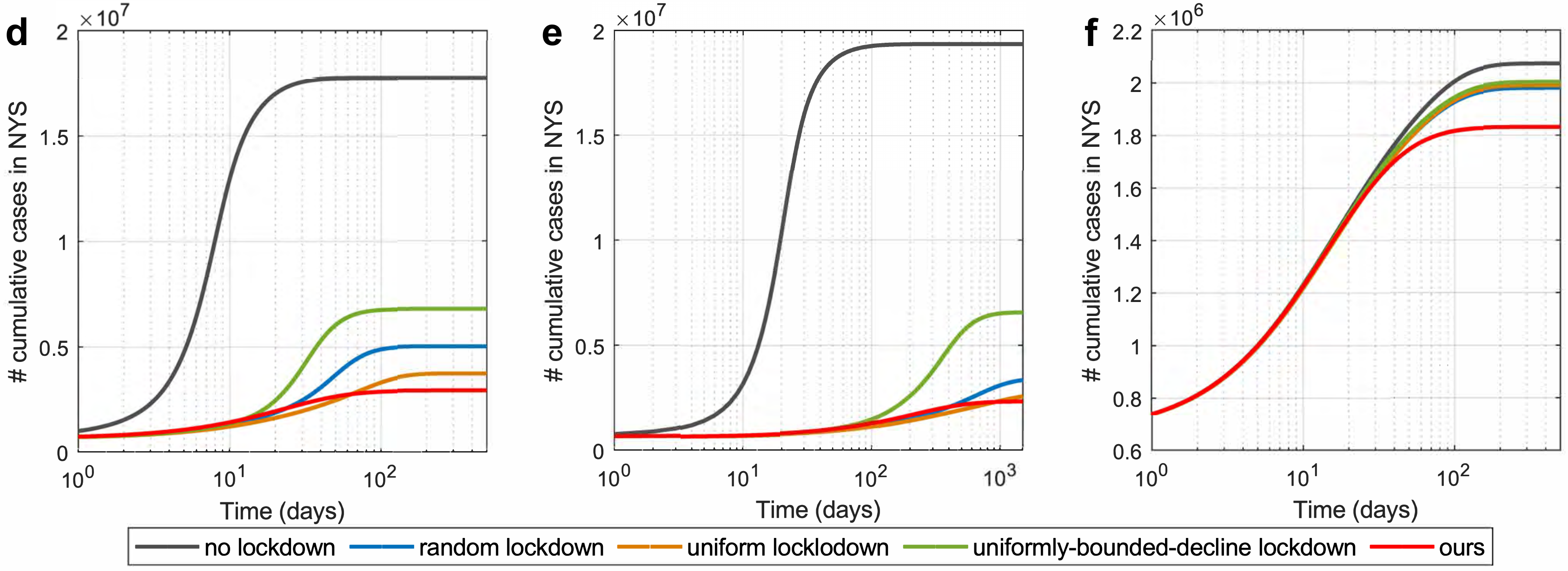}
	\end{minipage}%
	\caption{Estimated number of active casesand cumulative cases for the COVID-19 model by applying different lockdown policies based on available data about COVID-19 outbreak in NY on April 1st, 2020.  {\bf a-c},  the estimated number of active cases in NY. {\bf d-f}, the estimated cumulative cases in NY. In {\bf a}, {\bf d}, the disease parameters are set as in \cite{bertozzi2020challenges}, the decay rate $\alpha$ is chosen as 0.0231 which corresponds to halving every 30 days. In {\bf b}, {\bf e}, the disease parameters are set as in \cite{giordano2020modelling}, the decay rate is chosen as $\alpha = 0.2 r^{\text s} = 0.0034$ so that $\alpha < \min (r^{\text a}, r^{\text s})$. In {\bf c}, {\bf f}, the disease parameters are set as in \cite{birge2020controlling}, the decay rate $\alpha$ is chosen $0.0231$ that corresponds to halving every 30 days. Uniform lockdown, random lockdown, and uniformly-bouded-decline lockdown are defined as in Fig \ref{Fig: framework}. 
	}\label{Fig: active_acc_Covid_19}
	\centering
	\vspace{-1mm}
\end{figure}

\begin{figure}[!htb]
	\centering
	\begin{minipage}[b]{0.85\linewidth}
		\centering
		\includegraphics[width=1.0\linewidth]{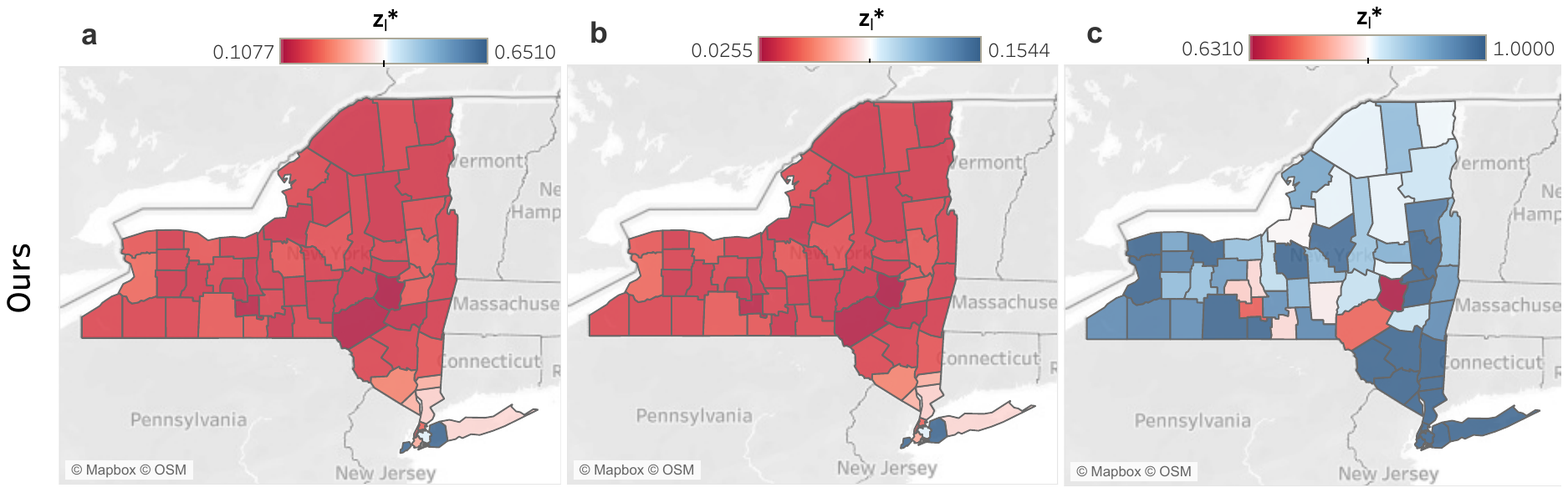}
	\end{minipage}

	\begin{minipage}[b]{0.85\linewidth}
		\centering
		\includegraphics[width=1.0\linewidth]{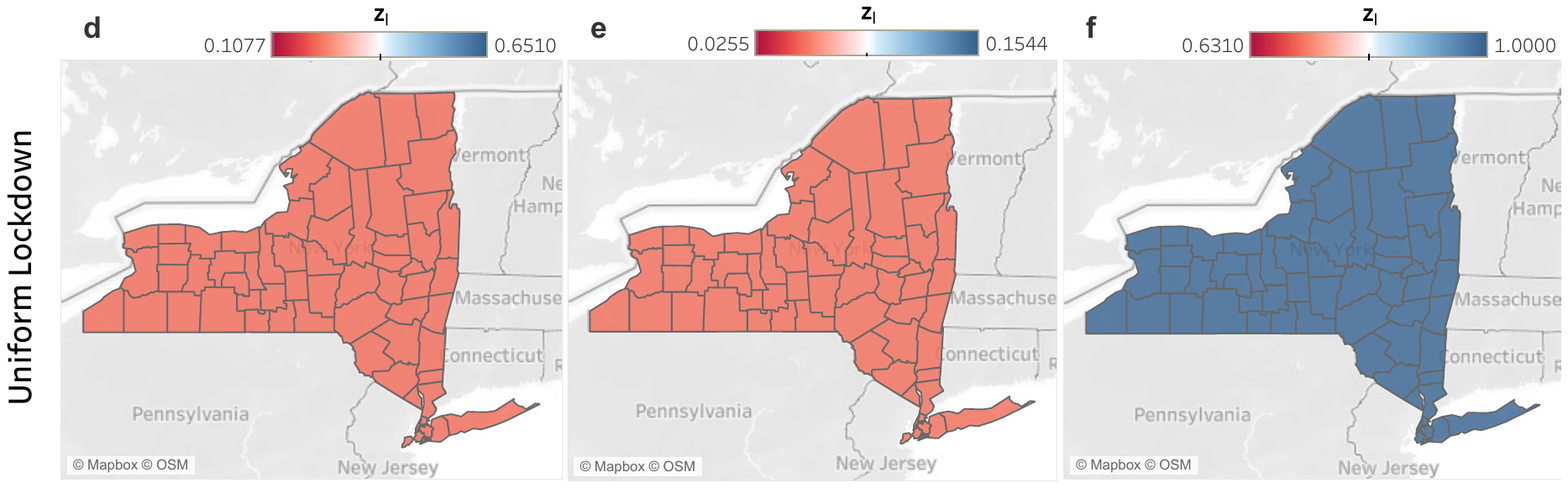}
	\end{minipage}%
	\centering

	\begin{minipage}[b]{0.85\linewidth}
		\centering
		\includegraphics[width=1.0\linewidth]{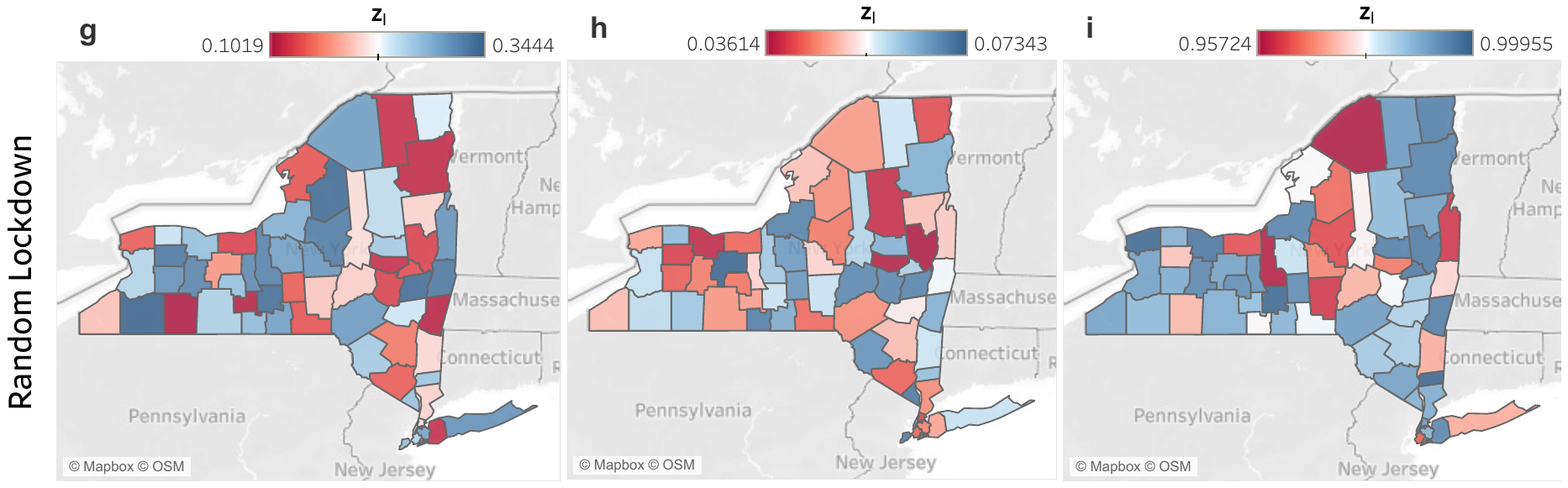}
	\end{minipage}%
	\centering

	\centering
	\begin{minipage}[b]{0.85\linewidth}
		\centering
		\includegraphics[width=1.0\linewidth]{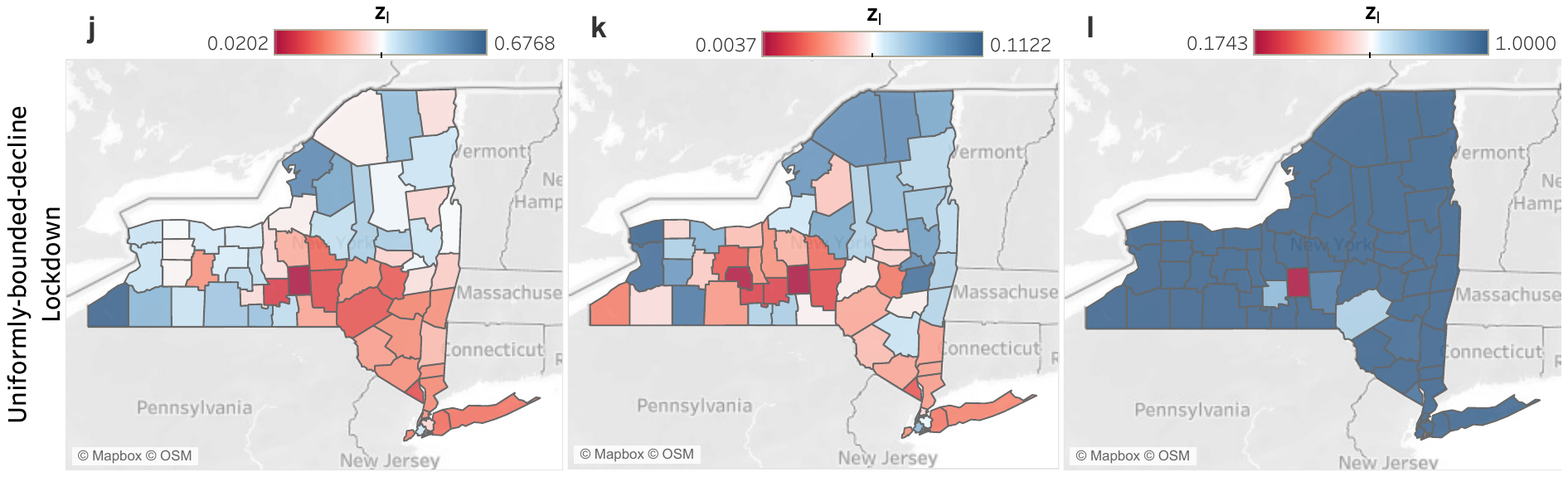}
	\end{minipage}
	\caption{Lockdown rate of each county given by different polices for the COVID-19 model based on available data about COVID-19 outbreak in NY on April 1st, 2020. {\bf a-c}, optimal lockdown rate $z_l^*$ given by our method . {\bf d-f}, uniform lockdown rate $z_l$. {\bf g-i}, random lockdown rate $z_l$. {\bf j-l}, uniformly-bounded-decline lockdown rate $z_l$.  Uniform lockdown, random lockdown, and uniformly-bouded-decline lockdown are defined as in Fig \ref{Fig: framework}.
		In {\bf a}, {\bf d}, {\bf g}, {\bf j}, the disease parameters are set as in \cite{bertozzi2020challenges}, the decay rate $\alpha$ is chosen as 0.0231 which corresponds to halving every 30 days. In {\bf b}, {\bf e}, {\bf h}, {\bf k} the disease parameters are set as in \cite{giordano2020modelling}, the decay rate is chosen as $\alpha = 0.2 r^{\text s} = 0.0034$ so that $\alpha < \min (r^{\text a}, r^{\text s})$. In {\bf c}, {\bf f}, {\bf i}, {\bf l}, the disease parameters are set as in \cite{birge2020controlling}, the decay rate $\alpha$ is chosen $0.0231$ that corresponds to halving every 30 days.  It can seen from {\bf a-c} that the value of $z_l^*$ for counties in NYC are relatively higher than other counties in New York State, which implies we should shutdown the outside of NYC harder than itself. Besides, it can be seen that such counter-intuitive phenomenon does not appear in any other lockdown polices.
	}\label{Fig: covid_19_each_county_ours_uni}
	\centering
	\vspace{-1mm}
\end{figure}

\clearpage

\begin{figure}[!htb]
	\centering
	\begin{minipage}[b]{0.24\linewidth}\label{fig: central_1}
		\centering
		\includegraphics[width= 1.0\linewidth]{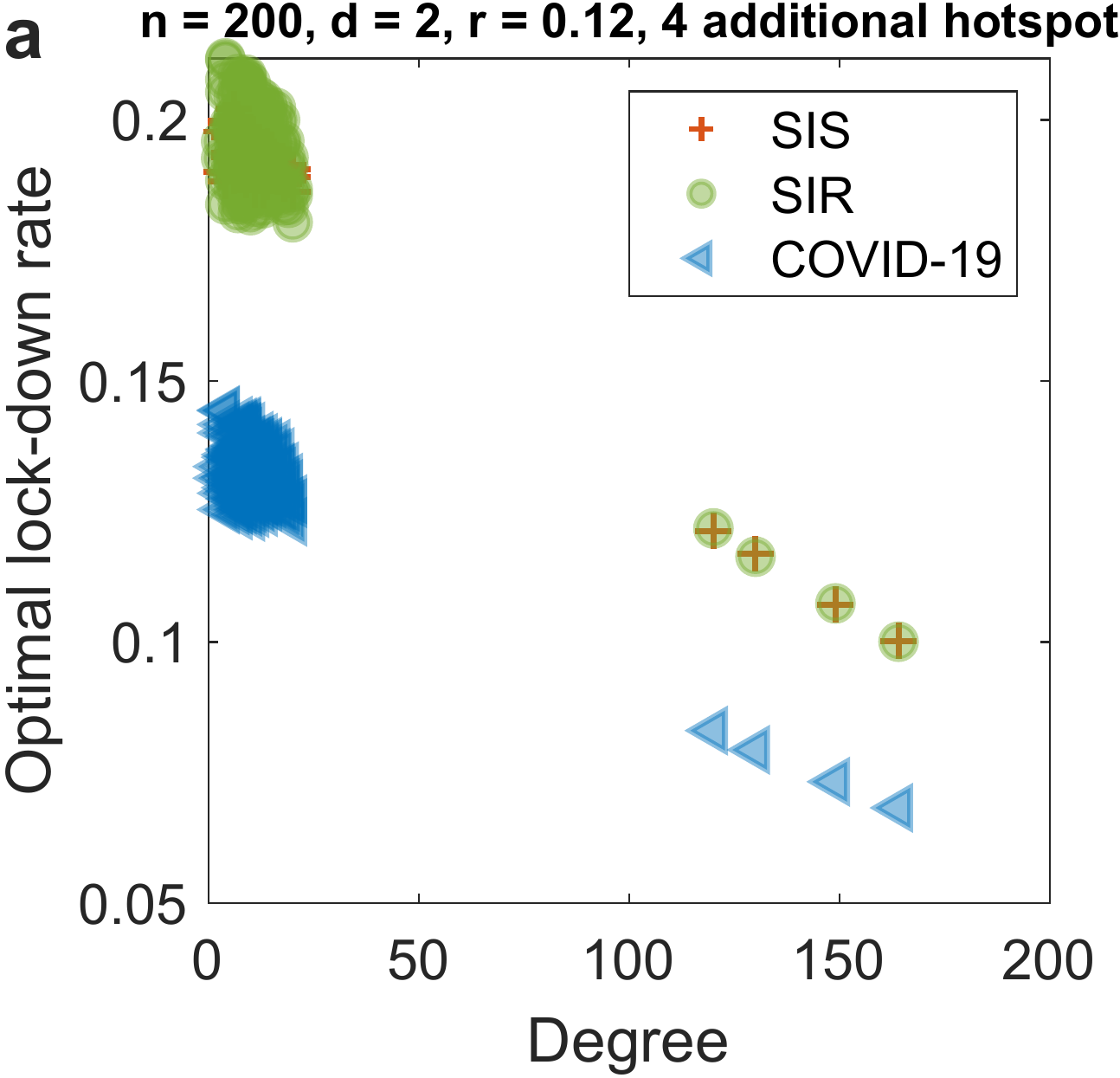}
	\end{minipage}%
	\hspace{1mm}
	\begin{minipage}[b]{0.24\linewidth}\label{fig: central_2}
		\centering
		\includegraphics[width =1.0\linewidth]{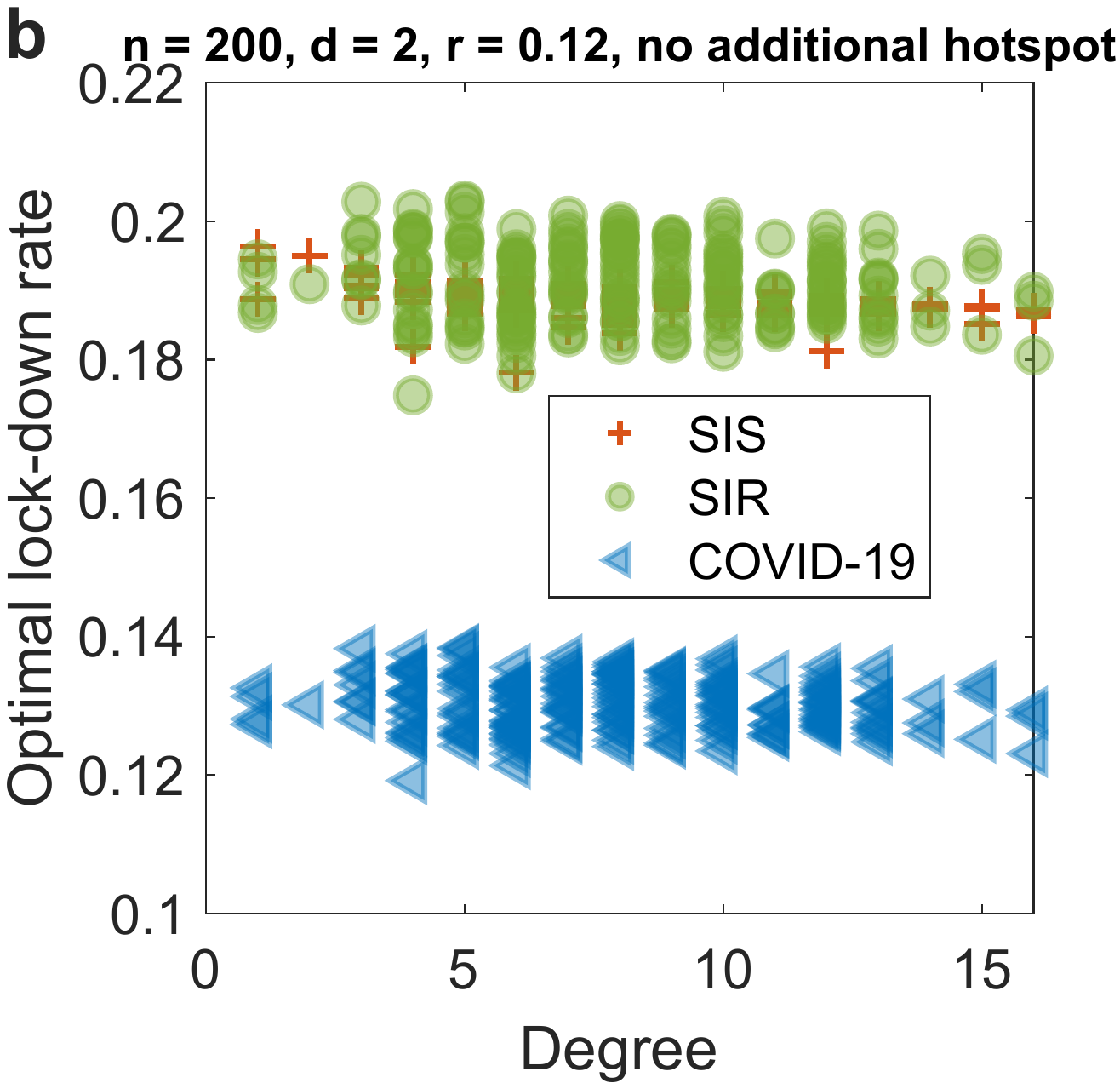}
	\end{minipage}%
	\begin{minipage}[b]{0.24\linewidth}\label{fig: population_new}
		\centering
		\includegraphics[width=0.91\linewidth]{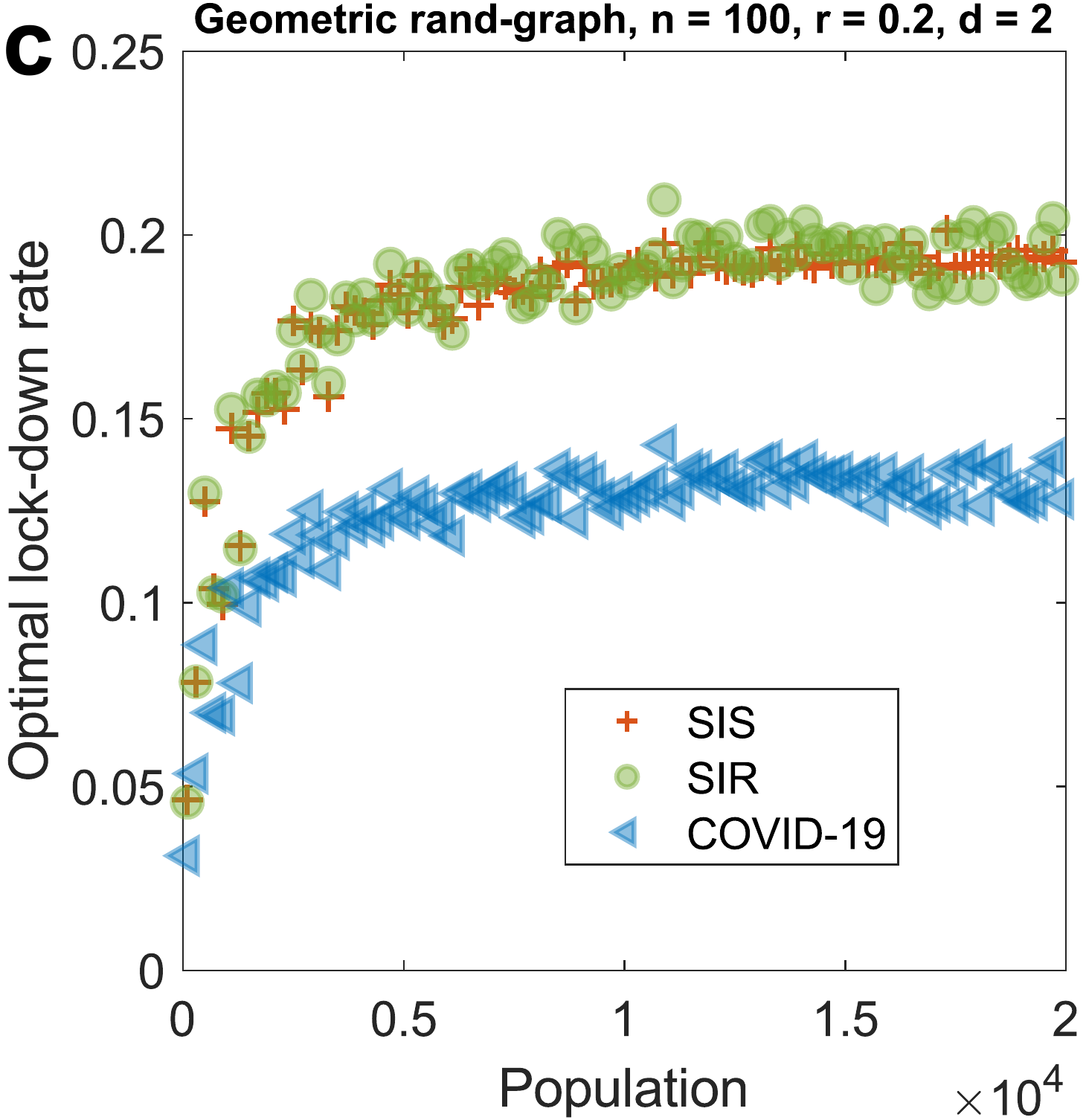}
	\end{minipage}%
	\begin{minipage}[b]{0.24\linewidth}\label{fig: home_rate_new}
		\centering
		\includegraphics[width=0.95\linewidth]{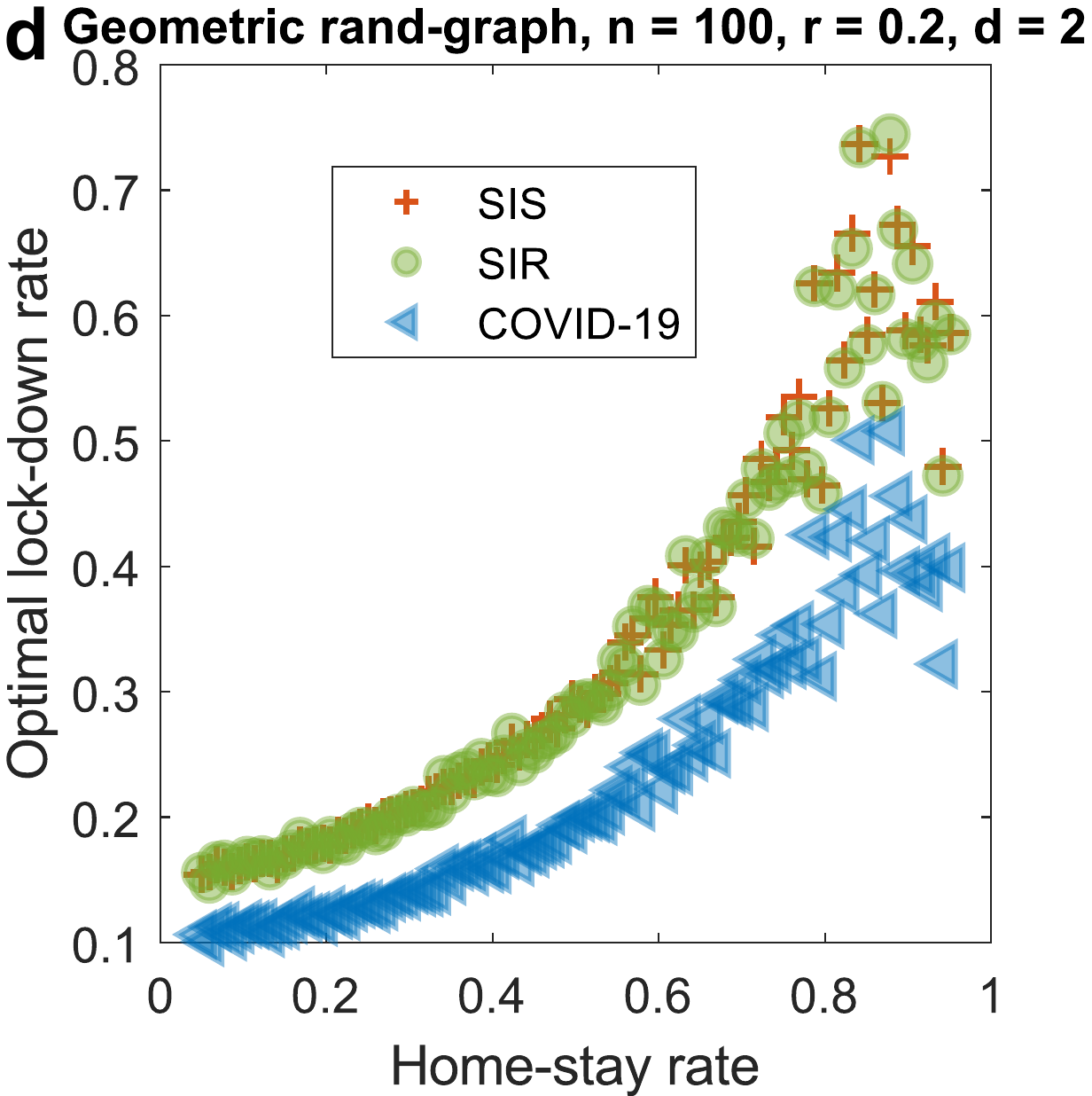}
	\end{minipage}
	\centering
	\vspace{-1mm}
	\caption{Numerical results of optimal lockdown rates on synthetic networks. {\bf a}, {\bf b}, the impact of the degree centrality on the geometric random graph.
		Each point represents a node in the network. It can be observed that centrality only matters for the value of $z_l^*$ when there exist hotspots in all the three models. 
		Without such hotspots, the effect of degree centrality is essentially ignorable. {\bf c}, the impact of population. Each point represents a node in the network. Nodes with small populations are assigned smaller values of $z_l^*$. Surprisingly, once the population is big enough, it doesn't affect $z_l^*$ too much. {\bf d}, the impact of  home-stay rate. Each point represents a node in the network. It can be observed that $z_l^*$increases as the home-stay rate increases, and in fact the home-stay rate has by far the largest influence on $z_l^*$ compared to the other parameters we consider.}\label{Fig: synthetic}
	\vspace{-1mm}
\end{figure}

\noindent {\bf Author contributions.} All authors designed and did the research. Q.M. performed all the calculations and wrote the manuscript. Y.-Y.L and A.O. edited the manuscript.  
\bigskip

\noindent {\bf Competing interests statement.} The authors declare no competing interests. 

\nocite{*}
\bibliography{pandemic}{}
\bibliographystyle{abbrv}


\clearpage

\begin{center}
  \vskip 60pt
\large \MakeUppercase{\textbf{Optimal Lockdown for Pandemic Control}}\\
\MakeUppercase{\textbf{---Supplementary Information---}} 
  \vskip 1em
  \large QIANQIAN MA\footnote{Department of Electrical and Computer Engineering, Boston
University, Boston, MA USA},  \ 
YANG-YU LIU\footnote{Channing Division of Network Medicine, Department of Medicine, Brigham and Women's
Hospital, Harvard Medical School, Boston, MA 02115, USA}, 
and ALEX OLSHEVSKY \footnote{Department of Electrical and Computer Engineering and Division of
Systems Engineering, Boston University, Boston, MA USA}
  \vskip 20pt
\end{center}

\setcounter{section}{0}
\setcounter{figure}{0}

\section{Related work}



\section{Related work}
	


	Our work is related to a number of recent papers motivated by the spread COVID-19, as well as some older work.  Indeed, spatial spread of epidemic admits a natural network representation, where nodes represent different locations and edges encode traveling of residents between the locations. Such spatial epidemic network model has received attention in the studies of COVID-19 recently \cite{jia2020population,chinazzi2020effect, bongiorno2021multi, zino2021analysis, parino2020modelling}. We begin by discussing several papers most closely related to our work. 
	
	Our paper builds on the results of \cite{birge2020controlling}, which proposed a spatial epidemic transmission model and consider the effect of lockdowns; we use the same model of lockdown of \cite{birge2020controlling} in this work.  The major difference between this work in \cite{birge2020controlling} is two-fold. First, we do not consider asymptotic stability in a model with births and deaths as our focus is on a shorter scale. Second, we propose new algorithms with improved running times; in particular, our main contribution is a linear time method that is applicable to the vast majority of cases we have considered. Similarly, the main difference of this work relative to \cite{nowzari2016analysis} and the references therein are new algorithms (though the lockdown models differ somewhat), as well as the new observations on counter-intuitive phenomena satisfied by the optimal lockdown.  Our work has some similarities with literature \cite{Brittoneabc6810}, which divided the population into 18 compartments and found that  population heterogeneity could significantly impact disease-induced herd immunity.  
	
	\subsection{Our Approach vs Traditional Optimal Control} \ao{An alternative approach would be to approach the lockdown problem using the techniques of optimal control. This approach is explored by a number of papers \cite{khanafer2014optimal, bock2018optimal, carli2020model, NBERw27102, fajgelbaum2020optimal, alvarez2020simple, borri2020optimal}. }
	
	As mentioned in the main body of the paper, our work has several main advantages over the traditional optimal control approach. The first is that we are looking for a fixed lockdown, whereas an optimal control based approach would offer a lockdown which varies for all time $t$. The second is that an approach based on optimal control would ask policymakers to repeatedly design lockdown relaxations when cases begin to decrease. As the public grows more impatient with lockdowns, political constraints could easily result in poor decision-making; one could argue this is what happened in the United States in 2020 \cite{guy2021association}. A single-fixed lockdown implemented when the number of cases is growing and maintained until the epidemic is extinct does not have this problem. Finally,  scalability is central to our results:  our main result is a nearly linear time algorithm. This is not the case for the optimal control approach. For example, Khanafer and Ba$\c{s}$ar  \cite{khanafer2014optimal} wrote down the optimal control formulation for the SIS case and remarked that solving the resulting equations ``is intractable.''
	
	\ao{As Khanafer and Ba$\c{s}$ar pointed out, there are no methods that are guaranteed to find the optimal control efficiently. Nevertheless, in a number of recent works promising numerical results are obtained. For a direct formulation of the problem, one can turn to  \cite{carli2020model}, which is also in the same spirit as our work, in that it studies control of COVID-19 using, among other things, movement restrictions. This results in a mixed-integer non-linear programming problem. Solving such problems is generally intractable, so \cite{carli2020model} used a genetic algorithm as a heuristic.} 
	
	\ao{Another possibility might be to use indirect methods (i.e., relying on the maximum principle) to solve the optimal control problem. The same complexity considerations, however, come up in this context again. Most of the literature on the optimal control of epidemics over networks seems to use variants of the forward-backward sweep method \cite{liu2016optimal,  fajgelbaum2020optimal, laaroussi2018optimal, bolzoni2017time} to solve the equations arising from the maximum principle, but it is known that this method can diverge even for simple examples \cite{mcasey2012convergence}. Nevertheless, on many examples the method converges in reasonable time: for example, \cite{liu2016optimal} reports excellent results on random networks of large size.}

	Recent papers studying control of COVID-19 using numerical optimal control methods are  \cite{NBERw27102, fajgelbaum2020optimal, alvarez2020simple, borri2020optimal}. These papers assumed a cost of a human life lost (sometimes set based on the average lifetime earnings) and considered the discounted total cost over the entire epidemic (alternatively, \cite{NBERw27102} considered the frontier of possible strategies over all possible ways to value life). This framework is conceptually different from ours: when modified to fit into our framework, strategies derived in this way will begin to relax the lockdown once the epidemic drops below a certain threshold as the number of lives lost comes to balance the cost of the lockdown, ultimately driving the epidemic to an endemic state; by contrast, our framework is designed to send the number of infections to zero. 
	
	More generally, the scalability of these approaches is unclear, due to their reliance on numerical methods without a clear convergence theory. 
	For example, solving for the solution of this nonlinear optimal control problem as in \cite{fajgelbaum2020optimal} requires an iterative method closely related to forward-backward sweeping. Each step requires the numerical solution of a system of differential equations, a matrix inversion, and a maximization of the Hamiltonian, without any a-priori bounds on the total number of steps the procedure will take, or any guarantee that the procedure will converge. 
	

	\section{Mathematical Background} The supplementary information will provide the proofs of the main results of the paper, as well as give details of many of our empirical and numerical results that were summarized in the main text. We begin with some definitions. 
	
	A matrix is called continuous time stable if all of its eigenvalues have nonpositive real parts. A matrix is called discrete time stable if all of its eigenvalues are upper bounded by one in magnitude. A central concern of this paper is to get certain quantities of interest (e.g., number of infected individuals) to decay at prescribed exponential rates. We will say that {\em $y(t)$ decays at rate $\alpha$ beginning at $t_0$} if $y(t) \leq y(t_0) e^{-\alpha t}$ for all $t \geq t_0$. Note that the decay in this definition  is not asymptotic but results in a decrease starting  at time $t_0$. 
	
	We will associate to every matrix $A \in \R^{n \times n}$ the graph $G(A)$ corresponding to its nonzero entries: the vertex set of $G(A)$ will be $\{1, \ldots, n\}$ while $(i,j) \in G(A)$ if and only if $A_{ji}\neq 0$. Informally, $(i,j)$ is an edge in $G(A)$ when the variable $j$ ``is influenced by'' variable $i$. We will say that $A$ is strongly or weakly
	connected if the graph $G(A)$ has this property.

	\subsection{Covering Semi-definite Program\label{sec:covering}} A covering semi-definite program has the form
	\[ \min \sum_{i=1}^d c_i x_i \] 
	\[ \sum_{i=1}^d x_i C_i \geq I \] 
	\[ x \geq 0. \] Here $c_i$ are nonnegative scalars and $C_i$ are positive semi-definite matrices. It turns out that covering semi-definite programs can be solved considerably faster than general semi-definite programs. Indeed, the recent paper \cite{jambulapati2020positive} showed that to compute a fixed-accuracy additive approximation of the optimal solution takes $\widetilde{O} \left( n^{\omega} + \sum_{i=1}^d {\rm nnz}(C_i) \right)$, where  $\omega$ is the exponent of matrix multiplication. 
	
	With these preliminaries in place, we next turn to justifying the analytical claims made in the main body of the paper. 
	
	\subsection{Matrix Balancing}\label{sec: matrix balancing} 
	The matrix balancing problems plays a fundamental role in our main results, and we briefly introduce it here. 
	Given a nonnegative matrix $P \in \R^{n \times n}$, we say it is balanced if it has identical row and column sums
	. The matrix balancing problem is, given a nonnegative $P$, to find a nonnegative diagonal matrix $D$ such that $D P D^{-1}$ is balanced. 
	
	The problem of matrix balancing is quite old; for example, an asymmetric version of this problem was introduced in the classic work of Sinkhorn and Knopp in the 1960s \cite{sinkhorn1967concerning}. It is impossible to survey all the literature on matrix balancing and related problems, though we refer the reader to \cite{idel2016review}. Recently, a powerful  algorithm for matrix balancing was given in \cite{cohen2017matrix}. It was shown in that work that this problem can be solved in linear time, understood as follows: solving the problem to accuracy $\epsilon$ requires only $\tilde{O}( {\rm nnz}(P) \log \kappa \log \epsilon^{-1})$ where $ {\rm nnz}(P)$ is the number of nonzero entries in the matrix $A$, $\kappa = D_{\rm max}^*/D_{\rm min}^*$ is the imbalance of the optimal solution, and the $\widetilde{O}$ hides logarithmic factors. Thus matrix balancing problems can be solved in nearly the same time as it takes to simply read the data, provided $\kappa$ is bounded away from zero. \aoa{In the event that we do not have an a-prior bound on $\kappa$, \cite{cohen2017matrix} give complexity bounds of $\tilde{O}({\rm nnz}(P)^{3/2})$ and $\tilde{O}({\rm nnz}(P) {\rm diam}(A))$ where ${\rm diam}(A)$ is the diameter of the graph corresponding to the matrix $A$.} 
	
	We will summarize this complexity by saying that the running time ``explicitly scales nearly linearly in the number of nonzero entries.'' The ``nearly'' comes from the logarithmic terms; the word ``explicit'' comes because the scaling also depends on the imbalance of the optimal lockdown $\kappa$, and one can construct families of examples where the $\kappa$ will have some kind of scaling with network size. 
	
	\smallskip 
	
	\section{Analytical Calculations} \label{SI: anlytical_calculations}
	In this section, we first present the details of the SIS model \cite{6763066} as well as how the matrix $A$ is constructed. Next, we justify the main theoretical achieved via eigenvalue bounds;  that  optimal lockdown for  these models can be reduced to a covering semi-definite program (which can be solved in matrix multiplication time); and that, under the high spread condition, optimal lockdown for  these models reduces to a matrix balancing problem (which can be solved in linear time). 
	
	\bigskip
	
	\subsection{Network SIS Model}\label{sec: SIS} is described by the following set of ordinary  differential equations
	\begin{equation} \label{eq:standardsis} \dot{x}_i = (1-x_i) \sum_{j=1}^n \beta a_{ij} x_j - \gamma x_i, ~~~ i = 1,\ldots, n.  
	\end{equation}  
	Here $\beta$ denotes the transmission rate, which captures the rate at which an infected individual infects others, $\gamma$ denotes the recovery rate, and $a_{ij}$ captures the rate at which infection flows from the population at location $j$ to location $i$.  Because $\dot{x}_i$ scales with $1-x_i$, the SIS model assumes that everyone who is not infected is susceptible.

	For simplicity of notation, we can stack up the coefficients $a_{ij}$ into a matrix as as $A=[a_{ij}]$. Then we can write the network SIS model as
	\[ \dot{x} = {\rm diag}({\bf 1} - x) \beta A x - \gamma x, \] where ${\bf 1}$ denotes the vector of all-ones while ${\rm diag}(u)$ makes a diagonal matrix out of the vector $u$. 
	
	It is desirable to have  $x_i(t) \rightarrow 0$ for all $i=1, \ldots, n$, i.e., to have the infection die out. It is mathematically convenient to encode this into the following equivalent condition: we will require that there exists some linear combination of the quantities $x_i(t)$ with positive coefficients which approaches zero, which happens if and only if the matrix $\beta A-\gamma I$ is continuous-time stable \cite{lajmanovich1976deterministic, aronsson1980deterministic, guo2006global, khanafer2016stability, mei2017dynamics}. 
	To achieve an exponential decay rate $\alpha$ of each $x_i(t)$, we require that there exists a positive linear combination of $x_i(t)$ decaying at that rate, which is guaranteed if $\lambda_{\rm max}(\beta A-\gamma I) \leq \alpha$ (see formal proof in \ssref{SI: anlytical_calculations}). Note that even though the network SIS dynamics is nonlinear, the asymptotic convergence  nevertheless reduces to a linear eigenvalue problem. 
	
	\smallskip
	
	\subsection{Construction of The Matrix $A$} \label{sec: A_construct}
	We next describe how the matrix $A$ is constructed. Our discussion will only be for the SIS case, as the COVID-19 case is similar. 
	
	Observe that the susceptible individuals at location $i$ can be infected at location $i$ as well as in other locations, the flow of susceptible population from location $i$ to location $l$ is $(1- x_i)\tau_{il}$. Besides, the rate of infection at location $i$ is proportional to the fraction of infected people in the total population of location $i$. Then we can rewrite the SIS model as
	\begin{align}\label{eq: SIS_rewrite}
		\dot{x}_i = \sum_{l =  1}^n (1 - x_i) \tau_{il} \frac{\sum_{j = 1}^{n}N_j \tau_{jl}x_j}{\sum_{k = 1}^n N_k \tau_{kl}} \beta - \gamma x_i.
	\end{align}
	Let $m(l) = \sum_{k = 1}^n N_k \tau_{kl}$, then \eqref{eq: SIS_rewrite} can be written as
	\begin{align*}
		\dot{x}_i &= \sum_{l =  1}^n (1 - x_i) \tau_{il} \frac{\sum_{j = 1}^{n}N_j \tau_{jl}x_j}{m(l)} \beta - \gamma x_i\\
		&= (1 - x_i) \sum_{l =  1}^n \sum_{j = 1}^n\frac{N_j \tau_{il}\tau_{jl}x_j }{m(l)}\beta- \gamma x_i\\
		& = (1- x_i)\sum_{j = 1}^n a_{ij} \beta x_j - \gamma x_i,
	\end{align*}
	where 
	\begin{align}\label{aij equation}
		a_{ij} = \sum_{l =  1}^n \frac{N_j \tau_{il}\tau_{jl} }{m(l)} = \sum_{l =  1}^n\frac{N_j}{\sum_{k = 1}^n N_k \tau_{kl}} \tau_{il}\tau_{jl}.     
	\end{align}
	As already remarked, this approach is not original to our work and is taken from \cite{birge2020controlling}. Note that in COVID-19 case, via similar process, we can obtain the same $a_{ij}$ as in Eq. \eqref{aij equation}.

	\subsection{Stability of The Network SIS and COVID-19 Models}\label{sec: stability}
	
	Recall that the network SIS model is given by the system of equations
	\begin{equation} \label{eq:standardsis2} \dot{x}_i = (1-x_i) \sum_{j=1}^n \beta a_{ij} x_j - \gamma x_i, ~~~ i = 1,\ldots, n,  
	\end{equation} where $n$ is the number of nodes in the underlying graph and $x_i(t)$ is the proportion of infected individuals at node $i$. By contrast,  the network COVID-19 model is given by 
	\begin{equation} \label{COVID_19-2}
		\left( 
		\begin{array}{c} 
			\dot{s} \\
			\dot{x^{\text a}} \\ 
			\dot{x}^{\text s} 
		\end{array} 
		\right) = 
		\left(
		\begin{array}{ccc} 
			0 & -\beta^{\text a} {\rm diag}(s) A &  - \beta^{\text s} {\rm diag}(s) A \\
			0 &\beta^{\text a} {\rm diag}(s) A - (\epsilon + r^{\text a}) &  \beta^{\text s} {\rm diag}(s) A \\
			0 & \epsilon & - r^{\text s}
		\end{array}
		\right)\left( 
		\begin{array}{c} 
			s \\
			x^{\text a} \\ 
			x^{\text s}
		\end{array} 
		\right),
	\end{equation} where now $x^{\text a}_i, x^{\text s}_i$ are the proportion of infected/asymptomatic and infected/symptomatic individuals at node $i$. This is a system of $3n$ equations, with three equations per node of the network. Recall also  the notation $M(t)$ used to denote the bottom $2n\times2n$ submatrix of the above matrix. In the main text, we stated that stability and decay rate of the network SIS model is equivalent to the eigenvalues of the matrix $A-\gamma I$, while the stability of the network COVID-19 model can be  ensured by bounding the eigenvalues of $M(t)$. We next give a pair of propositions formally justifying these assertions. 
	
	\medskip

	\begin{proposition} \label{prop:expdecay} Suppose the matrix $A$ is strongly connected. If $\lambda_{\rm max}(\beta A - \gamma I) \leq - \alpha$, then for the network SIS dynamics of Eq. (\ref{eq:standardsis2}),  there exists a positive linear combination of the quantities $x_i(t)$ that decays to zero at rate $\alpha$ starting at any time $t_0$. Conversely, if $\lambda_{\rm max}(\beta A - \gamma I) > -\alpha$, then there exists an initial condition in $x(0) > 0$ so that every positive linear combination of the quantites $x_i(t)$ fails to decay at rate $\alpha$.   
	\end{proposition}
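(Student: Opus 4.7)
My plan is to exploit Perron--Frobenius theory for the Metzler matrix $\beta A-\gamma I$ (it has nonnegative off-diagonal entries, and is irreducible since $A$ is strongly connected).  Let $\mu=\lambda_{\max}(\beta A-\gamma I)$; then there exist strictly positive left and right eigenvectors $v,w>0$ with $v^\top(\beta A-\gamma I)=\mu v^\top$ and $(\beta A-\gamma I)w=\mu w$.  I will also use the standard fact that the box $[0,1]^n$ is forward-invariant for the SIS dynamics (if $x_i=1$, then $\dot x_i=-\gamma<0$; if $x_i=0$, then $\dot x_i\ge 0$).

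For the forward direction, set $V(t)=v^\top x(t)$, which is a positive linear combination of the $x_i(t)$.  Differentiating and using $\mathrm{diag}(\mathbf 1-x)\,\beta A x\le \beta A x$ entrywise (which holds because $x\in[0,1]^n$ and $\beta A x\ge 0$), together with $v>0$, gives
\begin{equation*}
\dot V(t)\;=\;v^\top \bigl[\mathrm{diag}(\mathbf 1-x)\beta A x-\gamma x\bigr]\;\le\; v^\top(\beta A-\gamma I)x\;=\;\mu\,V(t)\;\le\;-\alpha V(t).
\end{equation*}
Gr\"onwall's inequality then yields $V(t)\le V(t_0)\,e^{-\alpha(t-t_0)}$ for all $t\ge t_0$, which is the required decay.

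For the converse, I would choose the single initial condition $x(0)=\epsilon w$ with $\epsilon>0$ small, and then show that no positive $c$ can make $c^\top x(t)$ satisfy the decay bound starting at $t_0=0$.  The key computation is
\begin{equation*}
\left.\frac{d}{dt}\bigl[e^{\alpha t}\,c^\top x(t)\bigr]\right|_{t=0} \;=\; c^\top\bigl[(\beta A-\gamma I)+\alpha I\bigr]x(0)\;-\;c^\top\,\mathrm{diag}(x(0))\,\beta A\,x(0),
\end{equation*}
which, using $(\beta A-\gamma I)w=\mu w$ and $x(0)=\epsilon w$, equals $\epsilon(\mu+\alpha)\,c^\top w-\epsilon^{2}\,c^\top\mathrm{diag}(w)\,\beta A w$.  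Since $(\beta A w)_i\le \max_j(\beta A w)_j=:K$ and all weights are positive, the ratio $\bigl(c^\top\mathrm{diag}(w)\beta A w\bigr)/(c^\top w)$ is a $(c_iw_i)$-weighted average of the $(\beta A w)_i$ and is therefore bounded above by $K$ uniformly in $c>0$.  Choosing $\epsilon<(\mu+\alpha)/K$ makes the derivative at $t=0$ strictly positive for every $c>0$, so $e^{\alpha t}c^\top x(t)$ is strictly increasing at $t=0$, and consequently $c^\top x(t)>c^\top x(0)\,e^{-\alpha t}$ for some $t>0$.

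The routine parts are Perron--Frobenius and the forward-invariance of $[0,1]^n$.  The main obstacle I foresee is the converse: one needs a \emph{single} initial condition that simultaneously defeats \emph{every} positive combination $c$, rather than a family of bad initial conditions indexed by $c$.  The trick above --- aligning $x(0)$ with the right Perron eigenvector so that the linear part dominates uniformly in $c$, and then using the $c$-uniform bound $K$ on the quadratic correction --- is what makes this work; if that uniform bound turned out to be tighter than expected on specific graphs it would only strengthen the argument, so I expect no further complication.
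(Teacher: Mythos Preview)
Your forward direction is essentially identical to the paper's: both use the positive left Perron eigenvector of the Metzler matrix and the elementwise inequality $\mathrm{diag}(\mathbf 1-x)\beta Ax\le\beta Ax$ to get $\dot V\le -\alpha V$.

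For the converse, your route is genuinely different from the paper's. The paper linearizes at the origin, passes through the Jordan form to get $\limsup_{t\to\infty}(1/t)\log\|x(t)\|=\lambda_{\max}>-\alpha$ for the linear flow, and then invokes the differentiable Hartman--Grobman theorem (the version with $Dh(0)=I$) to transfer this asymptotic growth rate to the nonlinear SIS trajectory starting from a small positive multiple of the right Perron vector. Your argument is far more elementary: you take the same initial condition $x(0)=\epsilon w$, compute $\frac{d}{dt}\bigl[e^{\alpha t}c^\top x(t)\bigr]\big|_{t=0}$ directly, and use the clean observation that $\bigl(c^\top\mathrm{diag}(w)\beta Aw\bigr)/(c^\top w)$ is a weighted average of the entries of $\beta Aw$, hence bounded by $K=\max_i(\beta Aw)_i$ \emph{uniformly in} $c>0$. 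One small point worth noting is that $\beta Aw=(\mu+\gamma)w$ with $\mu+\gamma>0$ (the Perron root of $\beta A$), so $K>0$ and the threshold $\epsilon<(\mu+\alpha)/K$ is meaningful; you should also take $\epsilon$ small enough that $\epsilon w\in(0,1]^n$.

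What each approach buys: yours avoids Hartman--Grobman entirely and is self-contained, but it only exhibits failure of the decay bound starting at $t_0=0$ (the derivative is positive there, so the inequality breaks for small $t>0$). The paper's asymptotic argument shows that $c^\top x(t)$ cannot satisfy $c^\top x(t)\le c^\top x(t_0)e^{-\alpha(t-t_0)}$ for \emph{any} fixed $t_0$, since the Lyapunov exponent of $\|x(t)\|$ exceeds $-\alpha$. For the proposition as stated (which does not quantify over $t_0$ in the converse), your argument suffices; if you wanted the stronger ``for all $t_0$'' conclusion, you would need either the paper's route or an additional monotonicity/comparison argument.
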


	\begin{proposition} Suppose the matrix $A$ is strongly connected, and $s(t_0) > 0$. If $\lambda_{\rm max}(M(t_0)) \leq -\alpha$, then there exists a positive linear combination of the quantities $x^{\text a}_i(t), x^{\text s}_i(t)$ which decays  at rate $\alpha$ starting at time $t_0$. \label{prop:COVID} 
	\end{proposition}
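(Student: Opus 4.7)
The plan is to construct a linear Lyapunov function $V(t)=v^\top [x^{\text a}(t)^\top, x^{\text s}(t)^\top]^\top$ where $v>0$ is a Perron--Frobenius left eigenvector of $M(t_0)$, and then deduce exponential decay at rate $\alpha$ by combining the eigenvalue bound with the monotonicity $s(t)\le s(t_0)$.

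\textbf{Step 1 (Perron--Frobenius setup).} First I verify that $M(t_0)$ is a Metzler matrix: all off-diagonal entries are of the form $\beta^{\text a}s_i a_{ij}\ge0$, $\beta^{\text s}s_i a_{ij}\ge0$, or $\epsilon\ge0$. I then check that the directed graph associated with the nonzero off-diagonal entries of $M(t_0)$ is strongly connected. This uses the assumption that $G(A)$ is strongly connected together with $s(t_0)>0$: the $(1,1)$-block $\beta^{\text a}\,\mathrm{diag}(s(t_0))A$ inherits strong connectivity among the $n$ asymptomatic coordinates; the $(2,1)$-block $\epsilon I$ provides, for each $i$, an edge $a_i\to s_i$; and the $(1,2)$-block $\beta^{\text s}\,\mathrm{diag}(s(t_0))A$ provides, for each symptomatic coordinate $s_j$, an outgoing edge to some asymptomatic coordinate, because strong connectivity of $G(A)$ ensures every node has an outgoing $A$-edge. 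Irreducibility of the Metzler matrix $M(t_0)$ then yields, via Perron--Frobenius, a strictly positive left eigenvector $v=(v^{\text a};v^{\text s})>0$ satisfying $v^\top M(t_0)=\lambda_{\max}(M(t_0))\,v^\top$.

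\textbf{Step 2 (monotonicity of $M(t)$).} Next I show that the nonnegative orthant is positively invariant for the full COVID-19 ODE: on $\{x^{\text a}_i=0\}$ one has $\dot x^{\text a}_i=s_i\sum_j a_{ij}(\beta^{\text a}x^{\text a}_j+\beta^{\text s}x^{\text s}_j)\ge0$, on $\{x^{\text s}_i=0\}$ one has $\dot x^{\text s}_i=\epsilon x^{\text a}_i\ge0$, and on $\{s_i=0\}$ one has $\dot s_i=0$. Hence $\dot s_i\le 0$ whenever $x^{\text a},x^{\text s}\ge 0$, so $s(t)\le s(t_0)$ componentwise for $t\ge t_0$. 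Since every $t$-dependent entry of $M(t)$ is of the form $\beta^{\text a}s_i a_{ij}$ or $\beta^{\text s}s_i a_{ij}$ with $a_{ij}\ge0$, the matrix $M(t)$ is entrywise dominated by $M(t_0)$ for all $t\ge t_0$. Multiplying on the left by the positive vector $v$ preserves the inequality, so $v^\top M(t)\le v^\top M(t_0)=\lambda_{\max}(M(t_0))\,v^\top\le -\alpha v^\top$ entrywise.

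\textbf{Step 3 (Lyapunov argument).} Finally, set $V(t):=v^\top[x^{\text a}(t)^\top,x^{\text s}(t)^\top]^\top$, a strictly positive linear combination of the infection variables. Differentiating and using Step 2 gives $\dot V(t)=v^\top M(t)\,[x^{\text a}(t)^\top,x^{\text s}(t)^\top]^\top\le -\alpha\,v^\top[x^{\text a}(t)^\top,x^{\text s}(t)^\top]^\top=-\alpha V(t)$, since both $v$ and the infection vector are nonnegative. Gronwall's inequality then yields $V(t)\le V(t_0)\,e^{-\alpha(t-t_0)}$, which is the desired decay.

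The main obstacle I anticipate is the irreducibility check of $M(t_0)$: its $(2,2)$-block is diagonal, so there are no direct edges between distinct symptomatic coordinates, and every path must be routed through the asymptomatic layer using both the $(2,1)$-block $\epsilon I$ and the $(1,2)$-block $\beta^{\text s}\,\mathrm{diag}(s(t_0))A$. Once the strictly positive left eigenvector is in hand, the remainder is a standard comparison argument for Metzler systems that relies only on the nonnegativity of the state and the monotonicity of $s(t)$.
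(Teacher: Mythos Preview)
Your proof is correct and follows essentially the same approach as the paper's: take the positive left Perron eigenvector $v$ of the Metzler matrix $M(t_0)$, use the monotonicity $s(t)\le s(t_0)$ to get $v^\top M(t)\le \lambda_{\max}(M(t_0))\,v^\top\le -\alpha v^\top$, and conclude via the differential inequality $\dot V\le -\alpha V$ for $V=v^\top[x^{\text a};x^{\text s}]$. Your write-up is in fact more careful than the paper's, since you explicitly verify the irreducibility of $M(t_0)$ and the positive invariance of the nonnegative orthant, both of which the paper takes for granted.
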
 
	
	\medskip
	
	The idea behind these propositions is standard in control theory: to get your system to decay at a rate of $e^{-\alpha t}$, make sure your eigenvalues have real parts that are at most $-\alpha$. For linear systems, this is guaranteed to work after a transient time, and for nonlinear systems, the situation is complicated. Fortunately, the nonlinear systems corresponding to network SIS and COVID dynamics have favorable properties, so that it is possible to draw conclusions about global behavior from the eigenvalues of a linear approximation at a point. 
	
	To get each of the quantities $x^{\text a}_i(t), x^{\text s}_i(t)$ to converge to zero at an asymptotic rate of $e^{-\alpha t}$, it suffices to have an arbitrary positive combination of them decay at rate $\alpha$. Interestingly, for the SIS case, this happens {\em if and only if} an eigenvalue condition is satisfied. In the COVID-19 case, the eigenvalue condition merely suffices to establish this.

	We now turn to the proof of these propositions. Our first step is to restate the Perron-Frobenius theorem in a form that will be particularly useful to us. 
	
	\begin{lemma}[A Version of Perron-Frobenius] \label{lemm:pf} Let us suppose $P$ is a strongly connected matrix whose off-diagonal elements are nonnegative. Then there exists a real eigenvalue of $P$ which is as large as the real part of any other eigenvalue of $P$. This eigenvalue is simple and the eigenvector corresponding to it is positive.  
	\end{lemma}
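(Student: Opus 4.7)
The plan is to reduce the statement to the classical Perron--Frobenius theorem for irreducible nonnegative matrices via a diagonal shift. Concretely, I would choose any scalar $c \in \R$ large enough that $Q := P + cI$ has all nonnegative entries; it suffices to take $c \geq -\min_i P_{ii}$, since the off-diagonal entries of $P$ are already nonnegative by hypothesis. The matrix $Q$ inherits strong connectedness from $P$: the shift only alters the diagonal, so the graph $G(Q)$ introduced earlier in the excerpt differs from $G(P)$ at most in self-loops, which do not affect strong connectivity. Hence $Q$ is an irreducible nonnegative matrix in the classical sense.

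Next I would invoke the standard Perron--Frobenius theorem applied to $Q$. This furnishes a simple (algebraic and geometric multiplicity one) eigenvalue $\mu^\star = \rho(Q) \geq 0$, together with a strictly positive right eigenvector $v$, such that every other eigenvalue $\mu$ of $Q$ satisfies $|\mu| \leq \mu^\star$. In particular, for every eigenvalue $\mu$ of $Q$ we have $\mathrm{Re}(\mu) \leq |\mu| \leq \mu^\star$.

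Finally I would undo the shift. The spectra satisfy $\mathrm{spec}(P) = \mathrm{spec}(Q) - c$, and the eigenvectors are preserved. Thus $\lambda^\star := \mu^\star - c$ is a real, simple eigenvalue of $P$, its eigenvector $v$ is strictly positive, and for every other eigenvalue $\lambda$ of $P$ one has $\mathrm{Re}(\lambda) = \mathrm{Re}(\mu) - c \leq \mu^\star - c = \lambda^\star$, which is precisely the claim.

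There is no substantive obstacle here; the only point to check with care is that the notion of ``strongly connected'' used in the excerpt (strong connectedness of the graph $G(P)$ of nonzero entries) coincides with the classical irreducibility hypothesis required for Perron--Frobenius to apply to $Q$. Since both properties are equivalent to strong connectedness of the directed graph of off-diagonal nonzero entries, and this graph is identical for $P$ and $Q$, the reduction is clean and the shift argument completes the proof.
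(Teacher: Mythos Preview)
Your proposal is correct and takes essentially the same approach as the paper: the paper also observes that $P+\alpha I$ is nonnegative for large enough $\alpha$ and then applies the classical Perron--Frobenius theorem to this shifted matrix. Your write-up is somewhat more detailed about why the shift preserves irreducibility and how the eigenvalue bound on real parts transfers back, but the core argument is identical.
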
 
	
	This lemma follows by observing that $A+\alpha I$ is nonnegative for a large enough choice of $\alpha$, so we can apply the Perron-Frobenius theorem (in the form of Theorem 2 in Section 8.2 of \cite{gantmacher2005applications}) to $A+\alpha I$. 
	
	\begin{definition} We will use $\lambda_{\rm max}(P)$ and $v_{\rm max}(P)$ to denote the eigenvalue/eigenvector described in Lemma \ref{lemm:pf}. 
	\end{definition} 
	
	We next give a proof of Proposition \ref{prop:expdecay}.

	\begin{proof}[Proof of Proposition \ref{prop:expdecay}] Suppose $P = \beta A- (\gamma - \alpha) I$. Let $\lambda_{\rm max}(P)$ and $v_{\rm max}(P)$ be the corresponding eigenvalue/eigenvector pair. Note that, by our assumptions, $\lambda_{\rm max} \leq 0$.  
		We thus have that 
		\begin{eqnarray*} \frac{d}{dt} v_{\rm max}^{\top} x & = & v_{\rm max}^{\top} \left[ {\rm diag} ( {\bf 1} - x) \beta A x - \gamma  x \right]\\ 
			& \leq & v_{\rm max}^{\top} \left[\beta Ax - \gamma x \right] \\ 
			& = & v_{\rm max}^{\top} \left[\beta Ax - \gamma x + \alpha I x \right] - v_{\rm max}^{\top} \alpha x \\
			& =  & \lambda_{\rm max} v_{\rm max}^{\top} x - \alpha v_{\rm max}^{\top} x\\ 
			& \leq & - \alpha v_{\rm max}^{\top} x
		\end{eqnarray*} where the second line used that $\beta, v_{\rm max}, A, x$ are nonnegative while ${\bf 1}-x \in [0,1]^n$; and the last line used that $\lambda_{\rm max} \leq 0$.  We conclude that $v_{\rm max}^{\top} x$ decays at a rate of $\alpha$ starting at any time. 
		
		On the other hand, suppose $\lambda_{\rm max}(\beta A-\gamma I) > -\alpha$. 
		Observe that the linearization of Eq. (\ref{eq:standardsis}) around the origin is $\dot{x}=(\beta A-\gamma I)x$.  Let us write the Jordan normal form,  
		\[ \beta A - \gamma I = T \left( {\rm diag}(\lambda_1, \ldots, \lambda_n) + J \right) T^{-1}, \] where $J$ is the upper-triangular matrix $\lambda_1, \ldots, \lambda_n$ are the eigenvalues of $\beta A-\gamma I$, with $\lambda_1$ being the Perron-Frobenius eigenvalue. Because the Perron Frobenius argument is simple, we have that the top Jordan block is $1 \times 1$; so that $J$  and all of its powers have zero entries in the first row. Using the standard formula for the matrix exponential of a Jordan block, we next decompose as 
		\begin{align*} e^{t(\beta A - \gamma I)}  & = T \left( {\rm diag}(e^{\lambda_1 t}, e^{\lambda_2 t}, \ldots, e^{\lambda_n t})  \left( I +  U_t \right) \right) T^{-1},  
		\end{align*} where $U_t$ is an upper triangular matrix, depending on $t$, but having only zero entries in its first row. We now choose the initial condition $x_0 = l T  {\bf e}_1$, where ${\bf e}_1$ is the first basis vector, and $l$ is a small-enough positive scalar; we'll discuss how small $l$ has to be later. We then have $T^{-1} x_0 =  l {\bf e_1}$ and therefore under the flow $\dot{x}= \beta A - \gamma I$ it holds that 
		\begin{align} \lim \sup_{t \rightarrow \infty} ||x(t)||_2^{1/t} & = \lim \sup_{t \rightarrow \infty} ||T e^{\lambda_1 t} l {\bf e}_1||_2^{1/t} \nonumber \\ & = \lambda_1 \nonumber \\
			& > -\alpha, \label{eq:linearbound}
		\end{align} 
		
		This establishes the property we want for the flow of the linear system $\dot{x}=(\beta A - \gamma I)x$, but we need to establish that the network SIS dynamics has the same property. This can be done via a particular form of the Hartman-Grobman theorem. Indeed, let $y(t)$ be the network SIS trajectory starting from $y_0$. The Harman-Grobman theorem, in the form of Theorem 3 of \cite{guysinsky2003differentiability}, guarantees the existence of a homeomorphism $h: \R^n \rightarrow \R^n$ such that 
		\[ y(t) = h^{-1} (e^{t(\beta A - \gamma I)} h(y_0)), \] and, as proved in  \cite{guysinsky2003differentiability},  because the network SIS dynamics are infinitely differentiable, we can further take $h$ to be differentiable at the origin with the derivative at the origin equalling identity: \[ h(y) = y + o(||y||_2). \] This implies that 
		\[ y = h^{-1}(y) + o(||y||_2),\] which we rearrange as 
		\begin{equation} \label{eq:hinverse} h^{-1}(y) = y + o(||y||_2)
		\end{equation}
		Morever,  for small enough $||x||_2,$ we have 
		\[ \frac{1}{2} ||x||_2 \leq ||h(x)||_2 \leq 2 ||x||_2. \] This further implies that for small enough $||x||_2$,
		\[ \frac{1}{2} ||x||_2 \leq ||h^{-1}(x)||_2 \leq 2 ||x||_2. \]

		With these observations in mind, we now choose the initial condition $y_0 = h^{-1}(x_0)$, where $x_0 = l T {\bf e}_1$ as above. We then have that   
		\begin{eqnarray*} \lim \sup_{t \rightarrow \infty} ||y(t)||_2^{1/t} & = & \lim \sup_{t \rightarrow \infty} ||h^{-1} (e^{t(\beta A-\gamma I)} h(y_0)||_2^{1/t} \\ 
			& \geq & \lim \sup_{t \rightarrow \infty}  (1/2)^{1/t} ||e^{t(\beta A-\gamma I)} x_0||_2^{1/t} \\ 
			& > & -\alpha,
		\end{eqnarray*} appealing in the last step to  Eq. (\ref{eq:linearbound}).
		
		We conclude the proof by arguing that our initial condition $y_0$ is nonnegative provided we choose $l$ small enough. Indeed, we first argue that $T {\bf e}_1$ is strictly positive. Indeed, observe that this is the first column of $T$, and since $\beta A-\gamma I  = T D T^{-1}$ can be rewritten as $(\beta A - \gamma I) T=TD$, we obtain that the first column of $T$ is the Perron-Frobenius eigenvector of $A$, which is positive by Lemma \ref{lemm:pf}.  Finally, $$y_0 = h^{-1}(l T {\bf e}_1) = l T {\bf e}_1 + o(l ||T||_2 \sqrt{n}) = l T {\bf e}_1 + o(l),$$ by Eq. (\ref{eq:hinverse});  and for small enough $l$, this has to be strictly positive by strict positivity of $T {\bf e}_1$.  This concludes the proof. 
	\end{proof}

	
	\medskip
	
	We next give the proof of Proposition \ref{prop:COVID}. Since, unlike in the SIS case, only one direction must be proven, the proof is straightforward. 
	
	\medskip
	
	\begin{proof}[Proof of Proposition \ref{prop:COVID}] By Lemma \ref{lemm:pf}, the matrix $M(t_0)$ has a left-eigenvector $v_{\rm max}$ which is positive. Let $\lambda_{\rm max}$ be the corresponding eigenvalue; by assumption $\lambda_{\rm max} \leq -\alpha$. Let us define $p(t) = [x(t), d(t)]^{\top}$, where $x(t)$ stacks up all the $x_i(t)$, and likewise for $d(t)$. Since it is immediate that the dynamics of Eq. (\ref{eq:COVID}) result in $s(t)$ non-increasing, we have that
		\begin{eqnarray*} \frac{d}{dt} v_{\rm max}^{\top} p(t) & = &  v_{\rm max}^{\top} M(t) p(t) \\ 
			& \leq & v_{\rm max}^{\top} M(t_0)) p(t) \\ 
			& = & \lambda_{\rm max} v_{\rm max}^{\top} p(t) \\ 
			& \leq & - \alpha v_{\rm max}^{\top} p(t)
		\end{eqnarray*} where the second equation uses that $v_{\rm max}$ and $p(t)$ are nonnegative, and as a consequence of the non-increasing of $s(t)$, $v_{\rm max}^{\top} M(t) p(t) \leq v_{\rm max}^{\top} M(t_0) p(t)$; while the final equation used $\lambda_{\rm max} \leq - \alpha$. We conclude that $v_{\rm max}^{\top} p(t)$ decreases at rate $\alpha$ starting from any time.
	\end{proof}

	
	\subsection{Lockdown Design}\label{sec: lock down}
	We now turn to the algorithmic question of designing an optimal lockdown. We will first present our main results. Next we will present a string of lemmas and observations which will culminate in the proof of the main Theorem. It is here that we will perform the reduction from the problem of computing the optimal lockdown to matrix balancing and covering semi-definite programs.

	Our first step is to discuss an assumption required by one of our algorithms. The formal statement of the assumption is as follows.

	\begin{assumption}[High spread assumption] $~$
		\begin{enumerate} \item In the network SIS model, we have ${\rm diag}(B^{\top}C) \geq \gamma$
			\item In the network COVID-19 model, we must have $$\left(\beta^{\text a} + \beta^{\text s} \frac{\epsilon}{r^{\text s}}\right) B^{\top} {\rm diag}(s(t_0)) C \geq \epsilon + r^{\text a}.$$ 
		\end{enumerate} \label{as:spread}
	\end{assumption} 
	
	To see why this condition is satisfied in a ``high spread'' regime, note that $A=C B^{\top}$ and, given our choices of $C$ and $B$ in Eq. (\ref{eq:bc}), the entry $C_{ii} B_{ii}$ corresponds to the spread of the epidemic in location $i$ purely from the same-location trips of residents of location $i$. This has to be bigger than the natural rate $\gamma$ at which people recover. In other words, the natural rate of spreading of the epidemic has to be high everywhere. Assumption \ref{as:spread} is actually somewhat looser than this, as what must be bounded below by $\gamma$ is ${\rm diag}(B^{\top} C)$, which is a sum of $O(n)$ products, only one of which is $C_{ii} B_{ii}$. Note that this is not quite the same as requiring that $A_{ii} \geq \gamma$ since $A+C B^{\top}$, and we've flipped the order of multiplication on the condition.    In the COVID-19 case, the interpretation is similar: the recovery rates $r^{\text a}, r^{\text s}$ need to be small relative to $s_i(t_0) C_{ii} B_{ii}$ as well as  the spread parameters $\beta^{\text a}, \beta^{\text s}, \epsilon$, though the relation is now more involved.

	\smallskip

	\textbf{Main theoretical contribution.} Our main theorem provides algorithms for the unconstrained and constrained lockdown problems in the cases when Assumption \ref{as:spread} does and does not hold. Our key contribution is to give an algorithm for optimal stabilizing lockdown whose complexity has an explicit scaling which is nearly linear in the number of nonzero entries of the matrix $A$. That is to say, not only can the optimal heterogeneous lockdown be computed exactly, but doing so takes nearly as much time as just reading through the data. 
	
	\begin{theorem} \label{thm:mainthm} Suppose the graph corresponding to positive entries of the matrix $A$ is strongly connected and $s(t_0)>0$. Then: \begin{enumerate} \medskip \item The unconstrained lockdown problem for both SIS and COVID-19 models can be reduced to matrix balancing. 
			\medskip
			\item Suppose further Assumption \ref{as:spread} holds. Then the constrained lockdown problem is equivalent to the unconstrained lockdown problem and consequently is also reducible to matrix balancing. 
			\medskip
			\item If Assumption \ref{as:spread} does not hold but the matrices $C$ and $B$ are given by Eq. (\ref{eq:bc}) and $\tau$ is strongly connected with positive diagonal, then the constrained lockdown problem for the SIS, SIR, COVID-19 models can be solved in $\widetilde{O}(n^3)$ time, where $\widetilde{O}(\cdot)$ hides factors which are logarithmic in the remaining problem parameters. 
		\end{enumerate}
	\end{theorem}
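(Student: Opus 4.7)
My plan is to unify the SIS and COVID-19 eigenvalue constraints into a single canonical form, then handle each of the three cases in turn. For the COVID-19 problem, I would write the Perron eigenvector of $M(t_0)$ as $[v^{\text{a}}, v^{\text{s}}]^\top$ and use the lower block row $\epsilon v^{\text{a}} - r^{\text{s}} v^{\text{s}} = \lambda v^{\text{s}}$ to solve $v^{\text{s}} = \epsilon v^{\text{a}} / (r^{\text{s}} + \lambda)$; substituting into the upper block row collapses $\lambda_{\max}(M(t_0)) \leq -\alpha$ to the scalar condition
\[ \lambda_{\max}\!\left( \bigl(\beta^{\text{a}} + \tfrac{\beta^{\text{s}} \epsilon}{r^{\text{s}} - \alpha}\bigr) {\rm diag}(s(t_0))\, C {\rm diag}(z) B^\top \right) \;\leq\; \epsilon + r^{\text{a}} - \alpha. \]
Using the cyclic identity $\lambda_{\max}(C {\rm diag}(z) B^\top) = \lambda_{\max}({\rm diag}(z) B^\top C)$, both models then take the form $\lambda_{\max}({\rm diag}(z) Q) \leq \kappa$ for a nonnegative matrix $Q$ (either $B^\top C$ or $B^\top {\rm diag}(s(t_0)) C$) and a scalar threshold $\kappa$ determined by the disease parameters and $\alpha$.

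For Part 1, I would apply Perron-Frobenius at an active unconstrained optimum: there is a positive eigenvector $v$ with $z_i^*(Qv)_i = \kappa v_i$, so $z_i^* = \kappa v_i/(Qv)_i$ and the cost reduces, up to an additive constant, to
\[ \frac{1}{\kappa} \sum_{l,j} c_l\, Q_{lj}\, \frac{v_j}{v_l}, \]
to be minimized over $v > 0$. The substitution $v_l = e^{x_l}$ turns this into a convex log-sum-exp in $x$, whose first-order conditions say exactly that the matrix $D^{-1}({\rm diag}(c) Q) D$ has equal row and column sums for $D = {\rm diag}(e^x)$, i.e.\ matrix balancing of ${\rm diag}(c) Q$. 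Running the nearly linear time balancer of \cite{cohen2017matrix} on this matrix and reading $z^*$ off the output diagonal then solves the unconstrained problem in matching complexity.

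For Part 2, I would show that the unconstrained optimum already satisfies $z^* \in (0,1]^n$, rendering the box constraint inactive. The termwise bound $(Qv)_i \geq Q_{ii} v_i$ yields $z_i^* = \kappa v_i/(Qv)_i \leq \kappa/Q_{ii}$, and Assumption \ref{as:spread} is calibrated so that $Q_{ii} \geq \kappa$ in both models, giving $z_i^* \leq 1$. Strong connectivity of $A$ together with $s(t_0) > 0$ ensures $v > 0$, so the Perron reduction is valid throughout.

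For Part 3, without Assumption \ref{as:spread}, I would exploit the explicit structure $C = \tau$, $B^\top = D_1 \tau^\top D_2$ with $\tau$ strongly connected and positive-diagonal. Writing $d_1 := {\rm diag}(D_1)$, the identity $C {\rm diag}(z) B^\top = \tau\, {\rm diag}(z d_1)\, \tau^\top D_2$ shows similarity to the positive semidefinite matrix $S(z) = D_2^{1/2} \tau\, {\rm diag}(z d_1)\, \tau^\top D_2^{1/2} = \sum_l z_l P_l$ with rank-one PSD summands $P_l = (d_1)_l (D_2^{1/2}\tau_l)(D_2^{1/2}\tau_l)^\top$, so the constraint becomes the linear matrix inequality $\sum_l z_l P_l \preceq \kappa I$. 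Substituting $w_l = 1/z_l$ and using invertibility of $U = [\sqrt{(d_1)_l}\, D_2^{1/2}\tau_l]_l$ (inherited from invertibility of $\tau$, which follows from its positive diagonal and strong connectivity), a Schur complement rewrites the constraint as ${\rm diag}(w) \succeq \kappa^{-1} U^\top U$; a congruence by $(U^\top U)^{-1/2}$ together with an affine shift absorbing the bound $w_l \geq 1$ puts the problem into the covering SDP form $\sum_l w_l \widetilde{C}_l \succeq I$ with PSD $\widetilde{C}_l$, after which the solver of \cite{jambulapati2020positive} runs in $\widetilde{O}(n^\omega + \sum_l {\rm nnz}(\widetilde{C}_l)) = \widetilde{O}(n^3)$. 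The main obstacle I anticipate is making the inversion/Schur-complement step rigorous when $U$ is ill-conditioned or the $P_l$ fail to span full space; this I would handle by restricting the LMI to ${\rm range}(U)$ or adding a vanishing Tikhonov regularization, at worst a polynomial overhead.
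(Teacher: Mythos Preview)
Your outline is essentially the paper's proof, only more direct. Where the paper introduces two named intermediate problems (``stability scaling'' ${\rm diag}(q)P-D$ continuous-time stable, and ``diagonal subtraction'' $P-{\rm diag}(l)$ stable) and chains them via Lemma~\ref{lemma:d}, you go straight from the constraint $\lambda_{\max}({\rm diag}(z)Q)\le\kappa$ to the Perron parametrization $z_i=\kappa v_i/(Qv)_i$ and the log-sum-exp objective $\sum_{i,j}c_iQ_{ij}e^{x_j-x_i}$, whose first-order conditions are exactly matrix balancing of ${\rm diag}(c)Q$. Your COVID reduction (eliminating $v^{\text s}$ from the Perron eigenvector) lands on the same $n\times n$ condition the paper obtains from its $L-D$ splitting and Eq.~(\ref{eq:ldeigs}); and your Part~2 bound $z_i^*\le\kappa/Q_{ii}$ is the same argument the paper makes by row inspection. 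For Part~3, your similarity-plus-Schur-complement path arrives at ${\rm diag}(w)\succeq\kappa^{-1}U^\top U$, which is precisely the paper's constraint $Q\preceq{\rm diag}(u)$ after a change of variables.

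There is one genuine error to fix: the claim that invertibility of $\tau$ ``follows from its positive diagonal and strong connectivity'' is false (take the $2\times2$ all-ones matrix). The paper never asserts this; instead it simply treats $\lambda_{\min}(\tau^\top\tau)^{-1}$ as one of the problem parameters hidden in the $\widetilde O(\cdot)$, which is exactly the ``restrict to ${\rm range}(U)$'' fix you anticipate at the end. A second place that needs tightening is your COVID eigenvector elimination: substituting $v^{\text s}=\epsilon v^{\text a}/(r^{\text s}+\lambda)$ gives the correct \emph{boundary} condition at $\lambda=-\alpha$, but to turn this into the two-sided equivalence $\lambda_{\max}(M(t_0))\le-\alpha\iff\lambda_{\max}(\text{reduced})\le\epsilon+r^{\text a}-\alpha$ you need a monotonicity argument in $\lambda$ (valid under $\alpha<\min(r^{\text s},\epsilon+r^{\text a})$). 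The paper handles this rigorously via Lemma~\ref{lemma:d} part~(3) rather than by eigenvector substitution, which is why its argument avoids this step.
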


	Next, we will prove Theorem \ref{thm:mainthm}. Our starting point will be the following lemma on a ``splitting'' of a positive matrix. 
	
	\smallskip
	
	\begin{lemma} $~$\label{lemma:d}
		\begin{enumerate} \item A strongly connected matrix $P$ with non-negative off-diagonal elements is continuous-time stable if and only if there exists $d>0$ such that $Pd \leq 0$.
			\item The nonnegative strongly connected matrix $B$ is discrete-time stable if and only if there exists $d>0$ such that $Bd \leq d$. 
			\item Suppose $P = L-D$ where $L$ is nonnegative while $D$ is a matrix with nonpositive off-diagonal elements whose inverse is elementwise nonnegative.   Suppose further that both $P$ and $D^{-1} L$ are both strongly connected. Then $P$ is continuous time stable if and only if $B=D^{-1}L$ is discrete-time stable. 
		\end{enumerate}
	\end{lemma}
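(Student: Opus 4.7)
The plan is to derive all three parts from Lemma \ref{lemm:pf} (the Perron--Frobenius theorem already recorded above), treating it as the central lever: both a positive right eigenvector and a positive left eigenvector are available for any strongly connected Metzler matrix. Parts 1 and 2 are the classical Metzler-stability and spectral-radius characterizations, and Part 3 will then follow by combining them with a ``change of variables'' via $D^{-1}$, exploiting that even though $D$ itself is not elementwise nonnegative, $D^{-1}$ is.

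For Part 1, the matrix $P$ is Metzler and strongly connected, so by Lemma \ref{lemm:pf} it admits a real eigenvalue $\lambda_{\rm max}(P)$ with positive right eigenvector $v_{\rm max}(P)$ and a positive left eigenvector; continuous-time stability of $P$ is equivalent to $\lambda_{\rm max}(P) \leq 0$. If $\lambda_{\rm max}(P) \leq 0$, taking $d = v_{\rm max}(P)$ gives $Pd = \lambda_{\rm max}(P)\,d \leq 0$. Conversely, if $Pd \leq 0$ for some $d > 0$, pairing with the positive left eigenvector $v$ gives $\lambda_{\rm max}(P)\,v^{\top} d = v^{\top} P d \leq 0$, and $v^{\top} d > 0$ forces $\lambda_{\rm max}(P) \leq 0$. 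Part 2 is the discrete-time analog: apply Lemma \ref{lemm:pf} to the nonnegative strongly connected $B$ to obtain the Perron eigenvalue $\rho(B)$ with positive right/left eigenvectors, recall that discrete-time stability is $\rho(B) \leq 1$, and repeat the same two-line arguments with the inequality $Bd \leq d$ in place of $Pd \leq 0$.

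The substantive content of Part 3 is the interplay between $L \geq 0$ and the non-monotone $D$, and the chief obstacle is that one cannot multiply an inequality through by $D$ while preserving its direction; only $D^{-1}$ is componentwise nonnegative. For the forward direction, I would apply Part 1 to obtain $d > 0$ with $Ld \leq Dd$, take the positive left Perron eigenvector $v$ of $B$, and set $w^{\top} = v^{\top} D^{-1} \geq 0$. Since $w^{\top} L = v^{\top} D^{-1} L = \rho(B)\,v^{\top}$ and $w^{\top} D = v^{\top}$, applying $w^{\top}$ to the inequality $Ld \leq Dd$ yields $\rho(B)\,v^{\top} d \leq v^{\top} d$, and since $v^{\top} d>0$ this forces $\rho(B) \leq 1$, so Part 2 concludes. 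For the reverse direction, I would bypass the non-monotonicity of $D$ by letting $d^* > 0$ be the positive right Perron eigenvector of $B$, so that $L d^* = \rho(B)\,D d^*$ and therefore $P d^* = (L-D)d^* = (\rho(B) - 1)\,D d^*$. The key observation is that although $D$ is not elementwise nonnegative, $D d^* = L d^*/\rho(B) \geq 0$ automatically because $L \geq 0$, $d^* > 0$, and $\rho(B) > 0$ (from strong connectivity of $B$ via Lemma \ref{lemm:pf}); combined with $\rho(B) - 1 \leq 0$ by hypothesis, this gives $P d^* \leq 0$, so Part 1 delivers continuous-time stability of $P$. The only remaining checks are that $P = L - D$ has nonnegative off-diagonal entries (immediate, since $-D$ does) so that Part 1 is applicable, and that the strong-connectivity hypotheses on both $P$ and $B=D^{-1}L$ ensure the Perron eigenvectors used in each direction are strictly positive.
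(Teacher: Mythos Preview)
Your proof is correct, but it diverges from the paper's in a couple of interesting ways.

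For the ``if'' direction of Parts 1 and 2, the paper does not use the left Perron eigenvector at all: it simply observes that $Pd \leq 0$ means ${\rm diag}(d)^{-1} P\, {\rm diag}(d)$ has nonpositive row sums, and then invokes Gershgorin circles. Your left-eigenvector pairing is equally valid and has the virtue of being the same technique you reuse later, but Gershgorin is arguably the more elementary one-liner here.

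For Part 3 forward, you work harder than necessary. You correctly identify that $D^{-1}$ is elementwise nonnegative, but then instead of using this fact directly you introduce the left Perron eigenvector $v$ of $B$ and the auxiliary $w^{\top} = v^{\top} D^{-1}$. The paper takes the short route: from $Ld \leq Dd$, simply left-multiply by the nonnegative $D^{-1}$ to obtain $D^{-1}Ld \leq d$, and then apply Part 2. Your argument is sound, but you could have saved yourself the detour.

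For Part 3 backward, your argument and the paper's are essentially identical: both take the right Perron eigenvector of $B$ and compute $Pd^* = (\rho(B)-1)\,Dd^*$. You are actually more careful here than the paper, which asserts $\lambda_{\rm max} D v_{\rm max} \leq D v_{\rm max}$ citing only $\lambda_{\rm max} \leq 1$, whereas you explicitly justify the needed nonnegativity of $Dd^*$ via $Dd^* = Ld^*/\rho(B) \geq 0$.
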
 
	
	\smallskip
	
	This lemma is a small variation on a well-known fact: usually, parts 1 and 2 are stated for strictly stable matrices, in which case all the inequalities need to be strict (see \cite{bullo2019lectures}, Theorem 15.17 for the strict version of part (i) and Proposition 1 of \cite{rantzer2011distributed} for the strict version of part (ii)). The non-strict version additionally requires that the matrices be strongly connected, which is not needed for the nonstrict version of this problem. Note that we do not claim that any part of this lemma is novel. For completeness, we nevertheless give a proof next.

	\begin{proof}[Proof of Lemma \ref{lemma:d}] For part (1), observe that $Pd \leq 0$ if the same as $P{\rm diag}(d) {\bf 1} \leq 0$, which is equivalent to ${\rm diag}(d)^{-1} P {\rm diag}(d)$ being a matrix with nonpositive row sums. Since we have assumed that $P$ has nonnegative off-diagonal elements, this implies that the diagonal elements of ${\rm diag}(d)^{-1} P {\rm diag}(d)$ are non-positive. By Gershgorin circles, ${\rm diag}(d)^{-1} P {\rm diag}(d)$ must be continuous-time stable. Since its eigenvalues are the same as the eigenvalues of $P$, we conclude that $P$ is also continuous-time stable. Similarly, for part (2), if such a $d$ exists, then ${\rm diag}(d)^{-1} B {\rm diag}(d)$ is a nonnegative matrix whose row sums are upper bounded by one, and must be discrete-time stable again by Gershgorin circles. This proves the ``if'' part of parts (1) and (2).
		
		For the ``only if'' parts, suppose $P$ is continuous-time stable and strongly connected with non-negative off-diagonal entries. By Lemma \ref{lemm:pf}, we have that there exists a positive vector $d$ such that $Pd =\lambda d$ and $\lambda \leq 0$;  this proves the ``only if'' of part 1. For part 2, if $B$ is discrete-time stable and strongly connected, we have that $B d = \lambda d$ for the Perron-Frobenius eigenvalue $d$ of $P$, which is positive. Since now $\lambda <1$, this proves the ``only if'' statement of part 2.

		For part (3), suppose first that $P=L-D$ is continuous-time stable. Since $P=L-D$ has nonnegative off-diagonal elements, we can apply part (1) to observe that continuous time stability of $P$ is equivalent to existence of $d$ satisfying
		\[ (L-D)d \leq 0, ~~~d > 0 \] which is  equivalent to 
		\[ Ld \leq Dd, ~~~ d > 0. \]  We now to multiply both sides by $D^{-1}$ and obtain that there exists a $d$ satisfying
		\[ D^{-1} L d \leq d, ~~~ d > 0. \] Note that we used that $D^{-1}$ is nonnegative to multiply both sides by $D^{-1}$. Since $D^{-1} L$ is strongly connected, the last equation is, by part (2), exactly the statement that $B=D^{-1} L$ is discrete-time stable. 
		
		Conversely, suppose $D^{-1} L$ is discrete-time stable (note that we cannot simply reverse the above chain of implications since we do not assume  that $D$ is elementwise nonnegative; thus we can multiply a linear inequality by $D^{-1}$ but not necessarily by $D$).  Let $\lambda_{\rm max}, v_{\rm max}$ denote the Perron-Frobenius eigenvalue/eigenvector pair of $D^{-1} L$, guaranteed to exist by Lemma \ref{lemm:pf}; note that $v_{\rm max}$ is strictly positive. We then have 
		\begin{align*} L v_{\rm max} & = D D^{-1} L v_{\rm max} \\ 
			& = D \lambda_{\rm max} v_{\rm max} \\ 
			& \leq D v_{\rm max}, 
		\end{align*} where the last step used that $\lambda_{\rm max} \leq 1$. It follows that $P v_{\rm max} = (L-D) v_{\rm max} \leq 0$, and since, as already observed, $v_{\rm max}$ is positive, we obtain that $P$ is continuous-time stable by applying part (1). 
	\end{proof}
	
	\medskip
	
	We will later need to interchange the order of products while still preserving the condition of being strongly connected. To that end, the following lemma will be useful. 
	
	\medskip 
	
	\begin{lemma}\label{lemma:strong_connectivity}
		Suppose $U,V$ are two nonnegative $n \times n$ matrices  with no zero rows or columns such that $VU$ is strongly connected. Then $UV$ is strongly connected. 
	\end{lemma}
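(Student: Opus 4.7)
The plan is to work directly with the definition of strong connectivity in terms of nonzero entries of matrix powers, exploiting the elementary identity $(UV)^{m+1} = U(VU)^m V$ valid for every $m\geq 0$. Since $U,V$ are nonnegative, all entries of $(UV)^{m+1}$ are sums of nonnegative terms, so to prove a particular entry is nonzero it is enough to exhibit a single nonzero summand.

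First I would fix an arbitrary ordered pair of indices $(i,j)$ and aim to produce an $m\geq 0$ such that $\bigl((UV)^{m+1}\bigr)_{ji}\neq 0$, which by the convention for $G(\cdot)$ is precisely an edge from $i$ to $j$ in $G\bigl((UV)^{m+1}\bigr)$ and hence a directed walk of length $m+1$ from $i$ to $j$ in $G(UV)$. Expanding the identity above gives
\[
\bigl((UV)^{m+1}\bigr)_{ji} \;=\; \sum_{p,q} U_{jp}\,\bigl((VU)^{m}\bigr)_{pq}\,V_{qi}.
\]
Using the hypothesis that $U$ has no zero rows, I can pick $p$ so that $U_{jp}\neq 0$, and using that $V$ has no zero columns, I can pick $q$ so that $V_{qi}\neq 0$.

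Next I would invoke the strong connectivity of $VU$ to produce an $m\geq 0$ with $\bigl((VU)^m\bigr)_{pq}\neq 0$: if $p=q$ take $m=0$ and note $\bigl((VU)^0\bigr)_{pp}=1$; otherwise strong connectivity guarantees a directed walk from $q$ to $p$ in $G(VU)$, which translates to the existence of some $m\geq 1$ with $\bigl((VU)^m\bigr)_{pq}\neq 0$. Nonnegativity of $U,V$ then ensures that the single summand $U_{jp}\bigl((VU)^m\bigr)_{pq}V_{qi}$ is strictly positive, so the whole sum defining $\bigl((UV)^{m+1}\bigr)_{ji}$ is nonzero. This exhibits a directed walk from $i$ to $j$ in $G(UV)$, and since $(i,j)$ was arbitrary, $UV$ is strongly connected.

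There is no real obstacle here beyond keeping the convention on $G(A)$ (edge $i\to j$ iff $A_{ji}\neq 0$) consistent with the bookkeeping above; the only substantive step is the observation that strong connectivity of $VU$ needs to be combined with the no-zero-row/column assumptions on $U$ and $V$ in order to ``attach'' a first step through $V$ and a last step through $U$ to an existing walk in $G(VU)$.
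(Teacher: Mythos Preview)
Your proof is correct and takes a genuinely different route from the paper. The paper builds a directed bipartite graph on $2n$ vertices encoding the nonzero entries of $U$ and $V$ separately, identifies $G(VU)$ and $G(UV)$ with the length-two-path graphs on the left and right vertex sets, and then argues by contrapositive about cuts: a proper subset with no outgoing edges on one side forces a proper subset with no outgoing edges on the other, using the no-zero-row/column hypothesis to guarantee the relevant subset is proper.

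Your argument is more direct and more algebraic: you exploit the identity $(UV)^{m+1}=U(VU)^mV$, pick $p$ with $U_{jp}>0$ (no zero rows of $U$) and $q$ with $V_{qi}>0$ (no zero columns of $V$), and then use strong connectivity of $VU$ to find $m$ with $((VU)^m)_{pq}>0$. Nonnegativity makes the single summand $U_{jp}((VU)^m)_{pq}V_{qi}$ enough. This avoids the bipartite construction entirely and makes the role of each hypothesis transparent: ``no zero rows of $U$'' and ``no zero columns of $V$'' are exactly what let you attach the first and last factors around a walk in $G(VU)$. Note also that your argument only uses two of the four no-zero-row/column conditions, so it actually proves a slightly sharper statement than the paper's version. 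The paper's bipartite picture, on the other hand, makes the symmetry of the statement (the ``if and only if'') visually obvious, though that symmetry is not needed here.
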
 
	
	\begin{proof} Consider a directed bipartite graph $G$ on $2n$ vertices, with vertices $l_1, \ldots, l_n$ and $r_1, \ldots, r_n$ denoting the two sides of the bipartition, defined as follows. If $U_{ij} > 0$, then we put an edge from $l_j$ to $r_i$; and if $V_{ij} > 0$ we put an edge from $r_j$ to $l_i$. 
		Let $G_1$ be the graph on $l_1, \ldots, l_n$ where we put the directed edge from $l_i$ to $l_j$ if there is a path of length two from $l_i$ to $l_j$ in $G$. Likewise, let $G_2$ be the directed graph on $r_1, \ldots, r_n$ such that we put an edge from $r_i$ to $r_j$ whenever there is a path of length two in $G$. 
		
		Then the strong connectivity of $VU$ is equivalent to having $G_1$ be strongly connected: indeed, $(VU)_{ab} > 0$ if and only if there exists a link from $b$ to $a$ in $G_1$. Similarly,  the strong connectivity of $UV$ is equivalent to having $G_2$ be strongly connected. We will show that if $G_1$ is not strongly connected, neither is $G_2$. The converse is established via a similar argument. 
		
		Indeed, suppose $G_1$ is not strongly connected. That means there exists a proper subset of the vertices, say $L_1 = \{l_1, \ldots, l_k\}$, with no edges outgoing to $L_1^c = \{l_{k+1}, \ldots, l_n\}$. Let $r_1$ be the set of out-neighbors of $L_1$ in $G$. Then we must have that $r_1$ is a proper subset of $\{r_1, \ldots, r_n\}$ (for otherwise, the assumption that $V$ has no zero rows/columns would contradict no edges going from $L_1$ to $L_1^c$ in $G_1$) and the set of out-neighbors of $r_1$ in $G$ is contained in $L_1$. 
		
		But since (i) $r_1$ is a proper subset of the right-hand side (ii) the out-neighbors of $r_1$ in $G$ are contained in $L_1$ (iii) the out-neighbors of $L_1$ in $G$ are contained in $r_1$, we obtain that there are no edges leading from $r_1$ to $r_1^c$ in $G_2$. This proves $G_2$ is not strongly connected. 
	\end{proof} 
	
	\medskip
	
	With these preliminary lemmas in place, we now turn to core of our reduction. We begin with the network SIS dynamics, where we will reduce the task of finding an (unconstrained) optimal lockdown to the problem of matrix balancing defined earlier. The reduction will go through several ``intermediate'' problems, the first of which as follows. 
	
	\medskip

	\begin{definition} We will refer to the following  as the {\em stability scaling problem}: given a nonnegative strongly connected matrix $P$ and positive diagonal matrix $D$, find positive scalars $q_1, \ldots, q_n$ minimizing $\sum_{i=1}^n q_i^{-1}$ such that ${\rm diag}(q_1, \ldots, q_n) P - D$ is continuous-time stable. 
	\end{definition}
	
	\medskip
	
	The utility of this definition should become clear after the following lemma.

	\medskip

	\begin{lemma}  Suppose $A = CB^{\top}$ is strongly connected. The minimum cost lockdown problem for the SIS network can be written as stability scaling.  Under Assumption \ref{as:spread}, the constrained lockdown problem for the network SIS problem can also be written as stability scaling. \label{lemma:ss} 
	\end{lemma}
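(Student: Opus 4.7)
The plan is to recast the SIS lockdown problem
\[ \min_z \sum_l c_l(z_l^{-1}-1) \quad \text{s.t.} \quad \lambda_{\max}(\beta C\,{\rm diag}(z) B^\top - \gamma I) \leq -\alpha \]
into stability scaling form via two reductions, and then handle the box constraint separately.

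First, I would apply the elementary fact that for square matrices $XY$ and $YX$ share the same characteristic polynomial, so $\lambda_{\max}(\beta C\,{\rm diag}(z) B^\top - \gamma I) = \lambda_{\max}(\beta\,{\rm diag}(z) B^\top C - \gamma I)$. This lets me rewrite the constraint as $\lambda_{\max}(\beta\,{\rm diag}(z) B^\top C - (\gamma-\alpha) I) \leq 0$, which is already of the ``diagonal times fixed matrix minus a positive diagonal'' shape demanded by stability scaling.

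Next, I would rescale variables by setting $q_l := z_l / c_l$, so the objective becomes $\sum_l q_l^{-1}$ modulo the additive constant $\sum_l c_l$, and the constraint becomes stability of ${\rm diag}(q)\, P - (\gamma-\alpha) I$ with $P := \beta\,{\rm diag}(c)\, B^\top C$. To confirm $P$ fits the stability scaling template I must check that it is nonnegative and strongly connected: nonnegativity is inherited from $B$, $C$, and $c$; strong connectivity of $B^\top C$ follows from Lemma \ref{lemma:strong_connectivity} applied to $CB^\top = A$ (strongly connected by hypothesis), and left-multiplication by the positive diagonal ${\rm diag}(c)$ preserves the zero pattern. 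This establishes the first assertion of the lemma.

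For the constrained variant the remaining task is to show that, under Assumption \ref{as:spread}, every $z$ feasible for the stability constraint already satisfies $z_l \le 1$, so the box constraint is never binding and the two problems coincide. The matrix $\beta\,{\rm diag}(z) B^\top C - (\gamma-\alpha) I$ is Metzler (off-diagonals nonnegative) and irreducible, so by Perron--Frobenius (Lemma \ref{lemm:pf}, applied after a nonnegative shift) its dominant eigenvalue is at least every diagonal entry. Stability therefore forces $\beta z_l (B^\top C)_{ll} \le \gamma - \alpha$ for each $l$, and the high-spread hypothesis ${\rm diag}(B^\top C) \ge \gamma$ rearranges this to the desired bound $z_l \le 1$.

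The main obstacle I expect is the book-keeping for the normalisation constants in this last step, namely confirming that the precise form of the high-spread hypothesis indeed delivers $z_l \le 1$ (rather than some slightly weaker bound) and sanity-checking the edge case $\alpha \to \gamma$. A smaller secondary issue is that Lemma \ref{lemma:strong_connectivity} requires $B$ and $C$ to individually have no zero rows or columns; given the explicit construction $C = \tau$, $B^\top = D_1 \tau^\top D_2$ with $\tau$ strongly connected and $D_1, D_2$ positive diagonal, this should be a one-line verification.
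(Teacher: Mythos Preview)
Your approach is essentially the paper's: swap the order of the matrix product via the $XY$/$YX$ spectrum identity, recognise the result as stability scaling, and under Assumption~\ref{as:spread} show the box constraint is automatically satisfied. Two minor differences worth noting: the paper leaves the cost weights $c_l$ in the objective (its working definition of stability scaling implicitly carries them, as you can see in the proof of Lemma~\ref{lemma:ds}) rather than absorbing them into $P$ via your substitution $q_l=z_l/c_l$; and for the constrained case the paper argues via Lemma~\ref{lemma:d}(1) (if some $z_j>1$ then row $j$ of ${\rm diag}(z)B^\top C-(\gamma-\alpha)I$ is nonnegative with a positive entry, obstructing $Pd\le 0$) rather than your direct ``Perron eigenvalue dominates every diagonal entry'' bound. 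Both routes are valid and equally short.

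Two small cleanups. First, the paper has absorbed $\beta$ into $B,C$; with your explicit $\beta$ the high-spread hypothesis must read $\beta\,{\rm diag}(B^\top C)\ge\gamma$, otherwise your inequality only yields $z_l\le 1/\beta$. Second, you should dispose of the case $\alpha\ge\gamma$ explicitly (not just as an edge case to sanity-check): the paper observes that for any $z>0$ the irreducible nonnegative matrix $C\,{\rm diag}(z)B^\top$ has strictly positive Perron eigenvalue, so the stability constraint is infeasible and the reduction to stability scaling (which requires $D=(\gamma-\alpha)I$ positive) is vacuously fine.
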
 
	
	\begin{proof} We consider the unconstrained lockdown problem first. Recall that, in the SIS model, we are looking for a minimum cost positive vector $z$ such that 
		\begin{equation} \label{eq:sisz} C {\rm diag}(z) B^{\top} - (\gamma - \alpha) I 
		\end{equation} is continuous-time stable. 
		Since the nonzero eigenvalues of a product of two matrices do not change after we change the order in which we multiply them, this is the same as requiring that 
		\[ {\rm diag}(z) B^{\top} C - (\gamma - \alpha ) I \] is continuous-time stable. 
		This is exactly the stability scaling problem provided we have two additional conditions. The first condition is that $\alpha < \gamma$ (because the diagonal matrix subtracted needs to be positive). The second condition is that $B^{\top} C$ should be strongly connected. But by  Lemma \ref{lemma:strong_connectivity}, the second condition is true because we assumed that $A=CB^{\top}$ is strongly connected.

		Observe that the reduction resulted in an instance of stability scaling with matrix $P=B^{\top} C$. Since we have assumed $A=CB^{\top}$ is strongly connected, we can apply Lemma \ref{lemma:strong_connectivity} to obtain that $P$ is strongly connected as needed. Under the assumption $\alpha < \gamma$, the matrix $(\gamma - \alpha)I$ is a positive diagonal matrix as required. 
		
		We thus have the reduction we want, from optimal lockdown to stability scaling, assuming $\alpha < \gamma$. But what if $\alpha \geq \lambda$? In that case, we claim that the minimum cost lockdown problem does not have a solution. Indeed, since at the optimal solution we must have all $z_i^*>0$, we have that $C {\rm diag}(z^*) B^{\top} $ is an irreducible nonnegative matrix and its Perron-Frobenius eigenvalue is strictly positive by the Perron-Frobenius theorem \cite{gantmacher2005applications}. Consequently, $C {\rm diag}(z^*) B^{\top} - (\gamma-\alpha)I$ has a positive eigenvalue and cannot be continuous-time stable.

		Finally, we consider the constrained version. We argue that, under Assumption \ref{as:spread}, the optimal solution to the constrained lockdown problem will have $z_i^* \leq 1$ for all $i$, so we can simply drop the constraint. Indeed, first observe that we can assume $\alpha \leq \gamma$,  else the problem does not have a solution as explained above. Now suppose that $z_j^* > 1$; then ${\rm diag}(z^*) B^{\top} C - {\rm diag}(\gamma - \alpha) I$ has a nonnegative  $j$'th row with at least one positive entry in that row. Indeed, the off-diagonal entries in the $j$'th row are clearly nonnegative, while the diagonal entry is nonnegative by Assumption \ref{as:spread}. 
		
		By Lemma \ref{lemma:d}, ${\rm diag}(z^*) B^{\top} C - {\rm diag}(\gamma - \alpha) I$ cannot be continuous-time stable. As matrix $C {\rm diag}(z) B^{\top} - (\gamma - \alpha) I$ has the same nonzero eigenvalues as the matrix ${\rm diag}(z^*) B^{\top} C - {\rm diag}(\gamma - \alpha) I$, it can not be stable either. 
	\end{proof} 
	
	
	
	
	
	\medskip

	\begin{lemma} In the unconstrained case, the minimum lockdown model for COVID-19 dynamics can be reduced to stability scaling provided $A$ is strongly connected and $s(t_0)>0$. In the constrained case, the same holds under Assumption \ref{as:spread}. 
		\label{lemma:COVID}
	\end{lemma}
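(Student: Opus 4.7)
The plan is to mirror the argument of Lemma \ref{lemma:ss} after first collapsing the $2n\times 2n$ block structure of $M(t_0)+\alpha I$ down to an $n\times n$ SIS-type matrix. Recall that the constraint is $\lambda_{\max}(M(t_0)) \leq -\alpha$, i.e., $M(t_0)+\alpha I$ is continuous-time stable, with $A = C\,\mathrm{diag}(z)\,B^{\top}$ appearing inside $M(t_0)$.

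First I would split $M(t_0)+\alpha I = L - D$ with
$$L = \begin{pmatrix} \beta^{\text a}\,\mathrm{diag}(s(t_0))\,A & \beta^{\text s}\,\mathrm{diag}(s(t_0))\,A \\ \epsilon I & 0 \end{pmatrix}, \qquad D = \begin{pmatrix} (\epsilon+r^{\text a}-\alpha)I & 0 \\ 0 & (r^{\text s}-\alpha)I \end{pmatrix}.$$
Provided $\alpha < \min(r^{\text s},\,\epsilon+r^{\text a})$ (otherwise feasibility fails by a Perron--Frobenius argument analogous to the $\alpha \geq \gamma$ case in Lemma \ref{lemma:ss}), $D$ is positive diagonal with nonpositive off-diagonal entries and nonnegative inverse, while $L \geq 0$. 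Strong connectivity of both $M(t_0)+\alpha I$ and $D^{-1}L$ follows from the strong connectivity of $A$ combined with $s(t_0)>0$: the $\epsilon I$ block links each asymptomatic node to its symptomatic counterpart, and the $\beta^{\text s}\,\mathrm{diag}(s)\,A$ block links each symptomatic node back to some asymptomatic node. Lemma \ref{lemma:d} part 3 then converts continuous-time stability of $M(t_0)+\alpha I$ to discrete-time stability of $D^{-1}L$, and part 2 turns this into the existence of $d=(d_1,d_2)^{\top}>0$ satisfying $Ld \leq Dd$.

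Next I would solve the two block inequalities. The second block reads $\epsilon\,d_1 \leq (r^{\text s}-\alpha)\,d_2$ and is tightest at $d_2 = \tfrac{\epsilon}{r^{\text s}-\alpha}\,d_1$; this choice also minimizes the left-hand side of the first block, which becomes
$$\beta^{*}\,\mathrm{diag}(s(t_0))\,A\,d_1 \leq (\epsilon+r^{\text a}-\alpha)\,d_1, \qquad \beta^{*} := \beta^{\text a} + \frac{\beta^{\text s}\epsilon}{r^{\text s}-\alpha}.$$
Reapplying Lemma \ref{lemma:d} part 1 (valid because $\beta^{*}\,\mathrm{diag}(s(t_0))\,A - (\epsilon+r^{\text a}-\alpha)I$ has nonnegative off-diagonal entries and is strongly connected) shows this is equivalent to continuous-time stability of $\beta^{*}\,\mathrm{diag}(s(t_0))\,A - (\epsilon+r^{\text a}-\alpha)I$. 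The problem is now structurally identical to the SIS case: substituting $A = C\,\mathrm{diag}(z)\,B^{\top}$ and cyclically permuting the product to $\mathrm{diag}(z)\,B^{\top}\mathrm{diag}(s(t_0))\,C$ (nonzero eigenvalues are invariant under cyclic reordering, and the scalar shift $(\epsilon+r^{\text a}-\alpha)I$ is unaffected) puts the constraint in stability-scaling form with $P = \beta^{*}\,B^{\top}\mathrm{diag}(s(t_0))\,C$, strongly connected by Lemma \ref{lemma:strong_connectivity}.

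For the constrained case, I would argue, exactly as in the SIS proof, that Assumption \ref{as:spread} forces $z_j^{*} \leq 1$ at the optimum. Since $\alpha>0$ gives $\beta^{*} > \beta^{\text a}+\beta^{\text s}\epsilon/r^{\text s}$, the assumption implies $\beta^{*}\,[B^{\top}\mathrm{diag}(s(t_0))C]_{jj} > \epsilon+r^{\text a}-\alpha$; if some $z_j^{*}>1$, then the $j$-th row of $\mathrm{diag}(z^{*})\,\beta^{*}\,B^{\top}\mathrm{diag}(s(t_0))\,C - (\epsilon+r^{\text a}-\alpha)I$ is nonnegative with a strictly positive diagonal, contradicting continuous-time stability by Lemma \ref{lemma:d} part 1. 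The main obstacle is the initial block-reduction: unlike the SIS matrix, $M(t_0)$ has block structure and mixed signs, so the simple order-swap trick is not directly available; the key is using Lemma \ref{lemma:d} parts 3 and 2 to linearize the eigenvalue condition and then exploiting the fact that symptomatic nodes receive no self-reinforcement in order to eliminate $d_2$ in closed form, yielding a scalar-coefficient inequality to which the SIS machinery applies verbatim.
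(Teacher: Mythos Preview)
Your proof is correct and reaches the same endpoint as the paper, but via a slightly different execution of the $L-D$ splitting. The paper places the $\epsilon$ coupling into $D$ (so $D$ is block lower-triangular with explicit inverse), then uses the order-swap $D^{-1}L \mapsto LD^{-1}$ to obtain a block upper-triangular matrix whose eigenvalues are read off from its top $n\times n$ block $b_1\,\mathrm{diag}(s(t_0))A_z$; a second application of Lemma~\ref{lemma:d}(3) then yields $C\,\mathrm{diag}(z)B^{\top}-\mathrm{diag}(s(t_0))^{-1}b_1^{-1}I$ stable. You instead keep $D$ block-diagonal, pass to the inequality characterization $Ld\le Dd$ of Lemma~\ref{lemma:d}(2), and eliminate $d_2$ in closed form using the tightness of the second block. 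The resulting reduced condition is identical (your $\beta^{*}/(\epsilon+r^{\text a}-\alpha)$ equals the paper's $b_1$), and your route has the minor advantage that the scalar shift $(\epsilon+r^{\text a}-\alpha)I$ survives to the end, so the cyclic-permutation step is immediate without having to absorb $\mathrm{diag}(s(t_0))^{-1}$ into the diagonal matrix $D$ of the stability-scaling instance. Your treatment of the constrained case and of the infeasibility when $\alpha\ge\min(r^{\text s},\epsilon+r^{\text a})$ matches the paper's.
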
 
	
	\begin{proof} 
		Let us begin by assuming that
		\begin{equation} \label{eq:alphabound} \alpha<\min(r^{\text s}, \epsilon + r^{\text a}).
		\end{equation} We will revisit this assumption later.  We  need to make the matrix 
		\begin{equation}\label{eq: A0 definition}
			A_0 := \left(
			\begin{array}{cc} 
				\beta^{\text a} {\rm diag}(s(t_0)) A_z - (\epsilon + r^{\text a}) +\alpha &  \beta^{\text s} {\rm diag}(s(t_0)) A_z \\
				\epsilon & - r^{\text s} + \alpha
			\end{array}
			\right)
		\end{equation}
		 stable, where we have introduced the notation  that $A_z = C {\rm diag} (z) B^{\top}$. Let us write 
		\[ A_0 = L - D,\] where 
		\[L = \left(\begin{matrix}\beta^{\text a} {\rm diag}(s(t_0))A_z & \beta^{\text s} {\rm diag}(s(t_0))A_z \\
			0 & 0 \end{matrix} \right), \]
		and
		\[ D = \left(\begin{matrix}\epsilon + r^{\text a} -\alpha & 0 \\
			-\epsilon & r^{\text s} - \alpha \end{matrix} \right) .\]
		
		We next  apply part 3 of Lemma \ref{lemma:d} to get that $A_0$ is continuous-time stable if and only if $D^{-1} L$ is discrete time stable. To do this, however, we need to verify that $D^{-1}$ is elementwise nonnegative, and that both $P$ and $D^{-1} L$ are strongly connected. This easily follows from observing that \[
		D^{-1} = \left (\begin{matrix} 
			\frac{1}{\epsilon + r^{\text a} -\alpha} & 0 \\
			\frac{\epsilon}{(\epsilon + r^{\text a} - \alpha)(r^{\text s} - \alpha)} & \frac{1}{r^{\text s} - \alpha},
		\end{matrix}\right )
		\] and recalling that $s(t_0) > 0$ as well as Eq. (\ref{eq:alphabound}). 
		
		To summarize, we have shown that equivalently we need to make sure that $B = D^{-1}L$ is discrete-time stable. Since the eigenvalues of a matrix which is the product of two matrices do not change after changing the order of the product, this is the same as having $L D^{-1}$ be discrete-time stable. But we have the simple expression 
		\begin{equation} \label{eq:ldeigs}
		L D^{-1} = \left(\begin{matrix}
			{\rm diag}(s(t_0))A_z b_1 & \frac{\beta^{\text s}}{r^{\text s} - \alpha}{\rm diag}(s(t_0))A_z \\
			0 & 0
		\end{matrix} \right),
		\end{equation}
		where $b_1 = \frac{\beta^{\text s} \epsilon +\beta^{\text a}(r^{\text s} - \alpha)}{(\epsilon + r^{\text a} -\alpha)(r^{\text s} - \alpha)}$. 
		
		But the eigenvalues of $LD^{-1}$ are just the eigenvalues of its top $n \times n$ block. Thus we obtain that $LD^{-1}$ is stable if and only if the matrix ${\rm diag}(s(t_0))Ab_1$ is discrete-time stable. Plugging in $A = C {\rm diag}(z)B^{\top}$, we now need that
		\begin{equation} \label{eq:im:stability} {\rm diag}(s(t_0))C {\rm diag}(z)B^{\top}b_1 \text{ is discrete-time stable}.\end{equation}
		
		We next appear to part (3) of Lemma \ref{lemma:d} again, using $D={\rm diag}(s(t_0))^{-1} b^{-1} I$ to obtain that we need 
		\begin{align}\label{eq:COVIDreduction}
			C {\rm diag}(z)B^{\top} - {\rm diag}(s(t_0))^{-1} b_1^{-1} I \text{ is continuous-time stable},   
		\end{align}
		We can apply part (3) of Lemma \ref{lemma:d} as strong connectivity follows from $z$ being elementwise positive and strong connectivity of $A=CB^{\top}$. But the last condition is equivalent to Equation (\ref{eq:sisz}), which we've already shown how to reduce to stability scaling.

		We conclude the reduction by observing that, once again, this results in an instance of stability scaling with the matrix $P=B^{\top} C$, which is strongly connected because $A=CB^{\top}$ is strongly connected, allowing us to apply Lemma \ref{lemma:strong_connectivity}. Further, the matrix ${\rm diag}(s(t_0))^{-1} b_1^{-1} I$ is a positive diagonal matrix due to the assumptions that $s(t_0)>0$ and the assumption of Eq. (\ref{eq:alphabound}).

		For the constrained case, we argue that under Assumption \ref{as:spread}, the optimal solution will have all $z_i^* \leq 1$, so we can equivalently consider the unconstrained case. Indeed, suppose e.g., that $z_j^* > 1$. In that case, the matrix ${\rm diag}(z^*) B^{\top} {\rm diag}(s(t_0)) C$ has its $(j,j)$'th entry at least $b_1^{-1}$. Since the $j$'th row of ${\rm diag}(z^*) B^{\top} {\rm diag}(s(t_0)) C $ is nonnegative, and since $B^{\top} {\rm diag}(s(t_0)) C$ is strongly connected by Lemma \ref{lemma:strong_connectivity}, we have that the $j$'th row of ${\rm diag}(z^*) B^{\top} {\rm diag}(s(t_0)) C $ is not zero. By by the same argument as we made in the SIS case, the matrix ${\rm diag}(z^*) B^{\top} {\rm diag}(s(t_0)) C  -  b_1^{-1} I$ cannot be continuous-time stable. This implies that Eq. (\ref{eq:COVIDreduction}) cannot be satisfied.

		Finally, we conclude the proof by revisiting the assumption we made in the very beginning, namely the assumption of Eq. (\ref{eq:alphabound}). We now argue the problem of optimal lockdown has no solution if that equation fails. Indeed, in that case, the matrix $A_0$  has either its first $n \times n$ block nonnegative, or its last $n \times n$ block nonnegative. We show that the problem has no solution in the first case; the other case has a similar proof. 
		
		We want to argue that having $d>0$ such that $A_0 d \leq 0$ cannot occur if the top $n \times n$ block of $A_0$ is nonnegative; by Lemma \ref{lemma:d} part (1) this is enough to show $A$ cannot be stable regardless of $z$. We can partition $d=[d_1, d_2]$, and we immediately see that we must have that $d_1=0$, since, by the strong connectivity of ${\rm diag}(s(t_0) ) A_z$, every entry of $d_1$ is multiplied by some entry in the top left $n \times n$ submatrix of $A_0$. Applying the same argument to the ``top right'' $n \times n$ block of $A_0$, get that $d_2=0$, and this is a contradiction. This concludes the proof. \end{proof}
	
	\medskip 
	
	To recap where we are: we have just reduced the unconstrained lockdown problem for the SIS and COVID-19 models to stability scaling; further, the constrained lockdown problem was reduced to the same under the high-spread condition. 
	
	We need to make the following remark: provided the matrix $A=CB^{\top}$ was strongly connected and $s(t_0)>0$, both of these reductions arrived at a stability scaling problems 
	Our next step is to reduce the stability scaling problem to a new problem, defined next, which has a somewhat simpler form.

	\begin{definition} Given a nonnegative matrix $A$, the problem of finding scalars $l_1, \ldots, l_n$ such that 
		$A - {\rm diag}(l_1, \ldots, l_n)$ is continuous-time stable while minimizing $\sum_{i=1}^n c_i l_i$ will be called the { \em diagonal subtraction problem. } 
	\end{definition} 
	
	\begin{lemma} Stability scaling can be reduced to diagonal subtraction. \label{lemma:ds}
	\end{lemma}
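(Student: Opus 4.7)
The plan is to absorb the multiplicative factor ${\rm diag}(q)$ into a diagonal-subtraction term via the substitution $l_i := D_{ii}/q_i$. Since $D$ is positive diagonal and we require $q_i > 0$, this defines a bijection between positive $q$-vectors and positive $l$-vectors, and it converts the stability-scaling objective $\sum_i q_i^{-1}$ into $\sum_i c_i l_i$ with $c_i := 1/D_{ii} > 0$.

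The key step is to check that the stability constraint transforms correctly: that ${\rm diag}(q) P - D$ is continuous-time stable if and only if $P - {\rm diag}(l_1, \ldots, l_n)$ is continuous-time stable. I would do this by left-multiplying by the positive diagonal matrix ${\rm diag}(1/q)$, which yields ${\rm diag}(1/q)[{\rm diag}(q) P - D] = P - {\rm diag}(D_{11}/q_1, \ldots, D_{nn}/q_n) = P - {\rm diag}(l)$. Both matrices have nonnegative off-diagonal entries (since $P \geq 0$ and $D$ is diagonal), and their off-diagonal sparsity patterns coincide with that of $P$, so both inherit strong connectivity from $P$. Applying Lemma \ref{lemma:d}(1) to each, continuous-time stability is equivalent to the existence of some $d > 0$ with image componentwise $\leq 0$; the two such conditions are equivalent because the associated inequalities differ only by left-multiplication by a positive diagonal matrix, which preserves the componentwise sign. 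This packages the stability-scaling instance as a diagonal-subtraction instance with $A = P$ (nonnegative) and costs $c_i = 1/D_{ii}$.

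The main obstacle I anticipate is verifying that the optimal $l^*$ for the resulting diagonal-subtraction problem is strictly positive, so that the inverse map $q_i^* = D_{ii}/l_i^*$ recovers a legitimate stability-scaling solution (the diagonal-subtraction problem as stated does not explicitly enforce $l_i > 0$). I would argue this purely from feasibility: any $l$ for which $P - {\rm diag}(l)$ is continuous-time stable must, by Lemma \ref{lemma:d}(1), admit some $d > 0$ with $(Pd)_i \leq l_i d_i$ componentwise; strong connectivity of $P$ forces every row of $P$ to be nonzero, so $(Pd)_i > 0$ whenever $d > 0$, which in turn forces $l_i > 0$. With that positivity in hand, the bijection between feasible $q$ and feasible $l$ is complete, the two objectives agree under the substitution up to the fixed positive weights $c_i$, and the optima coincide, completing the reduction.
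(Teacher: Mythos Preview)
Your proof is correct and reaches the same diagonal-subtraction instance as the paper (matrix $P$, cost coefficients proportional to $1/D_{ii}$), but by a shorter route. The paper argues the equivalence of the two stability constraints by invoking Lemma~\ref{lemma:d}(3) twice, passing through discrete-time stability of $D^{-1}{\rm diag}(q)P$ as an intermediate step. You instead apply Lemma~\ref{lemma:d}(1) directly to each of ${\rm diag}(q)P - D$ and $P - {\rm diag}(l)$, and link the resulting $d>0$ certificates by left-multiplying the inequality by the positive diagonal ${\rm diag}(1/q)$. This is more elementary and avoids the discrete-time detour entirely; it works because both matrices share the off-diagonal sparsity pattern of $P$ and hence inherit its strong connectivity, which is exactly what Lemma~\ref{lemma:d}(1) needs.

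Your closing paragraph on positivity of $l^*$ is also a point the paper leaves implicit. Your argument that any feasible $l$ must be strictly positive---because $(Pd)_i>0$ for every $d>0$ once every row of $P$ is nonzero, which strong connectivity guarantees for $n\geq 2$---is sound, and it is needed to ensure the map $q_i = D_{ii}/l_i$ lands back in the stability-scaling feasible set.
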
 
	
	\begin{proof} Indeed, starting from 
		\[ \min_{q_1>0, \ldots, q_n>0} \sum_{i=1}^n c_i q_{i}^{-1} \] 
		\[ \mbox{ such that } {\rm diag}(q_1, \ldots, q_n) P - D \mbox{ is continuous time stable}, \] we apply Lemma \ref{lemma:d}, part (c) to obtain that the constraint  is the same as \[ D^{-1} {\rm diag}(q) P \mbox{ is discrete-time stable}. \] Here we used crucially that the matrix $P$ is nonnegative and strongly connected and that the diagonal matrix $D$ is positive, ensuring that the assumptions of Lemma \ref{lemma:d}, part 3 are satisfied. We next use the same Lemma \ref{lemma:d} part (c) again to obtain that this the last constraint is identical to \[ P - {\rm diag}(q_1, \ldots, q_n)^{-1} D \mbox{ is continuous time stable } \] For simplicity, it is convenient to introduce the notation $\delta_i = q_i^{-1}, i = 1, \ldots, n$. In terms of these new variables $\delta_i$, we have the problem 
		\[ \min_{\delta_1>0, \ldots, \delta_n>0} \sum_{i=1}^n c_i \delta_i \] 
		\[ \mbox{ subject to } P - {\rm diag}(d_1 \delta_1, \ldots, d_n \delta_n ) \mbox{ is continuous-time stable}, \] where $D={\rm diag}(d_1, \ldots, d_n)$. Finally, introducing variables $u_i=d_i \delta_i, i = 1, \ldots, n$, we have
		\[ \min_{u_1>0, \ldots, u_n>0} \sum_{i=1}^n c_i \frac{u_i}{d_i} \]
		\[ \mbox{ subject to } P - {\rm diag}(u_1, \ldots, u_n ) \mbox{ is continuous-time stable }, \] which is an instance of diagonal subtraction with cost coefficients $c_i/d_i$. 
	\end{proof}

	\medskip
	
	With all these preliminaries in place, we are now ready to turn to the proof of our main theoretical resu.t 
	
	\medskip

	\begin{proof}[Proof of Theorem \ref{thm:mainthm}, parts 1 and 2] In Lemma \ref{lemma:ss}, we have shown how to reduce the optimal lockdown  problem for SIS and COVID-19 dynamics to stability scaling. In the subsequent Lemma \ref{lemma:ds}, we showed how to reduce stability scaling to diagonal subtraction. We now prove Theorem \ref{thm:mainthm} by reducing diagonal subtraction to matrix balancing. 
		
		Indeed, we need to solve 
		\[  \min_{l_1 > 0, \ldots, l_n >0} \sum_{i=1}^n c_i l_i  \] 
		\[ \mbox{ such that } A -  {\rm diag}(l_1, \ldots, l_n)  \mbox{ is continuous time stable}, \] where $A$ is a nonnegative matrix. We use  part (a) of Lemma \ref{lemma:d} to write this ass
		\begin{equation} \min_{l_1, \ldots, l_n, d_1, \ldots, d_n} \sum_{i=1}^n c_i l_i \label{eq:first} \end{equation}
		\[ \mbox{ such that } Ad - {\rm diag}(l_1 d_1, \ldots, l_n d_n) \leq 0 \]
		\[ d > 0, l > 0  \]  The difficulty here is that we are optimizing over $l$ and $d$, and the constraint includes a product of the variables. We try to make this into a simpler problem by introducing the variables $\mu_i = l_i d_i, i = 1, \ldots, n$.  We change variables from $(l,d)$ to $(\mu,d)$ to obtain the equivalent problem: 
		\begin{equation} \label{eq:second}  \min_{\mu_1, \ldots, \mu_n, d_1, \ldots, d_n} \sum_{i=1}^n c_i \mu_i/d_i \end{equation}
		\[ \mbox{ such that } Ad  \leq  \mu \]
		\[ d > 0, \mu >0 \] 
		In this reformulation, the constraint is linear, and the nonlinearity has been moved to the objective. 
		
		Next, consider what happens when we fix some $d>0$ and consider the best $\mu$ for that particular  $d$. Because $c_i > 0$ for all $i$, and $d>0$ , we want to choose each element $\mu_i$ as small as possible. The best choice is clearly $\mu = Ad$; no component of $\mu$ can be lower than that by our constraints. Because $A$ is a nonnegative  matrix without zero rows by strong connectivity, this results in a feasible $\mu>0$. 
		
		Thus we can transform this into a problem which optimizes over just the variables $d_1, \ldots, d_n$: 
		\[ \min_{d_1>0, \ldots, d_n>0} \sum_{i=1}^n c_i [Ad]_i/d_i.\] Our next step is to change variables  one more time. Since $d$ is a positive vector, it is natural to write $d_i = e^{g_i}$. In terms of the new variables $g_1, \ldots, g_n$,  we just need to minimize the function $f(g_1, \ldots, g_n)$ defined as  
		\[ \min f(g_1, \ldots, g_n) := \min_{g_1, \ldots, g_n} \sum_{i=1}^n \sum_{j=1}^n c_i a_{ij} e^{g_j-g_i}. \] We now have an unconstrained minimization of a convex function. In particular, if we can find a point where $\nabla f(g_1, \ldots, g_n)=0$, we will have found the global optimum. 
		
		Let us consider the $k$'th component of the gradient of $f(g_1, \ldots, g_n)$. We have the equation 
		\begin{eqnarray} 0 = \frac{ \partial f}{\partial g_k} (g_1, \ldots, g_k)  & = &  \sum_{j \neq k}  -c_k a_{kj} e^{g_j-g_k} + \sum_{i \neq k} c_i a_{ik} e^{g_k - g_i} \nonumber \\
			& = & \sum_{j=1}^n  -c_k a_{kj} e^{g_j-g_k} + \sum_{i=1}^n c_i a_{ik} e^{g_k - g_i} \label{eq:zerograd}
		\end{eqnarray} 
		Let $D_g = {\rm diag}(e^{g_1}, \ldots, e^{g_n})$ and $D_c = {\rm diag}(c_1, \ldots, c_n)$.
		Observing that 
		\[ [D_g^{-1} D_c A D_g]_{uv} = e^{-g_u} c_u a_{uv} e^{g_v} = c_u a_{uv} e^{g_v - g_u},
		\] we have that Eq. (\ref{eq:zerograd}) can be written as 
		\[ - [D_{g}^{-1} D_c A D_g {\bf 1}]_k + [{\bf 1}^{\top} D_g^{-1} D_c A D_g ]_k =  0 \] 
		In other words, the matrix $D_g^{-1} D_c A D_g$ needs to have it's $k$'th column sum equals to its $k$'th row sum. Thus provided we can find a balancing of the matrix $D_c A$, we will have found the minimum of $f(g_1, \ldots, g_n)$ as needed. 
		
		However, a matrix $X$ can be balanced if and only if the underlying graph $G(X)$ is strongly connected \cite{kalantari1997complexity}. Since we have assumed that $c_i$ are all positive and $A$ is strongly connected, we have that $D_c A$ can be balanced. We conclude that the unique minimum of $f(g_1, \ldots, g_n)$ can be recovered from the balancing of this matrix. This concludes the reduction of diagonal subtraction to matrix balancing.

	\end{proof}

	\bigskip
	
	We next turn to the third part of Theorem \ref{thm:mainthm}. We will not be using any of the reductions used to prove the first two parts of the theorem. The first steps of our proof are very similar to the proof of the main result of \cite{birge2020controlling}, which gave a semidefinite formulation of a lockdown problem ensuring asymptotic stability in a model involving births and deaths. We diverge from \cite{birge2020controlling} when we write the problem as a covering semi-definite problem as described in Section \ref{sec:covering} and analyze its complexity. 
	
	\medskip

	\begin{proof}[Proof of Theorem \ref{thm:mainthm}, part 3] We can simply repeat the proof of Lemma \ref{lemma:ss} to reduce the constrained lockdown problem for network SIS and COVID-19 processes to stability scaling; except that, without Assumption \ref{as:spread}, we now  have to keep the constraint that $z \in [0,1]^n$. Indeed, note that the only place where Assumption \ref{as:spread} was used in those two proofs was to argue that we can  omit that constraint. 
		
		In this way, we can reduce the constrained lockdown problem to the following:
		\[\min_{z_1, \ldots, z_n} \sum_{i=1}^n c_i z_i^{-1} \]
		\[
		{\rm diag}(a) C {\rm diag}(z_1, \ldots, z_n) B^{\top} - qI \mbox{ is continuous time stable } \] 
		\[ z \in [0,1]^n. \] In this SIS case, $a = {\bf 1}$ while $q=\gamma - \alpha$. In the COVID-19 case, from Eq. (\ref{eq:COVIDreduction}) we have that $a =s(t_0)$ while $q = b_1^{-1}$. Note that this optimization problem is only over the variables $z_1, \ldots, z_n$; all other quantities appearing in it are parameters.

		Suppose next that matrices $C$ and $B$ are chosen according to Eq. (\ref{eq:bc}). In other words, we have the problem 
		\[  \min_{z_1, \ldots, z_n} \sum_{i=1}^n c_i z_i^{-1}  \]
		\[ {\rm diag}(a) \tau {\rm diag}(z_1, \ldots, z_n) D_1 \tau^{\top} D_2   - qI \mbox{ is continuous time stable }
		\]
		\[ z \in [0,1]^n, \] where, recall, the matrices $D_1,D_2$ are defined immediately after Eq. (\ref{eq:bc}). 
		
		Since the nonzero eigenvalues of a product of two matrices do not change when we flip the order in which they are multiplied, the second constraint is equivalent to 
		\[ 
		{\rm diag}(z_1, \ldots, z_n) D_1 \tau^{\top} D_2 {\rm diag}(a) \tau - qI \mbox{ is continuous time stable }. \]  Applying part (3) of Lemma \ref{lemma:d}, we get that this is the same as  
		\[ q^{-1} {\rm diag}(z_1, \ldots, z_n) D_1 \tau^{\top} D_2 {\rm diag}(a) \tau  \mbox{ is discrete time stable }. \] Here we used the strong connectivity and positive diagonal of the matrix $\tau$.  Next, applying the same lemma again, we further get the equivalence to 
		\[  \tau^{\top} D_2 {\rm diag}(a) \tau - D_1^{-1}  {\rm diag}(z_1, \ldots, z_n)^{-1} qI \mbox{ is continuous time stable }. \] This can be simplified further by observing that a symmetric matrix is stable if and only if it is non-positive-definite. 
		
		Indeed, let us introduce the notation $u_i = (D_1)_{ii}^{-1} q z_i^{-1}$, and $Q = \tau^{\top} D_2 {\rm diag}(a) \tau$. We can therefore write our problem as 
		\[ \min \sum_{i=1}^n \frac{c_i (D_1)_{ii}}{q} u_i \] 
		\[ 0 \succeq Q - {\rm diag}(u_1, \ldots, u_n) \] 
		\[ u_i \in [q (D_1)_{ii}^{-1}, +\infty) \mbox{ for all } i=1, \ldots, n. \] 
		Further defining $c_i' = c_i (D_1)_{ii}$ and the $2n \times 2n$ positive-semidefinite matrices $B_i = e_i e_i^{\top} + e_{n+i} e_{n+i}^{\top}$, we can write this as 
		\begin{eqnarray} 
			&& 
			\min \sum_{i=1}^n c_i' u_i \nonumber \\
			&& \sum_{i=1}^n u_i B_i \succeq \left(
			\begin{array}{cc} 
				Q & 0 \\ 
				0 & 0 
			\end{array}
			\right) 
			+ 
			\left(
			\begin{array}{cc} 
				0 & 0 \\ 
				0 & q {\rm diag}( (D_1)_{11}^{-1}, \ldots,  (D_1)_{nn}^{-1}) 
			\end{array}
			\right) \label{eq:covering}
		\end{eqnarray} 
		As mentioned earlier, this is known as a ``covering SDP.'' In the recent paper, \cite{jambulapati2020positive}, it was shown that it can be be solved, up to various logarithmic factors, in matrix multiplication time. We next discuss in detail the results in \cite{jambulapati2020positive} and how they are applicable to Eq. (\ref{eq:covering}).  
		
		Specifically, in \cite{jambulapati2020positive}, an algorithm for checking whether the system of equations 
		\[ \sum_{i} x_i C_i \succeq I, \sum_i x_i P_i \leq (1-\delta) I, x \geq  0, \] is feasible was given; here $C_i, P_i$ are arbitrary nonnegative definite matrices. To begin applying this to our problem, let us choose $\delta=1/2$ and $P_i = \frac{1}{2n} u^{-1} c_i' {\bf 1} {\bf 1}^{\top}$ for all $i$. In that case, we are checking the feasibility of 
		\begin{equation} \label{eq:simplecovering} \sum_i x_i C_i \succeq I, \sum_i c_i' x_i \leq u, x \geq 0. 
		\end{equation} 
		The running time of the algorithm from \cite{jambulapati2020positive} for checking whether this is feasible is 
		$\widetilde{O}(\sum_i {\rm nnz}(C_i) + n^{\omega})$, where the $\widetilde{O}$ hides a multiplicative factor which is a power of logarithm is $O \left( n^2  \max_i \lambda_{\rm max}(C_i)/\lambda_{\rm max}(P_i) \right)$ (see discussion under ``Main Claim'' in \cite{jambulapati2020positive}, and note that, in our case, the number of matrices and the dimension are proportional). In our case, we will choose $C_i = e_i e_i^{\top} + e_{n+i} e_{n+i}^{\top}$ so that $$\lambda_{\rm max}(C_i) \leq 1, \lambda_{\rm max}(P_i) = \frac{1}{2} \max_i c_i' u^{-1},$$ and plugging the definition of $c_i'$, we have that the factor being hidden in the tilde is a power of logarithm of  
		\begin{align*} n^2 \frac{1}{\max_i \lambda_{\rm max}(P_i)} & = 2n^2 \frac{1}{\max_i c_i' u^{-1}} \\
			& = 2n^2u \frac{1}{\max_i c_i'} \\ 
			& = 2n^2 u \frac{1}{ \max_i c_i(D_1)_{ii}} \\ 
			& \leq 2n^2 u^{-1}    \frac{1}{    \max_i c_i \left( \sum_{j} N_j \tau_{ji} \right)^{-1}}.
		\end{align*}

		To summarize, we have discussed how long it takes to check feasibility of Eq. (\ref{eq:simplecovering}). The conclusion is that the running time $\widetilde{O}(\sum_i {\rm nnz}(C_i) + n^{\omega})$, where the the tilde notation hides a factor that is logarithmic in the problem parameters $n, u^{-1}, \min_i  \sum_j c_i^{-1}  N_j \tau_{ji}$.
		
		Now let us transition from just checking feasibility of Eq. (\ref{eq:simplecovering}) to minimizing $\sum_{i=1}^n c_i' x_i$ subject to the constraint $\sum_i x_i C_i \succeq 0, x \geq 0$. We can, of course, find an $\epsilon'$-additive approximation of this problem with a lot of feasibility checks. 
		
		Let us suppose that we actually know that $x_i^* \geq l_i$ for some $l_i$ (we do, in fact, know this because Eq. (\ref{eq:covering}) forces 
		\begin{align*} x_i^* \geq q (D_1)_{ii}^{-1} & = q \sum_{k} N_{k} \tau_{ki} \\ & := q N_i^{\rm visit} \\
			:= l_i
		\end{align*} If $c^*$ is the optimal cost, then we need to do $O \left( \log (c^*/\sum_i c_i' l_i) +  \log \epsilon'^{-1} \right)$ feasibility checks to get an $\epsilon$ additive approximation to the optimal cost. Upper bounding $c^* \leq n c_{\rm max} \max_i x_i^*$, we see that we need to do 
		\[ O \left( \log \frac{n \max_i c_i \max_i x_i^*}{q \sum_i c_i' N_{i}^{\rm visit}} + \log \epsilon'^{-1}\right), \] feasibility checks.

		
		Now we cannot quite apply this directly to  Eq. (\ref{eq:covering}), since one issue remains. That issue is that  we have discussed an algorithm to handle the constraint $\sum_i u_i B_i \geq I$; but in Eq. (\ref{eq:covering}), the right-hand side is not the identity matrix. 
		
		However, the right-hand side of Eq. (\ref{eq:covering})  is symmetric, so it can be decomposed as $U \Lambda U^{\top}$ for orthogonal $U$ and diagonal $\Lambda$. Since $A \succeq B$ implies $Z A Z^{\top} \succeq Z B Z^{\top}$, by choosing $Z =  \Lambda^{-1/2} U^{\top} $ we can equivalently rewrite Eq. (\ref{eq:covering}) as 
		\[ \min \sum_{i=1}^n c_i' u_i \] 
		\[ 
		\sum_{i=1}^n u_i \Lambda^{-1/2} U^{\top} B_i U \Lambda^{-1/2} \succeq I. 
		\] Letting $C_i' = \Lambda^{-1/2} U^{\top} B_i U \Lambda^{-1/2}$, we thus obtain a problem to which the algorithm of Eq. (\cite{jambulapati2020positive}) is applicable. Unfortunately, in the transformation the number of nonzero entries in the matrix $C_i$ changes. Nevertheless,  as explained in \cite{jambulapati2020positive} (in footnote 6 in the arxiv version), despite the change of variables, the scaling in the running time remains with the number of nonzero matrices in the matrices $B_i$. However, we now need to add the complexity of computing the eigendecomposition of the right-hand side of Eq. (\ref{eq:covering}) to out complexity bounds. 
		
		We can now put together everything we have observed above. The total complexity scales as the complexity of matrix multiplication and computing the eigendecomposition of a symmetric matrix, which we takes $O(n^3)$ exact arithmetic operations \cite{pan1999complexity}. This is multiplied by a power of a logarithm in the quantities 
		\[ \epsilon'^{-1}, n, \max_i c_i, \max_i x_i^*, \frac{b_1}{ \sum_i c_i' N_i^{\rm visit}}, u^{-1}, \lambda_{\rm max}(C'), \min_i  \sum_j c_i^{-1}  N_j \tau_{ji} \] 
		All of these are listed in the theorem statement, except $\max_i x_i^*, u^{-1}, \frac{1}{\sum_i c_i' N_i^{\rm visit}}, \lambda_{\rm max}(C')$. Let's upper bound each of these four quantities. 
		
		Since After Eq. (\ref{eq:covering}), we switched from $u$ to $x$ as we began discusisng the results of \cite{jambulapati2020positive}, we have that 
		\[ \max_i x_i^* = \max_i u_i^* \leq q \max_i q (D_1)_{ii}^{-1} (\min_i z_i^*)^{-1} = b_1^{-1} \left( \max_a \sum_k N_k \tau_{ka} \right) (\min_i z_i^*)^{-1} \] 
		Next, the smallest $u$ will be is $$\sum_i c_i' l_i = q \sum_i c_i' N_{i}^{\rm visit}.$$ Thus we can ``kill two birds with one stone'' by first upper bounding $u^{-1}$ as 
		\[ \left( \sum_i c_i' l_i \right)^{-1} = \left( \sum_i c_i (D_1)_{ii}^{-1} q \sum_k N_{k} \tau_{ki} \right)^{-1} \] 
		which is a polynomial in $$\max_i c_i^{-1}, \max_i \left( \sum_k N_k \tau_{ki} \right)^{-1}, b_1,$$ and this also upper bounds $$\sum_i c_i' N_{i}^{\rm visit}$$ by a polynomial in the same quantities and $q^{-1}$. 
		Next, 
		\[ \lambda_{\rm max}(C_i') \leq \frac{1}{\min_i \Lambda_{ii}} \leq \frac{1}{\min(\lambda_{\rm min}(Q), q \min_a N_a^{\rm visit})}, \] which is in turn a polynomial in 
		\[ \frac{1}{\lambda_{\rm min}(Q)}, q^{-1}, \left( \min_a N_k \tau_{ka} \right)^{-1} \] Finally, since $Q = \tau^{\top} D_2 {\rm diag}(a) \tau$, we have that 
		\begin{align*} \lambda_{\rm min}(Q) &  \geq \left( \min_i (D_2)_{ii} a_i \right)  \lambda_{\rm min}(\tau^{\top} \tau) \\ 
			& = \lambda_{\rm min}(\tau^{\top} \tau) \left( \min_i \beta^{\text s} N_i s_i(t_0) \right).
		\end{align*} To summarize, the arithmetic complexity of computing an $\epsilon'$ additive approximation to the optimal solution is $O(n^3)$ times a polylog in the variables
		\begin{small}
			\[ \epsilon'^{-1}, n, \max_i c_i, \max_i c_i^{-1}, (\min_i z_i^*)^{-1}, b_1, b_1^{-1}, \max_a \sum_k N_k \tau_{ka}, \left(\min_a \sum_k N_k \tau_{ka} \right)^{-1},  \left( \lambda_{\rm min}(\tau^{\top} \tau) \right)^{-1} , \left(  \min_i \beta^{\text s} N_i s_i(t_0) \right)^{-1}.\] 
		\end{small} Finally, this is to compute and $\epsilon'$-approximate solution of the modified cost function in Eq. (\ref{eq:covering}) which omits a factor of $q^{-1}$. To account for this, we can simply divide $\epsilon'$ by a factor depending on $q$; but since $q=b_1^{-1}$ and both $b_1, b_1^{-1}$ are already in the list of quantities under the poly log term, this does not change anything. 
	\end{proof} 
	
	\section{Connection to the Reproduction Number\label{sec:repr}}

\aoa{We now make an explicit a connection between the constraint we use on the growth rate of the epidemic, namely the constraint $\lambda_{\rm max}(M(t_0)) \leq -\alpha$, and the basic reproduction number $R(t_0)$, defined as the expected number of secondary cases produced in a completely susceptible population by a typical individual at time $t_0$. In particular, we show that our constraint is 
completely equivalent to an upper bound on the reproduction number. }

\aoa{\begin{proposition} Given any $r \in [0,1]$, one can find an $\alpha$ in the domain $\alpha \in [0, \min (r^{\text s}, \epsilon + r^{\text a})]$ such that the constraints 
\[ R(t_0) \leq r \] and 
\[ \lambda_{\rm max}(M(t_0)) \leq - \alpha \]  
are equivalent. The converse is also true: given $\alpha$ in the above domain, one can find an $r \in [0,1]$ so that the above two constraints are equivalent.  
\end{proposition}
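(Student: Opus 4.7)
The plan is to observe that both constraints reduce to bounds on the same scalar $\mu := \rho({\rm diag}(s(t_0))A)$, where $\rho$ denotes the spectral radius, and then to invert this common parameterization to read off the correspondence between $\alpha$ and $r$. For the reproduction-number side, I would apply the standard next-generation matrix construction to the infected compartments of Eq.~\eqref{eq:COVID}. Splitting their linearization at the disease-free state as $F-V$ with
\[ F = \begin{pmatrix} \beta^{\text a}{\rm diag}(s(t_0))A & \beta^{\text s}{\rm diag}(s(t_0))A \\ 0 & 0 \end{pmatrix}, \qquad V = \begin{pmatrix} (\epsilon + r^{\text a}) I & 0 \\ -\epsilon I & r^{\text s} I \end{pmatrix}, \]
a direct block computation shows that the bottom block-row of $FV^{-1}$ vanishes, so its spectral radius equals that of its top-left $n\times n$ block. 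This yields $R(t_0) = b_1(0)\,\mu$, where $b_1(\alpha) = \dfrac{\beta^{\text s}\epsilon + \beta^{\text a}(r^{\text s}-\alpha)}{(\epsilon + r^{\text a}-\alpha)(r^{\text s}-\alpha)}$ is exactly the scalar introduced in the proof of Lemma \ref{lemma:COVID}. The constraint $R(t_0)\leq r$ is therefore equivalent to the scalar inequality $\mu \leq r/b_1(0)$.

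For the eigenvalue-bound side, I would re-use the chain of reductions already worked out inside the proof of Lemma \ref{lemma:COVID}. Applying part~(3) of Lemma \ref{lemma:d} to the splitting $M(t_0)+\alpha I = L - (D-\alpha I)$ defined there, and then swapping the order of factors in $LD^{-1}$ as in Eq.~\eqref{eq:ldeigs}, one obtains that $\lambda_{\rm max}(M(t_0)) \leq -\alpha$ is equivalent to discrete-time stability of a matrix whose nonzero eigenvalues are those of $b_1(\alpha)\,{\rm diag}(s(t_0))A$. This reduces to the single scalar condition $\mu \leq 1/b_1(\alpha)$.

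The two constraints $\mu \leq r/b_1(0)$ and $\mu \leq 1/b_1(\alpha)$ coincide exactly when $r = b_1(0)/b_1(\alpha)$, which defines the desired correspondence. To upgrade this into a bijection between the closed intervals $[0,\min(r^{\text s},\epsilon+r^{\text a})]$ and $[0,1]$, I need to verify that $b_1$ is continuous, strictly increasing, equal to $b_1(0)$ at the left endpoint, and blows up at the right endpoint (so the $r=0$ boundary case is matched by a limit). A quotient-rule computation, after cancellation, yields the manifestly positive identity
\[ (\epsilon + r^{\text a}-\alpha)^2(r^{\text s}-\alpha)^2\, b_1'(\alpha) = \beta^{\text a}(r^{\text s}-\alpha)^2 + \beta^{\text s}\epsilon\bigl[(r^{\text s}-\alpha) + (\epsilon + r^{\text a}-\alpha)\bigr], \]
which is strictly positive on the open interval. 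The only mildly delicate step is this monotonicity check: writing out $b_1'$ naively gives a difference of positive terms whose sign is not obvious, and the algebraic collapse above is what makes the bijection go through cleanly.
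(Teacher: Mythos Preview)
Your proposal is correct and follows essentially the same route as the paper: both sides are reduced to upper bounds on $\rho({\rm diag}(s(t_0))A_z)$ via the next-generation matrix formula $R(t_0)=b_1(0)\,\mu$ and the reduction in Lemma~\ref{lemma:COVID} giving $\mu\leq 1/b_1(\alpha)$, and the correspondence $r=b_1(0)/b_1(\alpha)$ is then read off together with the endpoint behavior. The only difference is that you verify monotonicity of $b_1$ by an explicit derivative computation, whereas the paper simply observes that in the rewritten form $b_1(\alpha)^{-1}=\dfrac{\epsilon+r^{\text a}-\alpha}{\beta^{\text a}+\beta^{\text s}\epsilon/(r^{\text s}-\alpha)}$ the numerator is decreasing and the denominator increasing in $\alpha$.
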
 }

\begin{proof} 

\aoa{The argument essentially reprises the proof of  Lemma \ref{lemma:COVID}. Indeed, in that lemma we considered the matrix
\[ M(t_0) = 
\begin{pmatrix} 
\beta^{\rm  a}  {\rm diag}(s(t_0)) A_z - \epsilon - r^{\rm a} & \beta^{\rm s} {\rm diag}(s(t_0)) A_z \\ 
\epsilon & -r^{\rm s} 
\end{pmatrix}.
\] In Eq. (\ref{eq:im:stability}), it was shown that the constraint 
\[ \lambda_{\rm max}(M(t_0) \leq \alpha \] 
was equivalent to the constraint 
\[ \lambda_{\rm max}({\rm diag}(s(t_0)) A_z b_1(\alpha) ) \leq 1.  \] Here we write the expression $b_1(\alpha)$, unlike in Eq. (\ref{eq:im:stability}),
this way to highlight the dependence on $\alpha$. 
Using the definition of $b_1(\alpha)$ from that lemma, the last condition can be rewritten as 
\begin{align} 
 \rho ( {\rm diag}(s(t_0)) A_z) & \leq   b_1(\alpha)^{-1}  \nonumber \\ 
 & =  \frac{(\epsilon + r^{\text a} -\alpha)(r^{\text s} - \alpha)}{\beta^{\text s} \epsilon +\beta^{\text a}(r^{\text s} - \alpha)} \nonumber \\ 
 & = \frac{\epsilon + r^{\text a} - \alpha}{\beta^{\text a} + \beta^{\text s} \epsilon / (r^{\text s} - \alpha)} \label{eq:fchoice} 
 \end{align} 
 We thus have that an upper bound the condition $\lambda_{\rm max}(M(t_0)) \leq -\alpha$ is equivalent to the upper bound $\rho ( {\rm diag}(s(t_0)) A_z) \leq f(\alpha)$,  where $f(\alpha)$ is a monotonically decreasing
 function of $\alpha$. 
 We next argue that a similar finding holds for the constraint $R(t_0) \leq r$. }

\aoa{To do this, we need to express the reproduction number in terms of the matrix $M(t_0)$. We use an expression from \cite{van2002reproduction} in the form discussed in the recent paper \cite{smith2021convex}. Indeed, Definition 3 of \cite{smith2021convex} shows\footnote{\aoa{Specifically, to apply those results here, the variable $y$ of \cite{smith2021convex} represents the susceptible agents,
while the variable $x$ of that paper collects all the asymptotmatic and symptomatic locations. The function $f(x,y)$ of \cite{smith2021convex}, representing the inflow of infections, is taken to be zero.}}, referring to \cite{van2002reproduction}, that if we split  
\[ M(t_0) = F + V \] where 
\[ F = \begin{pmatrix} 
\beta^{\rm  a}  {\rm diag}(s(t_0)) A_z  & \beta^{\rm s} {\rm diag}(s(t_0)) A_z \\ 
0 & 0 
\end{pmatrix}
\] and 
\[ V = 
\begin{pmatrix} 
-\epsilon -r^{\rm a} & 0 \\ 
\epsilon & -r^{\rm s}  
\end{pmatrix} 
\] then 
\[ R(t_0) = \rho(F V^{-1}). \] }

\aoa{However, we have already computed this spectral radius in Eq. (\ref{eq:ldeigs}):
\begin{align*} R(t_0)& =  \rho ( {\rm diag}(s(t_0)) A_z )  b_1(0) \\ 
& \leq  \rho ( {\rm diag}(s(t_0)) A_z ) \frac{\beta^s \epsilon + \beta^a r^s}{(\epsilon + r^a)r^s}
\end{align*} 
Thus the constraint 
\[ R(t_0) \leq r \] 
is equivalent to 
\begin{equation} \label{eq:gchoice} \rho({\rm diag}(s(t_0) A_z) \leq r \frac{(\epsilon + r^a)r^s}{\beta^s \epsilon + \beta^a r^s} \end{equation} We have thus shown that the constraint $R(t_0) \leq r$ is equivalent to the condition 
$\rho({\rm diag}(s(t_0) A_z) \leq g(r)$, where $g(r)$ is a monotonic function of $r$. Since both constraint on $R(t_0)$ and $\lambda_{\rm max}(M(t_0))$ are equivalent to constraints on 
$\rho({\rm diag}(s(t_0) A_z)$, they are equivalent to each other. }

\aoa{Finally,  inspecting Eq. (\ref{eq:fchoice}) and Eq. (\ref{eq:gchoice}), we see that $\alpha = 0$ corresponds to $r=1$, which is as one might expect. On the other hand, we see that as $\alpha$ approaches the endpoint
of its domain, $\min (r^{\text s}, \epsilon + r^{\text a})$, we have that the corresponding $r$ approaches zero. This concludes the proof}. \end{proof}

	\section{Empirical Data Analysis}\label{data_sources} \quad
	
	We now describe how our data was obtained and how our simulations were conducted in a higher level of detail compared to the discussion in the main body of the paper. 
	
	\medskip

	\textbf{Disease parameters.} The disease parameters ($\beta^{\text s}$, $\gamma$,  $r^{\text a}$,  $r^{\text s}$, $\epsilon$, $\beta^{\text a}$) are essential to derive the optimal lockdown rate $z$. To provide valid results and fully understand the role of the disease parameters, we use three different groups of estimates provided in recent literature \cite{birge2020controlling} \cite{giordano2020modelling} \cite{bertozzi2020challenges} to construct the disease parameters, respectively. However, none of those models quite match our two-state COVID-19 model, which is why one needs to be careful in re-using the parameters estimated in those models. 
	
	We directly use the recovery rate $\gamma$, $r^{\text a}$, and $r^{\text s}$ from these literature. The reason is that these quantities have the interpretation  that infected people recover at a rate $\gamma$ (or $r^{\text a}$, $r^{\text s}$ in the COVID-19 case) per unit time, which should carry over from model to model. Similarly, we will also reuse the parameter $\epsilon$ from literature \cite{birge2020controlling} \cite{giordano2020modelling} which represents the rate that an infected individual develops symptoms.
	
	In the SIS case, we  choose the constant $\zeta$ (from definition of matrix $D_2$ in Eq. (\ref{eq:bc})) to match the initial growth of the models in the literature rate; the ``initial growth rate'' is  $\lambda_{\text{max}}(A) -\gamma$. We do likewise in the SIR case. 
	
	In the case of COVID-19, we also need to choose the  transmission rates $\beta^{\text a}, \beta^{\text s}$, which we do as follows. 
	Our first step is to let  $\beta^{\text a} = \hat{\alpha} \beta^{\text s}$ and assume we can reuse $\hat{\alpha}$ from the existing literature, as this scalar measures the transmission rate difference of symptomatic individuals and asymptomatic individuals. Thus we only need to decide how to choose $\beta^{\text s}$. Our second step is to choose $\beta^{\text s}$ to match the initial growth rates of the models in the literature. This is  $\lambda_{\rm max}(M(t_0))$, where $M(t_0)$ is defined in \eqref{COVID_19}. Note that as a consequence of this procedure, the matrix $A$ used for in COVID-19 case may not match the matrix $A$ used in the SIS case.

	Finally, we remark that the initial growth rate of the models in literature \cite{birge2020controlling} \cite{giordano2020modelling} \cite{bertozzi2020challenges} can also be computed with the similar method. The resulting parameter values offered by literature \cite{birge2020controlling} \cite{giordano2020modelling} \cite{bertozzi2020challenges} are summarized in \stref{tab: model_parameters}. Our procedure is essentially the same as matching the $R_0$ of the models (see e.g., \cite{diekmann1990definition,heesterbeek2007type}). 
	
	One caveat is the in the model from \cite{bertozzi2020challenges}, the parameters $r^{\text a}$, $r^{\text s}$, and $\hat{\alpha}$ are missing. In our experiments using data from \cite{bertozzi2020challenges}, we simply choose $r^{\text a} = r^{\text s} = \gamma$, and let $\hat{\alpha} = 0.6754$ as in \cite{giordano2020modelling}.
	
	\medskip
	
	\textbf{Populations ($N_i$).} To define the populations of each node in the network, we adopt the 2010 Census Bureau data \cite{population} at the level of the counties in the New York state. These populations ($N_i$) will be used to construct the matrix $A$ as in \eqref{eq:bc} in the three models.
	
	\medskip
	
	\textbf{Economic coefficients ($c_i$).} In the cost function, $c_i$ represents the economic coefficients, which captures the cost of closing down site or location $i$. As the economic activity closely related to the number of employees, we let $c_i$ be proportional to the number of employees in location $i$. Specifically, suppose $e_i$ is the number of employees of node $i$, $e_{max}$ is the maximum of the number of the employees in the network, we define $c_i = e_i/e_{max}$. 
	We use LEHD Origin-Destination Employment Statistics (LODES) dataset \cite{employment} to obtain the number of employees in each county of NYS. Particularly, we use the Residence Area Characteristics (RAC) type of this dataset from \cite{employment}. This dataset is based on census at the  block geographic level; hence we aggregate the data to the county level.
	
	\medskip
	
	\textbf{Travel rate ($\tau_{ij}$).} To construct matrix $A$ for the three models, we need travel rate matrix $\tau$, where $\tau_{ij}$ represents the rate at which an individual travels from location $i$ to location $j$. We use the Social Distancing Metrics dataset \cite{travel} from SafeGraph to generate $\tau$. This dataset was collected using a panel of GPS pings from anonymous mobile devices, and it is based on Census Block Group levels. For each device/individual, the dataset identifies a ``home'' CBG, and the median daily home-dwell-time is provided for each CBG. Additionally, this dataset provides the daily number of trips that the people go from their home CBG to various destination CBGs. 
	
	In our empirical simulations, we only consider the network of New York State (i.e.,  we do not consider the trips to places outside the New York State). For each node, we aggregate the number of trips to the county level and obtain the number of trips from one node to another. We can also obtain the home-dwell-time of each node as the median of the home-dwell-time among all the CBGs (daily median home-dwell-time) in this county. Then, we define 
	\begin{equation}\label{eq: definition of tau}
		\tau_{ij} = \left(1 - \frac{h_i}{1440} \right)\cdot \frac{k_{ij}}{\sum_a k_{ia}},
	\end{equation}
	where $h_i$ is the home-dwell-time of node $i$ (measured in minutes), $k_{ia}$ is the number of trips from node $i$ to node $a$. We divide $h_i$ by $1440$ because the latter is the total number of minutes in a day. 
	
	\medskip 
	
	\textbf{Initial susceptible rate $s(t_0)$.} For the network SIR model and COVID-19 model, we need the initial susceptible rate $s(t_0)$ to derive the optimal lockdown rate. For each node of our network, we can get the cumulative confirmed cases on each day from the dataset provide in \cite{susceptible}. However, recent studies \cite{nishiura2020estimation} suggest that many people are infected with COVID-19 but not showing symptoms. To account for this, we divide the cumulative confirmed cases by a reporting rate to estimate the actual total infections. For the reporting rate, we use the estimate $0.14$ provided in recent literature \cite{birge2020controlling} \cite{hortaccsu2020estimating}. Then we obtain $s_i(t_0) = 1 - \frac{I_i(t_0)}{0.14N_i}$, where $I_i(t_0)$ represents the number of the cumulative confirmed cases of node $i$ at time point $t_0$ and, as before,  $N_i$ is the number of the population of node $i$.
	
	\medskip
	
	\textbf{Initial recovery rate and initial symptom rate.} To estimate the rate of active cases and the rate of cumulative cases for SIS model and SIR model, we need initialize the recovery rate. For COVID-19 model, besides the recovery rate, we also need initialize the rate of symptomatic individuals and the rate of asymptomatic individuals. Since we do not consider individuals who died for the pandemic in all three models, in our simulation, we simply regard these people as recovered patients. We get the number of cumulative death cases in each county of New York State on April 1st from New York Times \cite{death_rate}. For the truly recovered people of the pandemic, unfortunately, we can not find specific numbers for each county in New York State. However, we learn from \cite{total_recovery} that the total number of recovered cases in USA on April 1st, 2020 is 8878, and the total number of cumulative cases in USA on April 1st, 2020 is 215215. Therefore, we initialize the recovery rate of county $i$ as: $(D_i(t_0) + I_i(t_0) \frac{8878}{215215} )/(0.14 N_i)$, where $D_i(t_0)$ represents the number of cumulative death cases in county $i$ at time point $t_0$, we divide 0.14 as we also need consider the reporting rate. For the initial symptom rate, we use the estimation in literature \cite{birge2020controlling} \cite{hortaccsu2020estimating} again, we assume $86\%$ of active cases are asymptomatic individuals, and the remaining active cases are symptomatic individuals. The number of active cases are obtained as the difference of the cumulative cases and the recovered cases.
	
	\section{Numerical Calculations}\label{exp: extended}

	We now describe the details of the synthetic experiments we have performed. 
	Because in the empirical data all the variables of interest co-vary together, synthetic experiments are necessary to dis-entangle the effect of aspects of graph variation.
	
	\subsection{Implementation Details of Figure \ref{Fig: framework}e}\label{sec: implementation} 
	To clearly demonstrate the optimal lockdown issues we consider and compare our non-uniform lockdown policy and the uniform lockdown policy, we implemented a synthetic experiment on a simple three-nodes network as shown in Figure \ref{Fig: framework}b. This specific network consists of a city (A) with large population, and two suburbs (B, C) with small population (the population as well as other data we used are given in Supplementary Table \ref{tab: figure 1(d) data}). We will assume employment is proportional to the population so that the economic cost coefficient $c_i$ is also proportional to the population. Moreover, we assume there exists no direct trips between location B and location C. For the choice of the travel rate matrix $\tau$, we choose a matrix that is similar to the New York State data, but with rounder numbers; specifically,  we define $\tau_{ij} = (1- \frac{h_i}{1440})\cdot \frac{k_{ij}}{\sum_a k_{ia}}$, where $h_i$ is the home-dwell-time of node $i$, $k_{ia}$ is the number of trips from node $i$ to node $a$, and we let 
	\[k = \left[\begin{matrix} 8000 & 1000 & 2000\\ 2000 & 8500 & 0\\
		1500 & 0 & 8000\end{matrix} \right ],\]
	$h_i$ is given in Supplementary Table \ref{tab: figure 1(d) data}. The initial susceptible rates are chosen as $s(t_0) = [0.90, ~ 0.92, ~ 0.95]$, so that the epidemic mainly localizes at the city. The initial recovery rate and initial symptom rate are chosen as in \ssref{data_sources}, the detailed numbers are given in Supplementary Table \ref{tab: figure 1(d) data}. The disease parameters are taken from \cite{bertozzi2020challenges} where considers a SEIR model; we set the missing parameters as $\hat{\alpha} = 0.6754$ (where $\beta^{\text a} = \hat{\alpha} \beta^{\text s}$) and $r^{\text a} = r^{\text s} = \gamma$. We still choose the target  decay rate $\alpha = 0.0231$ that corresponds to halving every 30 days.
	
	It can be seen from Figure \ref{Fig: framework}e that our policy outperforms the uniform lockdown policy in all of the three locations. Besides, we can see that our optimal lockdown policy tends to shutdown the city more stringently than the suburbs even if the epidemic mainly happens at the city. We will discuss the details about this counter-intuitive phenomenon in Section \ref{applications}.

	\medskip
	
	\subsection{Disease Parameter Sensitivity Analysis} To understand the impact of various disease parameters, we implement sensitivity experiments. In each experiment, we vary the value of one parameter while fix the values of the others. The normal values of $\gamma$, $\epsilon$, and initial growth rate are chosen as in  \cite{bertozzi2020challenges}, and the normal value $\hat{\alpha}$ is chosen as in \cite{giordano2020modelling} ($\hat{\alpha}$ is not provide in \cite{bertozzi2020challenges}). When analyze the impact of $\gamma$, we let the decay rate $\alpha = 0.02$. In other cases, we let the decay rate $\alpha = 0.2 r^{\text s} = 0.04$, since it is impossible to achieve a decay rate better than $\gamma$, so we make it our goal to reach halfway there.   For simplicity, we assume $\gamma = r^{\text a}$, and $r^{\text s} = \hat{\gamma} r^{\text a}$, where the normal value of $\hat{\gamma} = 1$. To compare the economic cost of the obtained optimal lockdown policy and the uniform lockdown (decay matching) policy for each scenario. We define {\em efficiency} as the economic cost ratio of the optimal lockdown policy to the best uniform lockdown policy, i.e., 
	\[ 
	{\rm efficiency} = \frac{\sum_i c_i \left(\frac{1}{z_i^*} - 1\right)}{\sum_i c_i\left(\frac{1}{z_{\rm uni}^*} - 1\right)},\] where $z_{\rm uni}*$ represents the uniform lockdown rate. Here, the uniform lockdown (decay matching) implies the uniform policy which decays at a rate greater than equal to $\alpha$.  
	{\color{black}SI Fig} \sfref{Fig: sensitivity} shows the corresponding experimental results.

	Observe each column of {\color{black}SI Fig} \sfref{Fig: sensitivity}, it can be seen that the value of the optimal lockdown rate $z_i^*$ and the value of the efficiency is inversely proportional to each other. 
	In other words, the higher the level of the allowed economic  activities, the more effective the optimal lockdown policy (compare to the best uniform lockdown). Besides, it can be observed from {\color{black}SI Fig} \sfref{gamma_z}, \sfref{gamma_c} that the recovery rate $\gamma$ is the parameter that has the most significant impact on the value of $z_i^*$. In particular, when $\gamma$ ranges from $0.03$ to $0.1$, 
	the value of $z_i^*$ for all the three models increases more than $0.5$. Therefore, if we can make the patients recover faster, we can send the number of infections to 0 quickly while maintaining a high level of economic  activity. $\hat{\gamma}$ decides the value of the recovery rate of symptomatic individuals in the COVID-19 model, hence
	we allow a higher level of economic  activities when the value of $\hat{\gamma}$ increases, this can be seen in {\color{black}SI Fig} \sfref{gamma_hat_z}. Moreover, the value of the initial growth rate is closely related to the value of the transmission rate $\beta^{\text s}$, the larger the initial growth rate is, the larger $\beta^{\text s}$ will be for each model. As a consequence, we have to maintain a lower level of economic  activities to make sure the infections go to 0 quickly ({\color{black}SI Fig} \sfref{growth_z}).
	
	It can be seen from {\color{black}SI Fig} \sfref{alpha_hat_z} and \sfref{epsilon_z} that the value of $z_i^*$ is not quite sensitive to the symptom rate  $\epsilon$ and $\hat{\alpha}$. When the value of $\hat{\alpha}$ increases, the corresponding value of $z_i^*$ slightly increases. The reason may be that the increasing of $\hat{\alpha}$ can lead to the increasing of $\lambda_{\rm max}(M(t))$. As we fix the value of the initial growth rate $\lambda_{\rm max}(M(t_0))$, the obtained value of transmission rate $\beta^{\text s}$ can be smaller, which means we can allow a higher level of economic  activities.


	\medskip

	\subsection{Other Parameter Analysis.} Except the disease parameters, the structure of the data may also have a great impact on the value of the generated optimal lockdown rate $z_i^*$. To clearly investigate these relationships, we depict the optimal $z_i^*$ with respect to the centrality, the home-stay rate (i.e., daily home-dwell-time in minutes/total number of minutes in a day), the population and the employment for each county in {\color{black}SI Fig} \sfref{Fig: 2020_parameters}. 
	It can be observed that the value of $z_i^*$ for all three models increases when the home-stay rate increases. However, we do not observe any direct relationship between centrality, population, employment, and the value of $z_i^*$. To further verify this observation, we implemented random permutation experiments.

	\medskip
	
	\subsection{Random Permutation Analysis.} To fully understand the relationship of the
	value of $z_i^*$ and the data structure, we implemented random permutation experiments with respect to degree, home-stay rate, population, employment and initial susceptible rate. In a random permutation experiment, we randomly permute one parameter while keep all the others fixed, then we fit the models to the randomly permuted data and apply the proposed algorithms get the optimal lockdown rate. We repeat this process 100 times for each parameter, and compare the average values of the lockdown rate on permuted data with the original data.
	The disease parameters are chosen as in \cite{bertozzi2020challenges}. In order to do a random permutation of the degree of the nodes, we fix the network, and randomly permute all the other parameters.  We implemented such experiments based on data on different time points. The experimental results are reported in  {\color{black}SI Fig} \sfref{Fig: rand_perm1} and {\color{black}SI Fig} \sfref{Fig: rand_perm2}.
	It can be observed that the shapes of the histogram after randomly permuting the employment and initial susceptible rate are similar to the shape of the histogram of the original data. This implies the value of $z_i^*$ does not have a close relationship with employment and susceptible rate. However, the shapes of the histogram after randomly permuting the degree, home-stay rate, and population are quite different from the original data. In other words, the centrality of the node, home-stay rate, and population have a close relationship with the value of $z_i^*$. To find out what kind of relationship between them, we implemented further experiments on synthetic data.

	\medskip
	
	\subsection{Impact of Centrality.}\label{sec: centrality}
	To study the impact of centrality, we generated some geometric random graphs and then added additional ``hotspots'' with high degree. This was done by randomly choosing several nodes in the initially generated geometric random graphs, and letting the edges leading from these nodes to other nodes be present with probability 0.9.  
	
	We let the remaining parameters  be identical for all the nodes. 
	Specifically, we let the population of all the nodes be $4000$, the home-stay rate of all the nodes be 0.8, and we assume the number of employees of each node is proportional to the population. Since the SIS model and the SIR model will be identical if we let the initial susceptible rate of each node be the same constant (we choose $\beta^{\text s}$ to make the initial growth rate of each model be equal). Besides, according to our previous analysis, when the values of the initial susceptible rate of the nodes are close, it will not impact the value of the optimal lockdown rate of each node too much. As a consequence, we choose initial susceptible rate from interval $[0.8, 0.9]$ uniformly at random. Moreover, according to the data \cite{travel} provided by SafeGraph, people tend to spend more time in their own counties rather than travel to other counties when they are not stay at home. To make the synthetic data  similar to the real data, we 
	update the definition of the travelling matrix $\tau$ as following:
	
	$$  \tau_{ij} =   \begin{cases}
		0.8 \cdot (1 - h'_i) \cdot \frac{A_{ij}}{\sum_{a \neq i} A_{ia}} & i\neq j\\
		0.2 h'_i & i = j
	\end{cases},
	$$
	where $h_i'$ represents the home-stay rate of node $i$. In addition, the disease parameters of this experiment are set as in \cite{bertozzi2020challenges}, and the missing parameters are set as: $\hat{\alpha} = 0.6754$ ($\beta^{\text a} = \hat{\alpha} \beta^{\text s}$), $r^{\text a} = r^{\text s} = \gamma$. Similar to the experiments on real data, we let the decay rate $\alpha = 0.2 \gamma = 0.04$ to satisfy the assumption that $\alpha \leq \gamma$. 

	The results of this experiment are presented in Figure \ref{Fig: synthetic}a-b. It can be observed that centrality only matters for the value of $z_i^*$ when there exist hotspots (i.e., highly central nodes) in all the three models. Surprisingly, beyond such hotspots, the effect of centrality is essentially nonexistent. 
	
	To further study the impact of the centrality, we generated random graphs based on Barabási–Albert model, where there exist few nodes with unusually high degree compared to other nodes in the network. By varying the the number of the nodes, the number of links and the initial seed for the B-A model, we can generate various random graphs. Besides the adjacency matrix $A$, all the other parameters about the data (population, $\tau$, employment, initial susceptible rate, home-stay-rate), the disease parameters and the decay rate $\alpha$ are all chosen as in the experiment based on geometric graph. The experimental results are presented in {\color{black}SI Fig} \sfref{Fig: centrality_B_A}. From {\color{black}SI Fig} \sfref{Fig: centrality_B_A}, we can observe the similar phenomenon: the hotspots are assigned with smaller values of $z_i^*$ by the optimal lockdown policy in all the three models, however, such effect of the centrality is nonexistent for all the other nodes.
	
	
	Moreover, we also define a kind of random graphs to provide further evidence for the relationship of the centrality and the value of $z_i^*$. To generate graphs where the degree of each node can be different and can be decided by us, we define the adjacency matrix $A$ of the random graph as following: the upper-triangular element $A_{ij}$ will be 1 with probability $p_i$, and be 0 with probability $1-p_i$, where $p$ is the given probability vector; the values of the elements in the lower-triangular matrix are equal to the symmetry of the upper-triangular matrix. In other words, the edges leading from different nodes can be present with different probabilities.
	Similarly, the disease parameters, the decay rate $\alpha$, and all the other parameters about the data
	are chosen as in the experiment based on geometric random graphs. The experimental results are presented in {\color{black}SI Fig} \sfref{Fig: centrality_rand}. From {\color{black}SI Fig} \sfref{Fig: centrality_rand}, we can also observe that centrality only matters for the value of $z_i^*$ when there exist hotspots. Except the hotspots, the effect of centrality is centrality is nonexistent.
	
	\subsection{Impact of Population.} \label{sec: population}
	In the experiments about the population, we fix all other parameters and vary the population of each node from 100 to 20000. Since the employment is proportional to the population, and the economic cost $c_i$ of each node is decided by the corresponding number of employees, the economic cost of different nodes will also be different. To make sure the centrality of all the nodes are close, we experimented with random regular graph, Erdos-Renyi random graph, and the geometric random graph. All the other data parameters, disease parameters, and the decay rate $\alpha$ are set as in the experiments about the centrality. {\color{black}The experimental results are presented in Figure \ref{Fig: synthetic}c-d}, it can be observed that nodes with small populations are assigned with smaller values of $z_i^*$, surprisingly once the population is not very small, the effect is almost nonexistent. 
	
	\subsection{Impact of Home-Stay Rate.} \label{sec: home-stay rate}
	{\color{black}To study the impact of  home-stay rate, we fixed all other model parameters and tuned the home-stay rate of the nodes.} We chose random regular graph, Erdos-Renyi random graph, geometric random graph, and the 2d grid as our network as the centrality of the nodes on these graphs are similar. The disease parameters, the dacay rate $\alpha$, and all the other data parameters are set as in the experiments about the centrality. {\color{black}The experimental results are presented in {\color{black}SI Fig} \ref{fig: home_stay_rate}. We found that $z_l^*$ increases with increasing home-stay rate, which agrees well with our intuition.}

	\section{City-Suburb Model}\label{city_suburb} We now revisit the phenomenon we have observed in our analysis of NY, which is that the optimal lockdown tends to shutdown the outside of NYC harder than the NYC itself. To isolate this phenomenon in the simplest possible setting, we implement a simple synthetic experiment of a network with two nodes: node 1 will be referred to as ``the city'' while node 2 will be referred to as ``the suburb.'' The city will have a large number of population while the suburb will have a smaller population. We will assume employment  is proportional to the population so that the economic cost coefficient $c_i$ is also proportional to the population. Then we apply the proposed algorithms to design the optimal shutdown policy to this city-suburb model.

	The disease parameters are taken from \cite{bertozzi2020challenges} where considers a SEIR model; we set the missing parameters as $\hat{\alpha} = 0.6754$ (where $\beta^{\text a} = \hat{\alpha} \beta^{\text s}$) and $r^{\text a} = r^{\text s} = \gamma$. We still choose the target  decay rate $\alpha = 0.2 r^{\text s} = 0.04$ as before. For the choice of the travel rate matrix $\tau$, we choose a matrix that is similar to the New York State data, but with rounder numbers; specifically,  we define $\tau_{ij} = (1- \frac{h_i}{1440})\cdot \frac{k_{ij}}{\sum_a k_{ia}}$, where $h_i$ is the home-dwell-time of node $i$, $k_{ia}$ is the number of trips from node $i$ to node $a$, and we let 
	\[k = \left[\begin{matrix} 8000 & 200\\
		20 & 850 \end{matrix} \right ],  \]
	\[ h = [800 \quad 800].\]
	
	We will consider three different cases:
	\begin{itemize}
		\item Case 1: population = $[20,000 \quad 2,000]$, $s(t_0) = [0.7 \quad 0.95]$. 
		
		\item Case 2: population = $[200,000 \quad 2,000]$, $s(t_0) = [0.7 \quad 0.95]$.
		
		\item Case 3: population = $[200,000 \quad 2,000]$, $s(t_0) = [0.95 \quad 0.95]$.    
	\end{itemize}
	The experimental results are presented in \stref{tab: city_suburb}. We see the same phonemon as in our New York State experiments:  the optimal lockdown policy shutdown the suburb more stringently than the city even though, in cases 1 and 2,  the epidemic is mainly localized in the city. Comparing the results of Case 1 and Case 2, we  see that this trend gets stronger when the population difference between the city and the suburb increases. 
	
	\section{Beyond Eigenvalue Bounds}\label{two-parameters}
	In this section, we  further test our finding that the optimal stabilizing shutdown using New York State data is more stringent outside NYC. Our goal is to directly compare lockdowns by comparing the total number of infections. Unfortunately, we do not know of any way to optimize lockdowns efficiently to minimize the total number of infections; indeed, this difficulty is what motivates optimization of eigenvalue bounds in the first place. 
	
	We will compare the optimal stabilizing lockdown computed by our method, which we will say has cost $c^*$,  with lockdowns of the following structure: \begin{itemize}
		\item Two-parameters lockdown: shutdown the counties in NYC by $z_1$, and shutdown the other counties in New York State by $z_2$, where $z_2 > z_1$, such that the economic cost of the shutdown is at most $c^*$.
		\item Uniform lockdown (cost matching): shutdown all the counties in New York State by $z$, such that the economic cost of the shutdown is at most $c^*$.
	\end{itemize} Because these lockdowns are characterized by, respectively, two and one parameters, they can be found via direct search (i.e., discretizing the parameters and trying every possibility). To summarize, we will perform an exhaustive search, not only over uniform lockdowns, but overall lockdowns defined by two $z_i$, one inside NYC and one outside, with the former being smaller (corresponding to a more stringent lockdown). 
	
	We will apply the disease parameters from literature \cite{birge2020controlling}, \cite{bertozzi2020challenges}, and \cite{giordano2020modelling} respectively. Similarly, we let the decay rate $\alpha = 0.2 r^{\text s}$ as before. The experimental results are presented in {\color{black}SI Fig} \sfref{Fig: cumula_0_2}. These findings further substantiate our results: our  policy outperforms the best two-parameters lockdown as well as the best uniform lockdown in all scenarios.

	We conclude with a cautionary tale about what happens when the eigenvalues are pushed too far into the left-hand plane. We have already remarked, in the main body of the paper, that this could result in poor performance as the improvement in asymptotic rate starts to come at the expense of transient performance; we next demonstrate how this phenomenon occurs in a series of charts. We experimented with decay rate $\alpha = 0.5 r^{\text s}$, the experimental results are presented in {\color{black}SI Fig} \sfref{Fig: cumula_0_5}. Note that, in two of the three charts, this corresponds to an asymptotic rate with roughly 10\% decrease in the number of cases per day. This extremely aggressive rate results in an optimal shutdown which, while attaining this rate, does not perform well. In particular, in the figures with a 10\% decrease rate, we see that our optimal stabilizing shutdown underperforms the uniform shutdown and the two-parameters shutdown with the same cost. 
	\medskip
	
	\noindent {\bf Sensitivity analysis:}
	To further study the impact of decay rate as well as various disease parameters to the total number of infections, we implement sensitivity experiments. In each experiment, we vary the value of one parameter while fix the values of the others.
	The normal values of $\gamma$, $\epsilon$, and the initial growth rate are chosen as in  \cite{bertozzi2020challenges}, and the normal value of $\hat{\alpha}$ ($\beta^{\text a} = \hat{\alpha}\beta^{\text s}$) is set as in \cite{giordano2020modelling}. We let the normal value of the decay rate $\alpha = 0.2 r^{\text s} = 0.04$, since it is impossible to achieve a decay rate better than $\gamma$. For simplicity, we assume $\gamma = r^{\text a}$, and $r^{\text s} = \hat{\gamma} r^{\text a}$, where the normal value of $\hat{\gamma} = 1$. The experimental results are presented in {\color{black}SI Fig} \sfref{Fig: cumula_sensitivity} and \sfref{Fig: Covid_sensitivity}.
	
	As expected, it can be observed from {\color{black}SI Fig} \sfref{subfig: SIS_alpha}, \sfref{subfig: SIR_alpha}, and \sfref{subfig: Covid_19_alpha} that the total number of the infections of our policy is much better for small $\alpha$; but if we increase $\alpha$ too much, this starts to change and we no longer outperform. Additionally, it can be seen from {\color{black}SI Fig} \sfref{subfig: SIS_growth_rate}, \sfref{sufig: SIR_growth_rate}, and \sfref{subfig: Covid_19_growth_rate} the total number of infections of all the polices increases as the initial growth rate increases, again as expected; however, somewhat surprisingly, we can see from {\color{black}SI Fig} \sfref{subfig: SIS_gamma}, \sfref{subfig: SIR_gamma}, and \sfref{subfig: Covid_19_gamma} that the total number of infections increases when the recovery rate $\gamma$ increases. The reason for this counter-intuitive phenomenon is that we assume the initial growth rate is fixed in this experiment, so that an increase in the recovery rate $\gamma$ means an increase in the transmission rate $\beta^{\text s}$ to obtain the same initial growth rate, and the latter effect increases  the total number of infections.
	
	For COVID-19 model, besides $\alpha$, $\gamma$, and the initial growth rate, we also studied the impact of parameters $\epsilon$, $\hat{\alpha}$, and $\hat{\gamma}$. We can observe from {\color{black}SI Fig} \sfref{subfig: Covid_19_epsilon} that the total number of the infections is not sensitive to parameter $\epsilon$ for all the three polices. As the transmission rate of asymptomatic individuals $\beta^{\text a} = \hat{\alpha} \beta^{\text s}$, then the total number of infections increases as $\hat{\alpha}$ grows ({\color{black}SI Fig} \sfref{subfig: Covid_19_alpha_hat}). Parameter $\hat{\gamma}$ is closely related to the recovery rate of the symptomatic individuals, its effect is similar to $\gamma$, when the value of $\hat{\gamma}$ increases, the transmission rate $\beta^{\text s}$ increases and further lead to the increasing of the total number of the infections ({\color{black}SI Fig} \sfref{subfig: Covid_19_gamma_hat}).
	
	\section{Why Is The Shutdown Outside NYC Harder? An Intuitive Explanation} \label{sec: counter-intuitive}
	Our main finding in the empirical experiments -- that an optimal stabilizing lockdown will shut down the outside of NYC more stringently than NYC -- is counter-intuitive. At least part of the reason why this happens is that our costs are taken to be proportional to employment: we do not attempt to take into account either the larger salaries of workers in NYC or the importance of NYC to the national economy. Nevertheless, the finding remains counter-intuitive, and we are not aware of any previous literature pointing out that something like this can occur.

	We now provide an explanation which makes this finding more intuitive. Let us consider an idealized city-suburb model with two nodes. Imagine now that the coefficients $c_i$ are proportional to population, and imagine that, initially, there is no travel between city and suburb. For clarity, let us consider a related model where, instead of minimizing cost subject to a constraint on how fast infections decay, we instead put a price on each infection; this is closely related to what we do and slightly simpler to reason about.  In that case, the optimal shutdown should be invariant to scaling in population since doubling the population of a city will both double the cost of a shutdown as well as double the benefit in terms of number of infections averted. 
	
	Now let us consider further what happens when we change the system by stipulating that: (1) $1\%$ of trips now happen between city and suburb (2) the population $N$ of the city goes to infinity while the population of the suburb remains fixed at, say, $1000$. 
	
	In this case, whenever we shut down the suburb, the cost scales with the population $1000$; while the benefit also scales with $N$, the population of the city, since any shutdown in the suburb decreases the total number of infections in the city (since the epidemic can spread in the city through interactions with the suburb). Thus, even though only $1\%$ of the trips go between city and suburb in each direction, the benefit of shutting down the suburb will overwhelm the cost as $N \rightarrow +\infty$. The same argument does not apply to the city: the increased benefits from reducing the number of infections in the suburb is small relative to the cost which scales with $N$.  This provides an explanation why an optimal shutdown might choose to be more stringent in the suburb. 
	
	Of course, this is an idealized example. We note that in our empirical results, costs were taken to be proportional to employment, not population; so that we will never encounter a situation where the costs of shutting down a suburb do not scale with the size of the city (because if $1\%$ of the trips go between city and suburb, the suburb will also have employment that will scale with $N$ to provide services to visitors). Nevertheless, our empirical results suggest that this idealized example is not too far from what happens once we use real data on number of trips and employment in NYC and suburbs, as the optimal stabilizing shutdown does choose to be more stringent outside NYC. 
	
	\section{Extended Lockdown Cost Functions} \label{sec: extended economic cost functions}
	In this section, we explore the optimal lockdown  with additional  cost functions. Our goal is to show the counter-intuitive phenomenon we observed earlier -- that the optimal lockdown tends to shutdown the outside of NYC harder than the NYC itself -- is not due to the particular choice of the economic cost functions in Eq. \eqref{eq:lockdown_cost} in the main text. The choice of the new economic cost functions follow the similar rules in \eqref{eq:lockdown_cost}. 
	
	We thus consider other cost functions. First, we consider 
	\begin{align}\label{eq: cube}
		c(z_1, \ldots, z_n) = \sum_i^n c_i (\frac{1}{z_i^3} - 1).
	\end{align}
	By using the similar method as in \ssref{sec: lock down}, we can write the optimal lockdown issue with economic cost \eqref{eq: cube} as a constrained convex optimization problem as following
	\begin{eqnarray} 
		&& 
		\min \sum_{i=1}^n c_i' u_i^3 \nonumber \\
		&& \sum_{i=1}^n u_i B_i \succeq \left(
		\begin{array}{cc} 
			Q & 0 \\ 
			0 & 0 
		\end{array}
		\right) 
		+ 
		\left(
		\begin{array}{cc} 
			0 & 0 \\ 
			0 & q {\rm diag}( (D_1)_{11}^{-1}, \ldots,  (D_1)_{nn}^{-1}) 
		\end{array}
		\right). \label{eq: sdp with cube}
	\end{eqnarray}
	The constraints of \eqref{eq: sdp with cube} is convex SDP constraints, and the cost function is also convex. Hence, we can use the projected gradient descent (PGD) alg{}orithm to obtain the optimal solution of this problem. Each step of the projection involves solving a semi-definite program.
	
	Besides the cost \eqref{eq: cube}, we also consider the following economic cost functions:
	\begin{align*}
		c(z_1, \ldots, z_n) & = \sum_i^n c_i \left(\frac{1}{z_i^{1.5}}  - 1 \right), \\
		c(z_1, \ldots, z_n) & = \sum_i^n c_i \left(\frac{1}{z_i^2} - 1 \right), \\
		c(z_1, \ldots, z_n) & = \sum_i^n c_i \left(\min(\frac{1}{z_i}, 10) - 1\right), \\
		c(z_1, \ldots, z_n) & = \sum_i^n c_i \left(\min\left(\frac{1}{z_i}, 20\right) - 1\right), \\
		c(z_1, \ldots, z_n) & = \sum_i^n c_i \left(\min \left(\frac{1}{z_i}, 100 \right) - 1 \right). 
	\end{align*}
	Similarly, we can also write the optimal lockdown issues with these costs as the convex optimization problem with convex SDP constraints, which can be addressed by the PGD algorithm.
	
	To check if we can still observe the similar counter-intuitive phenomenon, we experimented with the data about COVID-19 break in NYS with these extended cost functions. In our simulations, we employed the toolbox from \cite{Lofberg2004} to solve the projection step in each step of PGD problem.
	The disease parameters are set as in \cite{giordano2020modelling}. The decay rate is chosen as $\alpha = 0.2 r^{\text s} = 0.0034$ so that $\alpha < \min (r^{\text a}, r^{\text s})$. The experimental results are presented in {\color{black}SI Fig} \ref{Fig: polynomial costs} and {\color{black}SI Fig} \ref{Fig: min costs}. 
	
	It can still observed that the value of $z_i^*$ for counties in NYC are relatively higher than other counties in NYS in each scenario, which implies we should shutdown the outside of NYC harder than itself. 
	Additionally, the median of $z_i^*$ is greater than or close to the value of the uniform lockdown in all the cases. In other words, compared to the uniform lockdown, the optimal lockdown policy not only leads to less economic losses but also allows the majority of the counties to have more economic activity.
	
	\section{Robustness Check  of Travel Rate Matrix $\tau$ } \label{Sec: Robustness}
	In this section, we check the robustness of one of our main findings, that the optimal lockdown policy should shutdown the outside of  NYC harder than itself, with respect to the travel rate matrix $\tau$. In our simulations of NY, we used the data from SafeGraph \cite{travel} to construct the travel rate matrix $\tau$. The data was collected by using a panel of GPS pings from anonymous mobile devices, and might be inaccurate due to various practical reasons. For instance, we do not know from how many of the minutes recorded as travel outside might have been spent alone in a vehicle, with no possibility for transmission to other individuals. To test the robustness of our finding against the uncertainty of the travel rates, we introduce two types of perturbation to the travel rate matrix $\tau$ by either adding noise to its entries or removing a fraction of its entries. Then we check if the optimal lockdown policy still suggests that we should shutdown the outside of  NYC harder. In both scenarios, we perturb the travel rates of each county while keep the home-stay-rate of each county be the same.
	
	In the first scenario, we add Gaussian noises to each entry of the matrix $\tau$. To do this, for each $\tau_{ij}$, we generate Gaussian noise $g_{ij}$ with mean $0$ and variance $\theta \tau_{ij}^2$, then we set
	\[\tau_{ij}^{\rm temp} = \max(\tau_{ij} + g_{ij}, 0).\]
	Next, we let
	\[ \tau_{ij}^{\rm new} = \beta_i \tau_{ij}^{\rm temp},\]
	where $\beta_i$ is chosen to make $\sum_j \tau_{ij}^{\rm new} = \sum_j \tau_{ij}$. In this case, the size of the noises will be proportional to $\tau_{ij}$, and we can tune the size of the noises by varying the parameter $\theta$. Meanwhile, recall the definition of the travel rate matrix \eqref{eq: definition of tau},  we still have $\sum_j \tau_{ij}^{\rm new} = \sum_j \tau_{ij} =  1- \frac{h_i}{1440} $, which means the home-stay-rate of county $i$ keeps the same.
	
	In our second scenario, we remove a $p$-fraction of entries from each row of matrix $k$ uniformly at random, and replace these values by $0$, where $k_{ij}$ is the daily number of trips from county $i$ to county $j$. This is to mimic missing data. We call the updated travel natrix as $k^{\rm new}$, then we generate $\tau$ with our original method, 
	\[\tau_{ij}^{\rm new} = (1- \frac{h_i}{1440})\cdot \frac{k_{ij}^{\rm new}}{\sum_a k_{ia}^{\rm new}}.\] 
	Similarly, we can tune the size of the missing data by simply varying $p$.
	
	We implemented the two different perturbation scenarios with the data about COVID-19 break in NY. The disease parameters are set as in \cite{birge2020controlling}, the decay rate $\alpha$ is chosen $0.0231$ that corresponds to halving every 30 days.  The simulation results are presented in {\color{black}SI Fig} \ref{robustness_check}.
	It can be observed that the average of $z_i^*$ for NYC cities is greater than the average of other counties for a wide range of the noise level or the fraction of missing data in the travel rate matrix. Thus our finding that the optimal lockdown should shutdown NYC harder is robust against considerable uncertainty in the travel rate matrix $\tau$.

	\section{Nonuniform Transmission Rate $\beta$: Potential Urban-Rural Differentials} \label{sec-nonuniform}
	 \aoa{We now consider the possibility that the spread of an epidemic, as measured by the parameters $\beta^{\rm s}$ and $\beta^{\rm a}$ depends on the location. Specifically, we take a simple model where transmission is proportional to a power $h$ of the {\em population density} of each county. }

	 \aoa{There are a number of reasons why that might be the case. In an epidemic where the primary mode of transmission is through outside contact, the transmission speed will naturally scale with population density. Additionally, higher density counties will on average have more extensive public transport, which could also 
	 facilitate transmission even in the presence of masking guidelines. {\em However, for COVID-19 in New York State, it is not clear that COVID-19 transmission substantially increases with density.} }
	 
	 \aoa{The data can be seen  in {\color{black}SI Fig} \ref{fig: density and Rt}. The bottom two graphs of that figure show population density vs $R_t$ (average and range). We do not in general see 
	 an upward curve. The first graph of that Figure presents a plot of $R_t$ for New York Country (approximately $70,000$ people per square mile) and Hamilton County (approximately $2.6$ per square mile); these are, respectively, the most and least dense counties in New York. Although $R_t$ fluctuates over time for 
	 both counties, {\color{black}SI Fig} \ref{fig: density and Rt} does not show any persistent differences, in spite of a $\times 30,000$ density differential. This rules out the possibility of a strong density dependence, e.g., $\beta$ being proportional to density, though it leaves open the possibility that population density could have some slighter influence which is
	 counteracted by the differences in other characteristics between these counties.  }
	 	 
	 \aoa{We thus consider what happens if we introduce some scaling with a power $h$ of population density into our models; we stress that $h$ here could be quite close to zero.  It is clear that, as we increase $h$, at some point our finding that NYC should be shut down 
	 less harshly than the rest of NYS will reverse: as infections comparatively spread faster and faster in NYC, the NYC shutdown should get more stringent, at some point becoming {\em more} stringent that for the remainder of the state.  We want to see how big $h$ has to be for this reversal to occur. As before, will find it helpful to frame this $h$ in terms of the differential between the most and least populated
	 counties. Thus we seek to answer the following question: {\em how much faster does the infection need to spread in New York County relative to Hamilton County for this reversal to occur?} }
	 	 
	 \aoa{More formally, suppose $p_l$ is the population density of location $l$.
	For COVID-19 model, we let the transmission rates associated with location $l$ be
	\begin{align*} \beta^{\rm s}_l & = k p_l^{h} \\ \quad\beta^{\rm a}_l & = \hat{\alpha}\beta^{\rm s}_l,
	\end{align*}
	where, recall,  $\hat{\alpha}$ is a fixed scalar which measures the transmission rate ratio between symptomatic individuals and asymptomatic individuals. The parameter $k$ is chosen to match the initial growth rate of the available COVID-19 models, just as before. In this case, we can write $A_0$ in Eq. \eqref{eq: A0 definition} as
\[A_0 := \left(
\begin{array}{cc} 
	\hat{\alpha}k{\rm diag}(p^{h}) {\rm diag}(s(t_0)) A_z - (\epsilon + r^{\text a}) +\alpha & k{\rm diag}(p^{h}){\rm diag}(s(t_0)) A_z \\
	\epsilon & - r^{\text s} + \alpha
\end{array}
\right).\]
We can then apply our approach to get the optimal stabilizing lockdown.
We do this, getting the population density data from \cite{densitySource}. The simulation results are presented in {\color{black}SI Fig} \ref{fig: density}. It can be observed that the value $z_i^*$ of NYC counties still larger than the other counties when $h\leq 0.1$; however, this phenomenon becomes
unintelligible by $h \geq 0.15$. Coming back to the New York County/Hamilton County divide, we find that our results still hold if $\beta$ corresponding to the former is roughly $2.8$ times the $\beta$ corresponding to the latter.} 

	\section{Impact of The Cases Decline Speed} \label{sec: two parameters compare}	

	In this section, we consider the cases where the lockdown sends the confirmed cases to zero with different rates. Our goal is to show the counter-intuitive phenomenon we observed earlier- that the optimal lockdown tends to shutdown the outside of NYC harder than NYC itself - is not due to the particular choice of the decay rate $\alpha$. In particular, we consider the cases when the confirmed cases are reduced very fast, with 10 \% (or even 20\%) daily decline speed.
	
	Suppose we aim to reduce the confirmed cases with a speed at least $a^*$ through lockdown, we will compare the optimal stabilizing lockdown computed by our method with the following two simple lockdown policies:
	 \begin{itemize}
		\item Best two-parameters (NYC) lockdown: shutdown the counties in NYC by $z_1$, and shutdown the other counties in New York State by $z_2$, where $z_2 > z_1$, such that the daily decline speed of the COVID-19 model is at least $a^*$ and the corresponding economic cost is minimal.
		\item Best two-parameters (outside) lockdown: shutdown the counties in NYC by $z_1$, and shutdown the other counties in New York State by $z_2$, where $z_1 < z_2$, such that the daily decline speed of the COVID-19 model is at least $a^*$ and the corresponding economic cost is minimal.
	\end{itemize}
	Since both of the two lockdown policies are characterized by two parameters, we will find them via grid search (i.e., discretizing the parameters and trying every possibility). To summarize, we will fix the decline speed of the epidemics and compare the economic cost of different lockdown policies.
	
	Similar to before, we will apply the disease parameters from literature \cite{bertozzi2020challenges,giordano2020modelling} and \cite{birge2020controlling} respectively. Since we need to ensure $\alpha < \min(r^{\rm a}, r^{\rm s})$, and the values of $r^{\rm a}, r^{\rm s}$ are different in these literatures, the range of $a^*$ we consider are different for different groups of parameters. The experimental results are presented in {\color{black}SI Fig} \ref{fig: two parameters compare}.  It can be observed, when the decline speed is fixed,  the best two-parameters (NYC) lockdown policy leads to more economic losses than the best two-parameterss (outside) lockdown and our method, even when the cases decline very fast. This finding provides further support for the counter-intuitive phenomenon we observed that the optimal lockdown tends to shutdown the outside of NYC harder.
	
	\section{Robustness Check: Activity Differential between the Symptomatic and Asymptomatic Individuals\label{sec:activitydecline}} 
	\aoa{In this section, we consider what happens if symptomatic and asymptomatic individuals have different activity levels. We consider a simple model where the travel rate of symptomatic individuals is a constant fraction of the asymptomatic individuals. This could happen because some symptomatic individuals may choose to isolate or quarantine themselves. }

\aoa{If the activity level of symptomatic individuals is zero, the COVID-19 model we have proposed here reduces to the standard SIR model, as people in the symptomatic class will not infect anyone. In general, however, the activity level of symptomatic individuals may not equal zero: some people may ignore regulations, others may ignore their symptoms, and still others may believe they are suffering from something other than COVID. Because we are not aware of any research allowing us to choose a specific activity reduction, we will consider a number of possibilities for how much symptomatic individuals  reduce activity relative to asymptomatic individuals, ranging from $10\%$ to $90\%$. }

\aoa{Specifically, we set
	\[ \tau_{ij}^{\rm s} = \kappa \tau_{ij}^{\rm a}.\]
	Equivalently, we may multiply our coefficient $\hat{\alpha}$ with $\frac{1}{\kappa}$ and obtain the same model, because, recall, \[\hat{\alpha} = \frac{\beta^{\rm a}}{\beta^{\rm s}},\] and we choose the value of transmission rate $\beta^{\rm s}$ to match the given initial growth rate.  In our simulations, we experimented with $k = 0.1,~ 0.5, ~ 0.9,~ 1.0$ respectively. The simulation results are presented in {\color{black}SI Fig}  \ref{fig: activity level different }. We find that the optimal stabilizing lockdowns $z_i^*$ vary when $\kappa$ takes on different values, however, the patterns of $z_i^*$ are quite similar. Specifically, we still can observe that the optimal stabilizing lockdown tends to shutdown the outside of NYC harder than itself.}

	\clearpage
	
	\section{Supplementary Figures}\label{extended_results_figs}

	\begin{figure}[!htb]
		\centering
		\begin{minipage}[b]{0.95\linewidth}\label{fig: active_SIS}
			\centering
			\includegraphics[width=1.0\linewidth]{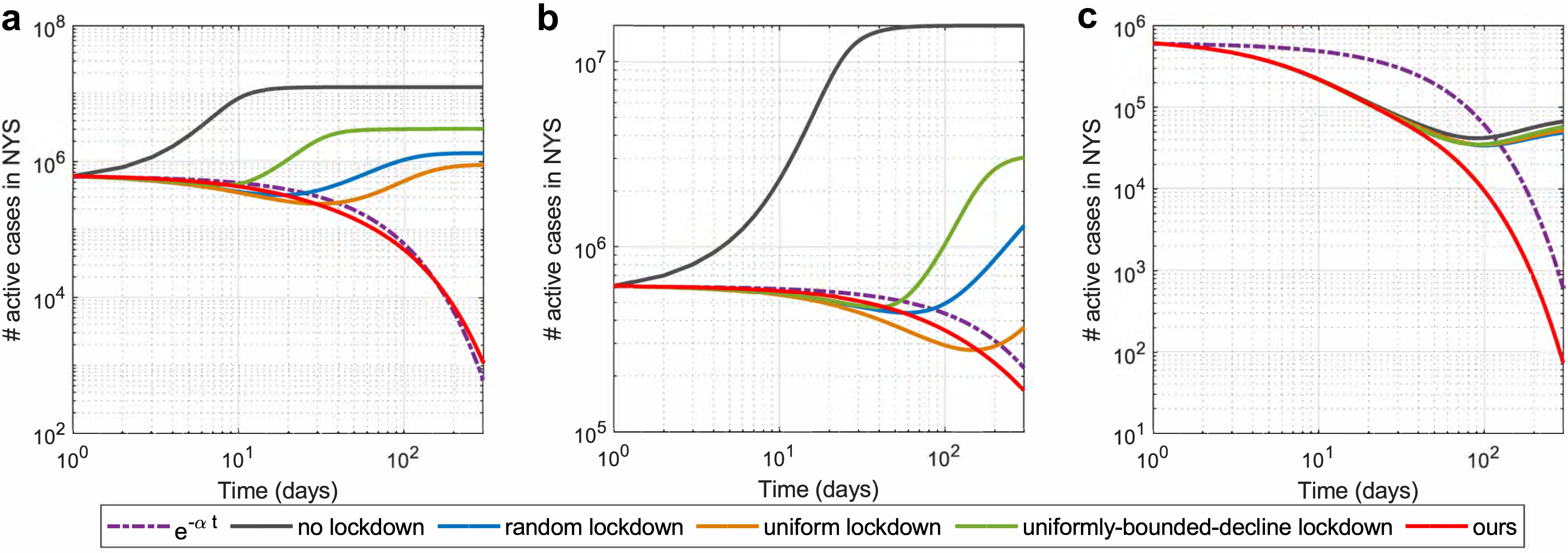}			
		\end{minipage}\hfill
		
		\vspace{3mm}
		
		\begin{minipage}[b]{0.95\linewidth}\label{fig: acc_SIS}
			\centering
			\includegraphics[width=1.0\linewidth]{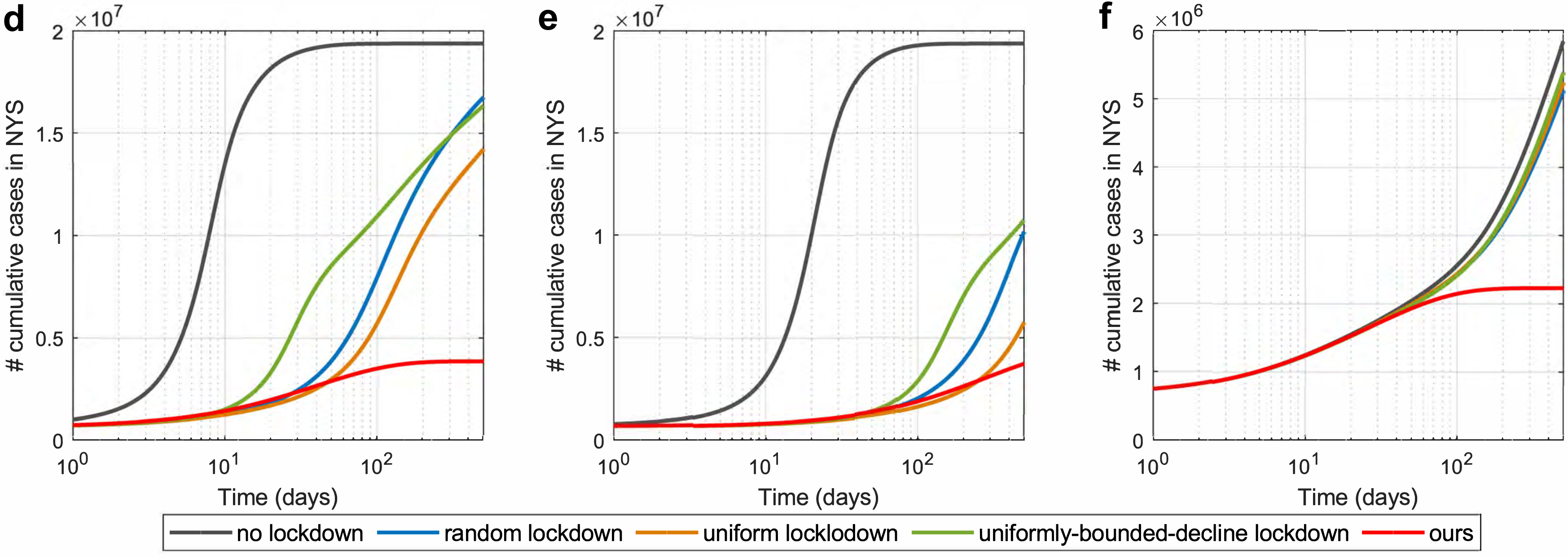}
		\end{minipage}%
		\centering
		\vspace{2mm}
		\caption{Experimental results on real data: {\bf the estimated number of active casesand cumulative cases for SIS model} by applying different lockdown policies based on available data about COVID-19 outbreak in NY on April 1st, 2020.  {\bf a-c},  the estimated number of active cases in NY {\color{black}from Apr. 1st, 2020 to Jan. 26th, 2021 (or June 10th, 2022)}. {\bf d-f}, the estimated cumulative cases in NY {\color{black}from Apr. 1st, 2020 to Aug. 14th, 2021 (or May 10th, 2024)}. In {\bf a}, {\bf d}, the disease parameters are set as in \cite{bertozzi2020challenges}, the decay rate $\alpha$ is chosen as 0.0231 which corresponds to halving every 30 days. In {\bf b}, {\bf e}, the disease parameters are set as in \cite{giordano2020modelling}, the decay rate is chosen as $\alpha = 0.2 r^{\text s} = 0.0034$ so that $\alpha < \min (r^{\text a}, r^{\text s})$. In {\bf c}, {\bf f}, the disease parameters are set as in \cite{birge2020controlling}, the decay rate $\alpha$ is chosen $0.0231$ that corresponds to halving every 30 days. Uniform lockdown, random lockdown, and uniformly-bouded-decline lockdown are defined as in Fig \ref{Fig: framework}.  It can be observed that our policy outperforms all the other lockdown polices.
		}\label{Fig: active_acc_SIS}
	\end{figure}
	
	\begin{figure}[!htb]
		\centering
		\begin{minipage}[b]{0.95\linewidth}\label{fig: active_SIR}
			\centering
			\includegraphics[width=1.0\linewidth]{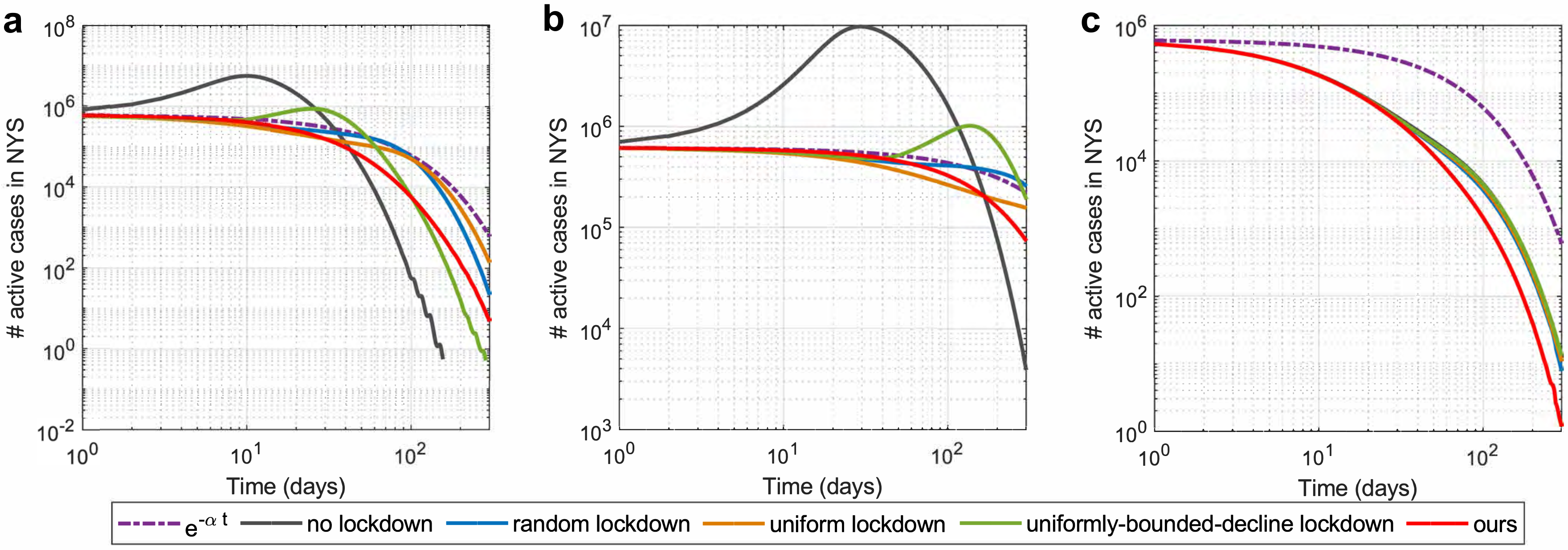}			
		\end{minipage}\hfill
		
		\vspace{3mm}
		
		\begin{minipage}[b]{0.95\linewidth}\label{fig: acc_SIR}
			\centering
			\includegraphics[width=1.0\linewidth]{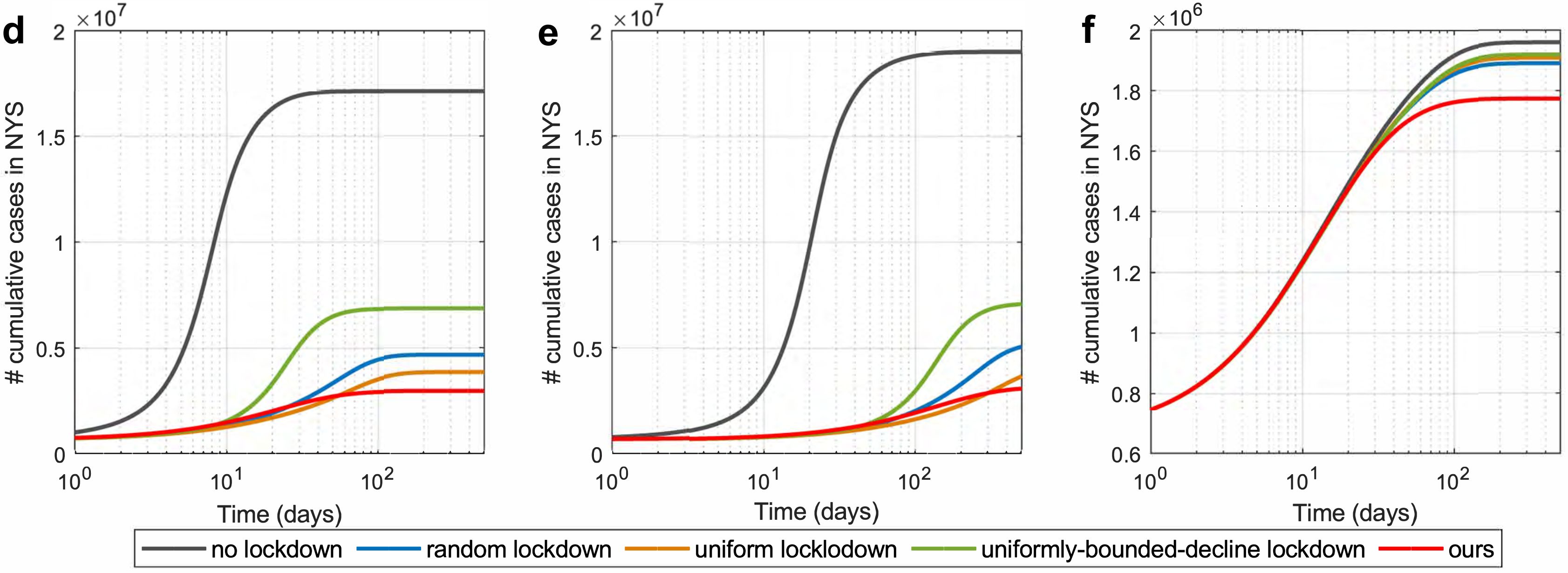}
		\end{minipage}%
		\centering
		\vspace{-1mm}
		\caption{Experimental results on real data: {\bf the estimated number of active cases and cumulative cases for SIR model} by applying different lockdown policies based on available data about COVID-19 outbreak in NY on April 1st, 2020.  {\bf a-c},  the estimated number of active cases in NY {\color{black}from Apr. 1st, 2020 to Jan. 26th, 2021 (or June 10th, 2022)}. {\bf d-f}, the estimated cumulative cases in NY {\color{black}from Apr. 1st, 2020 to Aug. 14th, 2021 (or May 10th, 2024)}. In {\bf a}, {\bf d}, the disease parameters are set as in \cite{bertozzi2020challenges}, the decay rate $\alpha$ is chosen as 0.0231 which corresponds to halving every 30 days. In {\bf b}, {\bf e}, the disease parameters are set as in \cite{giordano2020modelling}, the decay rate is chosen as $\alpha = 0.2 r^{\text s} = 0.0034$ so that $\alpha < \min (r^{\text a}, r^{\text s})$. In {\bf c}, {\bf f}, the disease parameters are set as in \cite{birge2020controlling}, the decay rate $\alpha$ is chosen $0.0231$ that corresponds to halving every 30 days. Uniform lockdown, random lockdown, and uniformly-bouded-decline lockdown are defined as in Fig \ref{Fig: framework}.  It can be observed that our policy outperforms all the other lockdown polices.
		}\label{Fig: active_acc_SIR}
	\end{figure}
	
	\clearpage

	\begin{figure}[!htb]
		\centering
		\begin{minipage}[b]{0.9\linewidth}
			\centering
			\includegraphics[width=1.0\linewidth]{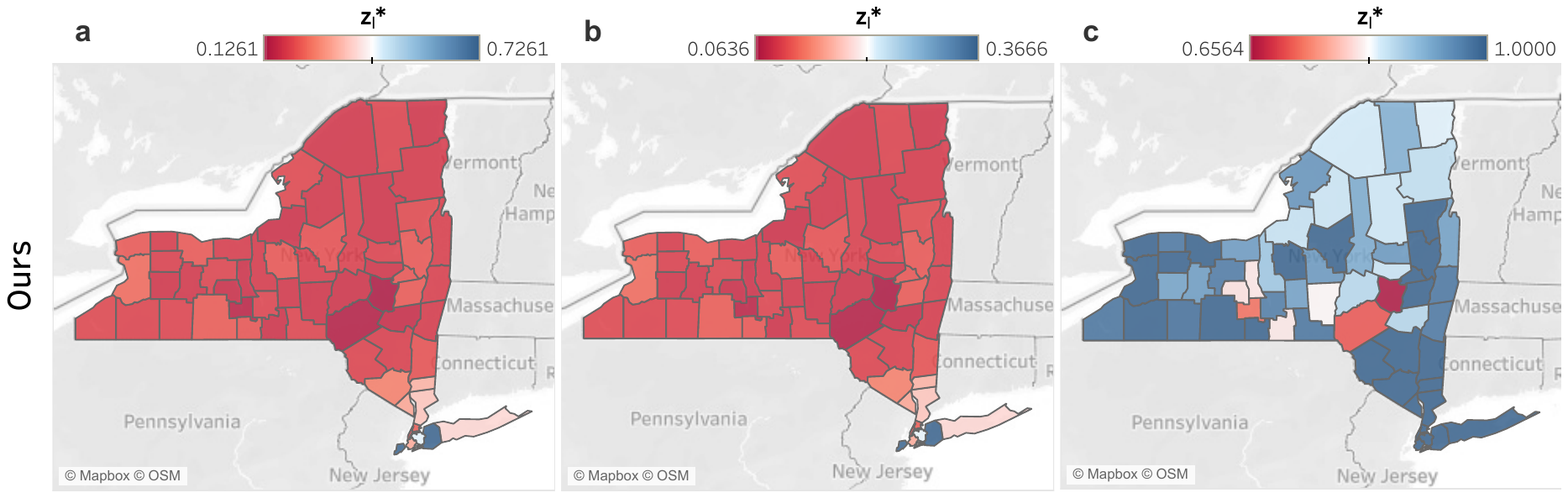}
		\end{minipage}

		\begin{minipage}[b]{0.9\linewidth}
			\centering
			\includegraphics[width=1.0\linewidth]{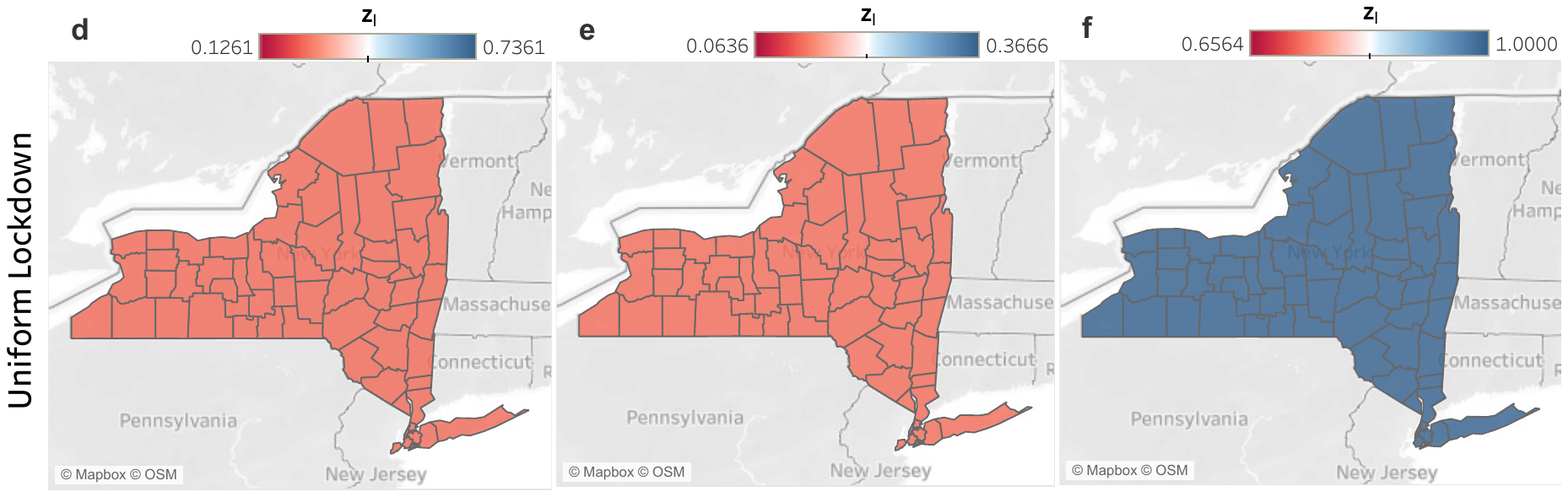}
		\end{minipage}%

		\begin{minipage}[b]{0.9\linewidth}
			\centering
			\includegraphics[width=1.0\linewidth]{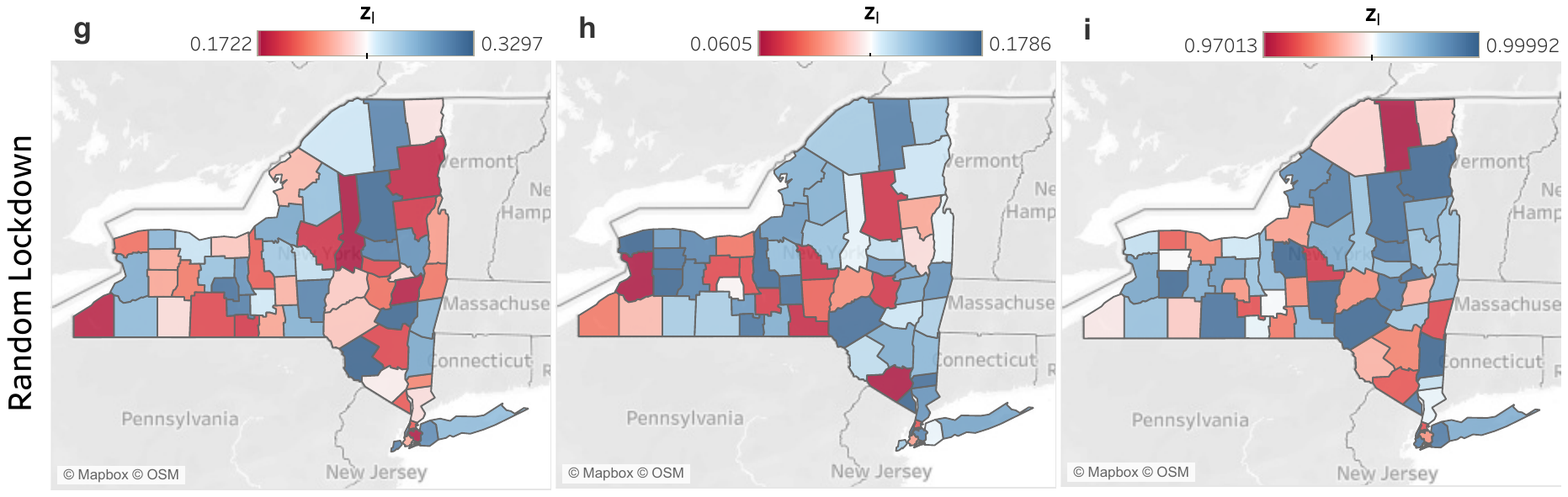}
		\end{minipage}%
		
		\begin{minipage}[b]{0.9\linewidth}
			\centering
			\includegraphics[width=1.0\linewidth]{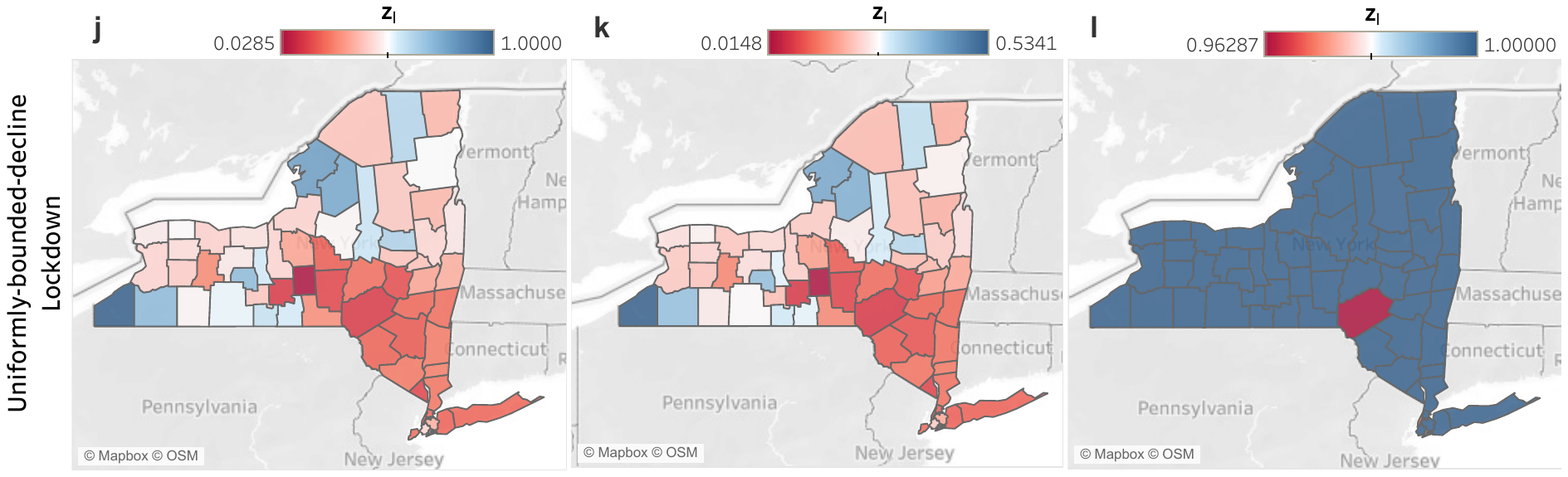}
		\end{minipage}%
		\caption{Experimental results of real data: {\bf lockdown rate of each county given by different polices for SIS model} based on available data about COVID-19 outbreak in NY on April 1st, 2020. {\bf a-c}, optimal lockdown rate $z_i^*$ given by our method . {\bf d-f}, uniform lockdown rate $z_i$. {\bf g-i}, random lockdown rate $z_i$. {\bf j-l}, uniformly-bounded-decline lockdown rate $z_i$. Uniform lockdown, random lockdown, and uniformly-bouded-decline lockdown are defined as in Fig \ref{Fig: framework}.
		In {\bf a}, {\bf d}, {\bf g}, {\bf j}, the disease parameters are set as in \cite{bertozzi2020challenges}, the decay rate $\alpha$ is chosen as 0.0231 which corresponds to halving every 30 days. In {\bf b}, {\bf e}, {\bf h}, {\bf k} the disease parameters are set as in \cite{giordano2020modelling}, the decay rate is chosen as $\alpha = 0.2 r^{\text s} = 0.0034$ so that $\alpha < \min (r^{\text a}, r^{\text s})$. In {\bf c}, {\bf f}, {\bf i}, {\bf l}, the disease parameters are set as in \cite{birge2020controlling}, the decay rate $\alpha$ is chosen $0.0231$ that corresponds to halving every 30 days.  It can observed from {\bf a-c} that the value of $z_i^*$ for counties in NYC are relatively higher than other counties in New York State, which implies we should shutdown the outside of NYC harder than itself. Besides, it can be seen that such counter-intuitive phenomenon does not appear in any other lockdown polices.
	}\label{Fig: SIS_each_county_uni_ours_decline}
	\centering
	\vspace{-1mm}	
	\end{figure}

	\begin{figure}[!htb]
		\centering
		\begin{minipage}[b]{0.9\linewidth}
			\centering
			\includegraphics[width=1.0\linewidth]{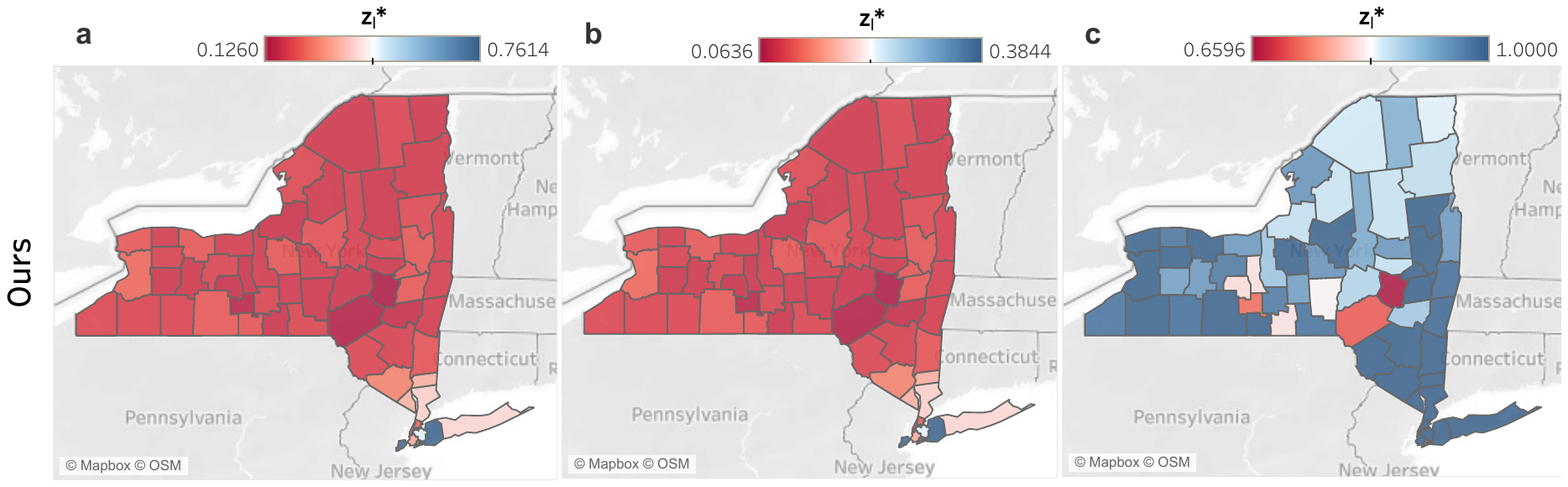}
		\end{minipage}
		
		\begin{minipage}[b]{0.9\linewidth}
			\centering
			\includegraphics[width=1.0\linewidth]{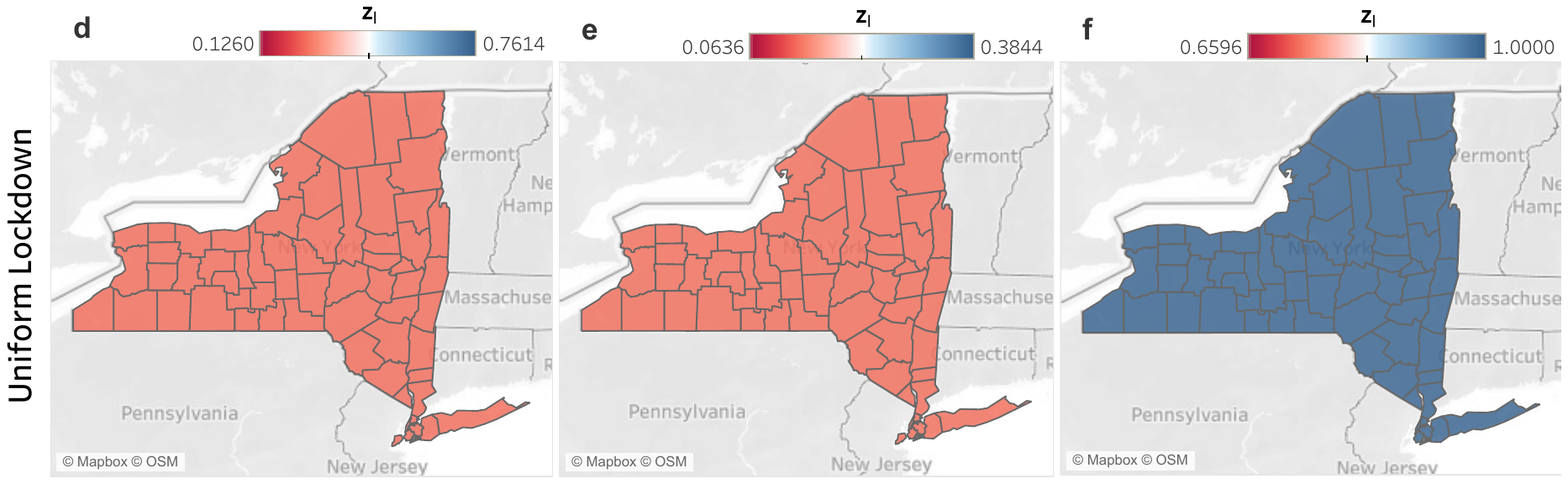}
		\end{minipage}%
		
		\begin{minipage}[b]{0.9\linewidth}
			\centering
			\includegraphics[width=1.0\linewidth]{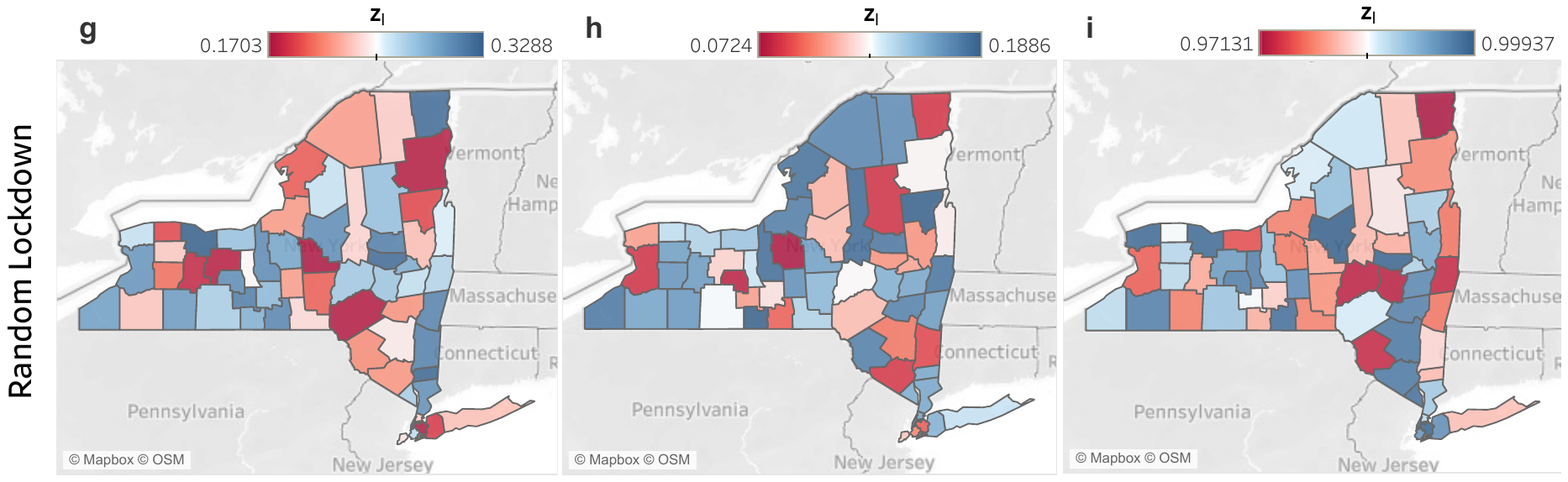}
		\end{minipage}%
		
		\begin{minipage}[b]{0.9\linewidth}
			\centering
			\includegraphics[width=1.0\linewidth]{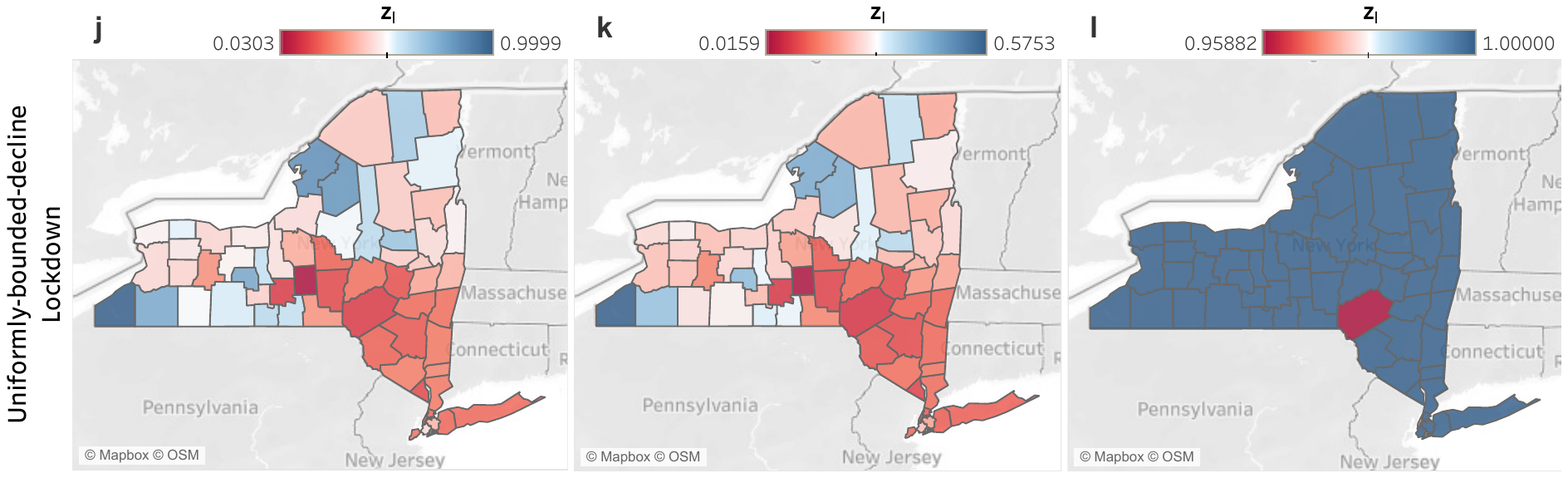}
		\end{minipage}%
		\caption{Experimental results of real data: {\bf lockdown rate of each county given by different polices for SIR model} based on available data about COVID-19 outbreak in NY on April 1st, 2020. {\bf a-c}, optimal lockdown rate $z_i^*$ given by our method . {\bf d-f}, uniform lockdown rate $z_i$. {\bf g-i}, random lockdown rate $z_i$. {\bf j-l}, uniformly-bounded-decline lockdown rate $z_i$.  Uniform lockdown, random lockdown, and uniformly-bouded-decline lockdown are defined as in Fig \ref{Fig: framework}.
	In {\bf a}, {\bf d}, {\bf g}, {\bf j}, the disease parameters are set as in \cite{bertozzi2020challenges}, the decay rate $\alpha$ is chosen as 0.0231 which corresponds to halving every 30 days. In {\bf b}, {\bf e}, {\bf h}, {\bf k} the disease parameters are set as in \cite{giordano2020modelling}, the decay rate is chosen as $\alpha = 0.2 r^{\text s} = 0.0034$ so that $\alpha < \min (r^{\text a}, r^{\text s})$. In {\bf c}, {\bf f}, {\bf i}, {\bf l}, the disease parameters are set as in \cite{birge2020controlling}, the decay rate $\alpha$ is chosen $0.0231$ that corresponds to halving every 30 days.  It can observed from {\bf a-c} that the value of $z_i^*$ for counties in NYC are relatively higher than other counties in New York State, which implies we should shutdown the outside of NYC harder than itself. Besides, it can be seen that such counter-intuitive phenomenon does not appear in any other lockdown polices.
}\label{Fig: SIR_each_county_uni_ours_decline}
\centering
\vspace{-1mm}				
	\end{figure}

	\begin{figure}
		
		\includegraphics[width=1.0\linewidth]{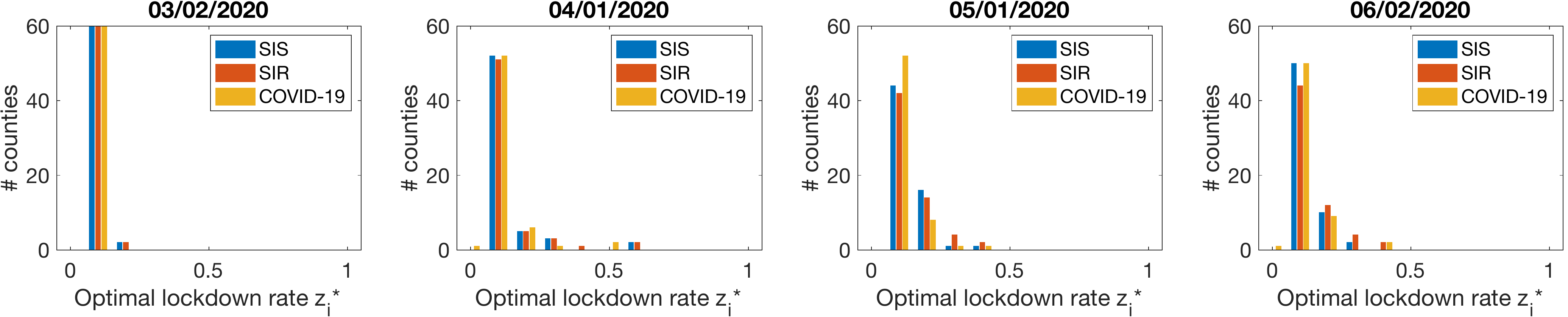}
		\centering
		\caption{{\bf Experimental results of on extended dates}: optimal lockdown rate $z_i^*$ given by Theorem \ref{thm:mainthm} for SIS, SIR, and COVID-19 model based on available data about COVID-19 outbreak in New York State on March 2nd, April 1st, May 1st and June 2nd. The disease parameters are set as in \cite{bertozzi2020challenges}. The decay rate is chosen as $\alpha = 0.2 r^{\text s} = 0.04$ so that $\alpha < \min(r^{\text a}, r^{\text s})$. It can be seen that the value of $z_i^*$ tends to increase from March to June, as people travel less frequently due the impact of COVID-19.
		}\label{Fig: diff_dates_res}
	\end{figure}

	\clearpage

	%

	\begin{figure}[!htb]
		\centering
		\subfigure[]{\label{gamma_z}
			\begin{minipage}[b]{0.195\linewidth}
				\centering
				\includegraphics[width=1.0\linewidth]{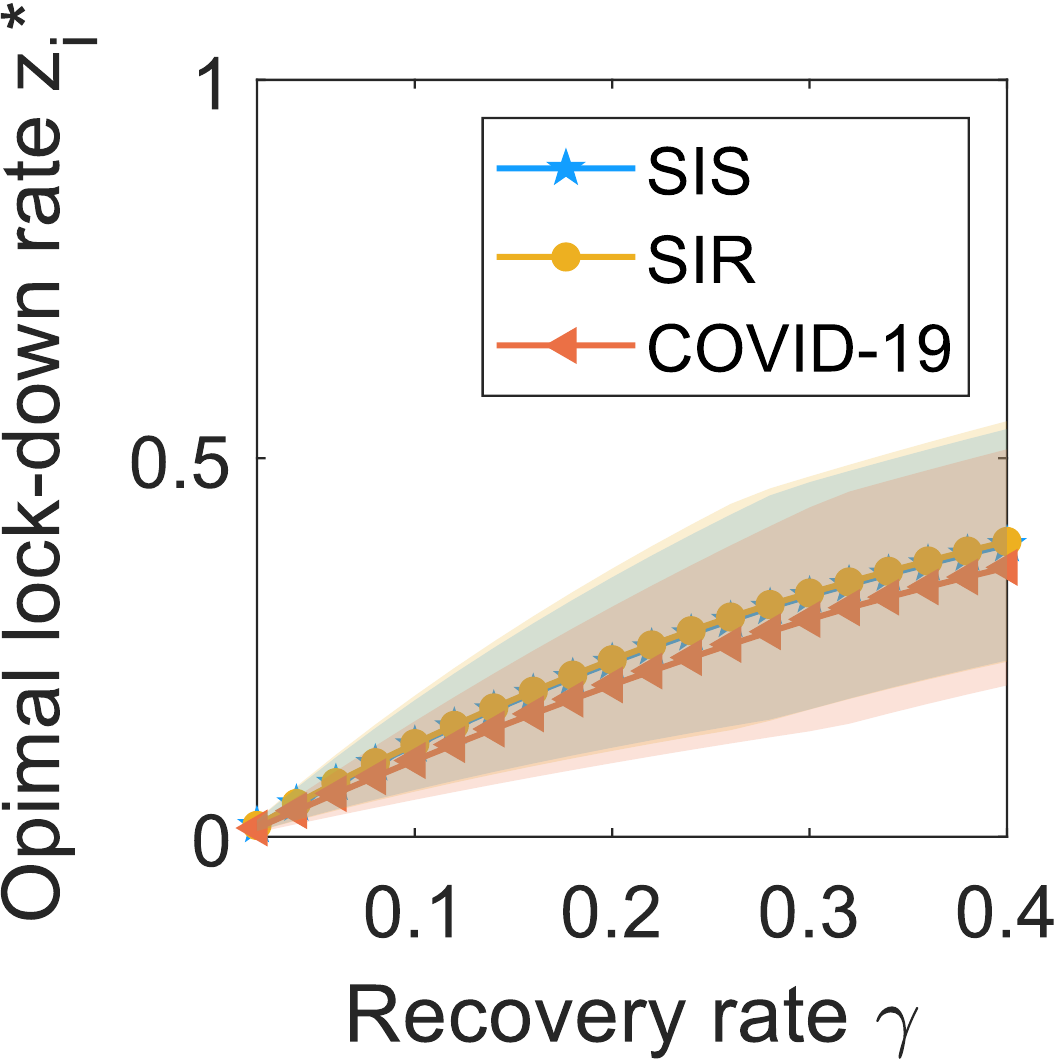}
			\end{minipage}%
		}%
		\subfigure[]{\label{growth_z}
			\begin{minipage}[b]{0.19\linewidth}
				\centering
				\includegraphics[width =1.0\linewidth]{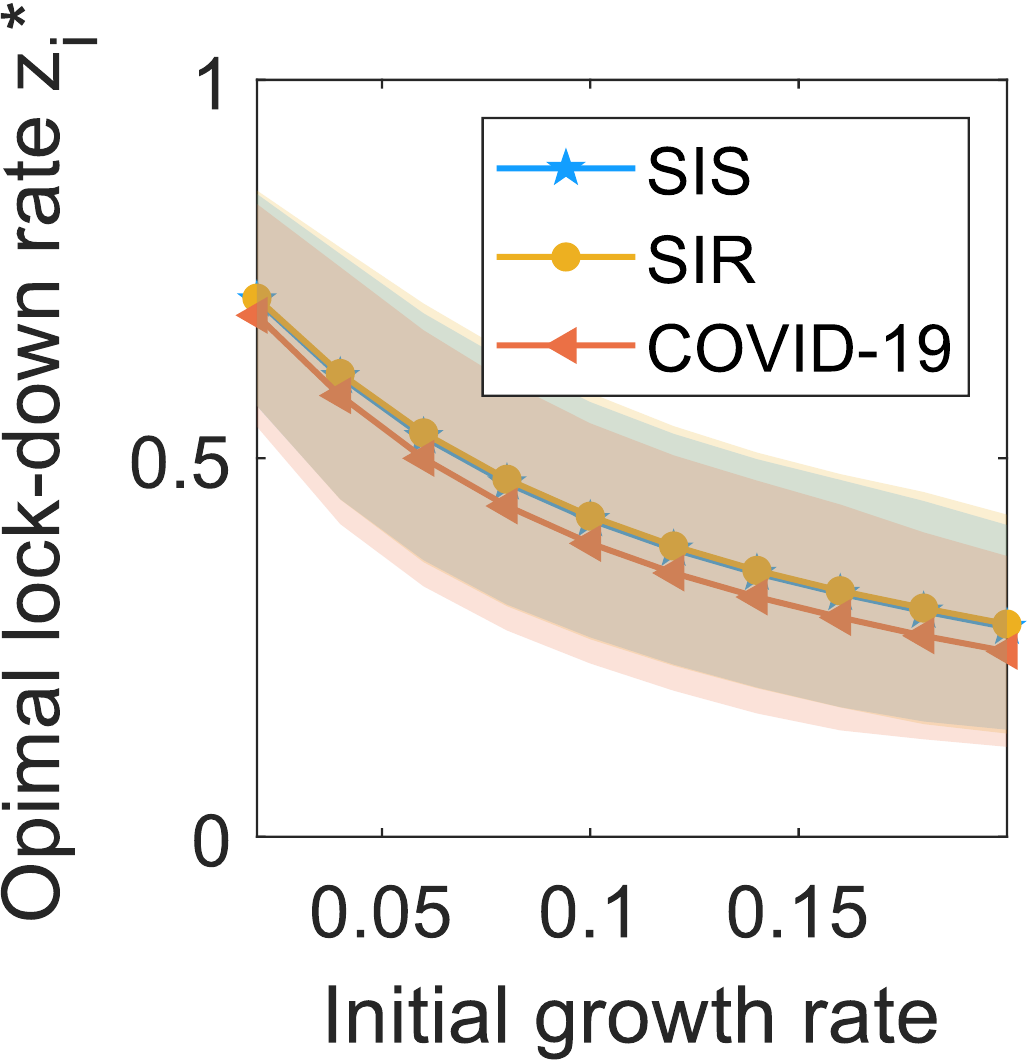}
			\end{minipage}%
		}%
		\subfigure[]{\label{epsilon_z}
			\begin{minipage}[b]{0.19\linewidth}
				\centering
				\includegraphics[width=1.0\linewidth]{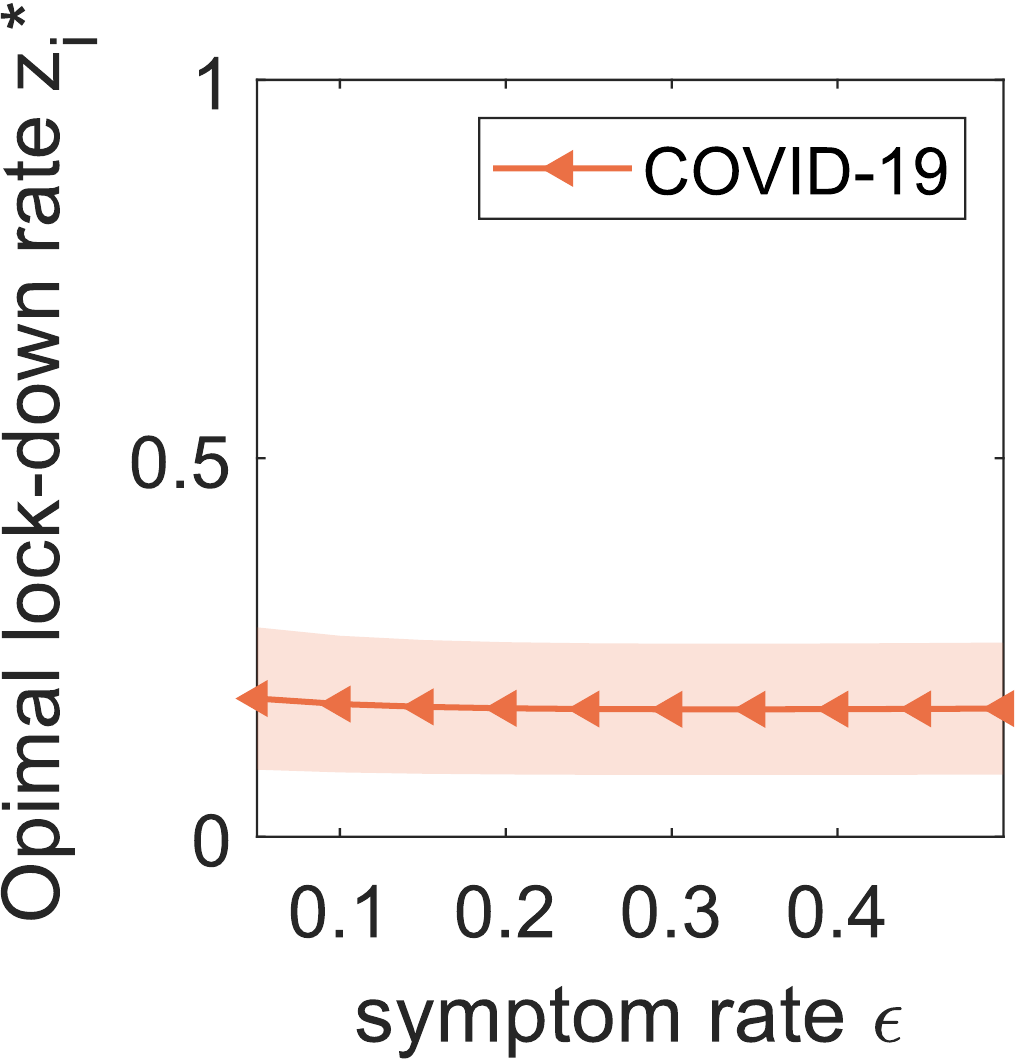}
			\end{minipage}%
		}%
		\subfigure[]{\label{alpha_hat_z}
			\begin{minipage}[b]{0.195\linewidth}
				\centering
				\includegraphics[width=1.0\linewidth]{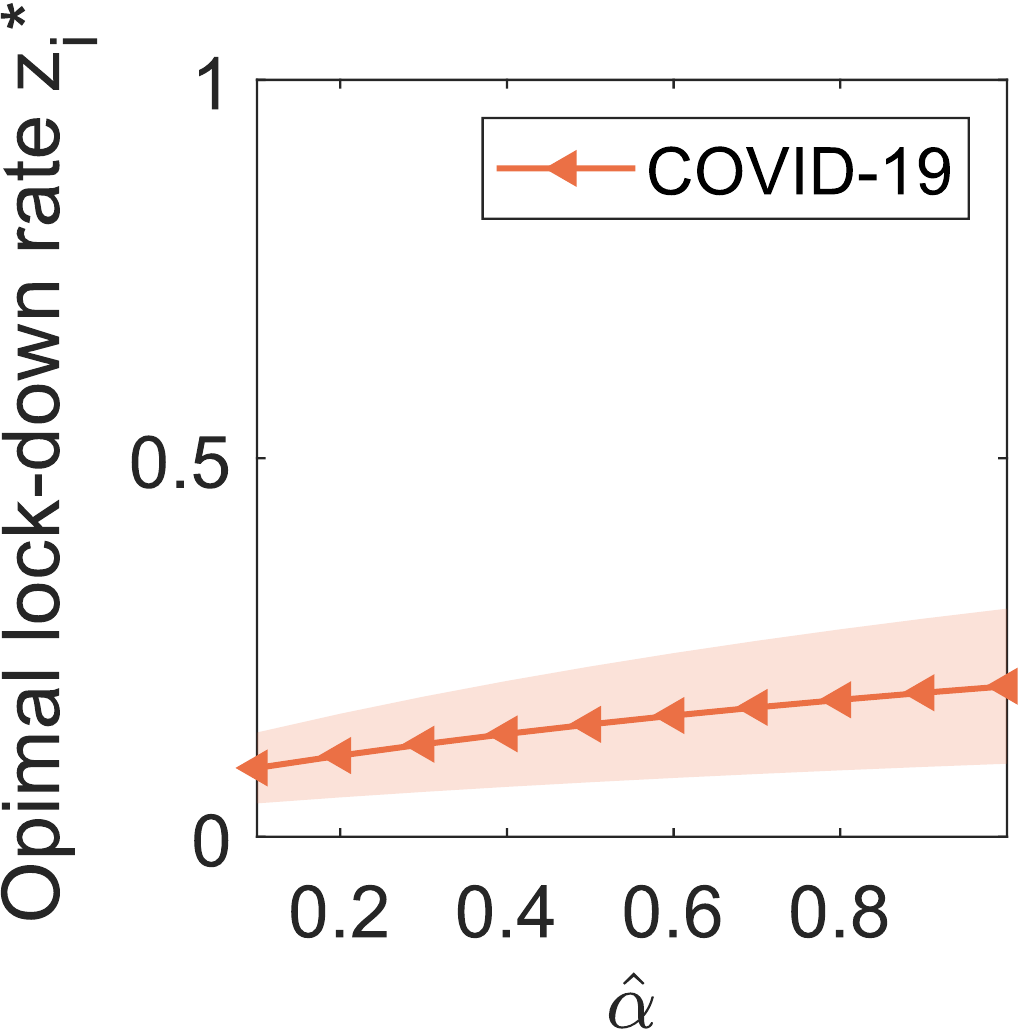}
			\end{minipage}%
		}%
		\subfigure[]{\label{gamma_hat_z}
			\begin{minipage}[b]{0.195\linewidth}
				\centering
				\includegraphics[width=1.0\linewidth]{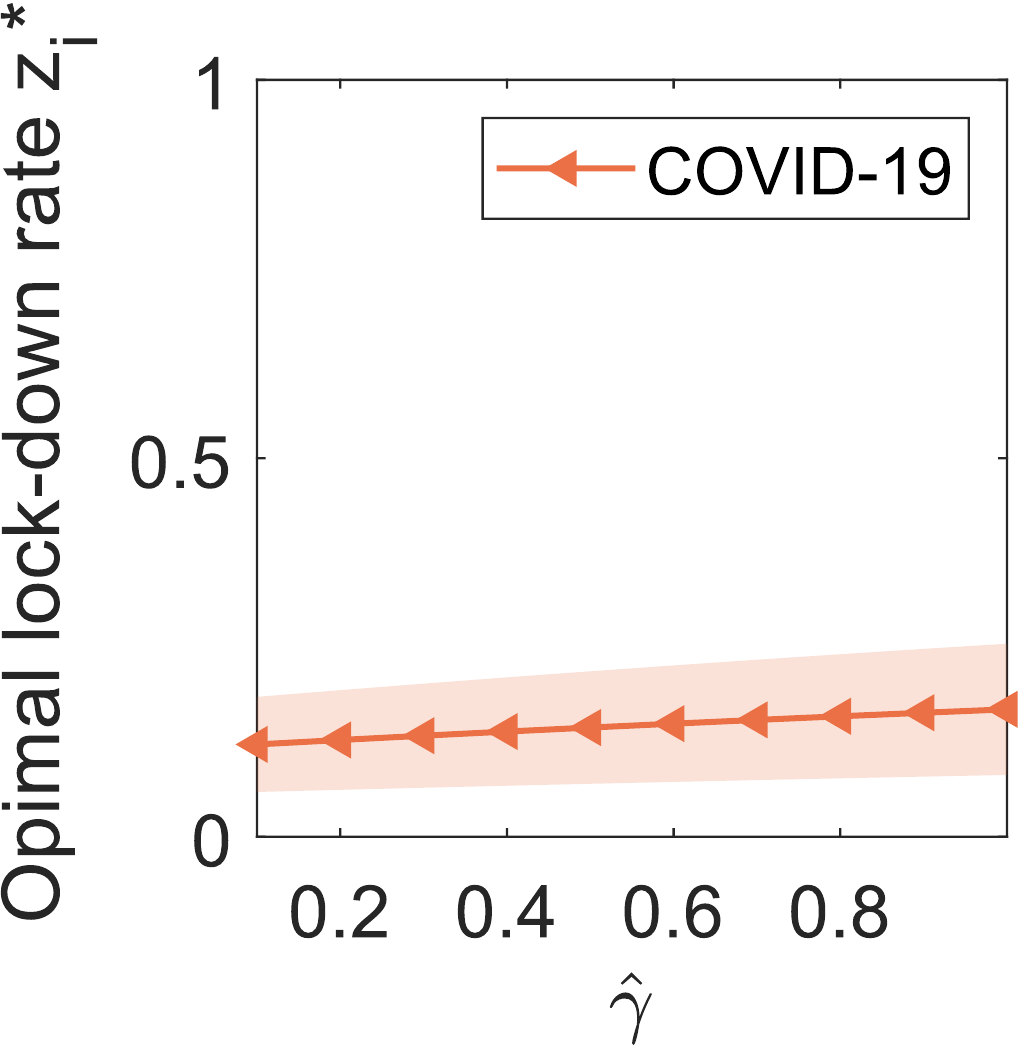}
			\end{minipage}%
		}%
		
		\vspace{-2.5mm}
		\subfigure[]{\label{gamma_c}
			\begin{minipage}[b]{0.195\linewidth}
				\centering
				\includegraphics[width=1.0\linewidth]{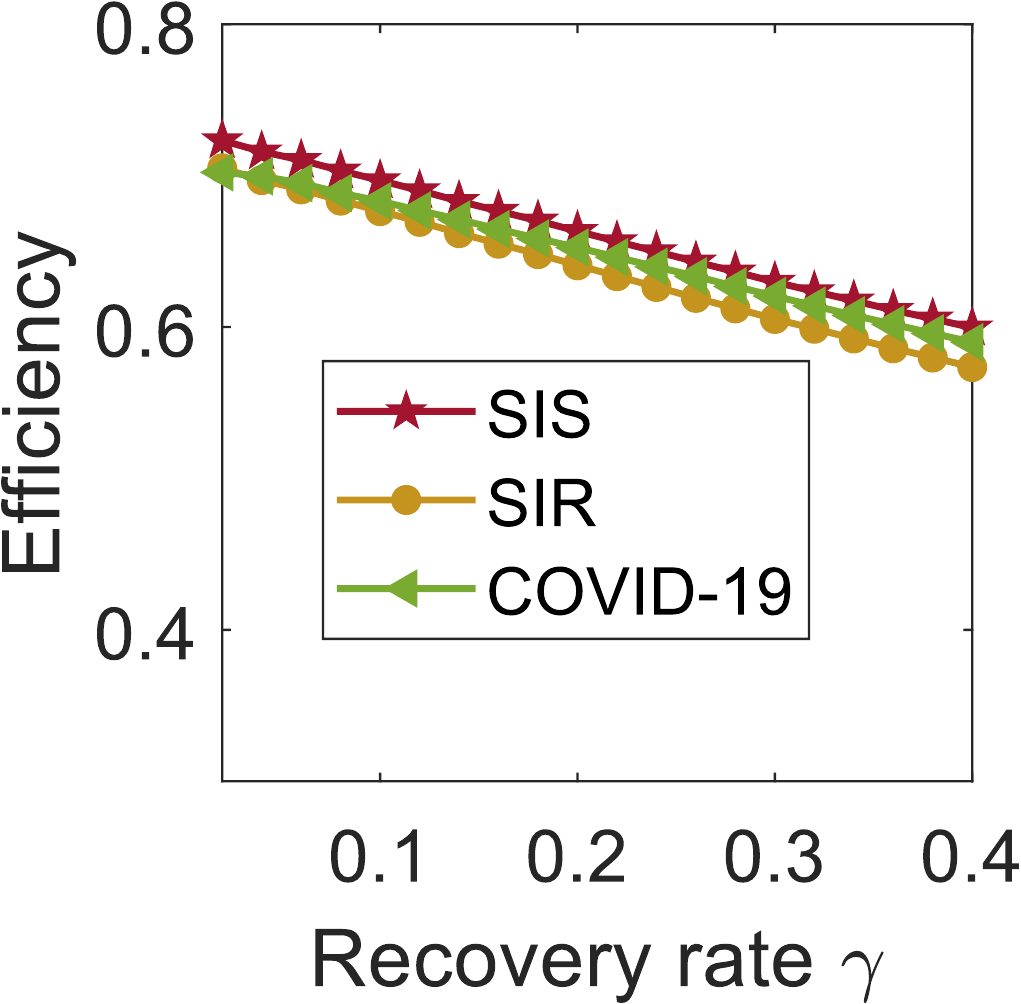}
			\end{minipage}%
		}%
		\subfigure[]{\label{growth_c}
			\begin{minipage}[b]{0.195\linewidth}
				\centering
				\includegraphics[width=0.98\linewidth]{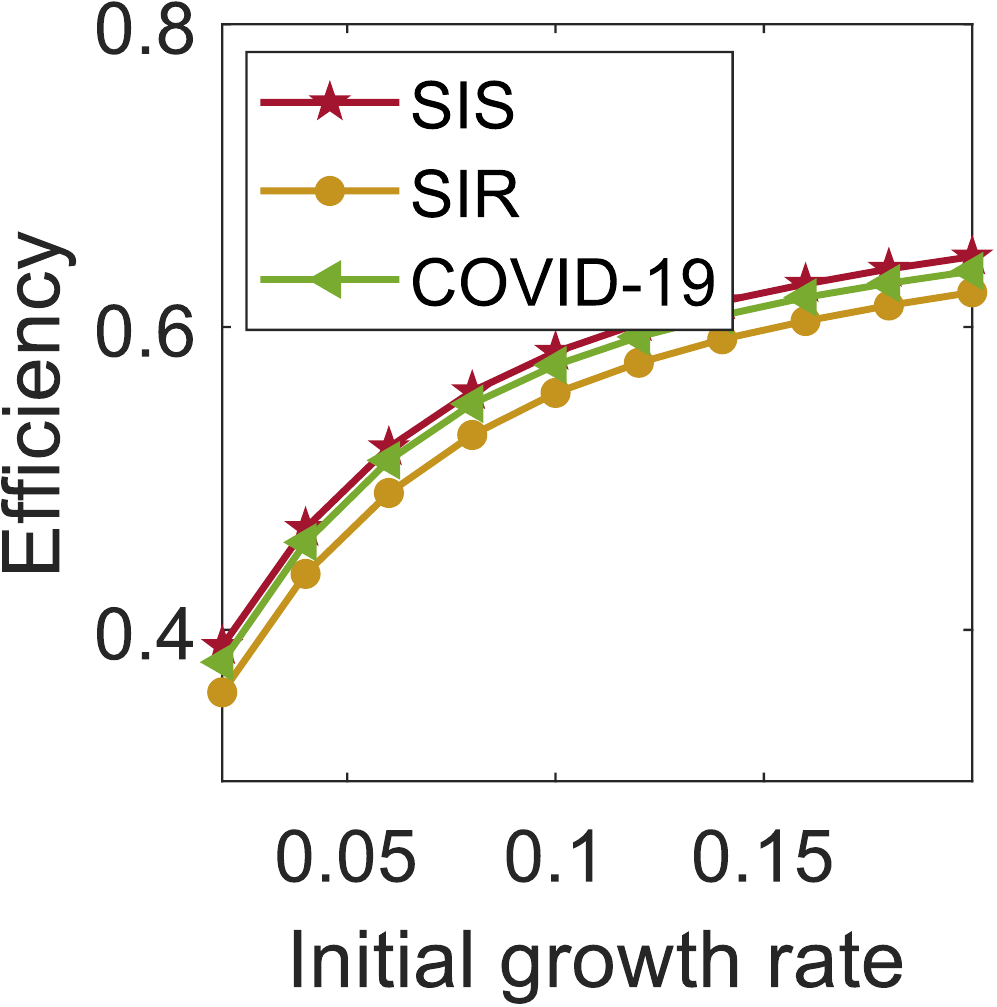}
			\end{minipage}%
		}%
		\subfigure[]{\label{epsilon_c}
			\begin{minipage}[b]{0.19\linewidth}
				\centering
				\includegraphics[width=1.0\linewidth]{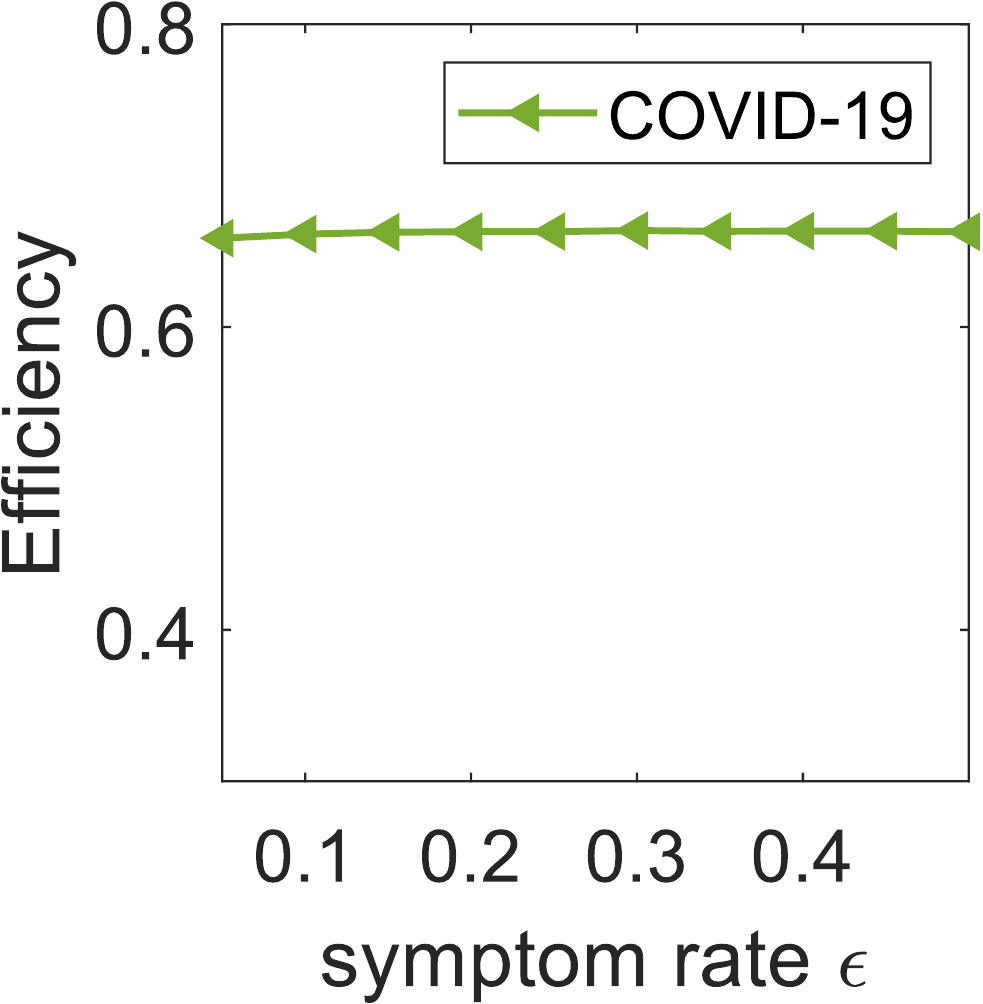}
			\end{minipage}%
		}%
		\subfigure[]{\label{alpha_hat_c}
			\begin{minipage}[b]{0.197\linewidth}
				\centering
				\includegraphics[width=1.0\linewidth]{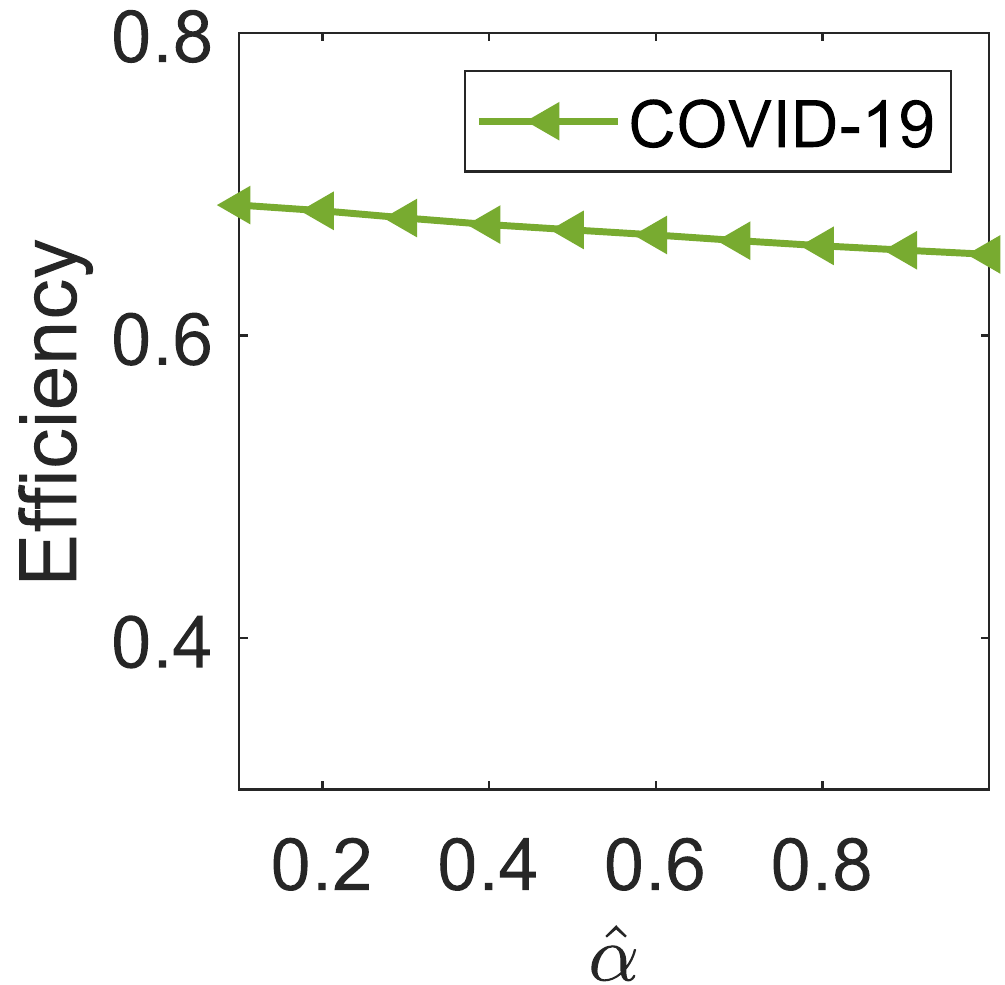}
			\end{minipage}%
		}%
		\subfigure[]{\label{gamma_hat_c}
			\begin{minipage}[b]{0.19\linewidth}
				\centering
				\includegraphics[width=1.0\linewidth]{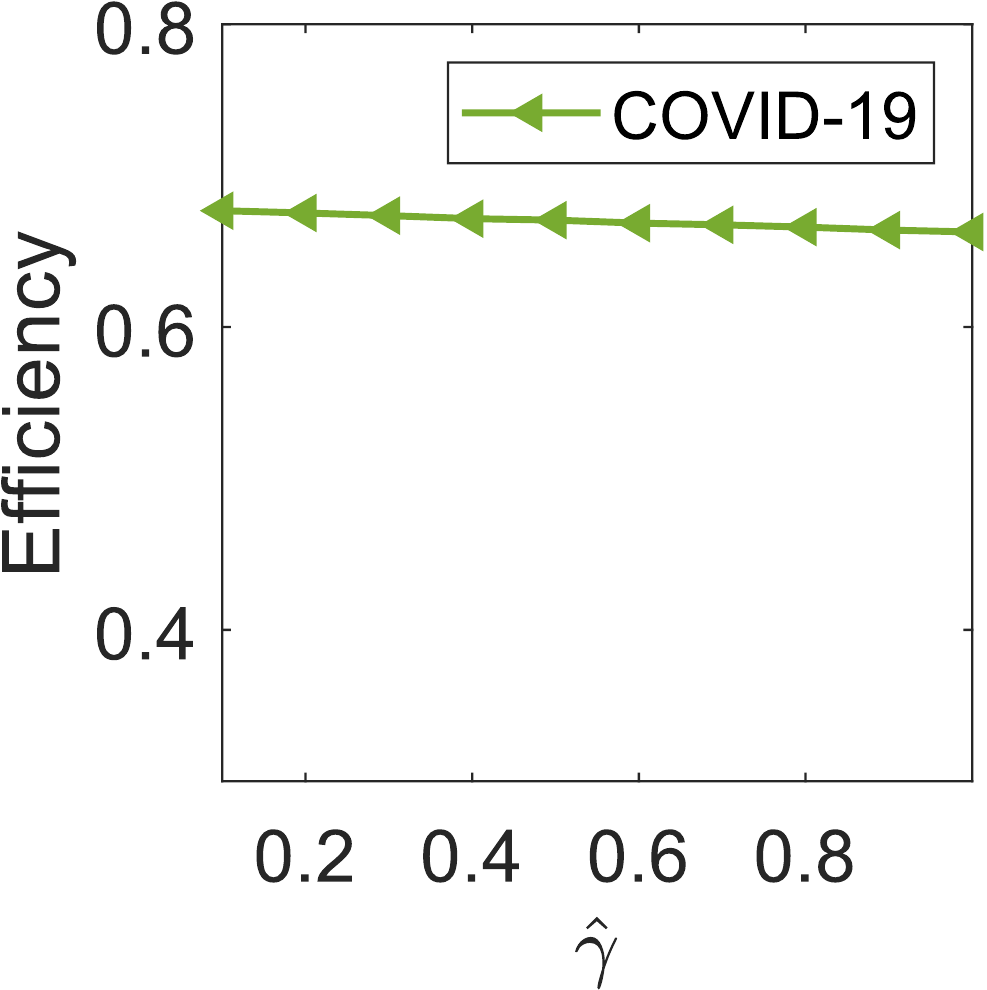}
			\end{minipage}%
		}%
		\centering
		\vspace{-1mm}
		\caption{{\bf Disease parameter sensitivity analysis with respect to $z_i^*$ and efficiency}: optimal lockdown rate $z_i^*$ and corresponding efficiency given by Theorem \ref{thm:mainthm} for SIS, SIR, and COVID-19 model based on available data about COVID-19 outbreak in New York State on April 1st. In {\bf a-e}, the shaded region represents the standard deviation of $z_i^*$ for all 62 counties, the solid line is the mean value of $z_i^*$ for all 62 counties. Parameter $\epsilon$, $\hat{\alpha}$, $\hat{\gamma}$ only appears in COVID-19 model, hence {\bf c-e} and {\bf h-j} only show the results of COVID-19 model. It can be seen that the value of $z_i^*$ and the corresponding economic cost are sensitive to recovery rate and the initial growth rate but not to other parameters.}\label{Fig: sensitivity}
		\vspace{-1mm}
	\end{figure}

	\begin{figure}[!htb]
		\centering
		\subfigure[]{\label{Fig: PNAS_rela}
			\begin{minipage}[b]{0.9\linewidth}{}
				\centering
				\includegraphics[width=1.0\linewidth]{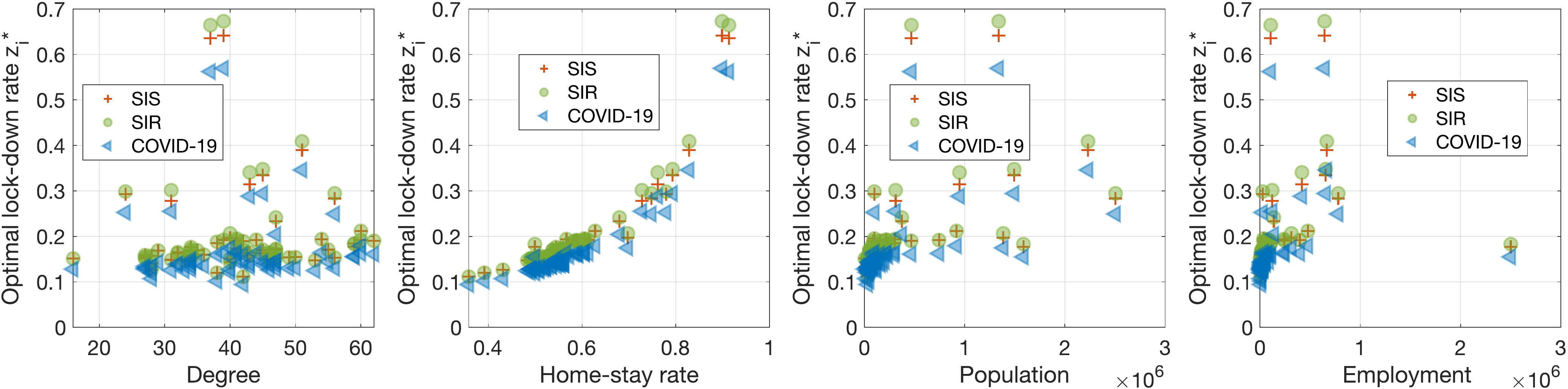}
			\end{minipage}%
		}%
		
		\subfigure[]{\label{Fig: nature_medicine_rela}
			\begin{minipage}[b]{0.9\linewidth}
				\centering
				\includegraphics[width =1.0\linewidth]{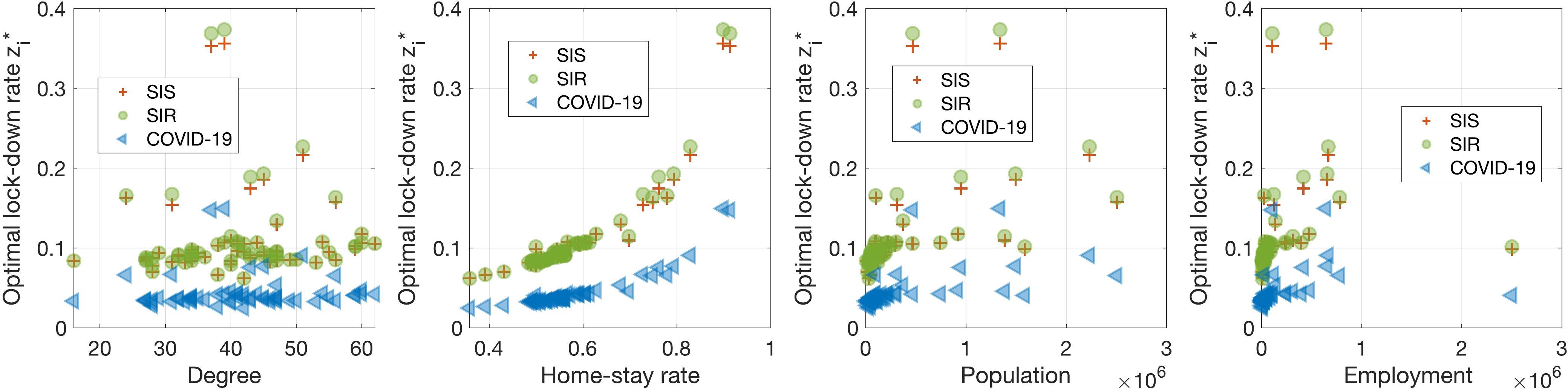}
			\end{minipage}%
		}%
		
		\subfigure[]{\label{Fig: Chicago_rela}
			\begin{minipage}[b]{0.9\linewidth}
				\centering
				\includegraphics[width=1.0\linewidth]{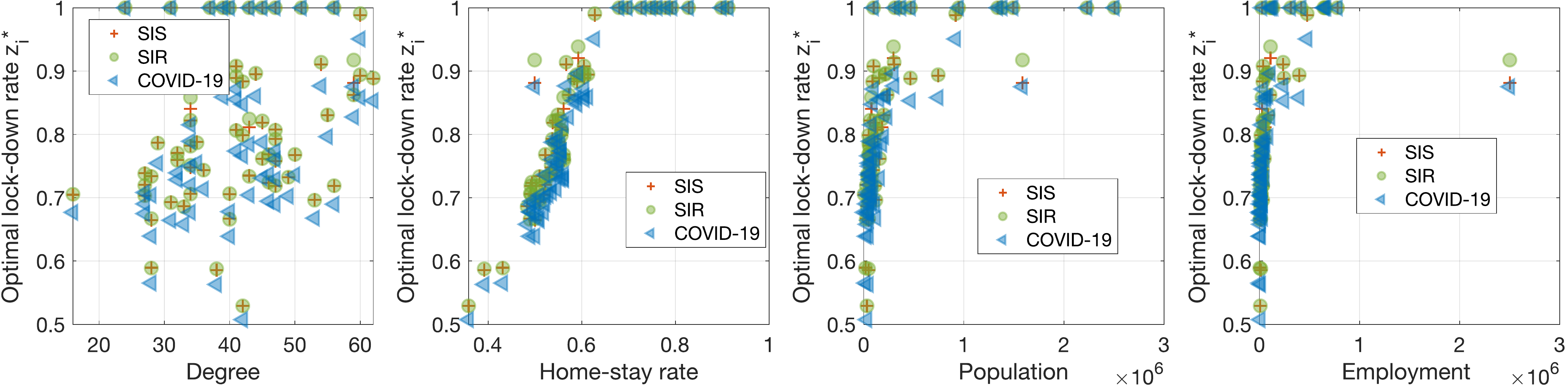}
			\end{minipage}%
		}%
		\centering
		\vspace{-1mm}
		\caption{Other parameter analysis: {\bf the relationship of the optimal lockdown rate $z_i$ and the centrality, home-stay rate, population and employment}. $z_i^*$ are produced by Theorem \ref{thm:mainthm} based on available data about COVID-19 outbreak about COVID-19 outbreak in New York State on April 1st, 2020. 
			The disease parameters are set as in \cite{bertozzi2020challenges}. In {\bf a}, the disease parameters are set as in \cite{bertozzi2020challenges}, the decay rate $\alpha$ is chosen $0.0231$ that corresponds to halving every 30 days. In {\bf b}, the disease parameters are set as in \cite{giordano2020modelling}, the decay rate is chosen as $\alpha = 0.2 r^{\text s} = 0.0034$ so that $\alpha < \min(r^{\text a}, r^{\text s})$. In {\bf c}, the disease parameters are set as in \cite{birge2020controlling}, the decay rate $\alpha$ is chosen $0.0231$ that corresponds to halving every 30 days.
			Degree closely related to centrality, and the number of employees decides the economic cost coefficients $c_i$. Each point represents a county in New York State. It can be observed that the value of $z_i^*$ increases as the home-stay rate of the node increases, the impacts of the degree, population and the employment to the value of $z_i^*$ are not obvious.}\label{Fig: 2020_parameters}
		\vspace{-1mm}
	\end{figure}
	
	\begin{figure}[!htb]
		\centering
		\subfigure[]{\label{March_rand_perm}
			\begin{minipage}[b]{0.9\linewidth}
				\centering
				\includegraphics[width =1.0\linewidth]{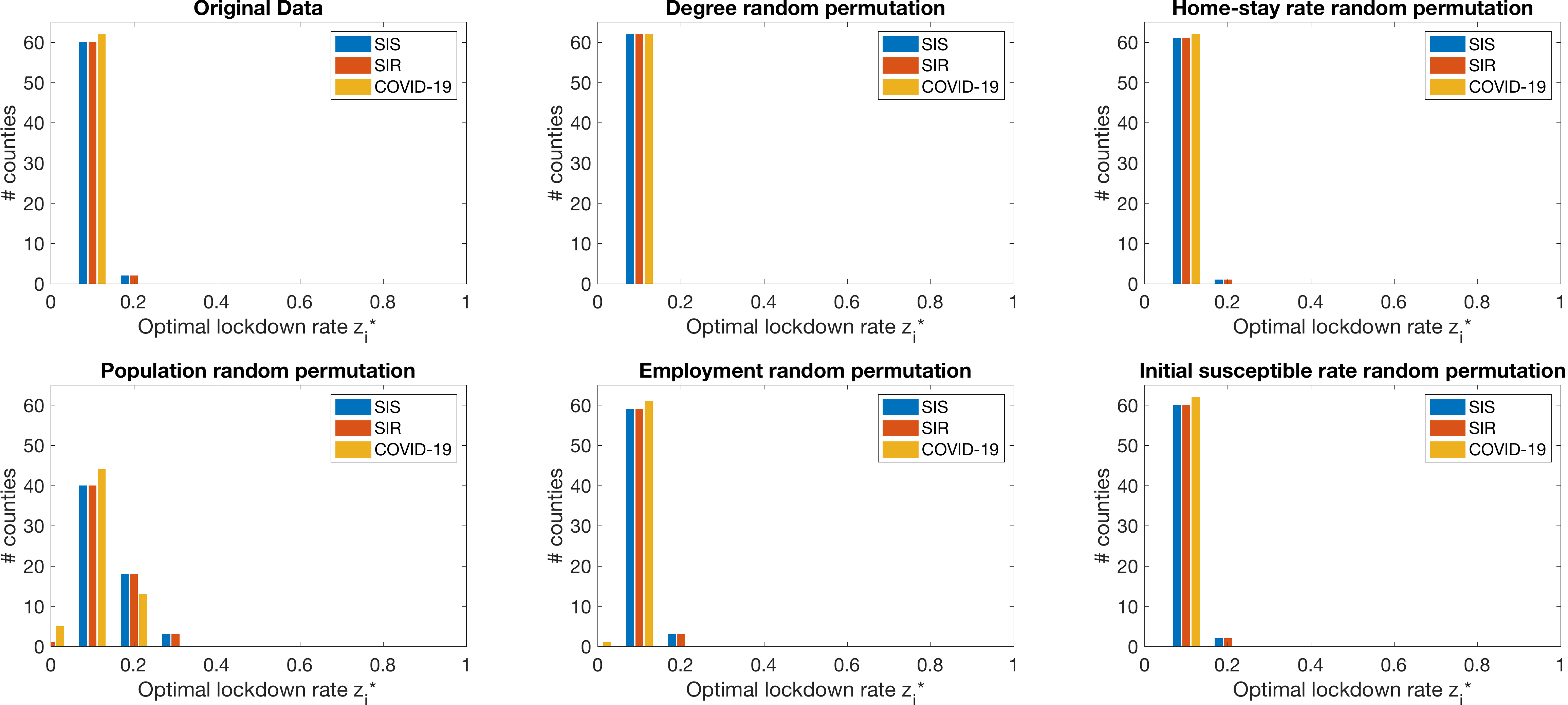}
			\end{minipage}%
		}%
		
		\subfigure[]{\label{April_rand_perm}
			\begin{minipage}[b]{0.9\linewidth}
				\centering
				\includegraphics[width=1.0\linewidth]{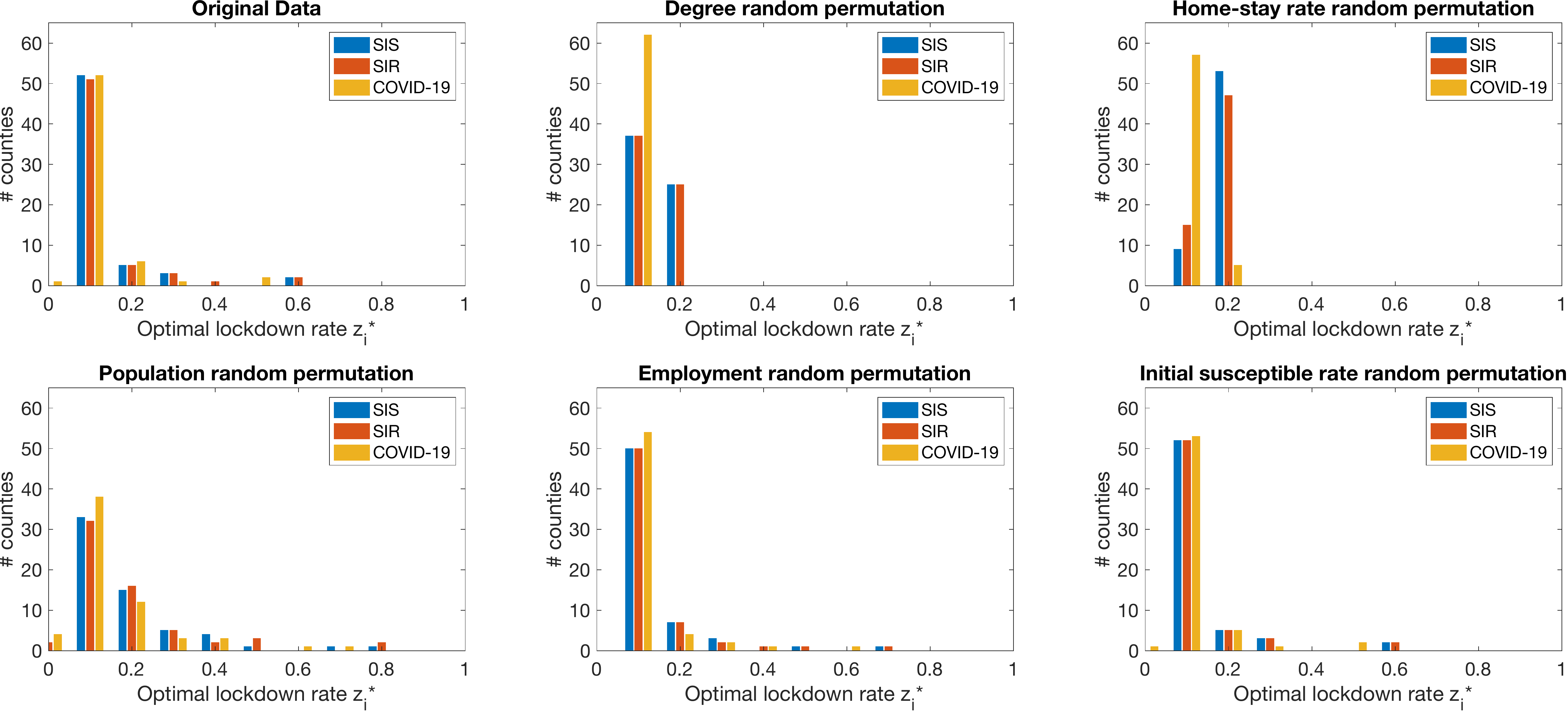}
			\end{minipage}%
		}%
		\centering
		\vspace{-1mm}
		\caption{{\bf Experimental results of random permutation}: optimal lockdown rate $z_i^*$ after doing random permutation with respect to degree, home-stay rate, population, employment, and initial susceptible rate. {\bf a},
			shows the results based on available data about COVID-19 outbreak in NYS on March 2nd, 2020. {\bf b},  the results based on available data about COVID-19 outbreak in NYS on April 1st, 2020. The decay rate is chosen as $\alpha = 0.2 r^{\text s} = 0.04$ so that $\alpha < \min(r^{\text a}, r^{\text s})$. The results are average of 100 repeat. It can be observed that the distribution of $z_i^*$ can be very strongly affected by permutations of centrality, population, and the home stay rate. On the other hand, the distribution of $z_i^*$ is not altered much by permuting employment and the initial susceptible rate.}\label{Fig: rand_perm1}
		\vspace{-1mm}
	\end{figure}

	\begin{figure}[!htb]
		\centering
		\subfigure[]{\label{May_rand_perm}
			\begin{minipage}[b]{0.9\linewidth}
				\centering
				\includegraphics[width =1.0\linewidth]{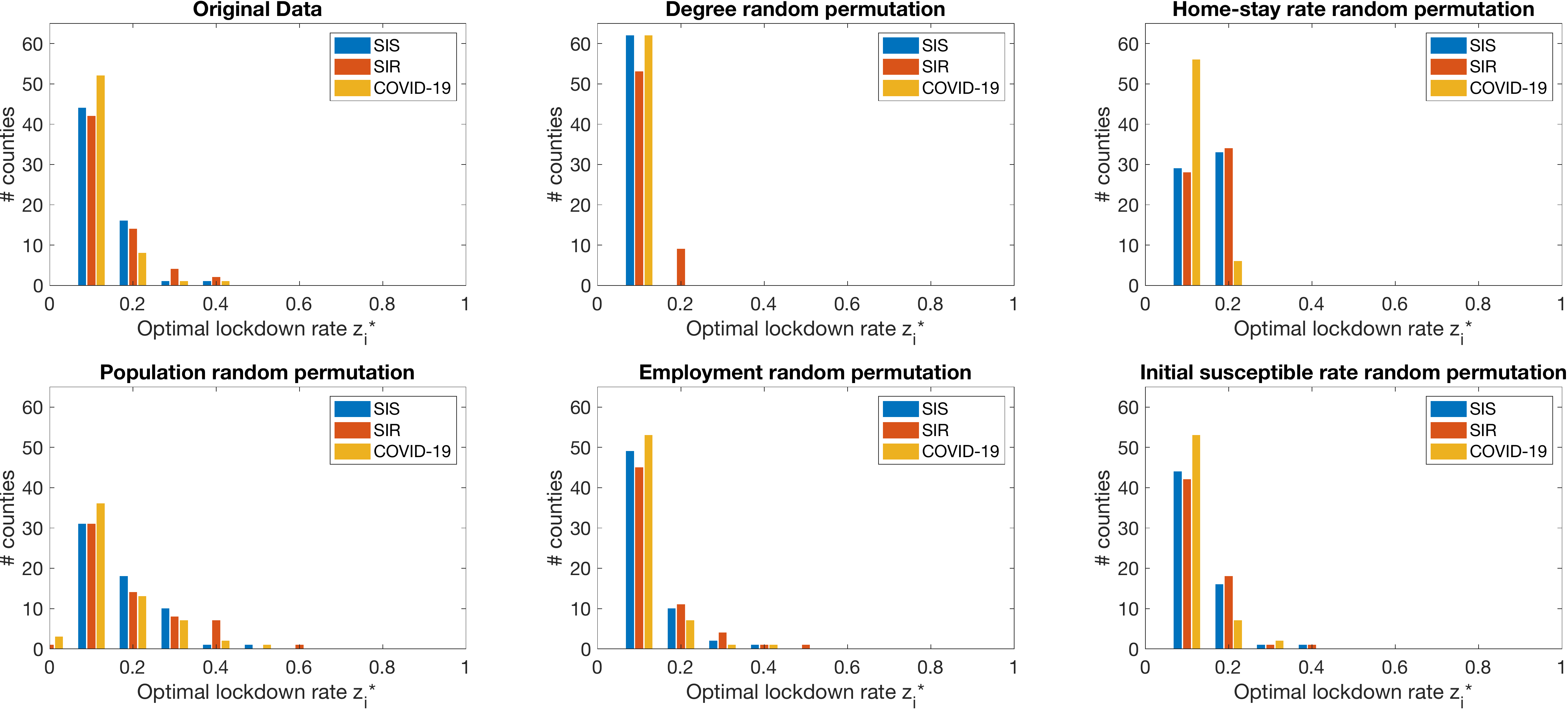}
			\end{minipage}%
		}%
		
		\subfigure[]{\label{June_rand_perm}
			\begin{minipage}[b]{0.9\linewidth}
				\centering
				\includegraphics[width=1.0\linewidth]{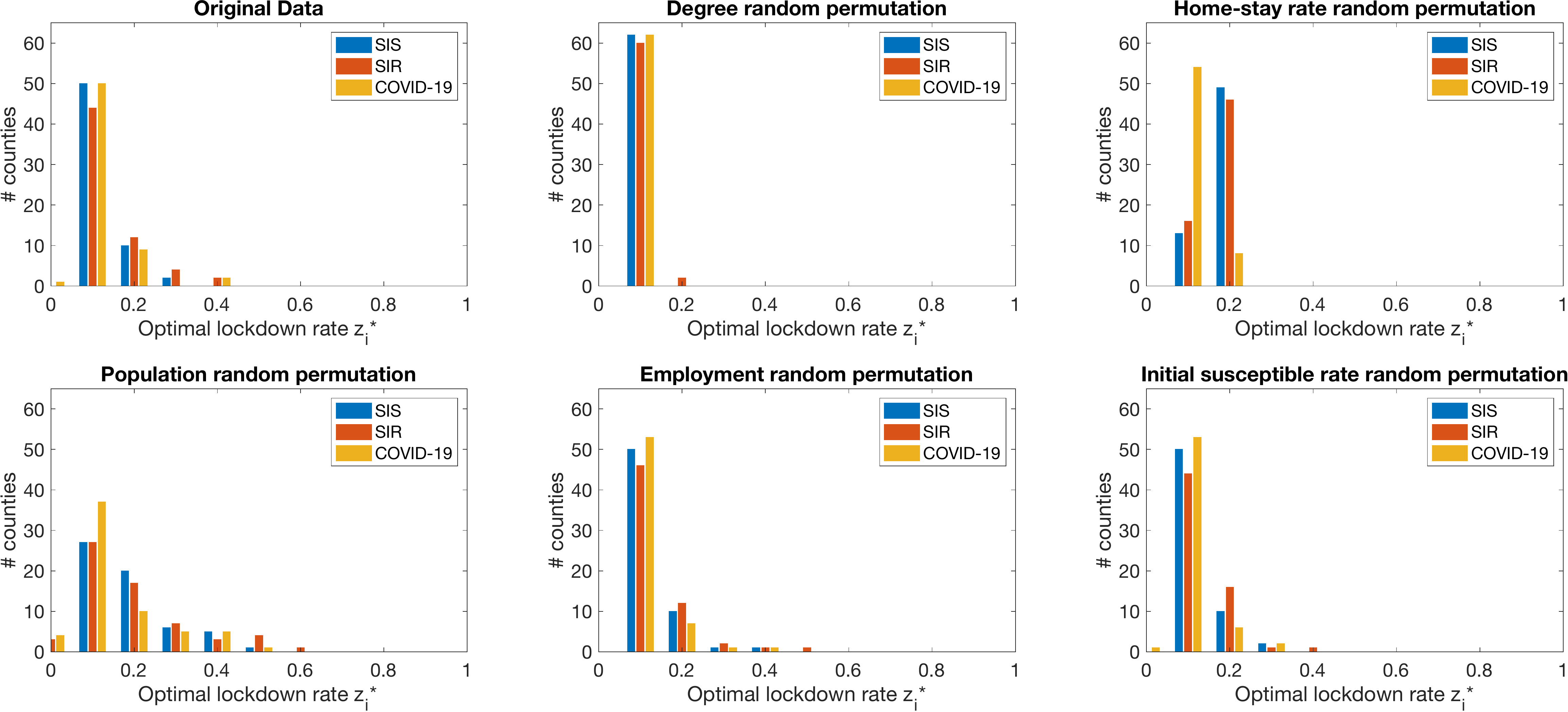}
			\end{minipage}%
		}%
		\centering
		\vspace{-1mm}
		\caption{{\bf Experimental results of random permutation}: optimal lockdown rate $z_i^*$ after doing random permutation with respect to degree, home-stay rate, population, employment, and initial susceptible rate. {\bf a}, the results based on available data about COVID-19 outbreak in NYS on May 1st, 2020. {\bf b}, the results based on available data about COVID-19 outbreak in NYS on June 2nd, 2020. The disease parameters are set as in \cite{bertozzi2020challenges}. The decay rate is chosen as $\alpha = 0.2 r^{\text s} = 0.04$ so that $\alpha < \min(r^{\text a}, r^{\text s})$.The results are average of 100 repeat. It can be observed that the distribution of $z_i^*$ can be very strongly affected by permutations of centrality, population, and the home stay rate. On the other hand, the distribution of $z_i^*$ is not altered much by permuting employment and the initial susceptible rate.
		}\label{Fig: rand_perm2}
		\vspace{-1mm}
	\end{figure}



	\begin{figure}[!htb]
		\centering
		\subfigure[]{\label{Fig: centrality_B_A}
			\begin{minipage}[b]{0.9\linewidth}
				\centering
				\includegraphics[width =1.0\linewidth]{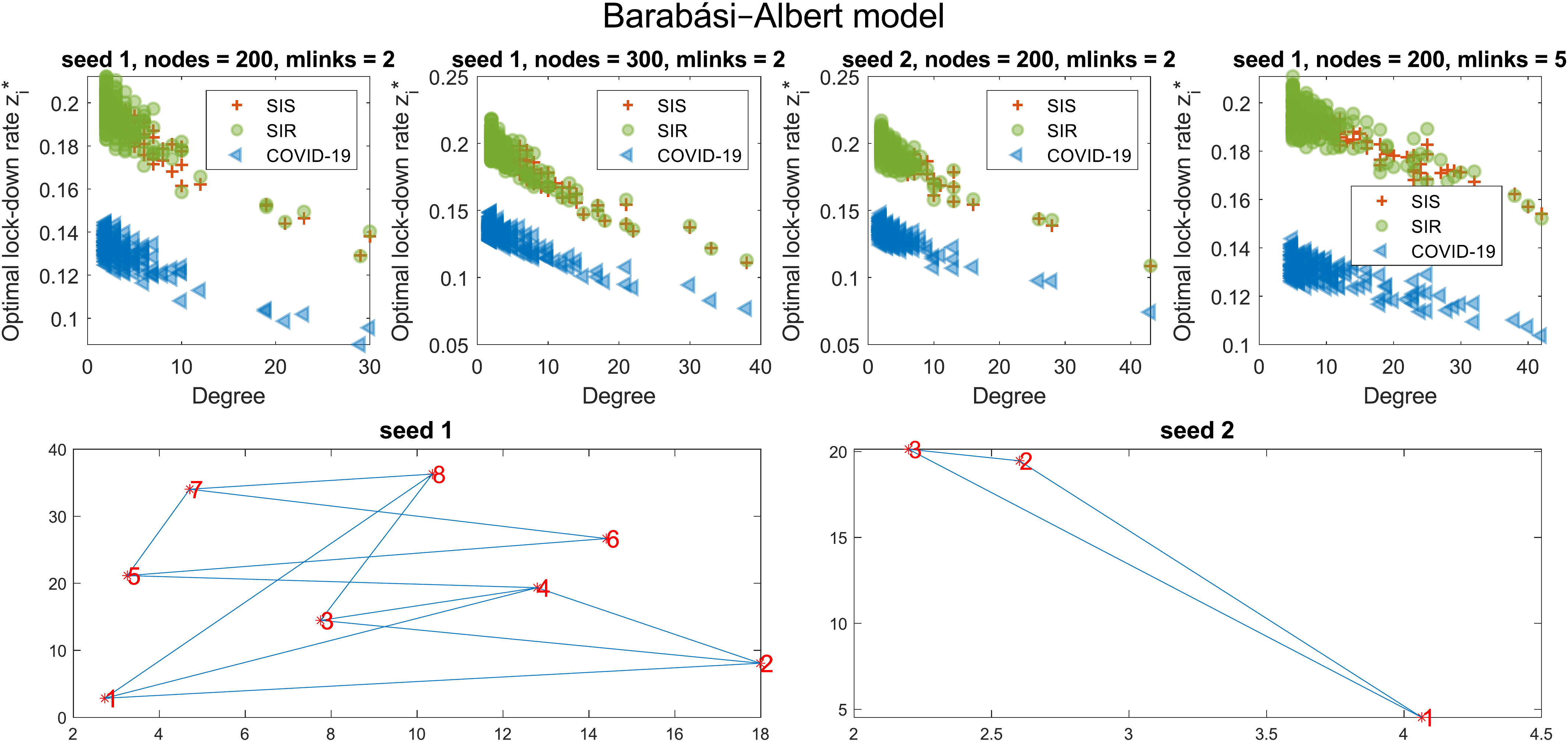}
			\end{minipage}%
		}%
		
		\subfigure[]{\label{Fig: centrality_rand}
			\begin{minipage}[b]{0.9\linewidth}
				\centering
				\includegraphics[width=1.0\linewidth]{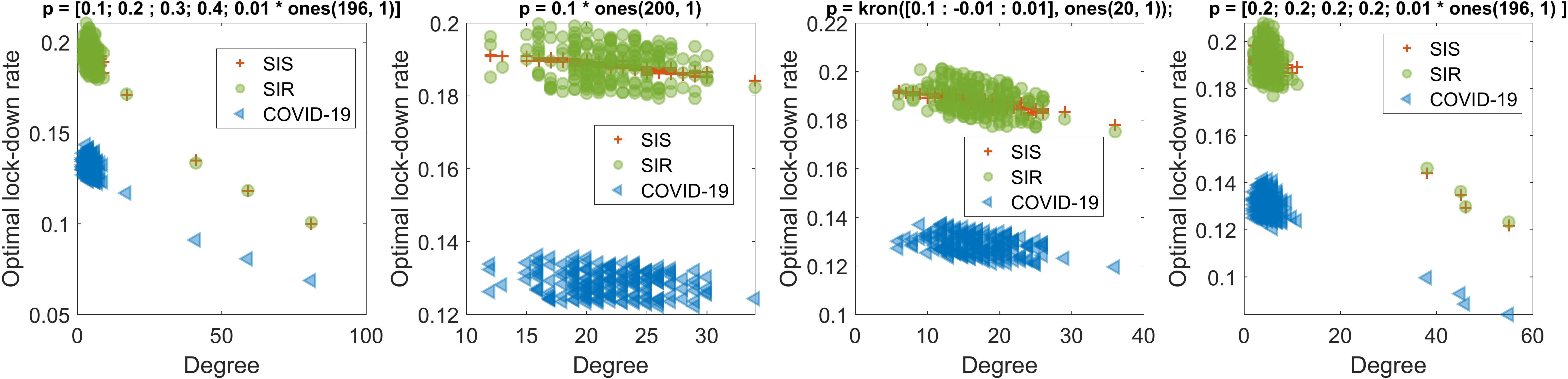}
			\end{minipage}%
		}%
		\centering
		\vspace{-1mm}
		\caption{Experimental results of synthetic data: {\bf the impact of centrality of the network} to the value of optimal lockdown rate $z_i^*$. {\bf a}, the results based on Barabási–Albert model. {\bf b}, the results based on a kind of generated random graph.
			Each point represents a node in the network. It can be observed that centrality only matters for the value of $z_i^*$ when there exist hotspots in all the three models. 
			Surprisingly, beyond such hotspots, the effect of centrality is essentially nonexistent.}
		\vspace{-1mm}
	\end{figure}

	\clearpage	
	\begin{figure}[!htb]
				\centering
			\begin{minipage}[b]{0.75\linewidth}
				\includegraphics[width=0.95\linewidth]{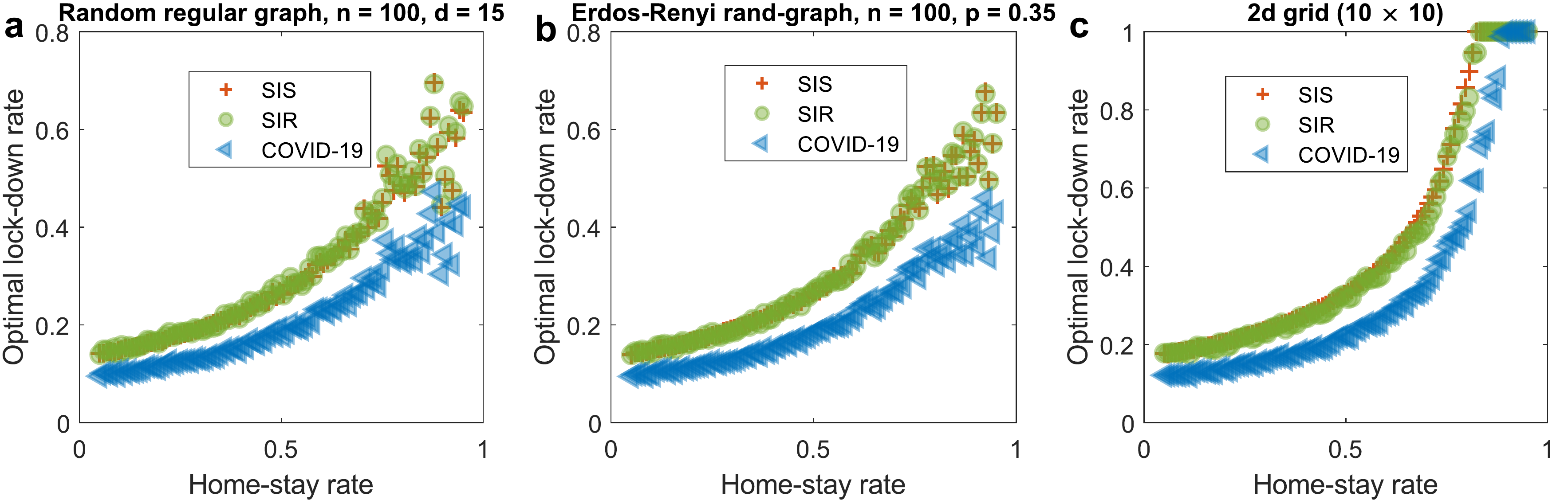}
			\end{minipage}%
					\raggedleft\begin{minipage}[b]{0.235\linewidth}
			\includegraphics[width=0.95\linewidth]{plots/home_rate_new.pdf}
		\end{minipage}%
		\centering
		\vspace{-1mm}
		\caption{{\color{black}Experimental results of synthetic data: {\bf the impact of home-stay rate} to the value of optimal lockdown rate $z_i^*$.  Each point represents a node in the network. It can be observed that $z_i^*$increases as the home-stay rate increases, and in fact the home-stay rate has by far the biggest influence on $z_i^*$ compared to the other parameters we consider.}}\label{fig: home_stay_rate}
		\vspace{-1mm}
	\end{figure}

	\begin{figure}[!htb]
		\centering
		\subfigure[]{\label{PNAS_0_2}
			\begin{minipage}[b]{0.9\linewidth}
				\centering
				\includegraphics[width =1.0\linewidth]{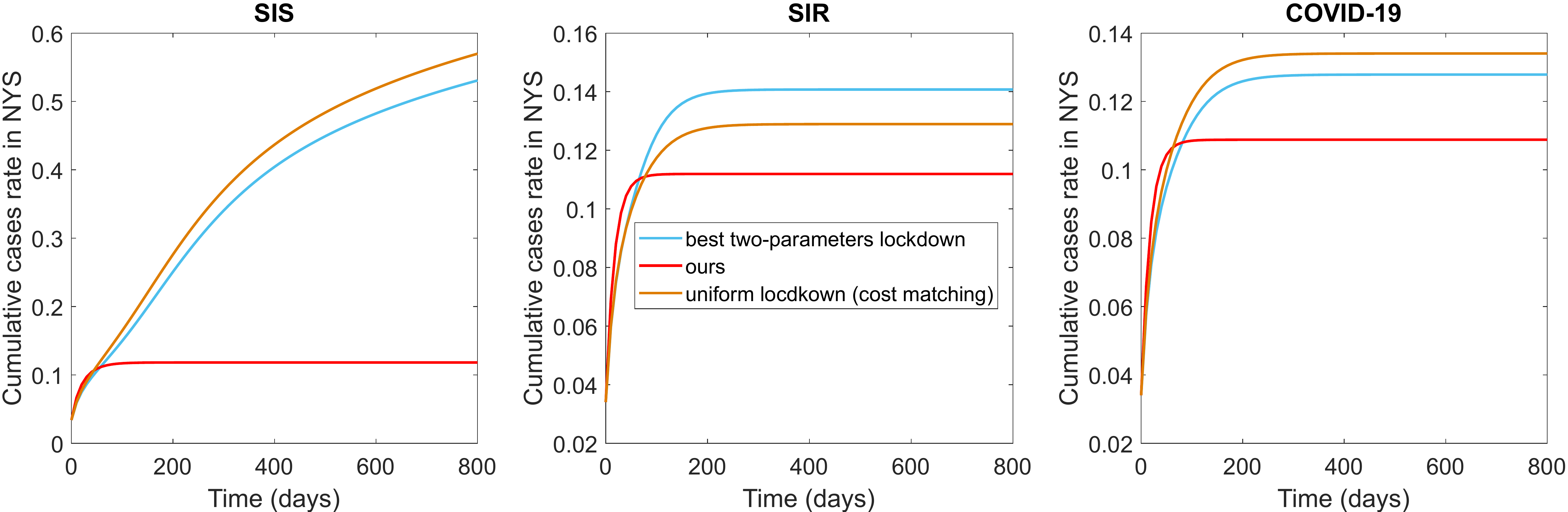}
			\end{minipage}%
		}%
		
		\subfigure[]{\label{nature_medicine_0_2}
			\begin{minipage}[b]{0.9\linewidth}
				\centering
				\includegraphics[width=1.0\linewidth]{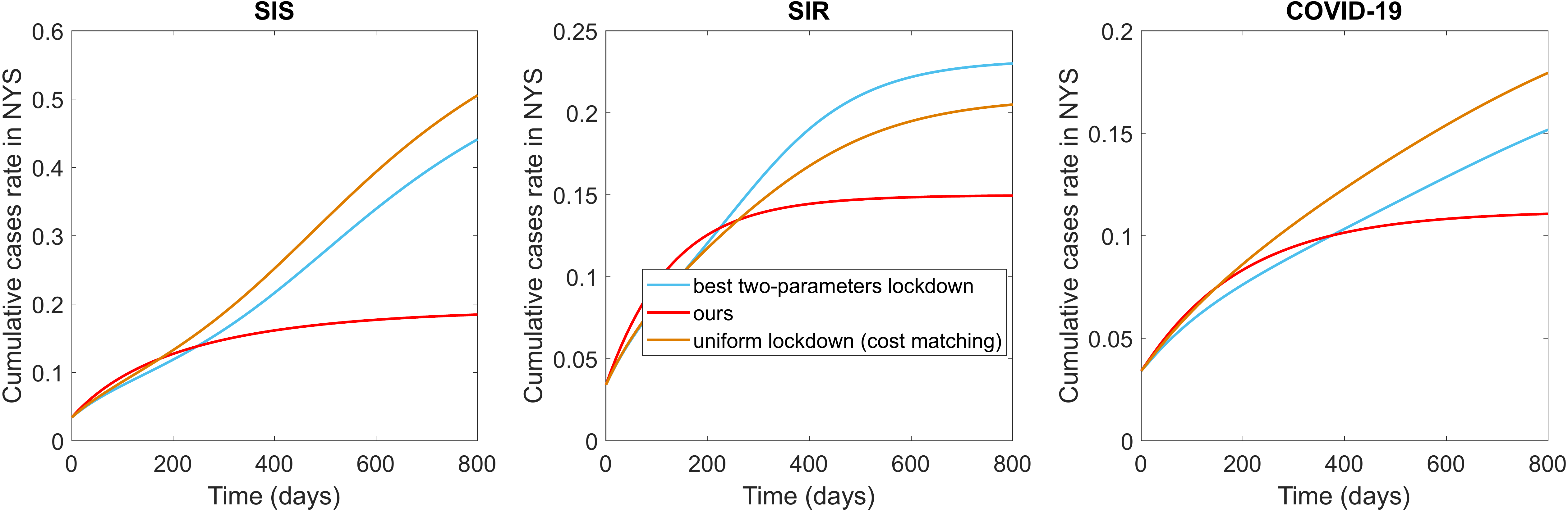}
			\end{minipage}%
		}%
		
		\subfigure[]{\label{Chicago_0_2}
			\begin{minipage}[b]{0.9\linewidth}
				\centering
				\includegraphics[width=1.0\linewidth]{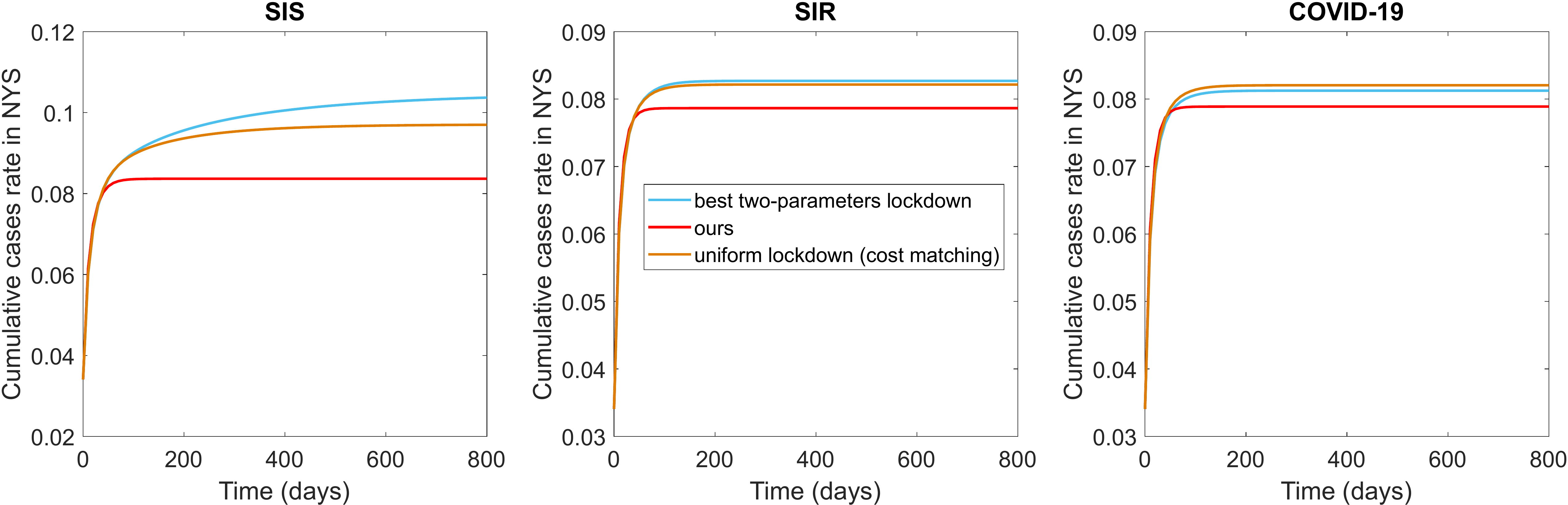}
			\end{minipage}%
		}%
		\centering
		\vspace{-1mm}
		\caption{Experimental results of real data: {\bf the estimated rate of accumulative cases in the total population} by applying different lockdown policies based on available data about COVID-19 outbreak in New York State on April 1st, 2020. It can be observed our policy outperforms the best two-parameters lockdown as well as the uniform lockdown in all the three models. In {\bf a}, the disease parameters are set as in \cite{bertozzi2020challenges}, and the decay rate $\alpha = \mathbf{0.2} r^{\text s} = 0.04$. In {\bf b}, the disease parameters set as in \cite{giordano2020modelling}, and the decay rate $\alpha = \mathbf{0.2} r^{\text s} = 0.0034$. In {\bf c}, the disease parameters set as in \cite{birge2020controlling}, and the decay rate $\alpha = \mathbf{0.2} r^{\text s} =0.058$.
		}\label{Fig: cumula_0_2}
		\vspace{-1mm}
	\end{figure}

	\begin{figure}[!htb]
		\centering
		\subfigure[]{\label{PNAS_0_5}
			\begin{minipage}[b]{0.9\linewidth}
				\centering
				\includegraphics[width =1.0\linewidth]{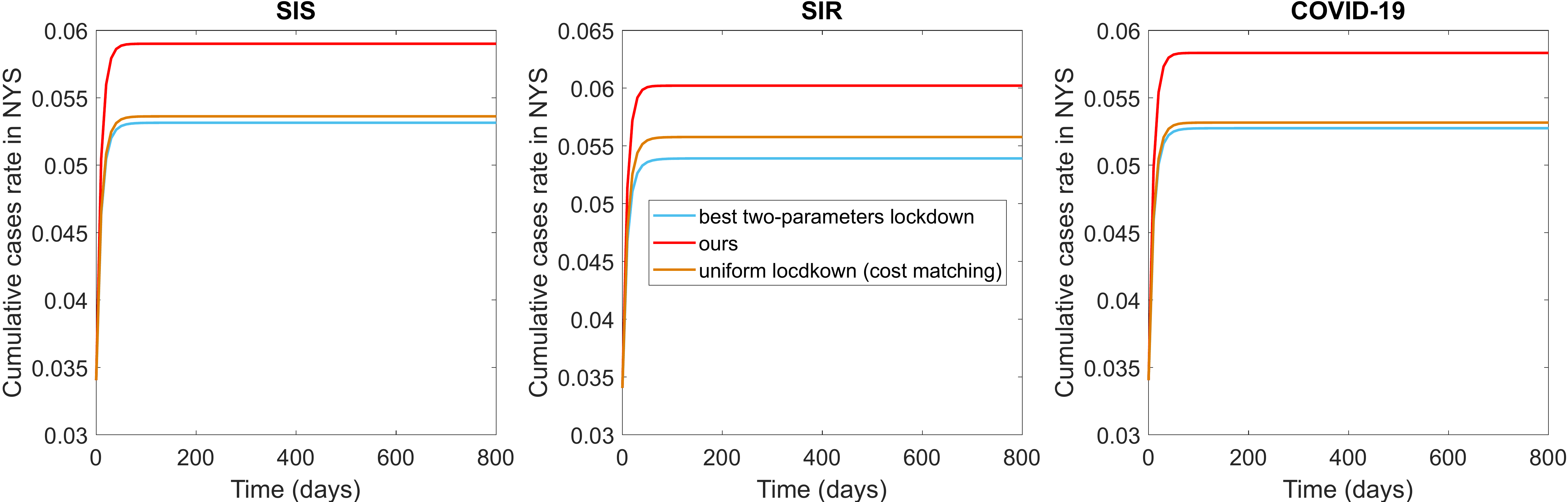}
			\end{minipage}%
		}%
		
		\subfigure[]{\label{nature_medicine_0_5}
			\begin{minipage}[b]{0.9\linewidth}
				\centering
				\includegraphics[width=1.0\linewidth]{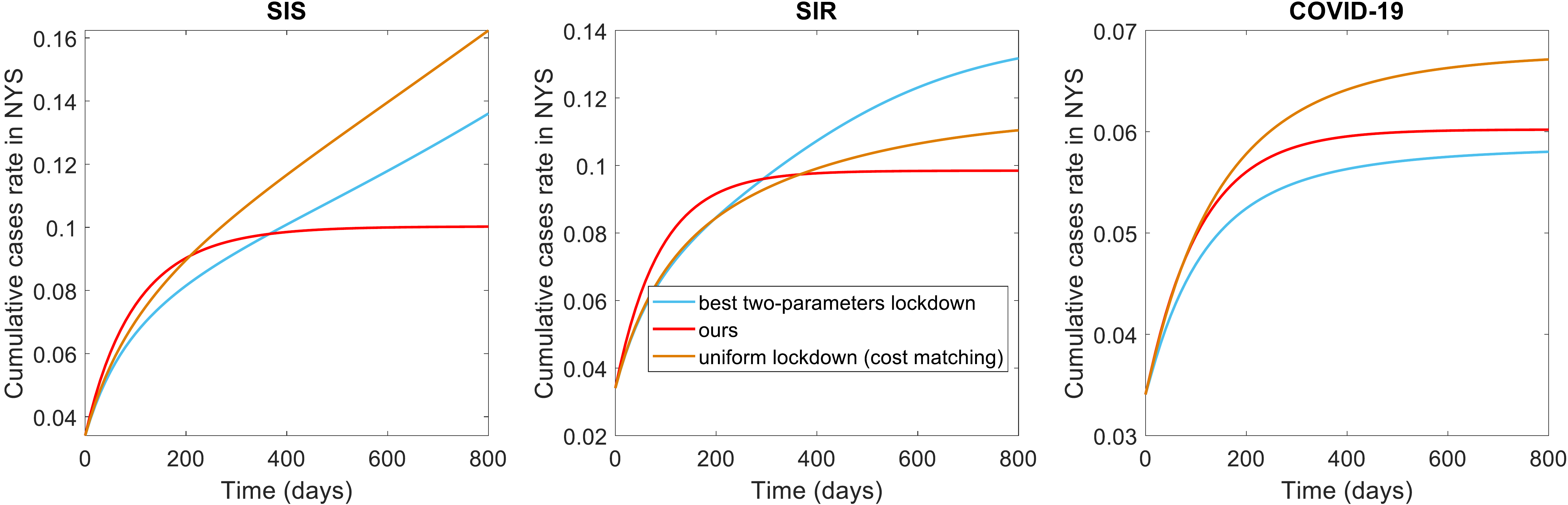}
			\end{minipage}%
		}%
		
		\subfigure[]{\label{Chicago_0_5}
			\begin{minipage}[b]{0.9\linewidth}
				\centering
				\includegraphics[width=1.0\linewidth]{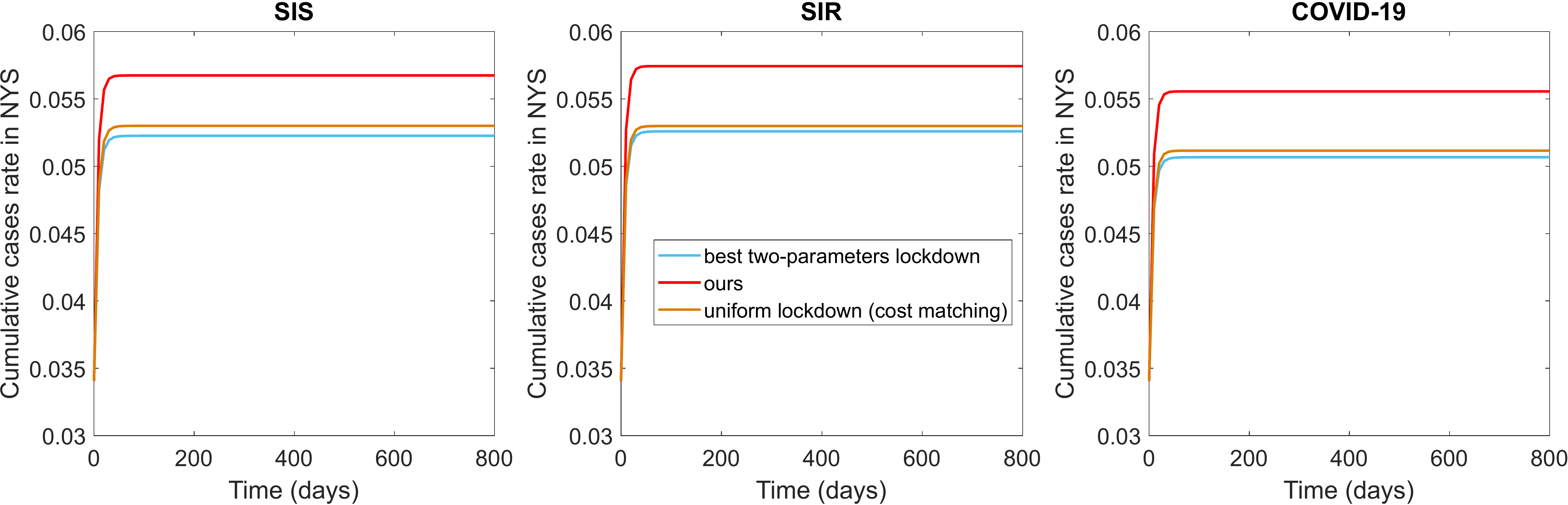}
			\end{minipage}%
		}%
		\centering
		\vspace{-1mm}
		\caption{Experimental results of real data: {\bf the estimated rate of accumulative cases in the total population} by applying different lockdown policies based on available data about COVID-19 outbreak in New York State on April 1st, 2020. In {\bf a}, the disease parameters set as in \cite{bertozzi2020challenges}, and the decay rate $\alpha = \mathbf{0.5}  r^{\text s} = 0.1$. In {\bf b}, the disease parameters set as in \cite{giordano2020modelling}, and the decay rate $\alpha = \mathbf{0.5} r^{\text s} = 0.0085$. In {\bf c}, the disease parameters set as in \cite{birge2020controlling}, and the decay rate $\alpha = \mathbf{0.5}  r^{\text s} = 0.145$. In {\bf b}, it can be seen that our policy outperforms the best two-parameters lockdown as well as the uniform lockdown in SIS model and SIR model. In {\bf a}, {\bf c}, and the COVID-19 model in {\bf b}, it can be seen that our policy underperforms the best two-parameters lockdown as well as the uniform lockdown, the reason for such phenomenon is provided in  \ssref{two-parameters}. }\label{Fig: cumula_0_5}
		\vspace{-1mm}
	\end{figure}

	\begin{figure}[!htb]
		\centering
		\textbf{SIS}\par\medskip
		\vspace{-1mm}
		\centering
		\subfigure[]{\label{subfig: SIS_alpha}
			\begin{minipage}[b]{0.3\linewidth}
				\centering
				\includegraphics[width=1.0\linewidth, height= 1.018\linewidth]{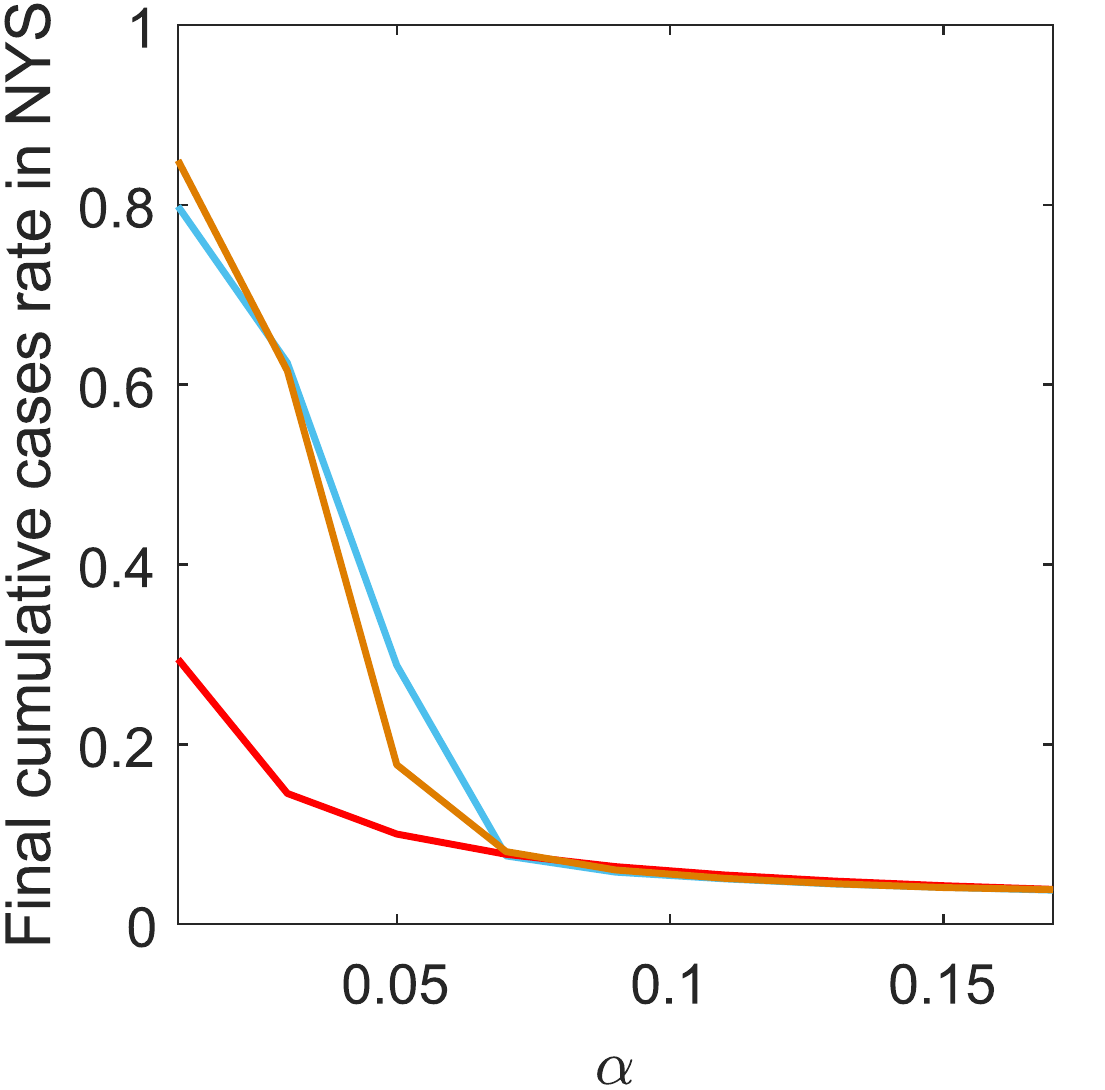}
			\end{minipage}%
		}%
		\subfigure[]{\label{subfig: SIS_gamma}
			\begin{minipage}[b]{0.3\linewidth}
				\centering
				\includegraphics[width=1.0\linewidth]{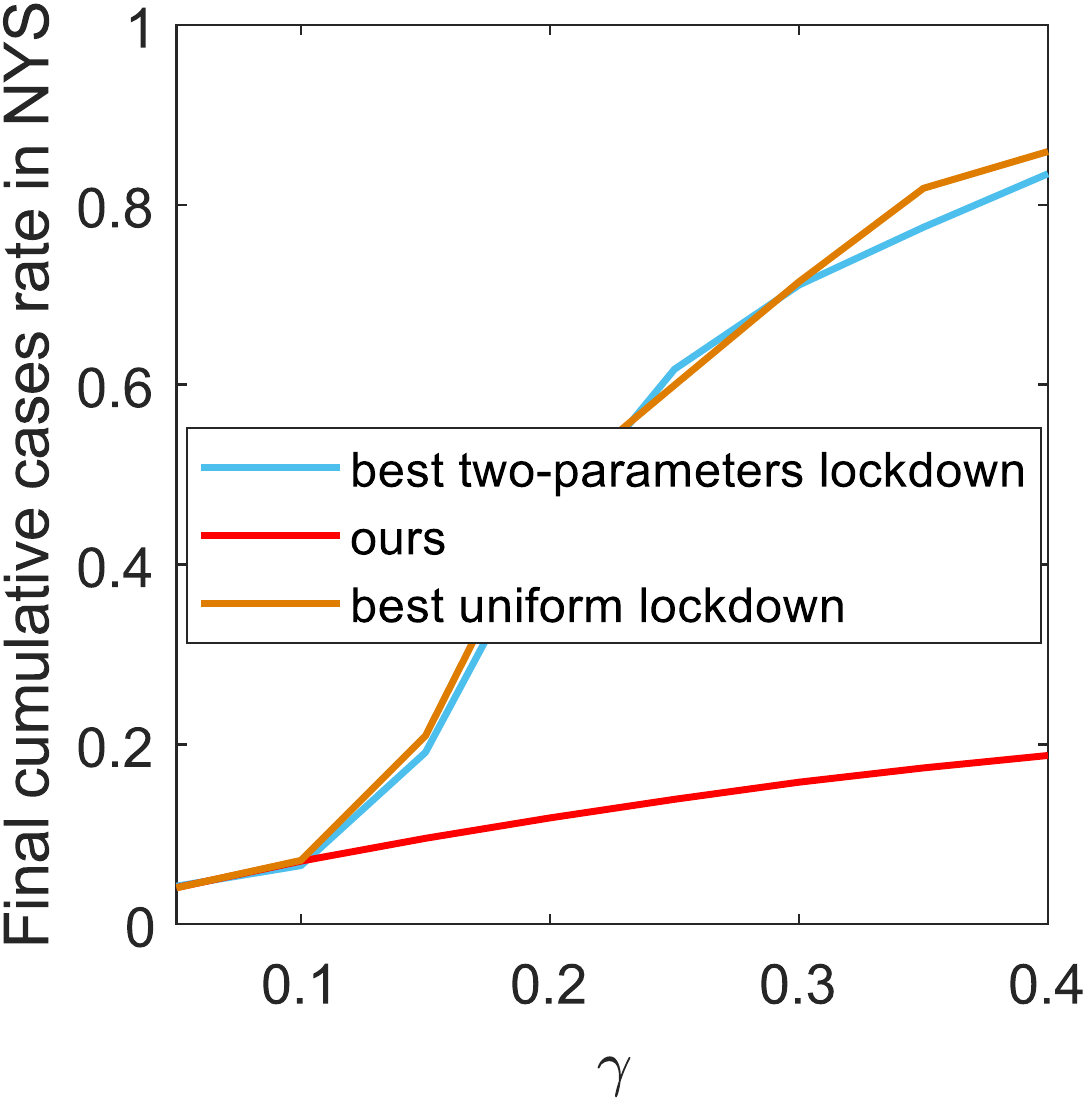}
			\end{minipage}%
		}%
		\subfigure[]{\label{subfig: SIS_growth_rate}
			\begin{minipage}[b]{0.3\linewidth}
				\centering
				\includegraphics[width=1.0\linewidth]{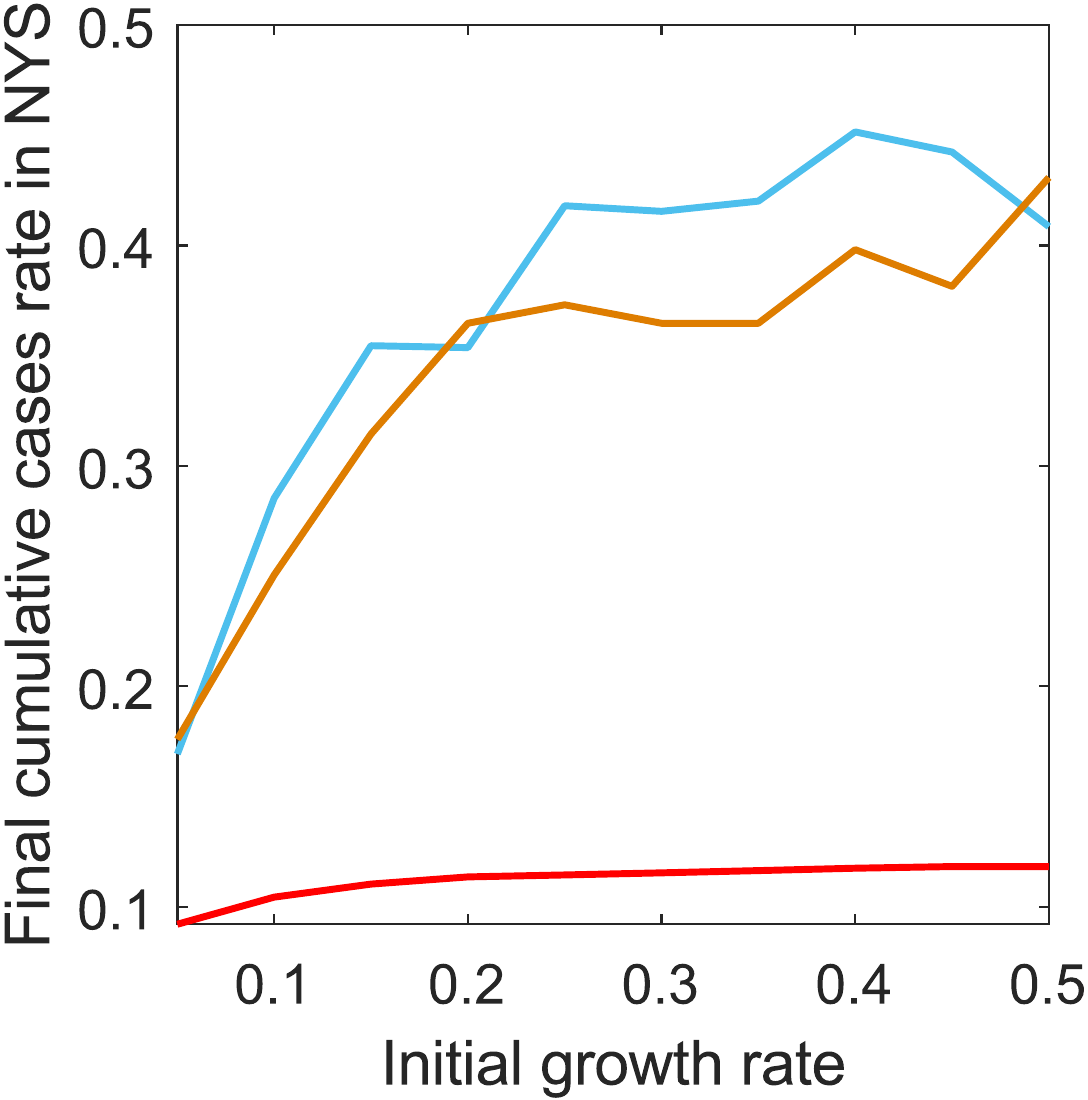}
			\end{minipage}%
		}%

		\centering
		\textbf{SIR}\par\medskip
		\vspace{-1mm}
		\centering
		\subfigure[]{\label{subfig: SIR_alpha}
			\begin{minipage}[b]{0.3\linewidth}
				\centering
				\includegraphics[width=1.0\linewidth]{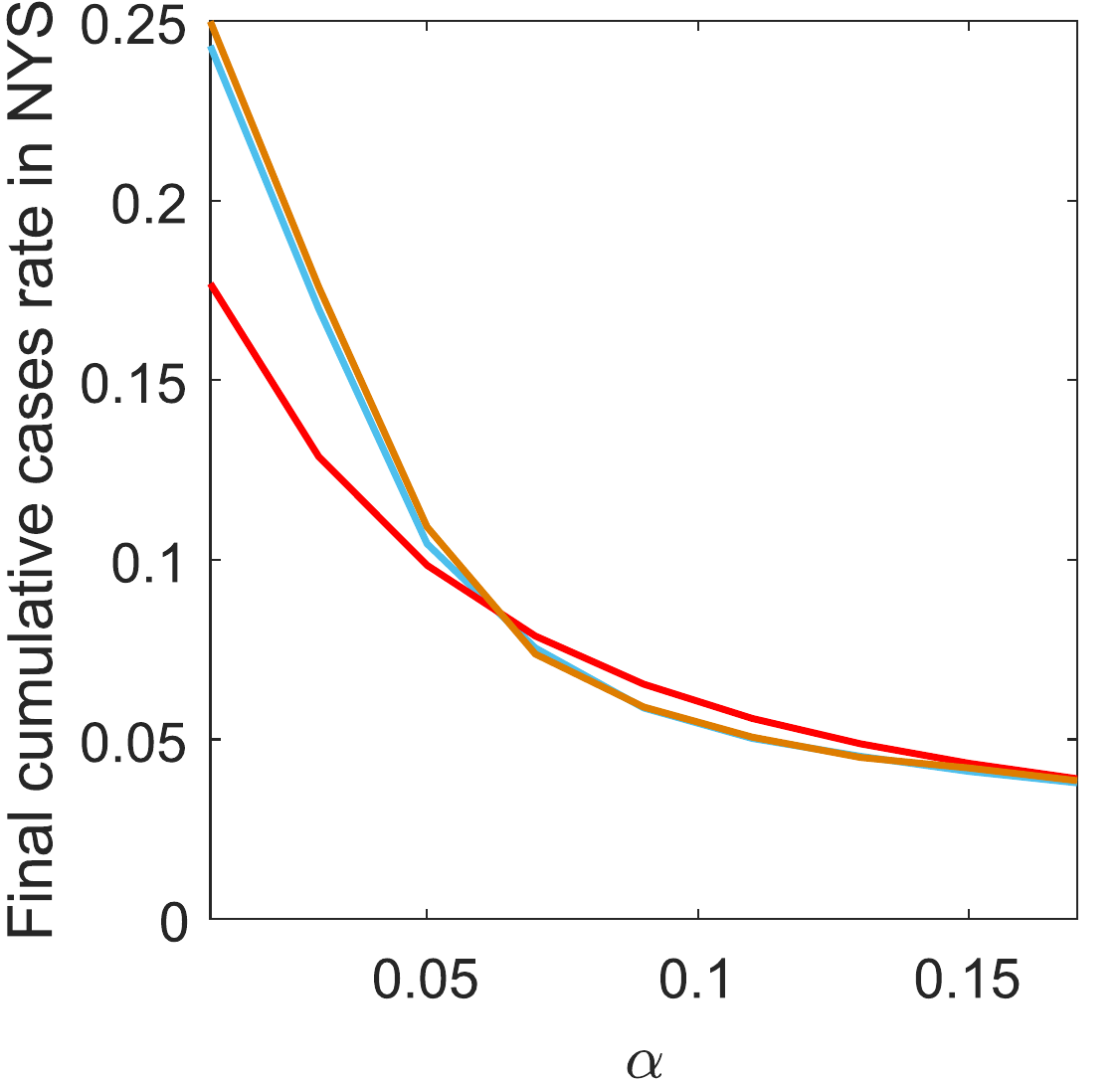}
			\end{minipage}%
		}%
		\subfigure[]{\label{subfig: SIR_gamma}
			\begin{minipage}[b]{0.3\linewidth}
				\centering
				\includegraphics[width =1.0\linewidth]{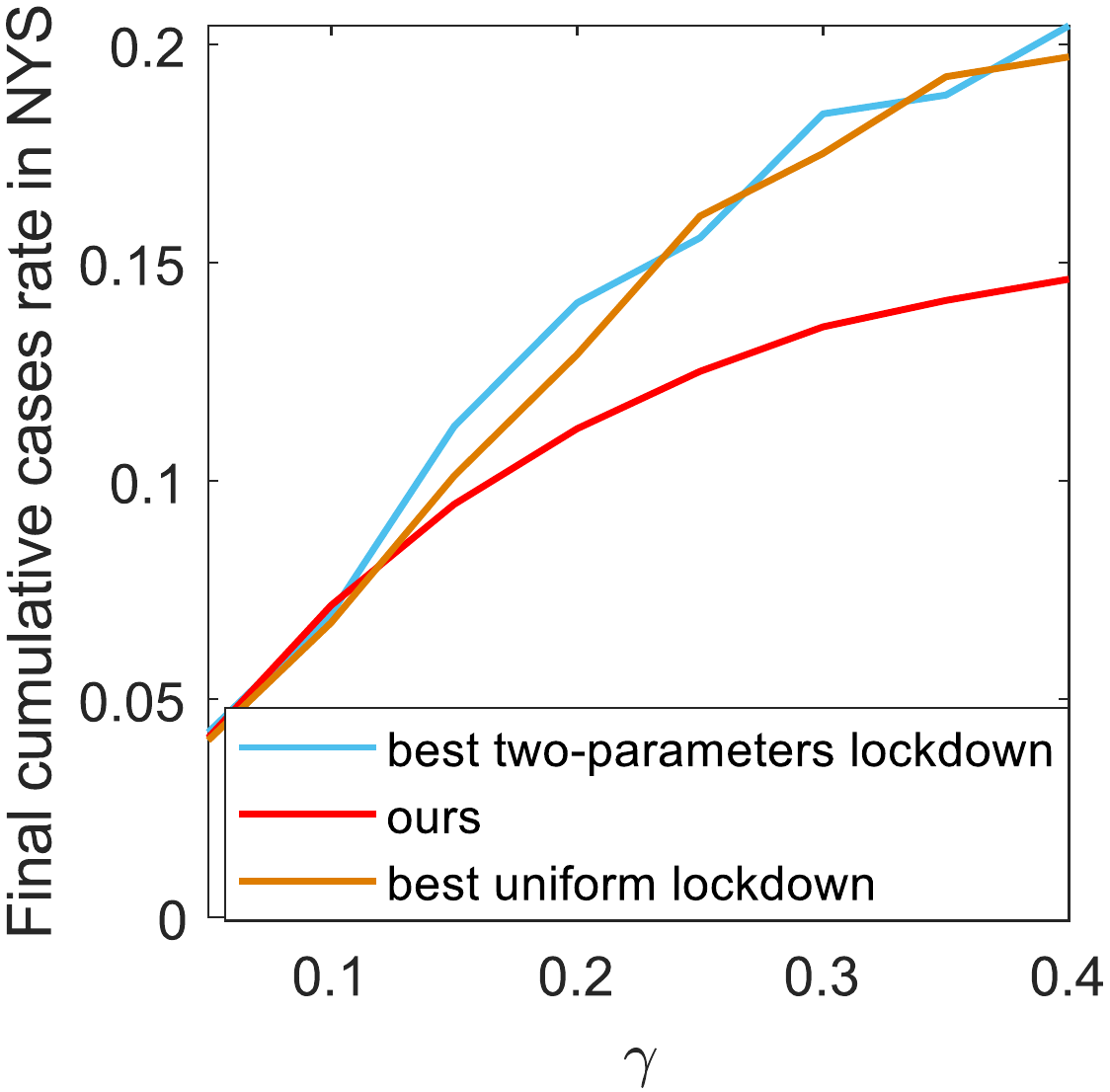}
			\end{minipage}%
		}%
		\subfigure[]{\label{sufig: SIR_growth_rate}
			\begin{minipage}[b]{0.3\linewidth}
				\centering
				\includegraphics[width=1.0\linewidth]{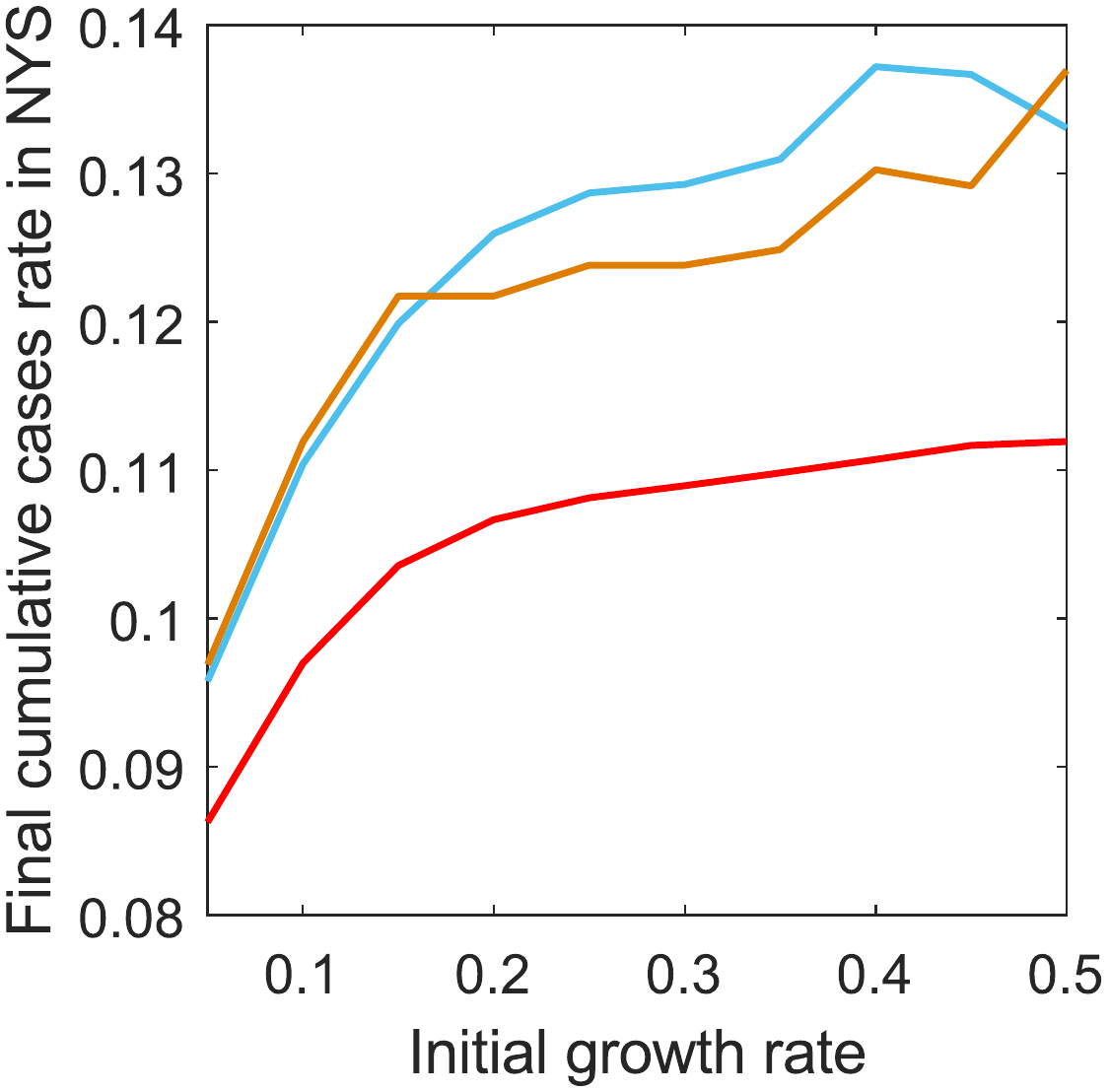}
			\end{minipage}%
		}%
		\centering
		\vspace{-2mm}
		\caption{{\bf Sensitivity analysis of disease parameters with respect to final cumulative cases}: the estimated rate of final accumulative cases (at day 500) in the total population for {\bf SIS model and SIR model}, respectively based on available data about COVID-19 outbreak in New York State on April 1st, 2020. It can be observed from {\bf a} and {\bf d} that the total number of the infections of our policy is much better for small $\alpha$; but if we increase $\alpha$ too much, this starts to change and we no longer outperform. Besides, from {\bf c} and {\bf f} we can see that the total number of infections of all the polices increases as the initial growth rate increases. In addition, it can be observed from {\bf b} and {\bf e} that
			the total number of infections increases when the recovery rate $\gamma$ increases, the reason for this counter-intuitive phenomenon is provided in \ssref{two-parameters}.}\label{Fig: cumula_sensitivity}
		\vspace{-1mm}
	\end{figure}

	\begin{figure}[!htb]
		
		\centering
		\textbf{COVID-19}\par\medskip
		\vspace{-1mm}
		\centering
		\subfigure[]{\label{subfig: Covid_19_alpha}
			\begin{minipage}[b]{0.3\linewidth}
				\centering
				\includegraphics[width=1.0\linewidth, height = 1.0\linewidth]{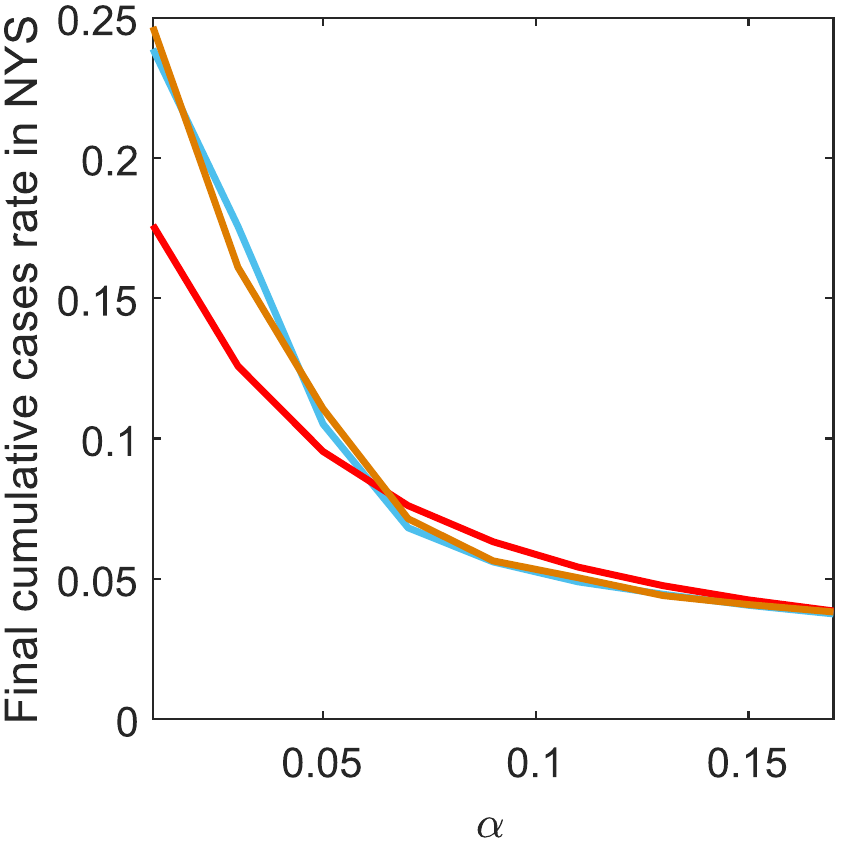}
			\end{minipage}%
		}%
		\subfigure[]{\label{subfig: Covid_19_growth_rate}
			\begin{minipage}[b]{0.3\linewidth}
				\centering
				\includegraphics[width=1.0\linewidth, height = 1.0\linewidth]{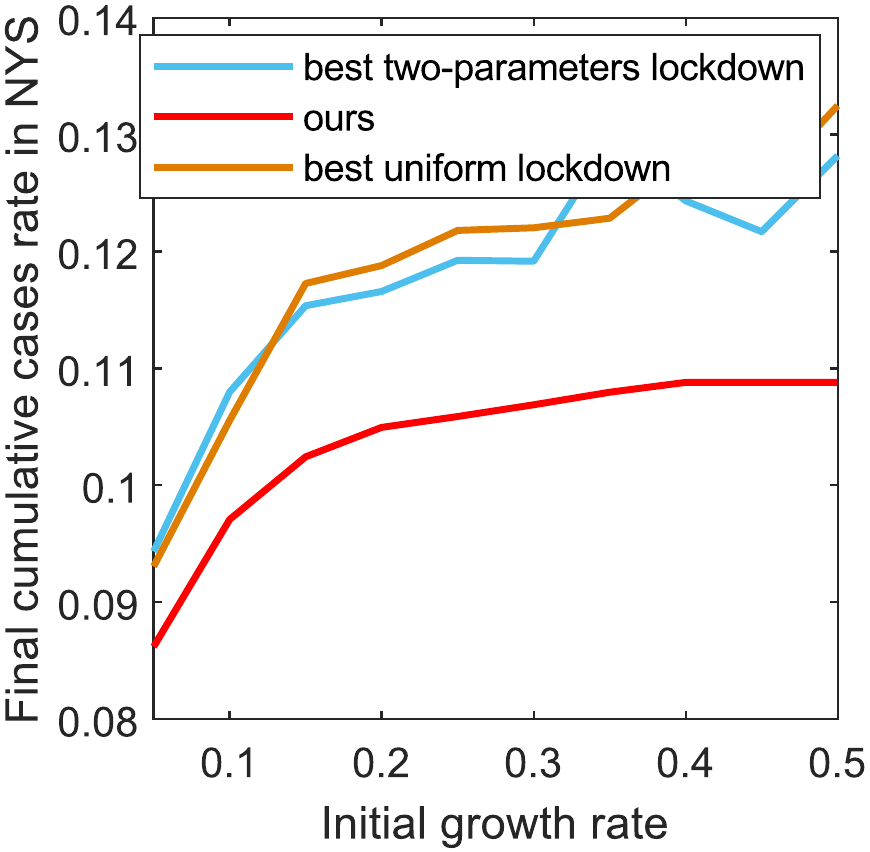}
			\end{minipage}%
		}%
		\subfigure[]{\label{subfig: Covid_19_gamma}
			\begin{minipage}[b]{0.3\linewidth}
				\centering
				\includegraphics[width =1.0\linewidth, height = 1.0\linewidth]{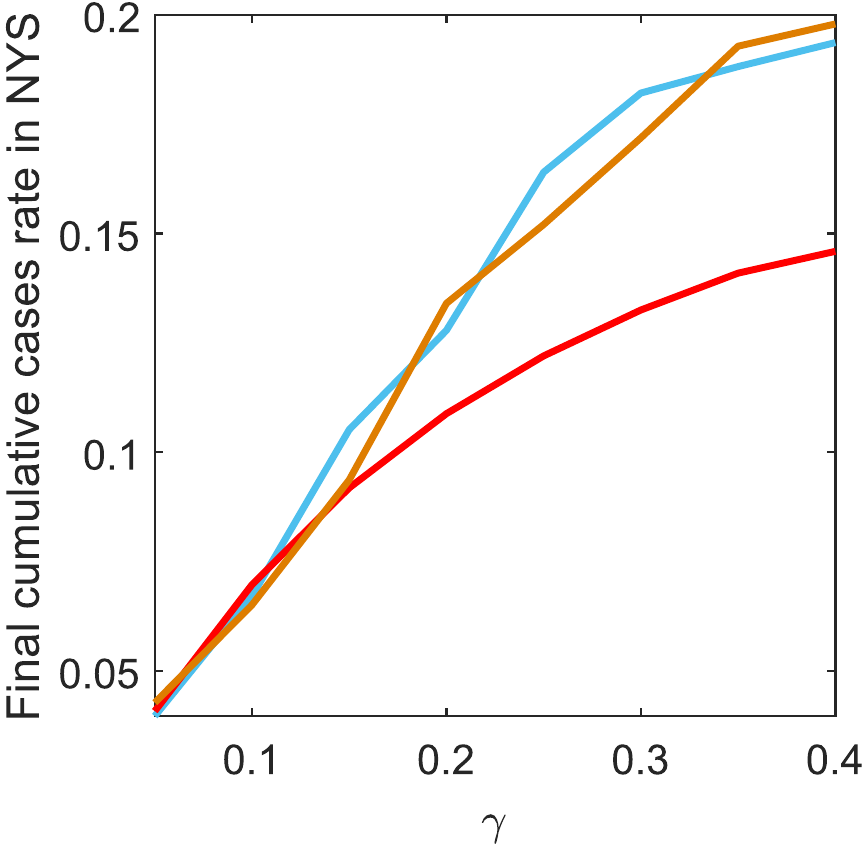}
			\end{minipage}%
		}%

		\subfigure[]{\label{subfig: Covid_19_epsilon}
			\begin{minipage}[b]{0.3\linewidth}
				\centering
				\includegraphics[width=1.0\linewidth, height = 1.0\linewidth]{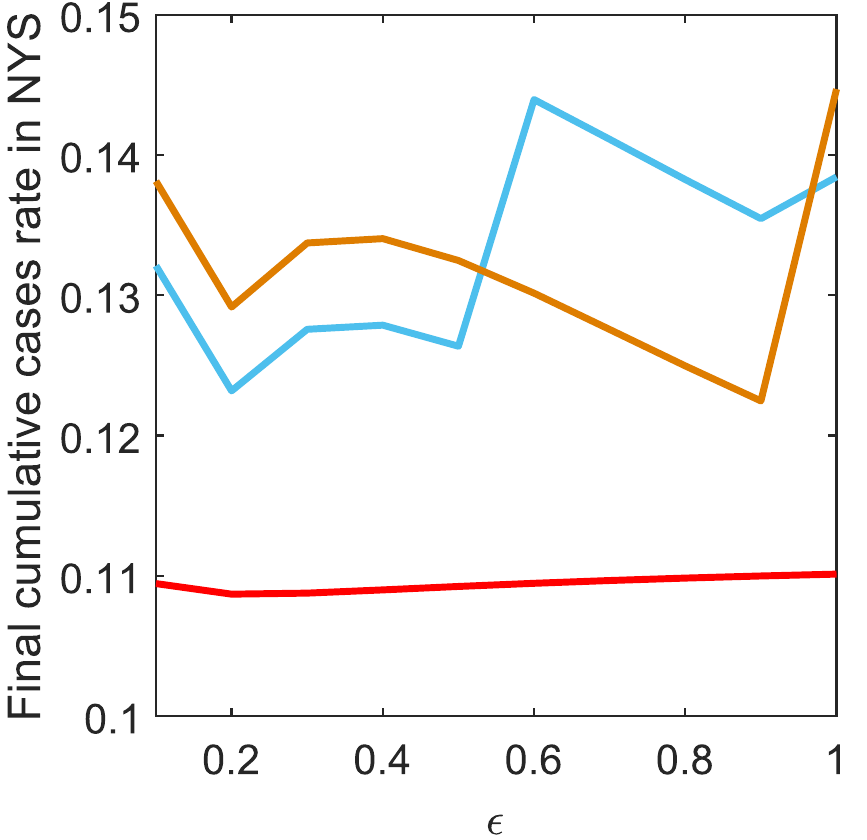}
			\end{minipage}%
		}%
		\subfigure[]{\label{subfig: Covid_19_alpha_hat}
			\begin{minipage}[b]{0.3\linewidth}
				\centering
				\includegraphics[width=1.0\linewidth, height = 1.0\linewidth]{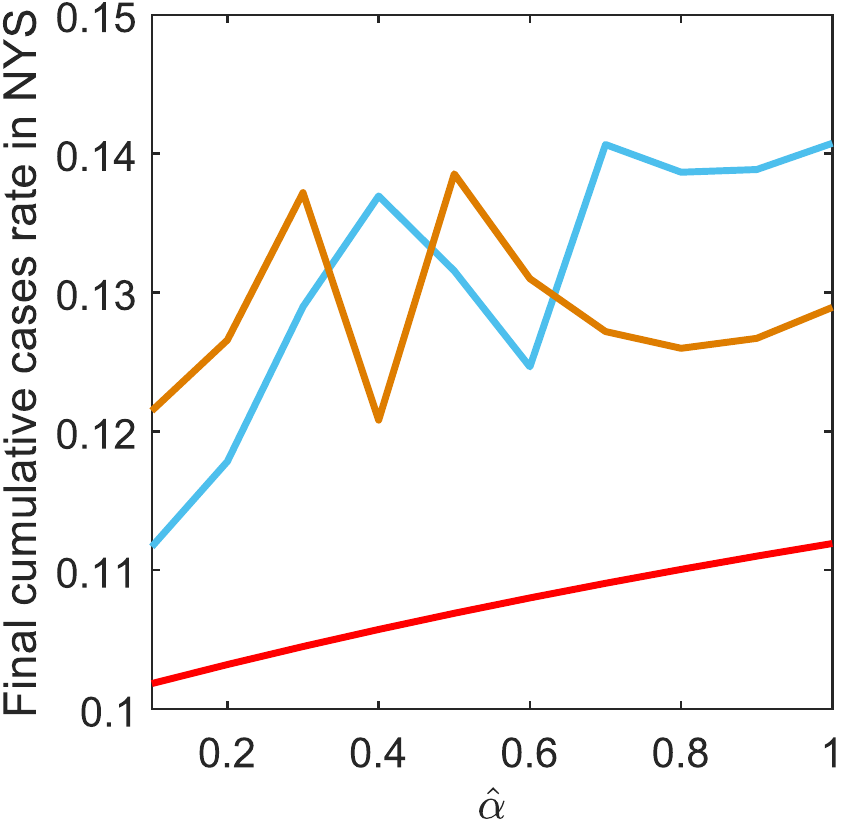}
			\end{minipage}%
		}%
		\subfigure[]{\label{subfig: Covid_19_gamma_hat}
			\begin{minipage}[b]{0.3\linewidth}
				\centering
				\includegraphics[width=1.0\linewidth, height = 1.0\linewidth]{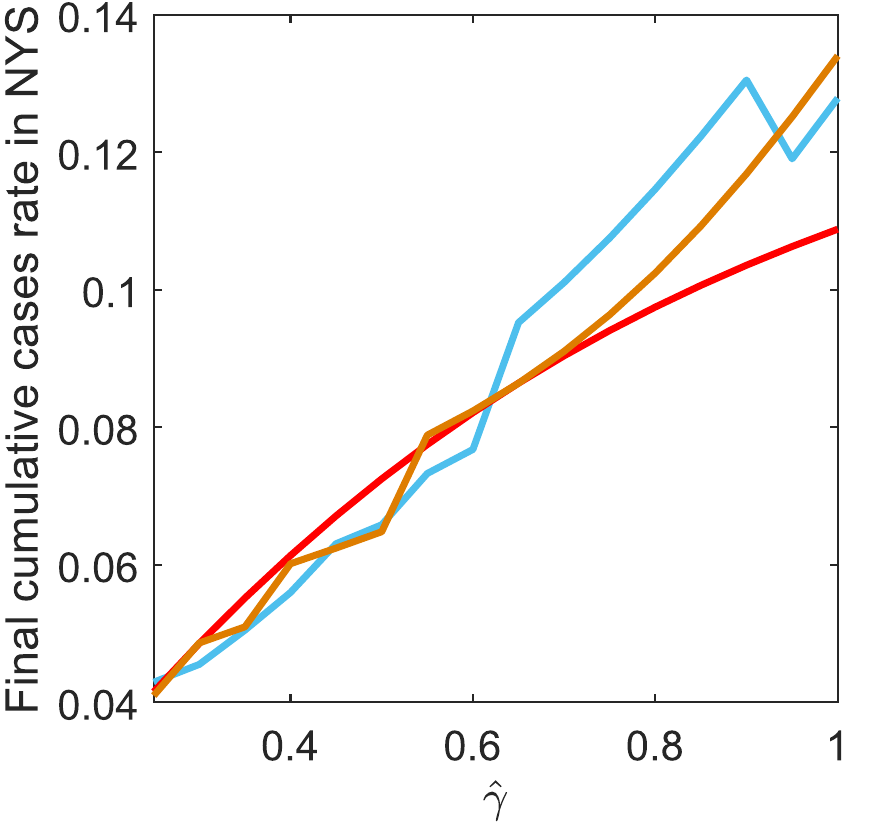}
			\end{minipage}%
		}%
		
		\centering
		\vspace{-2mm}
		\caption{{\bf Sensitivity analysis of disease parameters with respect to final cumulative cases}: the estimated rate of final accumulative cases (at day 500) in the total population for {\bf COVID-19 model} based on available data about COVID-19 outbreak in New York State on April 1st, 2020. It can be observed from {\bf a} that the total number of the infections of our policy is much better for small $\alpha$; but if we increase $\alpha$ too much, this starts to change and we no longer outperform. Besides, from {\bf b}, we can see that the total number of infections of all the polices increases as the initial growth rate increases. In addition, it can be observed from
			{\bf c} that
			the total number of infections increases when the recovery rate $\gamma$ increases, the reason for this counter-intuitive phenomenon is provided in \ssref{two-parameters}. The analysis for {\bf d-f} can also be found in \ssref{two-parameters}.}\label{Fig: Covid_sensitivity}
		\vspace{-1mm}
	\end{figure}
	
	\begin{figure}[!htb]
		\centering
		\subfigure[]{
			\begin{minipage}[b]{0.9\linewidth}\label{fig: one_half}
				\centering
				\includegraphics[width=1.0\linewidth]{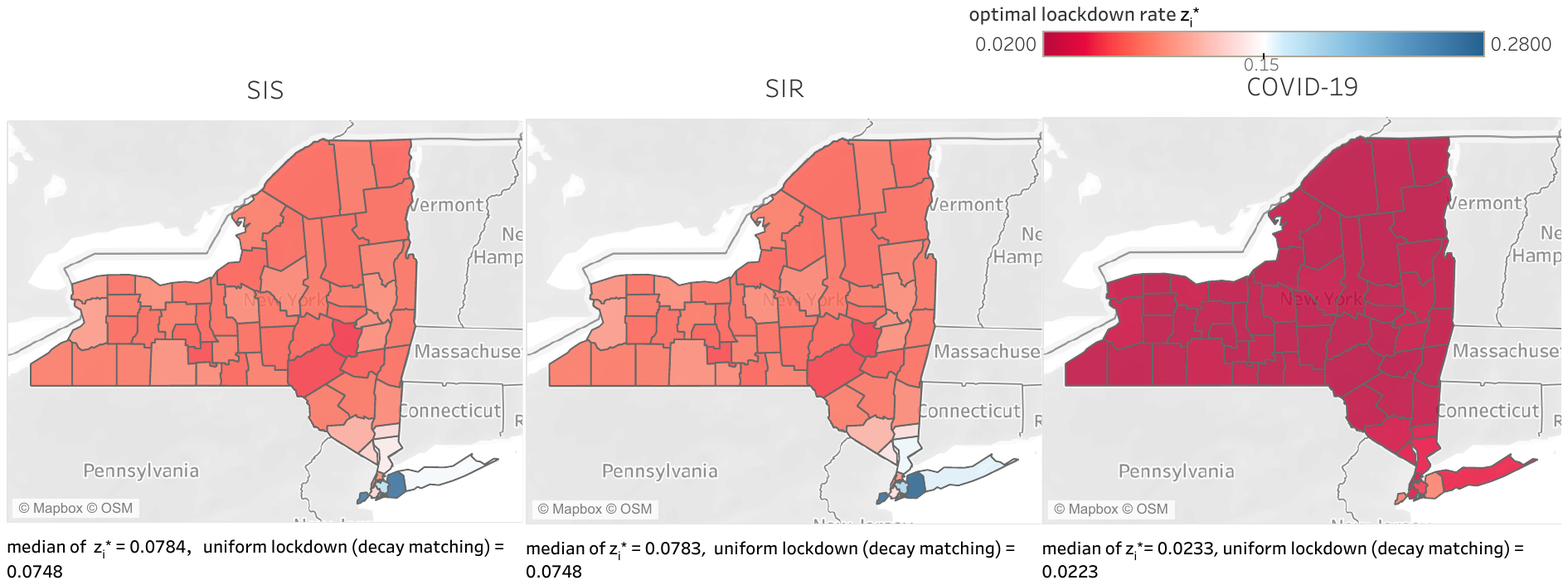}
			\end{minipage}%
		}%
		
		\subfigure[]{
			\begin{minipage}[b]{0.9\linewidth}\label{fig: square}
				\centering
				\includegraphics[width=1.0\linewidth]{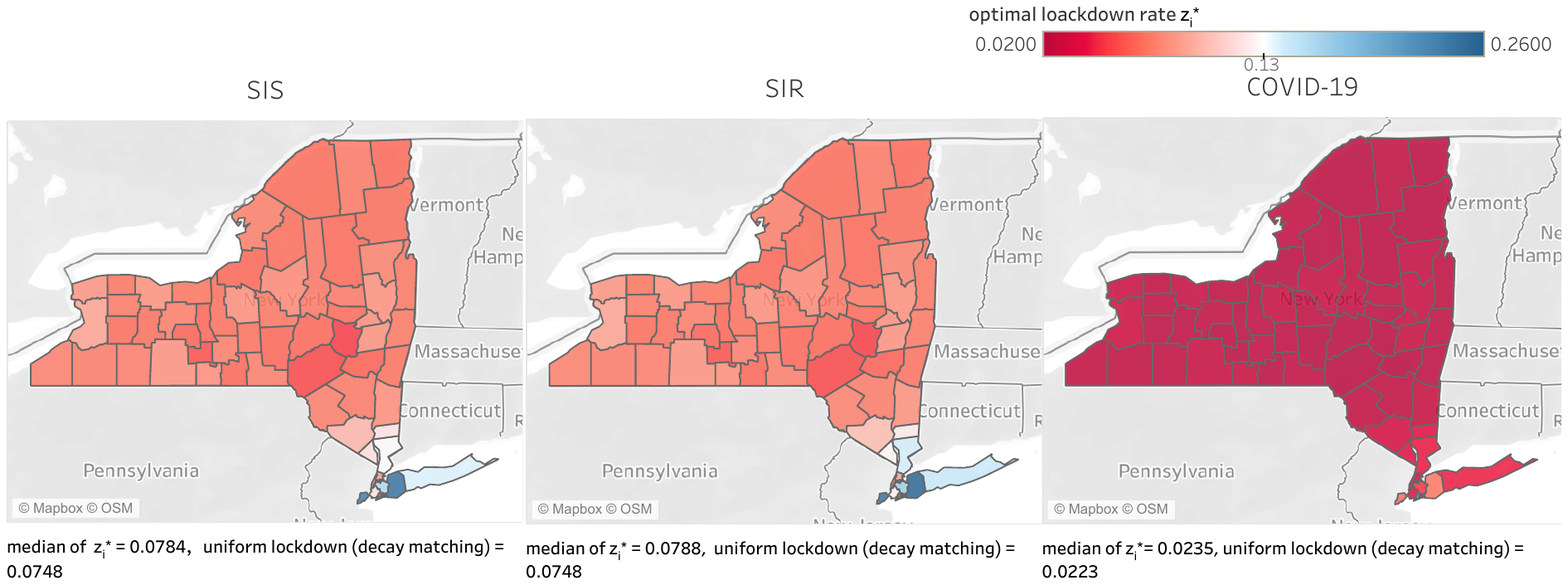}
			\end{minipage}%
		}%
		
		\subfigure[]{
			\begin{minipage}[b]{0.9\linewidth}\label{fig: cube}
				\centering
				\includegraphics[width=1.0\linewidth]{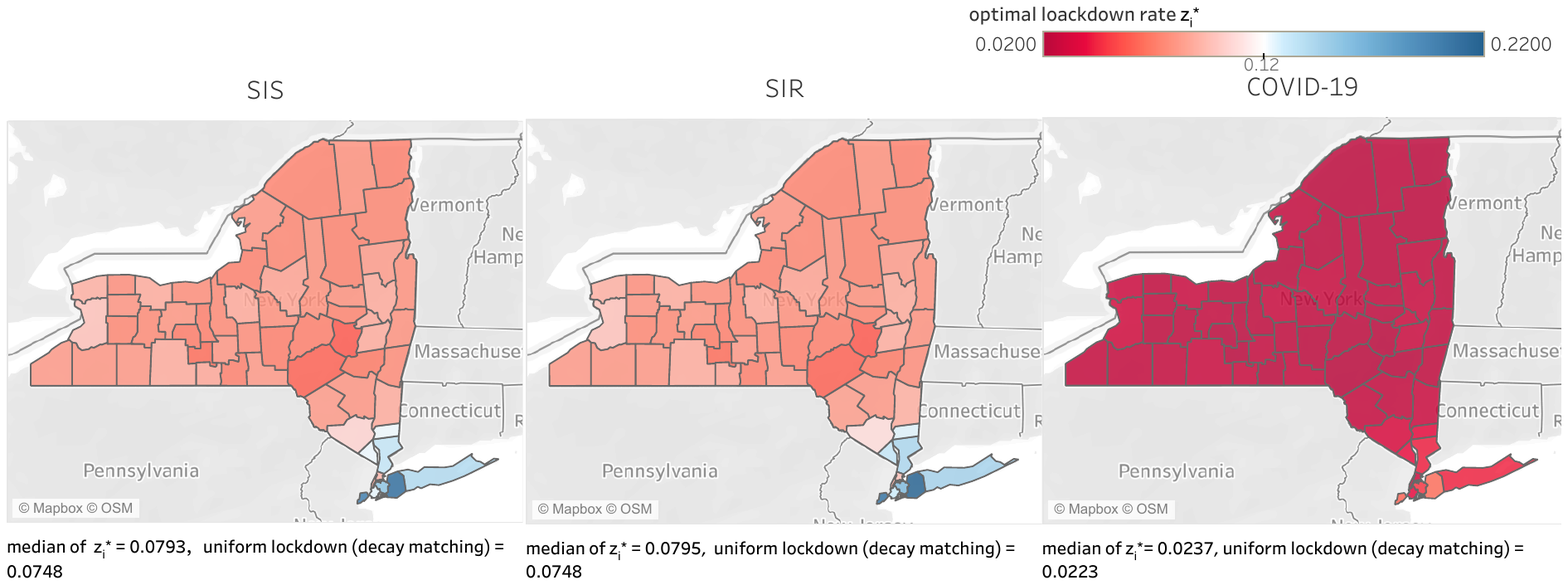}
			\end{minipage}%
		}%
		\centering
		\caption{{\bf Experimental results with extended lockdown cost function}: optimal lockdown rate $z_i^*$ given by Theorem \ref{thm:mainthm} for SIS, SIR, and COVID-19 model based on available data about COVID-19 outbreak in New York State on April 1st, 2020. The disease parameters are set as in \cite{giordano2020modelling}. The decay rate is chosen as $\alpha = 0.2 r^{\text s} = 0.0034$ so that $\alpha < \min (r^{\text a}, r^{\text s})$. In {\bf a}, the cost function is chosen as $\mathbf{\sum_i{c_i (\frac{1}{z_i^{1.5}} - 1)}}$.
			In {\bf b}, the cost function is chosen as $\sum_i{c_i(\frac{1}{z_i^{2}} - 1)}$. In {\bf c}, the cost function is chosen as $\sum_i{c_i (\frac{1}{z_i^{3}} - 1)}$. It can observed that the value of $z_i^*$ for counties in NYC are relatively higher than other counties in New York State, which implies we should shutdown the outside of NYC harder than itself. Besides, the median of $z_i^*$ is greater than the value of the uniform lockdown in all the cases.
		}\label{Fig: polynomial costs}
		\vspace{-1mm}
	\end{figure}
	\clearpage

	\vspace{\fill}
	\clearpage
	
	\begin{figure}[!htb]
		\centering
		\subfigure[]{
			\begin{minipage}[b]{0.9\linewidth}\label{fig: min_10}
				\centering
				\includegraphics[width=1.0\linewidth]{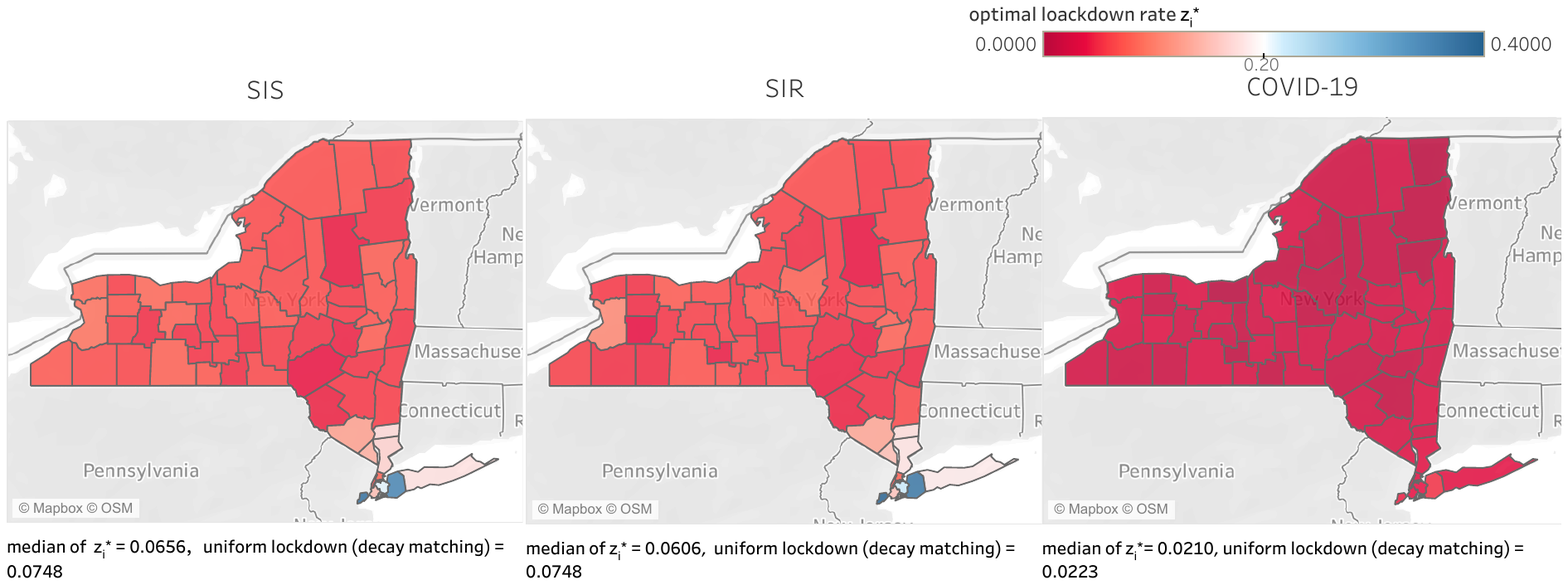}
			\end{minipage}%
		}%
		
		\subfigure[]{
			\begin{minipage}[b]{0.9\linewidth}\label{fig: min_20}
				\centering
				\includegraphics[width=1.0\linewidth]{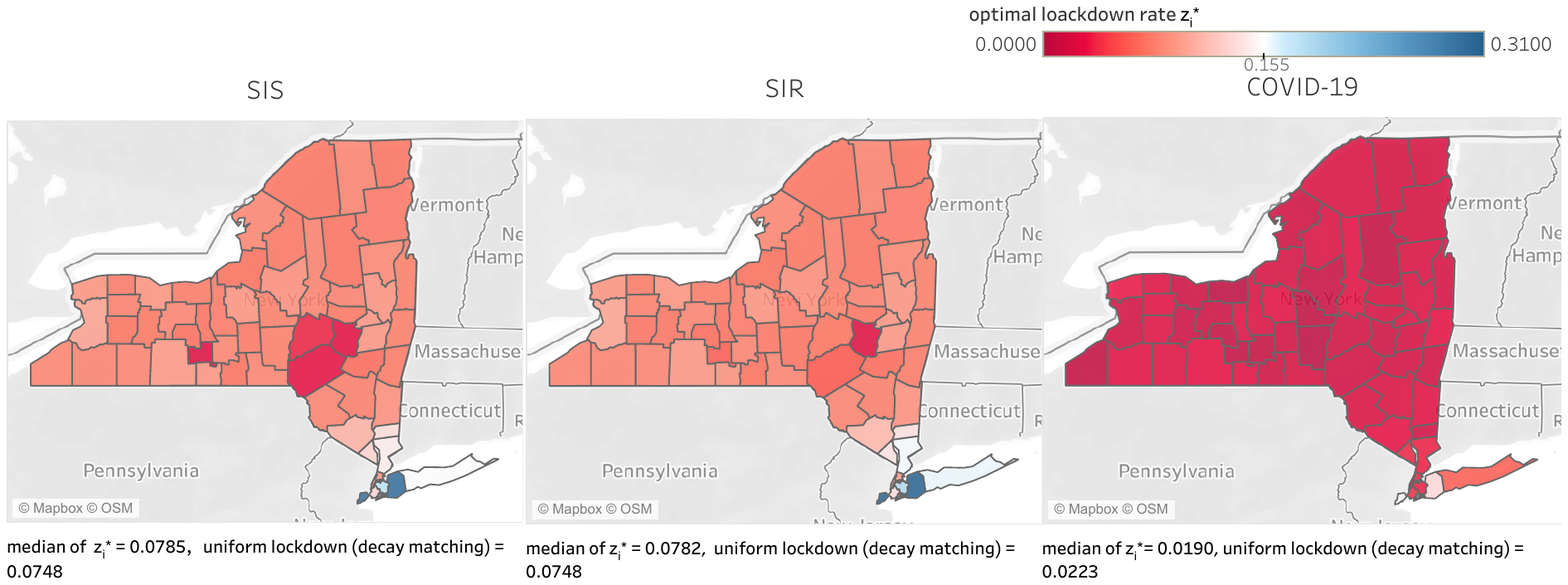}
			\end{minipage}%
		}%
		
		\subfigure[]{
			\begin{minipage}[b]{0.9\linewidth}\label{fig: min_100}
				\centering
				\includegraphics[width=1.0\linewidth]{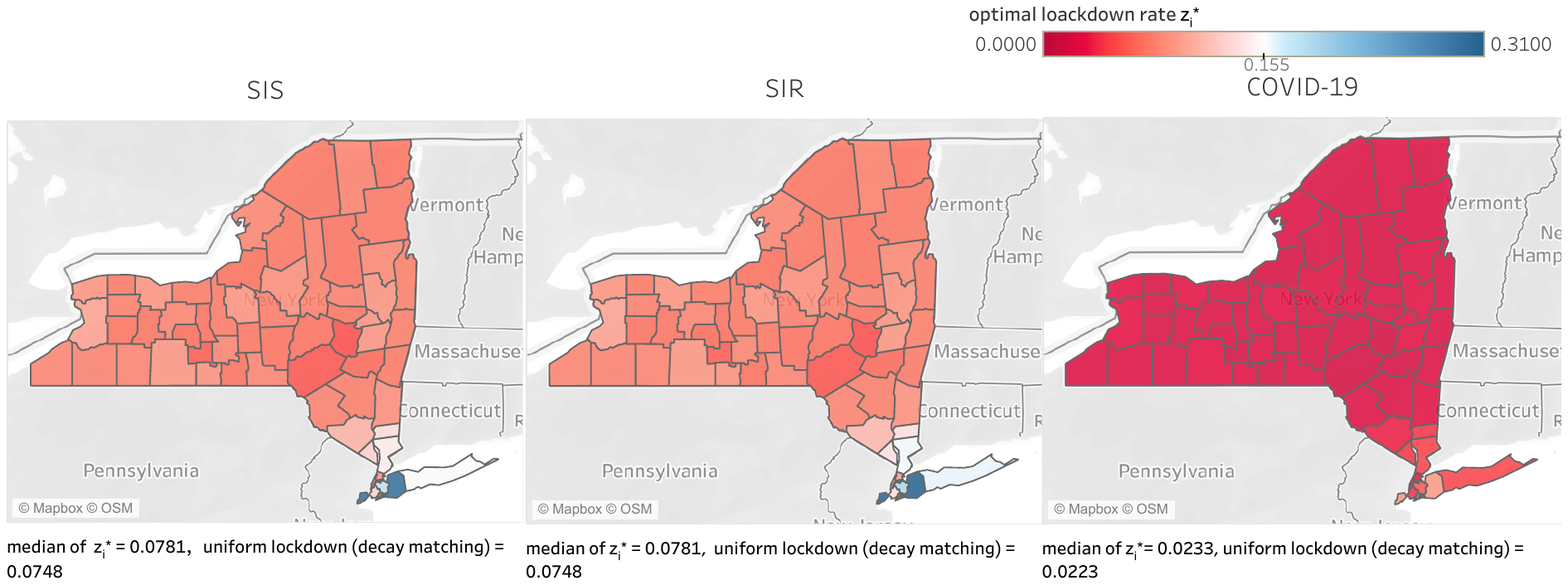}
			\end{minipage}%
		}%
		\centering
		\caption{{\bf Experimental results with extended lockdown cost function}: optimal lockdown rate $z_i^*$ given by Theorem \ref{thm:mainthm} for SIS, SIR, and COVID-19 model based on available data about COVID-19 outbreak in New York State on April 1st, 2020. The disease parameters are set as in \cite{giordano2020modelling}. The decay rate is chosen as $\alpha = 0.2 r^{\text s} = 0.0034$ so that $\alpha < \min (r^{\text a}, r^{\text s})$. In {\bf a}, the cost function is chosen as $\sum_i{c_i (\min(\frac{1}{z_i}, 10) - 1)}$.
			In {\bf b}, the cost function is chosen as $\sum_i{c_i (\min(\frac{1}{z_i}, 20) - 1)}$. In {\bf c}, the cost function is chosen as $\sum_i{c_i(\min(\frac{1}{z_i}, 100) - 1)}$. It can observed that the value of $z_i^*$ for counties in NYC are relatively higher than other counties in New York State, which implies we should shutdown the outside of NYC harder than itself. Besides, the median of $z_i^*$ is greater than or close to the value of the uniform lockdown in all the cases.
		}\label{Fig: min costs}
		\vspace{-1mm}
	\end{figure}

	\begin{figure}[!htb]
		\centering
		\begin{minipage}[b]{0.95\linewidth}
			\centering
			\includegraphics[width=1.0\linewidth]{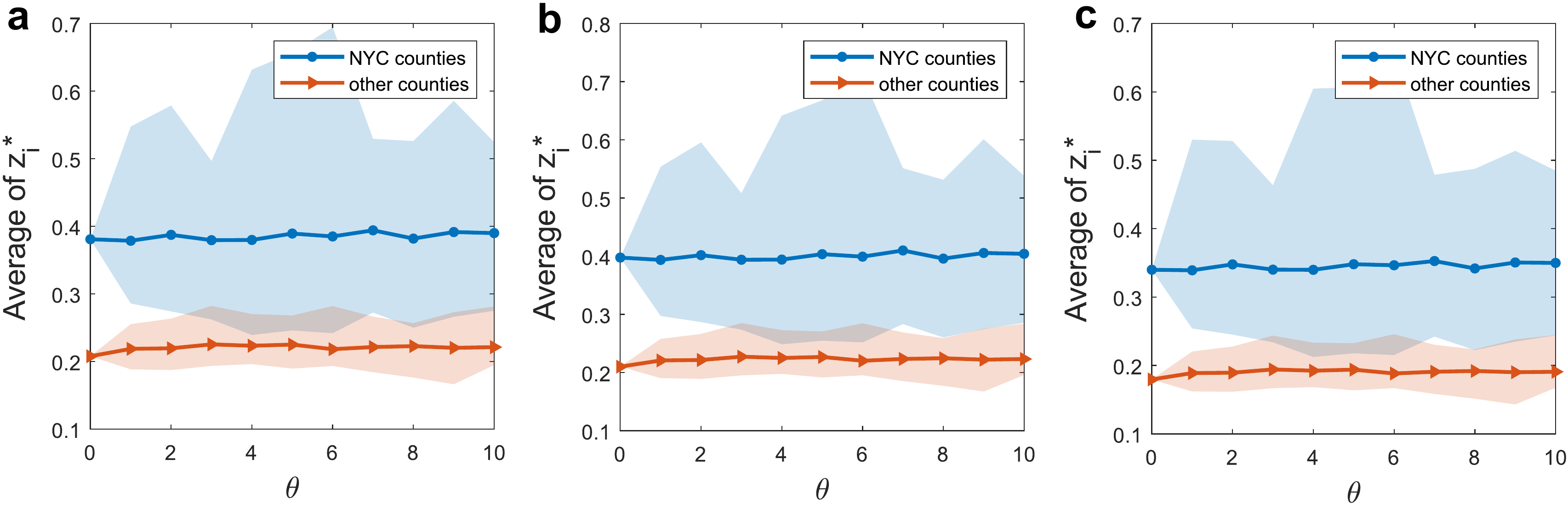}			
		\end{minipage}\hfill
		
		\vspace{3mm}
		
		\begin{minipage}[b]{0.95\linewidth}
			\centering
			\includegraphics[width=1.0\linewidth]{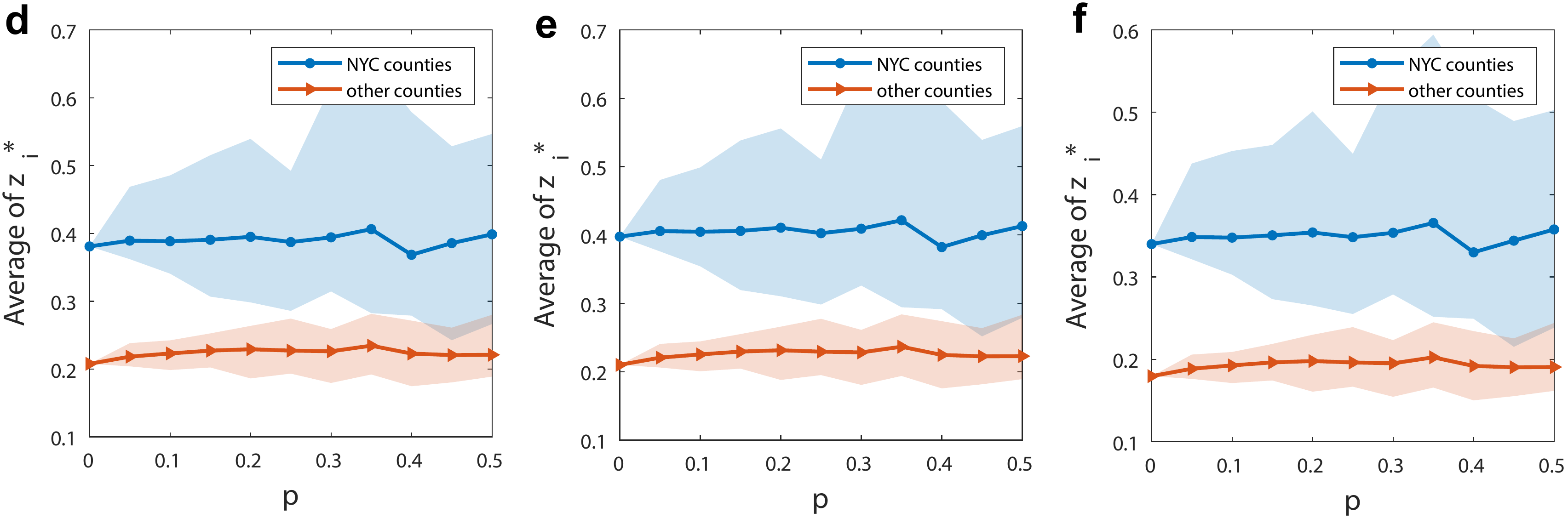}
		\end{minipage}%
		\centering
		\vspace{-1mm}
		\caption{{\bf Robustness check with respect to the travel rate matrix $\tau$}: the average values of the optimal lockdown rate $z_i^*$ calculated by our method for the counties in NYC and other counties in NY after introducing noises or removing data from the travel rate matrix $\tau$.  {\bf a-c}, results after introducing gaussian noises to each entry of $\tau$,  $\theta$ is proportional to the magnitude of the noises we add, details are presented in SI Sec. \ref{Sec: Robustness}. {\bf d-f}, results after removing  a $p$-fraction of travelling data for each county (mimic the situation where a $p$-fraction of travelling data are missing). {\bf a, d} are results based on SIS model, {\bf b, e} are results based on SIR model, {\bf c, f} are results based on COVID-19 model. 
			The disease parameters are set as in \cite{birge2020controlling}, the decay rate $\alpha$ is chosen $0.0231$ that corresponds to halving every 30 days. Solid lines show the average of 50 runs, the shaded regions show the min-max interval of the 50 runs. It can be observed that the average of $z_i^*$ for NYC cities is greater than the average of other counties for a wide range of the noise level or the fraction of missing data. This suggests that our finding that the optimal lockdown should shutdown NYC harder is robust against the uncertainty of the travel rate matrix $\tau$.
		}\label{robustness_check}
	\end{figure}

\begin{figure}[!htb]
	\centering
	\begin{minipage}[b]{0.9\linewidth}
		\centering
		\includegraphics[width=1.0\linewidth]{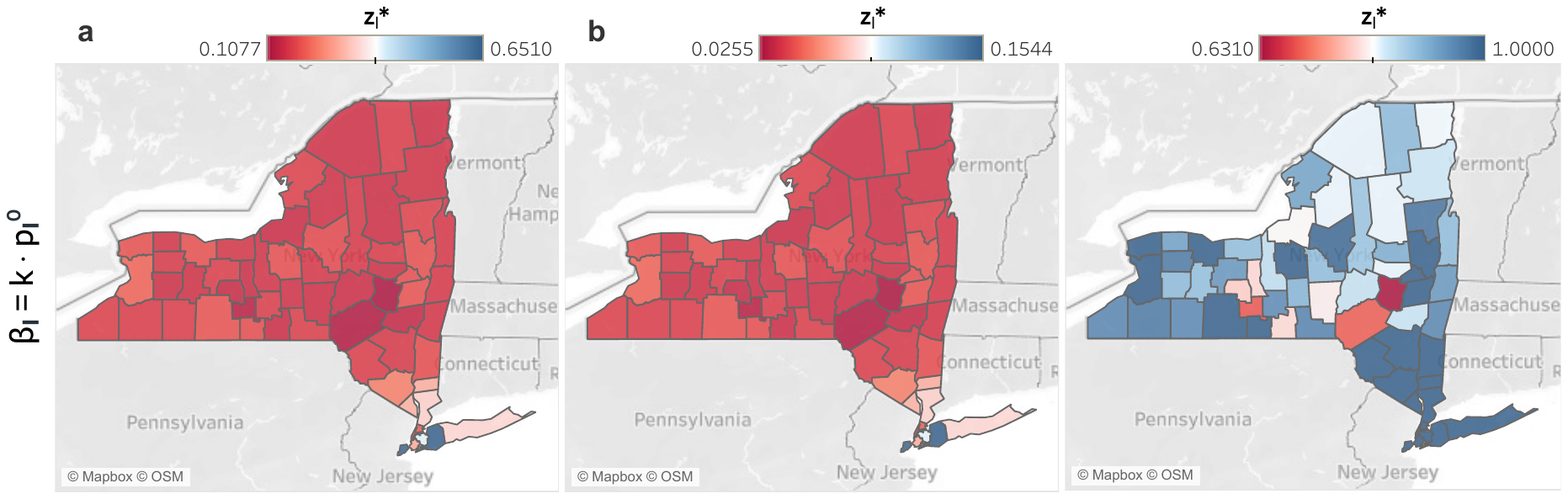}
	\end{minipage}	
		\begin{minipage}[b]{0.9\linewidth}
			\centering
			\includegraphics[width=1.0\linewidth]{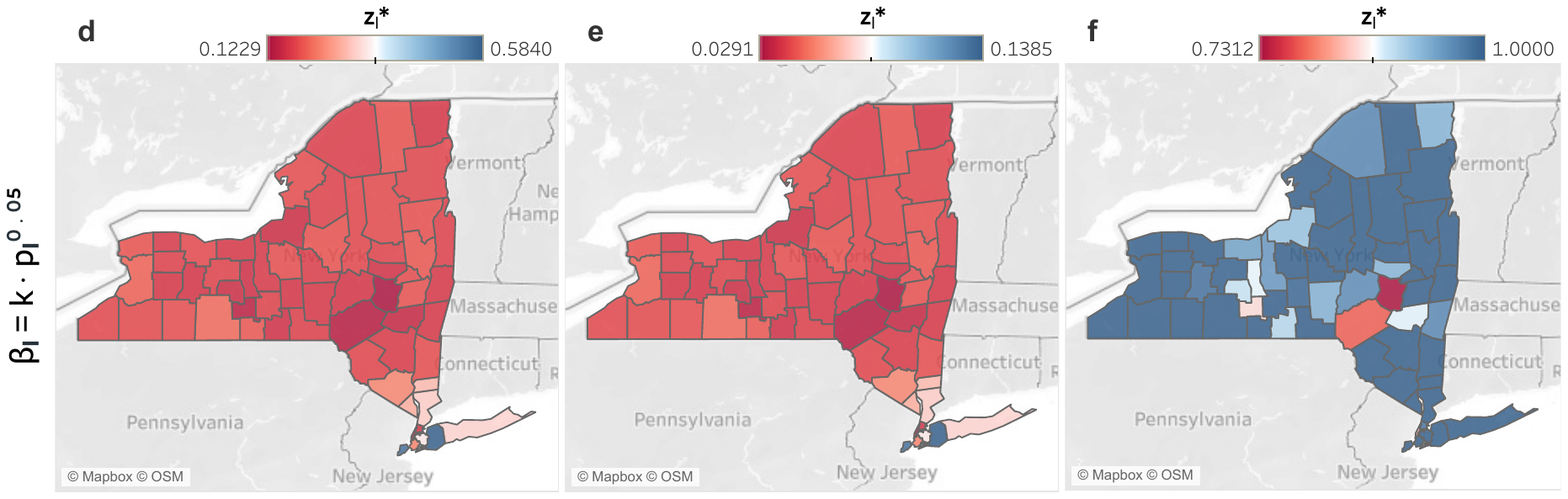}
		\end{minipage}%

		\begin{minipage}[b]{0.9\linewidth}
			\centering
			\includegraphics[width=1.0\linewidth]{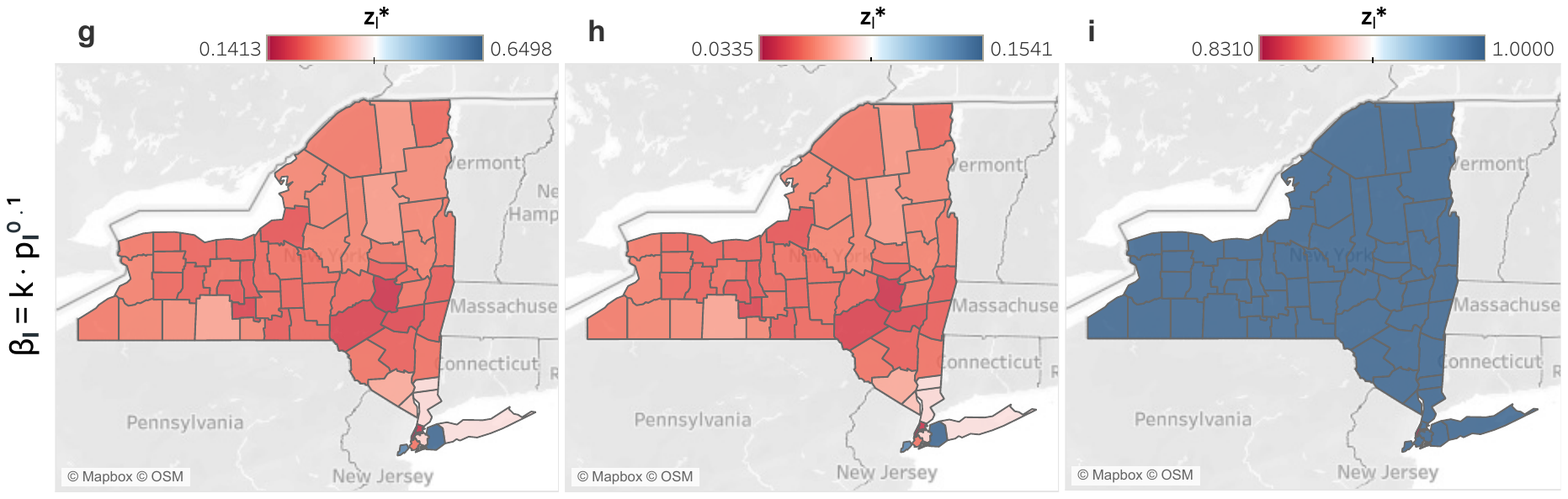}
		\end{minipage}%
		
		\begin{minipage}[b]{0.9\linewidth}
			\centering
			\includegraphics[width=1.0\linewidth]{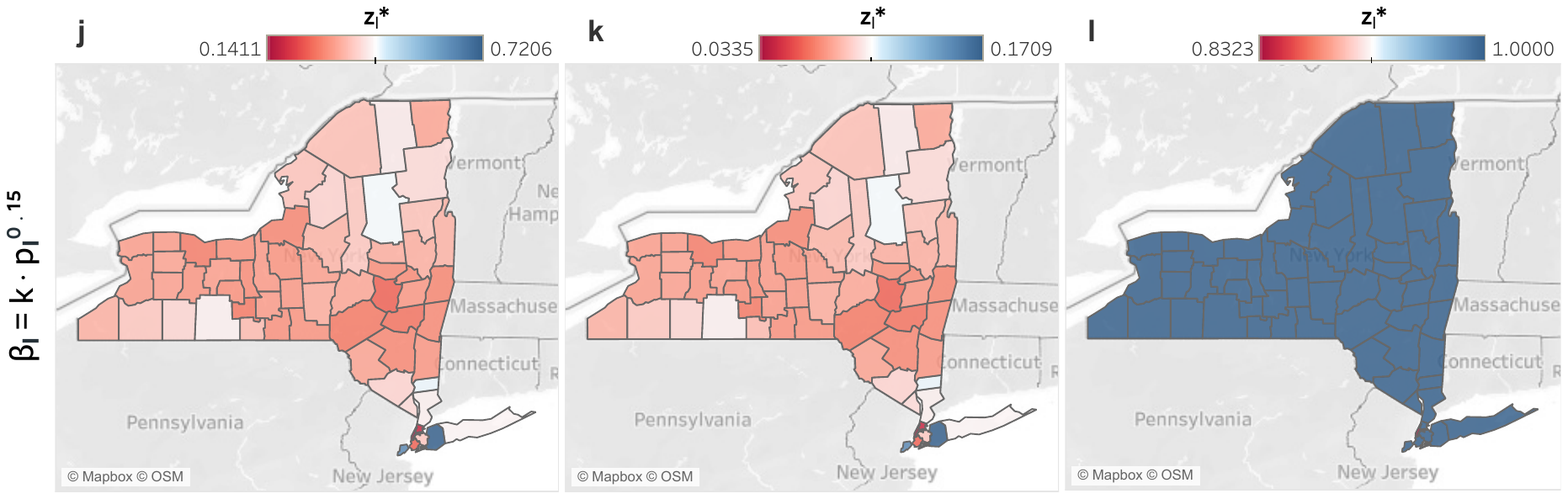}
		\end{minipage}%
	
	\vspace{-3mm}
	\caption{Optimal lockdown rate of each county calculated by our method for COVID-19 model when {\bf the transmission rate $\beta$ is nonuniform}. In the first row, we assume the transmission rate $\beta$ at each county are the same. In the second row, we assume the transmission rate $\beta_l$ is proportional to $p_l^{0.05}$, where $p_l$ represents the population density at county $l$. In the third row, we assume the transmission rate $\beta_l$ is proportional to $p_l^{0.1}$. In the last row, we assume the transmission rate $\beta_l$ is proportional to $p_l^{0.15}$.
		In the first column, the disease parameters are set as in \cite{bertozzi2020challenges}, the decay rate $\alpha$ is chosen as 0.0231 which corresponds to halving every 30 days. In the second column, the disease parameters are set as in \cite{giordano2020modelling}, the decay rate is chosen as $\alpha = 0.2 r^{\text s} = 0.0034$ so that $\alpha < \min (r^{\text a}, r^{\text s})$. In the third column, the disease parameters are set as in \cite{birge2020controlling}, the decay rate $\alpha$ is chosen $0.0231$ that corresponds to halving every 30 days. The data used is about COVID-19 outbreak in NY on April 1st, 2020.  
	}\label{fig: density}
	\centering
	\vspace{-1mm}
\end{figure}

\begin{figure}[!htb]
	\centering
	\begin{minipage}[b]{0.9\linewidth}
		\centering
		\includegraphics[width=1.0\linewidth]{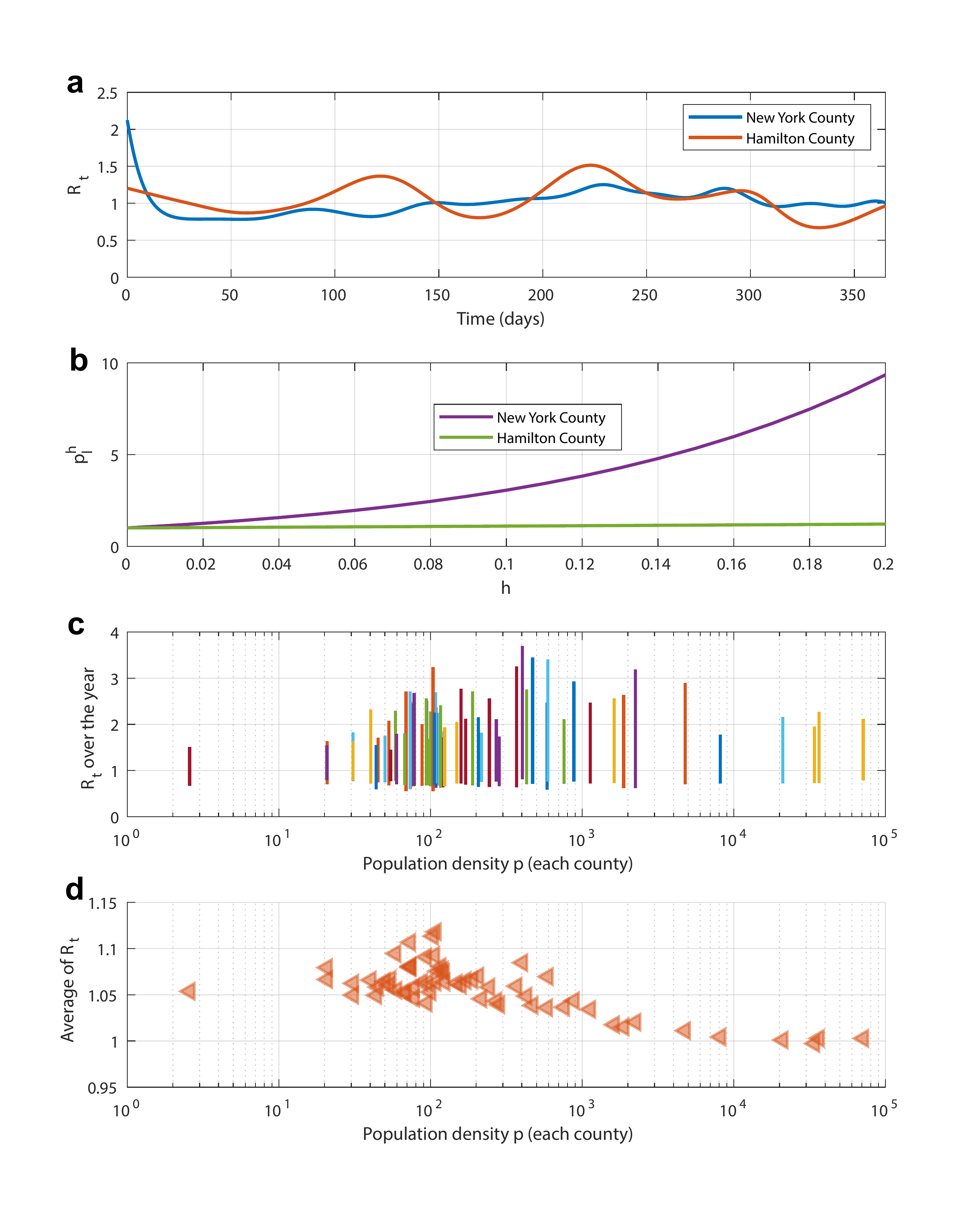}
	\end{minipage}	
	\vspace{-3mm}
	\caption{{\bf a}, the estimated value of the effective reproduction number {\bf $R_t$ for New York County and Hamilton County} from March 16th, 2020 to March 16th, 2021.  The data comes from \cite{NewYorkRt}. {\bf b}, the constant power of the population density $p_l^h$ for New York County and Hamilton County when $h$ ranges in the interval $[0, 0.2]$. {\bf c}, $R_t$ range of each county in New York State over the year (March 16th, 2020 - March 16th, 2021) and the population density of each county. {\bf d} the average of $R_t$ for each county in New York State over the year (March 16th, 2020 - March 16th, 2021) and the population density of each county.
	}\label{fig: density and Rt}
	\centering
	\vspace{-1mm}
\end{figure}

\begin{figure}[!htb]
	\centering
	\begin{minipage}[b]{0.9\linewidth}
		\centering
		\includegraphics[width=1.0\linewidth]{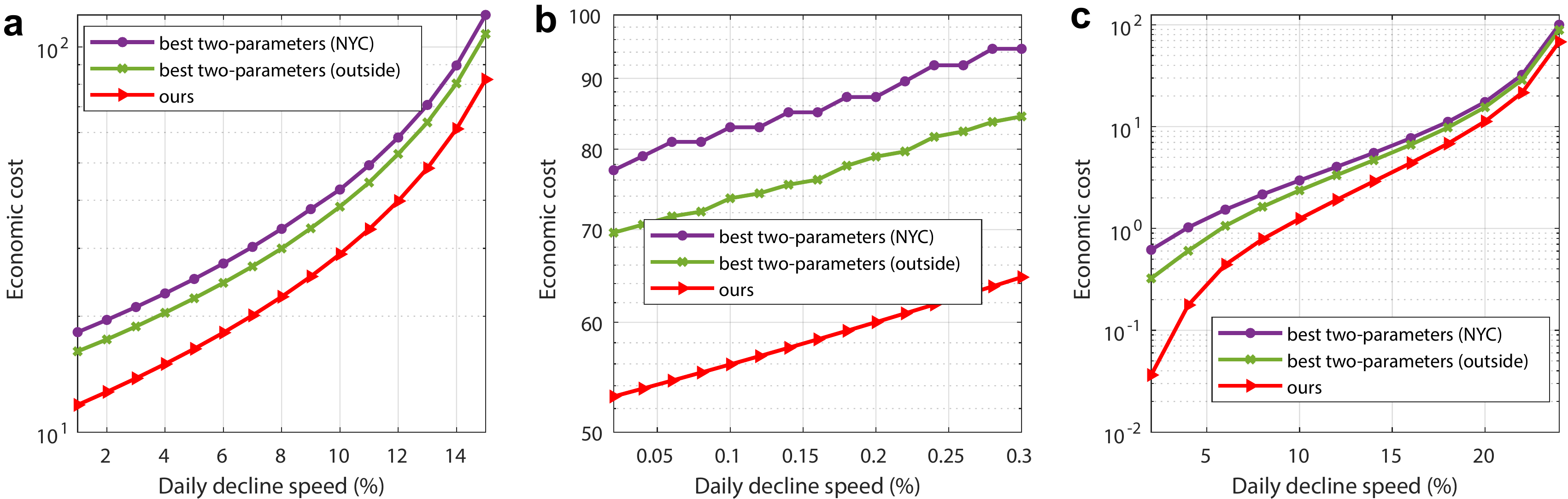}
	\end{minipage}	
	\vspace{-3mm}
	\caption{The economic cost of different lockdown policies when {\bf the daily decline speed of the confirmed cases is fixed}. In {\bf a}, the disease parameters are set as in \cite{bertozzi2020challenges}, the daily cases decline speed ranges between $[1\%, ~15\%]$. In {\bf b}, the disease parameters set as in \cite{giordano2020modelling}, the daily cases decline speed ranges between $[0.01\%, ~0.3\%]$. In {\bf c}, the disease parameters set as in \cite{birge2020controlling}, the daily cases decline speed ranges between $[2\%, ~24\%]$. The best two-parameters (NYC) lockdown and the best wo-parameters (outside) lockdown are defined in SI Sec \ref{sec: two parameters compare}.
	}\label{fig: two parameters compare}
	\centering
	\vspace{-1mm}
\end{figure}

\begin{figure}[!htb]
	\centering
	\begin{minipage}[b]{0.9\linewidth}
		\centering
		\includegraphics[width=1.0\linewidth]{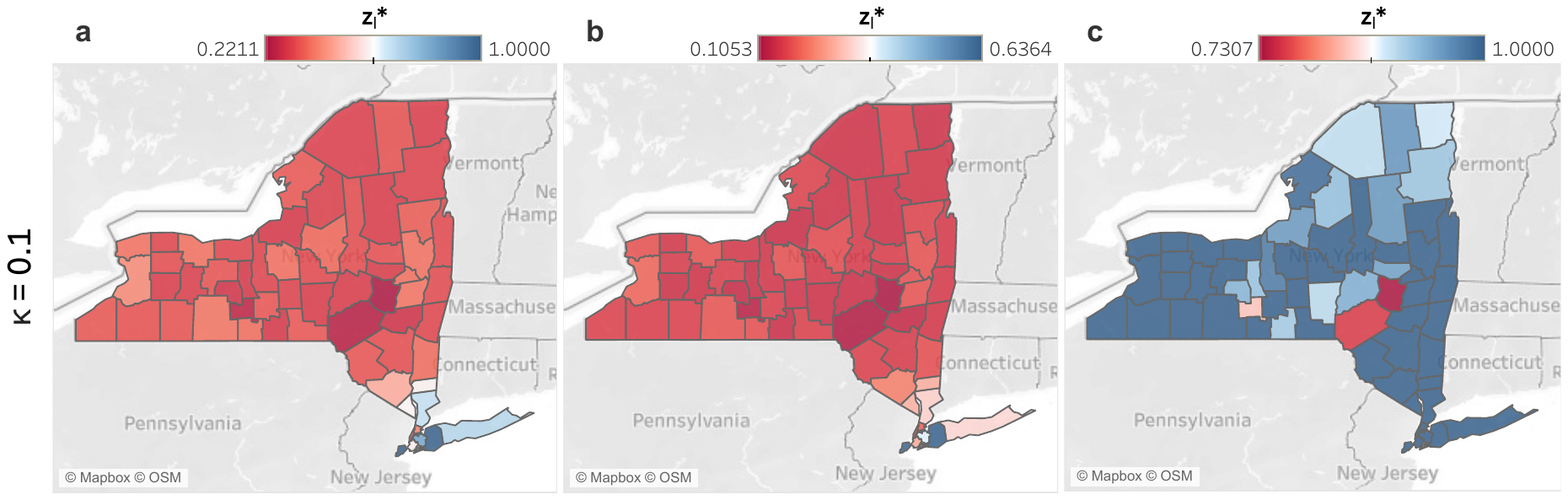}
	\end{minipage}	
	\begin{minipage}[b]{0.9\linewidth}
		\centering
		\includegraphics[width=1.0\linewidth]{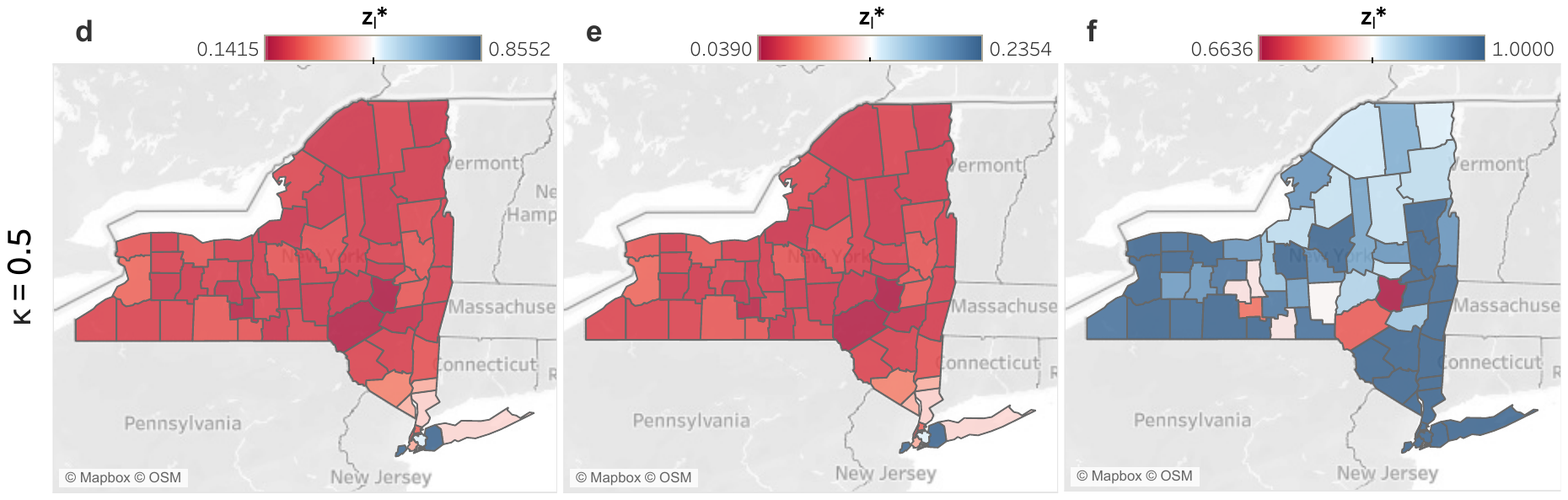}
	\end{minipage}%

	\begin{minipage}[b]{0.9\linewidth}
		\centering
		\includegraphics[width=1.0\linewidth]{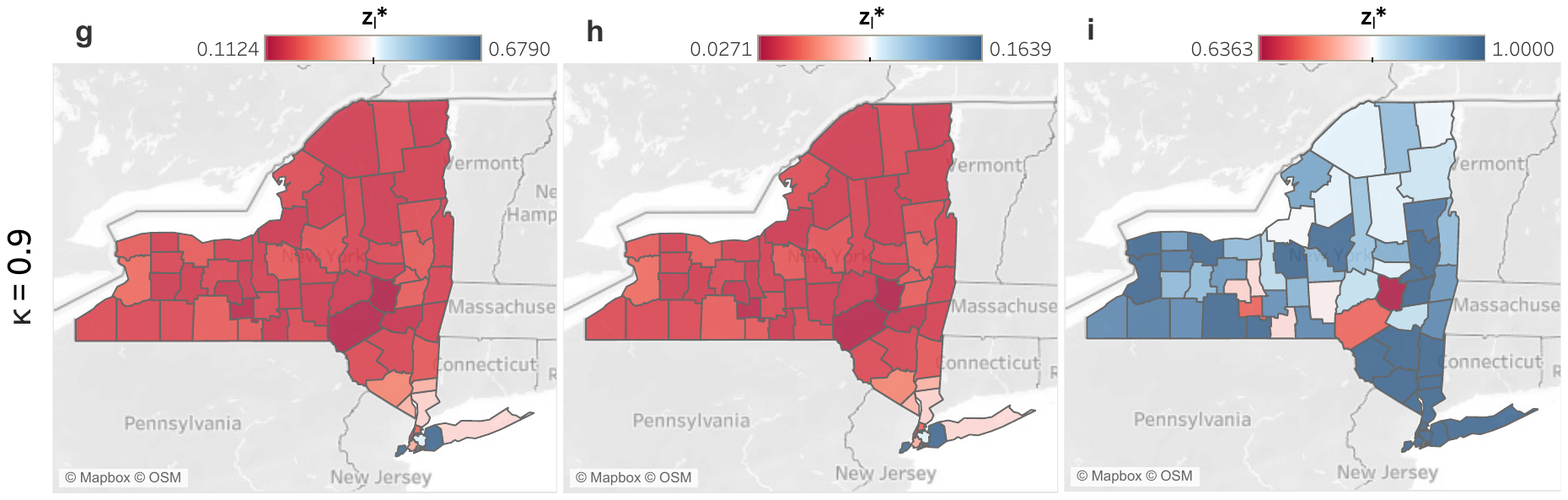}
	\end{minipage}%
	
	\begin{minipage}[b]{0.9\linewidth}
		\centering
		\includegraphics[width=1.0\linewidth]{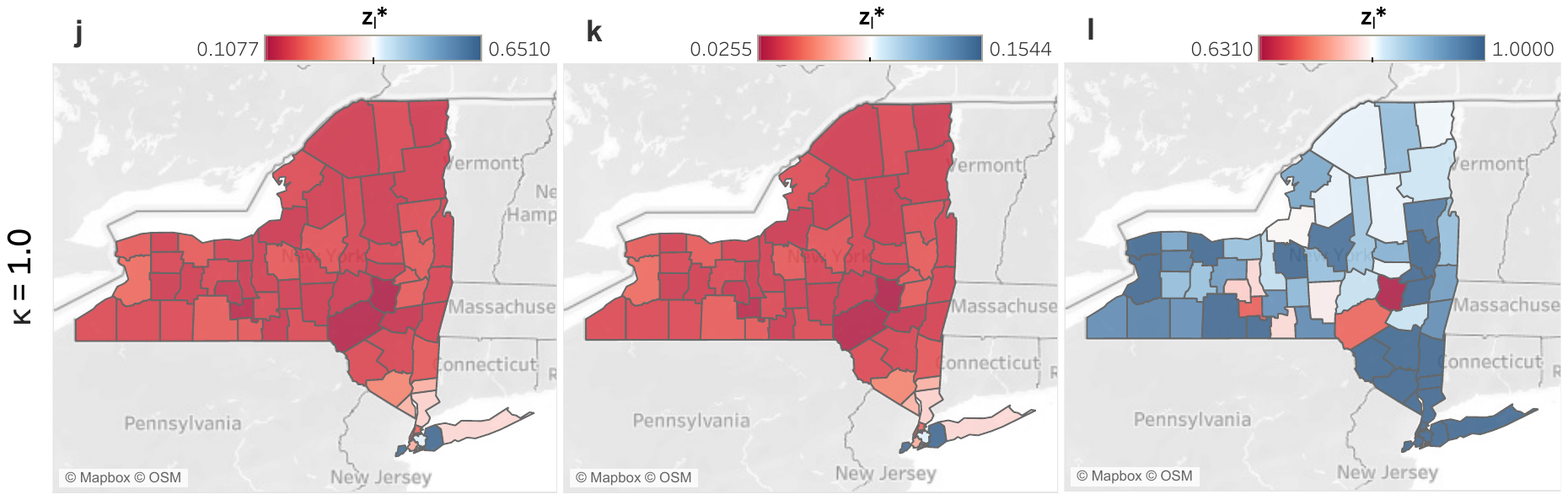}
	\end{minipage}%
	
	\vspace{-3mm}
	\caption{Optimal lockdown rate of each county calculated by our method for COVID-19 model when {\bf the activity level of symptomatic and asmptomatic individuals are different}. In {\bf a-c}, the travel rate $\tau_{ij}$ of symptomatic individuals is $10\%$ of the asymptomatic individuals.  In {\bf d-f}, the travel rate $\tau_{ij}$ of symptomatic individuals is $50\%$ of the asymptomatic individuals. In {\bf g-i}, the travel rate $\tau_{ij}$ of symptomatic individuals is $90\%$ of the asymptomatic individuals. 
	In {\bf j-l}, the travel rate $\tau_{ij}$ of symptomatic individuals are equal to the asymptomatic individuals. 
		In the first column, the disease parameters are set as in \cite{bertozzi2020challenges}, the decay rate $\alpha$ is chosen as 0.0231 which corresponds to halving every 30 days. In the second column, the disease parameters are set as in \cite{giordano2020modelling}, the decay rate is chosen as $\alpha = 0.2 r^{\text s} = 0.0034$ so that $\alpha < \min (r^{\text a}, r^{\text s})$. In the third column, the disease parameters are set as in \cite{birge2020controlling}, the decay rate $\alpha$ is chosen $0.0231$ that corresponds to halving every 30 days. The data used is about COVID-19 outbreak in NY on April 1st, 2020.  
	}\label{fig: activity level different }
	\centering
	\vspace{-1mm}
\end{figure}

	\clearpage

	\section{Supplementary Tables}\label{extended_results_table}
	
	\begin{table}[!htb]
		\centering
		\caption{Data used in Figure \ref{Fig: framework}d \label{tab: figure 1(d) data}
		}
		\scalebox{0.85}{
			\begin{tabular}{c  p{0.8cm}<{\centering} p{1.0cm}<{\centering} p{1.0cm}<{\centering} p{1.0cm}<{\centering} p{1.5cm}<{\centering} p{0.8cm}<{\centering}}
				\toprule
				Locations &  $s_i(t_0)$ & $r_i(t_0)$ & $x^{\text a}(t_0)$ & $x^{\text s}(t_0)$ & population & $h_i$ \\
				\midrule  
				A & 0.90 & 0.0041 & 0.0825 & 0.0134 & 200,000 & 800 \\
				B & 0.92 & 0.0033 & 0.0660 & 0.0107 & 2000 & 800\\
				C & 0.95 & 0.0021 & 0.0412 & 0.0067 & 4000 & 800\\
				\bottomrule
			\end{tabular}
		}
	\end{table}
	
	\begin{table}[!htb]
		\centering
		\caption{disease parameters from references \label{tab: model_parameters}
		}
		\scalebox{0.85}{
			\begin{tabular}{c  p{0.8cm}<{\centering} p{0.8cm}<{\centering} p{0.8cm}<{\centering} p{0.8cm}<{\centering} p{1.0cm}<{\centering}}
				\toprule
				Sources &  $\gamma$ & $r^{\text a}$ & $r^{\text s}$ & $\epsilon$ & $ \hat{\alpha}$ \\
				\midrule  
				\cite{birge2020controlling}  &  0.29 & 0.29 & 0.29 & 0.14 & 0.55 \\
				\cite{giordano2020modelling} &  0.034 & 0.034 & 0.017 & 0.125 & 0.6754\\
				\cite{bertozzi2020challenges} &  0.20 & – & – & 0.32 & – \\
				\bottomrule
			\end{tabular}
		}
	\end{table}

	\begin{table}[!htb]
		\centering
		\caption{Optimal lockdown rate $z_i^*$ for the City-suburb model. The first node is the city and the second one is the suburb. It can be observed that the optimal lockdown policy shutdown the suburb more stringently than the city even though, in cases 1 and 2,  the epidemic is mainly localized in the city. \label{tab: city_suburb}
		}
		\scalebox{0.85}{
			\begin{tabular}{m{3cm}<{\centering} m{3cm}<{\centering} m{3cm}<{\centering} m{3cm}<{\centering} }
				\toprule Model & Case 1 & Case 2 & Case 3
				\\
				\midrule  
				SIS &  [0.195, 0.189] & [0.196, 0.120] & [0.170, 0.104]  \\
				SIR &  [0.216, 0.164] & [0.197, 0.113] & [0.170, 0.104]\\
				COVID-19 & [0.185, 0.141] & [0.169, 0.098] & [0.145, 0.089] \\
				\bottomrule
			\end{tabular}
		}
	\end{table}
	\clearpage

\end{document}